\documentclass[a4paper,11pt]{amsart}
\usepackage[OT2,OT1]{fontenc}
\usepackage[latin1]{inputenc}\usepackage{amsmath}
\usepackage{amsthm}
\usepackage{amscd}
\usepackage{amssymb}
\usepackage{pb-diagram, pb-xy}
\usepackage[all]{xy}
\usepackage{hyperref}
\hypersetup{colorlinks=true} 
\usepackage{dsfont}

\newtheorem{thm}{Theorem}[section]
\newtheorem{cor}[thm]{Corollary}
\newtheorem{lem}[thm]{Lemma}
\newtheorem{prop}[thm]{Proposition}

\theoremstyle{definition}
\newtheorem{defn}[thm]{Definition}
\newtheorem{rem}[thm]{Remark}

\newtheorem{para}[thm]{--}

%Mise En Page
\setlength{\textwidth}{16.5cm}%
\setlength{\textheight}{24cm}%
\setlength{\hoffset}{-2cm}%
\setlength{\voffset}{-1cm}

%Generalities

\newcommand{\angl}[1]{\left\langle #1\right\rangle}

\newcommand{\tq}{\: | \:}

\newcommand{\qqet}{\qquad\mbox{and}\qquad}

%Operators
\DeclareMathOperator{\ad}{ad}% Adjoint
% Automorphisme
% Gr de Brauer
\DeclareMathOperator{\cl}{cl}% class
\DeclareMathOperator{\Der}{Der}% 
\DeclareMathOperator{\End}{End}% Endomorphisme
\DeclareMathOperator{\Ext}{Ext}% Extensions
\DeclareMathOperator{\Gal}{Gal}% Gr de Galois
\DeclareMathOperator{\GL}{GL}% Gr lineaire
\DeclareMathOperator{\gr}{gr}% graduation
\DeclareMathOperator{\Hdg}{Hdg}% 
\DeclareMathOperator{\Hom}{Hom}% Homomorphisme
\DeclareMathOperator{\id}{id} %
\DeclareMathOperator{\im}{im} %
 %Inversion
\DeclareMathOperator{\Lie}{Lie} %
\DeclareMathOperator{\Mor}{Mor} %
 %Neron--Severi
 %Norme reduite
 %non ramiife
 %Gr lineaire projectif
 %
% Reduction
\DeclareMathOperator{\res}{res}% Restriction
% dim cohomologique stricte
% Cl separable
% 
\DeclareMathOperator{\spec}{spec}% Spectre
% Torfoncteur
% torsion
% Trace reduite
% Trace
% Degr� de transcendence

\newcommand{\hotimes}{\:\widehat\otimes\:}

\newcommand{\Tell}{\textup{T}_\ell}
\newcommand{\Tnot}{\textup{T}_0}

\newcommand{\Vell}{\textup{V}_{\!\ell}}
\newcommand{\Vnot}{\textup{V}_{\!0}}
\newcommand{\VdR}{\textup{V}_{\!\textup{dR}}}
\newcommand{\VIA}{\textup{V}_{\!\mathbb{A}}}

%text up
 %
 % 

 % support compact
 % 
 % essential
 % fppf
% 
% 

%
\newcommand{\updR}{\textup{dR}}%

\newcommand{\upmot}{\textup{mot}}% motivic
% opposite
\newcommand{\upet}{\textup{\'et}}% opposite
% torsion

%Ensembles
\newcommand{\IA}{\mathbb{A}}
\newcommand{\IC}{\mathbb{C}}

\newcommand{\IG}{\mathbb{G}}
\newcommand{\IL}{\mathbb{L}}

\newcommand{\IQ}{\mathbb{Q}}
\newcommand{\IR}{\mathbb{R}}

\newcommand{\IZ}{\mathbb{Z}}

\newcommand{\cM}{\mathcal M}
\newcommand{\cO}{\mathcal O}

\newcommand{\cT}{\mathcal T}

\newcommand{\cHom}{\mathcal Hom}
\newcommand{\cEnd}{\mathcal End}
\newcommand{\cExt}{\mathcal Ext}

\newcommand{\fl}{\mathfrak l}
\newcommand{\fg}{\mathfrak g}

\newcommand{\fh}{\mathfrak h}

\newcommand{\fu}{\mathfrak u}

% Long arrows --------------------------------------------------------------------------------
\renewcommand{\to}{\xrightarrow{\quad}}
\renewcommand{\mapsto}{\longmapsto}

\date{\today}

\title{On the Mumford--Tate conjecture for 1--motives}%
\author{Peter Jossen}%

\begin{document}

\begin{abstract}
We show that the statement analogous to the Mumford--Tate conjecture for abelian
varieties holds for 1--motives on unipotent parts. This is done by comparing the
unipotent part of the associated Hodge group and the unipotent part of the image
of the absolute Galois group with the unipotent part of the motivic fundamental
group.
\end{abstract}

\maketitle%

\vspace{7mm}%
\tableofcontents

\vspace{14mm}
\section*{Introduction and overview}

\begin{par}
Let $k$ be a field which is finitely generated over $\IQ$, with algebraic
closure $\overline k$. Let $X$ be a separated scheme of finite type over $k$,
and let $i\geq 0$ be an integer. For every embedding $\sigma:k\to \IC$ the
cohomology group
$$V_0 = H^i(X(\IC),\IQ)$$
carries a mixed rational Hodge structure. The fundamental group of the Tannakian
subcategory of the category of mixed Hodge structures generated by $V_0$ is
called the \emph{Mumford--Tate group} of $V_0$. It is an algebraic subgroup of
$\GL_{V_0}$, which is reductive in the case $X$ is smooth and proper. For any
prime number $\ell$, the $\ell$--adic \'etale cohomology group
$$V_\ell = H^i_\upet(X_{\overline k},\IQ_\ell)$$
is a Galois representation, conjectured to be semisimple if $X$ is smooth and
proper. The vector spaces $V_0$ and $V_\ell$ both carry a \emph{weight
filtration}, and, once an extension of $\sigma$ to $\overline k$ is chosen,
there is a canonical, natural isomorphism of filtered $\IQ_\ell$--vector spaces
$V_0\otimes \IQ_\ell \cong V_\ell$ called \emph{comparison isomorphism}. The
general Mumford--Tate conjecture states that the image of the Galois group
$\Gal(\overline k|k)$ in the group of $\IQ_\ell$--linear automorphisms of
$V_\ell$ contains an open subgroup which is contained and open in the
$\IQ_\ell$--points of the Mumford--Tate group associated with the Hodge
structure $V_0$, via the comparison isomorphism. The classical Mumford--Tate
conjecture is the special case where $X$ is an abelian variety and $i=1$.
\end{par}

\begin{par}
Although a conjecture in general, the classical Mumford--Tate conjecture is known to be true in a variety of
cases, see \cite{Ribe90} or the introduction of \cite{Vasiu} for overviews. For abelian varieties of complex
multiplication type, the statement of the conjecture follows from Faltings's theorems, but was proven
already in 1968 by Pohlmann \cite{Pohlmann}. Serre proved it for elliptic curves in \cite{SerreEllAdic}, and
for abelian varieties $A$ with $\End_{\overline k}A = \IZ$ of dimension $2$, $4$, $6$ or an odd number in
\cite{SerreCours}. Serre's results were improved by Pink in \cite{Pink98}. Recent progress on the question
is due to Vasiu who shows in \cite{Vasiu} the statement of the conjecture to be true for an abelian variety
$A$ under some conditions on the Shimura pair associated with $H_1(A(\IC),\IQ)$.
\end{par}

\begin{par}
The general Mumford--Tate conjecture fits well into the framework of motives. We will show that it holds for 1--motives, provided the classical
Mumford--Tate conjecture holds for abelian parts. Recall from \cite{Deli74} that a 1--motive $M$ over $k$ is given by a diagram of commutative group schemes over
$k$ of the form
$$M = \left[\begin{diagram} \setlength{\dgARROWLENGTH}{4mm}
\node[3]{Y}\arrow{s,l}{u}\\
\node{0}\arrow{e}\node{T}\arrow{e}\node{G}\arrow{e}\node{A}\arrow{e}\node{0}
\end{diagram}\right]$$
where $G$ is an extension of an abelian variety $A$ by a torus $T$, and $Y$ is \'etale locally constant, locally isomorphic to a finitely generated free $\IZ$--module. In other words, $Y$ is a Galois--module which is finitely generated and free as a commutative group. We can look at tori, abelian varieties and finitely generated free groups with Galois action as 1--motives, and 1--motives come equipped with a weight filtration $W$ such that
$$\gr^W_0(M) = Y \qquad\qquad \gr^W_{-1}(M) = A\qquad\qquad \gr^W_{-2}(M) = T$$
With every 1--motive $M$ are associated $\ell$--adic Galois representations
$\Vell M$ and having chosen a complex embedding $\sigma: k\to \IC$ also a mixed
Hodge structure $\Vnot M$. The choice of an extension of $\sigma$ to an
embedding $\overline k\to \IC$ yields a natural comparison isomorphism $\Vnot M
\otimes\IQ_\ell \cong \Vell M$ which is compatible with the weight filtration.
We write $\fl^M$ for the Lie algebra associated with the image of
$\Gal(\overline k|k)$ in $\GL(\Vell M)$ and $\fh^M$ for the Lie algebra of the
Mumford--Tate group of $\Vnot M$. The Lie algebras $\fl^M \subseteq
\End_{\IQ_\ell}(\Vell M)$ and $\fh^M \subseteq \End_{\IQ}(\Vnot M)$ both carry a
two step filtration induced by the weight filtration on $\Vell M$ and $\Vnot M$
respectively which we also denote by $W$:
$$0 \subseteq W_{-2}\fl^M\subseteq W_{-1}\fl^M \subseteq \fl^M \qqet 0 \subseteq
W_{-2}\fh^M\subseteq W_{-1}\fh^M \subseteq \fh^M$$
The Lie algebras $W_{-1}\fl^M$ and $W_{-1}\fh^M$ are the nilpotent
radicals of $\fl^M$ and $\fh^M$ respectively, the reductive Lie algebras
$\gr_0^W(\fl^M)$ and $\gr_0^W(\fh^M)$ are the ones classically associated with
the abelian variety $A = \gr^W_{-1}(M)$, if $A\neq 0$. The comparison isomorphism
permits us to identify $\fh^M\otimes\IQ_\ell$ with a Lie subalgebra of
$\End_{\IQ_\ell}(\Vell M)$. It is a theorem of Brylinski (\cite{Brylinski}, our
theorem \ref{Thm:BrylinskiDeligne}) that this subalgebra is independent of the
chosen complex embeddings. With this identification made, we can state our first
main result:
\end{par}

\vspace{4mm}
\begin{par}{\bf Theorem 1.}\emph{
Let $M$ be a 1--motive over a finitely generated subfield $k$ of $\IC$. The Lie
algebra $\fl^M$ is contained in $\fh^M\otimes\IQ_\ell$, and the equality
$W_{-1}\fl^M = W_{-1}\fh^M\otimes\IQ_\ell$ holds. In particular, the
Mumford--Tate conjecture holds for $M$ if and only if it holds for the abelian
variety $\gr^W_{-1}(M)$.}
\end{par}

\vspace{4mm}
\begin{par}
With every variety $X$ over $k$ one can naturally associate a 1--motive $M^1(X)$
over $k$ such that there are canonical isomorphisms
$$\Vnot M^1(X) \cong H^1(X(\IC),\IQ)(1) \qqet \Vell M^1(X) \cong
H^1_\upet(X_{\overline k},\IQ_\ell)(1)$$
of Hodge structures and Galois representations respectively. For curves this is
a classical construction due to Deligne, in general it is due to Barbieri--Viale
and Srinivas \cite{BaViSri01}. Our first theorem immediately yields:
\end{par}

\vspace{4mm}
\begin{par}{\bf Corollary.}\emph{
Let $X$ be a variety over $k$. The Mumford--Tate conjecture holds for cohomology
in degree $1$ of $X$ if and only if the classical Mumford--Tate conjecture holds
for the albanese of a smooth projective variety birational to $X$.}
\end{par}

\vspace{4mm}
\begin{par}
It is only natural to ask for an analogue of Theorem 1 in positive
characteristic, replacing $k$ by a field which is finitely generated over a
finite field. Alas, there is no Mumford--Tate group in characteristic $p>0$.
However, if we concentrate on the weight $(-1)$--parts, i.e.\ nilpotent
radicals, we can do even better by constructing a motive with which we can
compare $W_{-1}\fl^M$ and $W_{-1}\fh^M$. This motive will be a
semiabelian variety, and was already constructed, following Deligne, by Bertolin
in \cite{Bert03}, where it is called \emph{Lie algebra of the unipotent motivic
fundamental group of $M$}. Our second main result is the following theorem.
\end{par}

\vspace{4mm}
\begin{par}{\bf Theorem 2.}\emph{
With every 1--motive $M$ over a field $k$ is canonically associated a semiabelian variety $P(M)$ over $k$, having the following properties:
\begin{enumerate}
 \item For every field extension $k'|k$, there is a natural isomorphism $P(M)\times_kk' \cong P(M\times_kk')$.
 \item If $k = \IC$, there is a canonical isomorphism of mixed Hodge structures $\Vnot P(M) \cong W_{-1}\fh^M$, where $\fh^M$ denotes the Lie algebra of the
Mumford--Tate group of $\Vnot M$.
 \item Let $\overline k$ be an algebraic closure of $k$ and let $\ell$ be a prime number not equal to the characteristic of $k$. Then, $\Vell P(M)$ can be canonically identified with a $\Gal(\overline k|k)$--subrepresentation of $\End(\Vell M)$. If $k$ is finitely generated over its prime field, then the equality $\Vell P(M)=W_{-1}\fl^M$ holds, where $\fl^M$ denotes the Lie algebra of the image of $\Gal(\overline k|k)$ in $\GL(\Vell M)$.
\end{enumerate}}
\end{par}

\vspace{4mm}
\begin{par}
We willl also formulate an adelic refinement of part (3) (Theorem \ref{Thm:ComparisonEll}). The proof of this refinement is conditional in $k$ has positive characteristic, since it depends on a Galois property of abelian varieties over $k$ which is, to the best of my knowledge, only proven in characteristic $0$. To get an idea of what $P(M)$ and the isomorphisms in the theorem look like, consider a 1--motive $M$ over a field $k$, where $Y=\IZ$ and $T=0$, so $M$ is given by an abelian variety $A$ over $k$ and a rational point $a = u(1)\in A(k)$. In that case, $P(M)$ is defined to be the smallest abelian subvariety of $A$ which contains a multiple of $a$. For
instance, $P(M)=0$ if and only if $a$ is torsion, which is always the case if $k$ is finite. For a fixed prime number $\ell$ and an integer $i\geq 0$, consider the fields
$$k(A[\ell^i]) \qqet k(\ell^{-i}a) $$
obtained by adjoining to $k$ the $\ell^i$--torsion points of $A(\overline k)$,
respectively all $\ell^i$--division points of $a$ in $A(\overline k)$. So
$k(\ell^{-i}a)$ is a Galois extension of $k(A[\ell^i])$, and there is a natural
map 
$$\vartheta: \Gal\big(k(\ell^{-i}a)|k(A[\ell^i])\big) \to A[\ell^i]$$
sending $\sigma$ to $\sigma(b) - b$ where $b\in A(\overline k)$ is any point
such that $\ell^ib=a$. Results of Ribet (\cite{Ribe76,Ribe79}, see also \cite{Hind88},
Appendix 2, Lemme I,bis) state that if $k$ is a number field, the image of the
map $\vartheta$ is contained in the subgroup $P(M)[\ell^i]$ of $A[\ell^i]$ with
finite index bounded independently of $i$, and even equal to $P(M)[\ell^i]$ for
all but finitely many $\ell$. Passing to limits over $i$ and then passing to Lie
algebras gives the isomorphism claimed in part (3) of our theorem. Let it be
acknowledged that Hindry's reformulation of Ribet's result was seminal to our
general construction. 
\end{par}

\vspace{4mm}
\begin{par}
An important application of 1--motives is their use as a tool in the study of the group rational points $G(k)$ of an abelian or semiabelian variety $G$ over a field $k$. This is no surprise, since to give a $k$--rational point on $G$ is the same as to give a morphism $\IZ\to G$ over $k$. For instance, if $k$ is a number field, direct consequences of our theorems in the case where $M = [Y \to A]$ for an abelian variety $A$ over $k$ play an important role in the proof of local--global principles for subgroups of $A(k)$, as I have shown in \cite{Joss11}. I will give a further illustration concerning \emph{deficient points} on semiabelian varieties over number fields, which were introduced by Jacquinot and Ribet in \cite{Jacq87}. In the case where $k$ is the function field of a complex curve, such points have been studied recently by Bertrand in \cite{Bert11} in connection with a relative version of the Manin--Mumford conjecture.
\end{par}

\vspace{4mm}
\begin{par}{\bf Overview.}
Section \ref{Sec:RealizOf1Mot} is to rehearse 1--motives and related
constructions. In Section \ref{Sec:GaloisInMT} we show that the image of the
absolute Galois group is contained in the $\IQ_\ell$--points of the
Mumford--Tate group. This is essentially a reformulation of a result of Deligne
and Brylinski. In Section \ref{Sec:UnipoMotFG} we construct the semiabelian
variety $P(M)$, that is, the Lie algebra of the unipotent part of the motivic
fundamental group of a 1--motive. We then compare the Lie algebra of the
unipotent motivic fundamental group with the Mumford--Tate group and with the
image of Galois in sections \ref{Sec:CompMotivicHodge} and
\ref{Sec:CompMotivicGalois} respectively, by showing that the Hodge,
respectively the $\ell$--adic realisation of $P(M)$ is canonically isomorphic to
the Lie algebra of the unipotent part of the Mumford--Tate group, respectively
to the Lie algebra of the unipotent part of the image of $\Gal(\overline k|k)$
in $\GL(\Vell M)$. With this we have proven the essential part of our Main
Theorem. However we now have two isomorphisms between the nilpotent radicals of
$\fl^M$ and $\fh^M\otimes\IQ_\ell$, the one given in the Main Theorem, the other
via comparison with the motivic fundamental group. We will check in section
\ref{Sec:Compatibility} that they are the same, and deduce our main theorems as
stated above. In section \ref{Sec:Corollaries} we give some corollaries to our
main theorems, concerning deficient points. The appendix contains a comment by
P.~Deligne.
\end{par}

\vspace{4mm}
\begin{par}{\bf Acknowledgments.}
I am indebted to the University of Regensburg, who kindly supported me for quite
some time now without ever seriously complaining. I am grateful to Tam\'as
Szamuely who suggested valuable improvements to earlier versions of the text,
and to Pierre Deligne who contributed an essential complement and generously
permitted me to propagate it. This work was initiated in July 2010 while
attending a workshop on Tannakian categories in Lausanne, I wish to thank the
EPFL and in particular Varvara Karpova for kindest hospitality. 
\end{par}

\vspace{14mm}
\section{Complements and recollections on 1--motives}\label{Sec:RealizOf1Mot}

\begin{par}
We introduce some constructions related to 1--motives relevant to our goals, and
also recall some standard definitions. 
\end{par}

\vspace{4mm}
\begin{para}\label{Par:1MotIntro}
We start with some recollections on 1--motives. By a 1--motive $M$ over a scheme $S$ we
understand a diagram of commutative group schemes over $S$ of the form
$$M = \left[\begin{diagram} \setlength{\dgARROWLENGTH}{3mm}
\node[3]{Y}\arrow{s,l}{u}\\
\node{\!\!0\!}\arrow{e}\node{\!T\!}\arrow{e}\node{\!G\!}\arrow{e}\node{\!A\!}
\arrow{e}\node{\!0\!\!} \end{diagram}\right]$$
where $G$ is an extension of an abelian scheme $A$ by a torus $T$, and $Y$
\'etale locally constant, locally isomorphic to a finitely generated free
$\IZ$--module. A \emph{morphism of 1--motives} is a morphism of diagrams. The
\emph{weight filtration} of $M$ is the three step filtration $0 \subseteq
W_{-2}M \subseteq W_{-1}M \subseteq W_{0}M = M$  given by 
$$W_{-2}M = \left[\begin{diagram} \setlength{\dgARROWLENGTH}{3mm}
\node[3]{0}\arrow{s}\\
\node{\!\!0\!}\arrow{e}\node{\!T\!}\arrow{e,=}\node{\!T\!}\arrow{e}\node{\!0\!}
\arrow{e}\node{\!0\!\!} \end{diagram}\right] \qqet W_{-1}M =
\left[\begin{diagram} \setlength{\dgARROWLENGTH}{3mm} \node[3]{0}\arrow{s}\\
\node{\!\!0\!}\arrow{e}\node{\!T\!}\arrow{e}\node{\!G\!}\arrow{e}\node{\!A\!}
\arrow{e}\node{\!0\!\!} \end{diagram}\right]$$
This filtration is functorial in $M$. While the category of 1--motives is not an
abelian category, it is an exact category. That a sequence $0\to M\to M'\to
M''\to 0$ is exact means that the simple complex associated with the double
complex
$$\begin{diagram}
\setlength{\dgARROWLENGTH}{3mm}
\node{Y}\arrow{s}\arrow{e}\node{Y'}\arrow{s}\arrow{e}\node{Y''}\arrow{s}\\
\node{G}\arrow{e}\node{G'}\arrow{e}\node{G''} 
\end{diagram}$$
is fppf--acyclic. The quotients $M/W_i(M)$ exist, and we have in particular
$$\gr^W_\ast(M) = \left[\begin{diagram} \setlength{\dgARROWLENGTH}{3mm}
\node[3]{Y}\arrow{s,l}{0}\\
\node{\!\!0\!}\arrow{e}\node{\!T\!}\arrow{e}\node{\!T\!\oplus
\!A\!}\arrow{e}\node{\!A\!}\arrow{e}\node{\!0\!\!} \end{diagram}\right]$$
We will often identify 1--motives $M$ with two term complexes
$[Y\xrightarrow{\:\:u\:\:} G]$ placed in degrees $0$ and $1$, and accordingly
morphisms of 1--motives with morphisms of complexes.
\end{para}

\vspace{4mm}
\begin{para}\label{Para:IntroHodgeRealis}
With every 1--motive $M$ over $\IC$ is associated an integral mixed Hodge
structure $\Tnot M$, called the \emph{Hodge realisation} of $M$. The
construction of $\Tnot M$ goes as follows: The kernel of the exponential map
$\exp:\Lie G(\IC) \to G(\IC)$ is canonically isomorphic to the singular homology
group $H_1(G(\IC),\IZ)$. Consider then the pull--back diagram
$$\begin{diagram}
\setlength{\dgARROWLENGTH}{4mm}
\node{0}\arrow{e}\node{H_1(G(\IC),\IZ)}\arrow{s,=}\arrow{e}\node{T_0
M}\arrow{s}\arrow{e}\node{Y}\arrow{s,r}{u}\arrow{e}\node{0}\\
\node{0}\arrow{e}\node{H_1(G(\IC),\IZ)}\arrow{e}\node{\Lie
G(\IC)}\arrow{e,t}{\exp}\node{G(\IC)}\arrow{e}\node{0}
\end{diagram}$$
The group $\Tnot M$ is finitely generated and free. It depends functorially on
$M$ hence carries a weight filtration induced by the weight filtration of $M$.
The Hodge filtration on $\Tnot M \otimes \IC$ has only one nontrivial step which
is determined by the Hodge filtration on $H_1(A(\IC),\IC)$. We write $\Vnot M :=
\Tnot M \otimes \IQ$ for the corresponding rational Hodge structure. Deligne has
shown (\cite{Deli74}, \S 10.1), that the functor $\Tnot$ from the category of
1--motives over $\IC$ to the category of integral mixed Hodge structures is
fully faithful. In other words, the natural maps
$$\Hom_\IC(M_1,M_2) \to \Hom_{\rm MHS}(\Tnot M_1, \Tnot M_2)$$
is an isomorphism for all 1--motives $M_1$, $M_2$ over $\IC$. The construction
of $\Tnot M$ behaves well in families: If $M$ is a 1--motive over a smooth
complex variety $S$, then the family $(\Tnot M_s)_{s\in S}$ is a variation of
mixed Hodge structures.
\end{para}

\vspace{4mm}
\begin{para}\label{Para:Def:fhM}
Let $V$ be a rational mixed Hodge structure. The \emph{Mumford--Tate group $H^V$
of $V$} is the fundamental group of the Tannakian category generated by $V$
inside the Tannakian category of rational mixed Hodge structures. We identify
this group with an algebraic subgroup of $\GL_V$ via its natural, faithful
action on $V$ and write $\fh^M \subseteq \End_\IQ(\Vnot M)$ for its Lie algebra. 
The \emph{weight filtration on $\fh^V$} is the filtration given by 
$$W_i(\fh^V) = \{f\in \fh^V \tq f(W_nV) \subseteq W_{n+i}V\}$$
The \emph{Mumford--Tate group of a 1--motive $M$ over $\IC$} is the
Mumford--Tate group of the Hodge--realisation $V_0M$ of $M$.
\end{para}

\vspace{4mm}
\begin{para}\label{Para:IntroElladicRealis}
Let $\ell$ be a prime number and let $M$ be a 1--motive over a field $k$ of
characteristic $\neq \ell$ with algebraic closure $\overline k$ and absolute
Galois group $\Gamma := \Gal(\overline k|k)$. The finite $\Gamma$--modules
$$M[\ell^i] := H^0(M\otimes^\IL \IZ/\ell^i\IZ)(\overline k) = \frac{\{(y,x)\in Y \times
G(\overline k) \tq u(y)=\ell^ix\}}{\{(\ell^iy,u(y))\tq y\in Y\}}$$
form a projective system for varying $i$, and we define
$$\Tell M := \lim_{i\geq 0} M[\ell^i]  \qqet \Vell M := \Tell M
\otimes\IQ_\ell$$
The object $\Tell M$ is a finitely generated free $\IZ_\ell$--module equipped
with a continuous action of $\Gamma$. The construction of $\Tell M$ is
functorial in $M$, hence a weight filtration on $\Tell M$ whose graded quotients
are the ordinary Tate modules of $T$ and $A$, and $Y\otimes\IZ_\ell$. If $k$ is
finitely generated over its prime field, then the natural maps
$$\Hom_k(M_1,M_2)\otimes\IZ_\ell \to \Hom_\Gamma(\Tell M_1,\Tell M_2)$$
are isomorphisms, and for all but finitely many prime numbers $\ell$, the maps
$$\Hom_k(M_1,M_2)\otimes \IZ/\ell\IZ \to \Hom_\Gamma(M_1[\ell], M_2[\ell])$$
are isomorphisms as well. These statement generalise the theorems of Tate,
Zahrin and Faltings about homomorphisms between abelian varieties over finitely
generated fields (and can be deduced by d\'evissage from these theorems and the
Mordell--Weil theorem, see \cite{Jann94}, Theorem 4.6). The construction of
$\Tell M$ behaves well for 1--motives $M$ over a base scheme $S$ over which
$\ell$ is invertible. In that case, $\Tell M$ is a smooth $\ell$--adic sheaf on
$S$.
\end{para}

\vspace{4mm}
\begin{para}\label{Para:Def:flM}
Let $M$ be a 1--motive over a field $k$ of characteristic $\neq \ell$, and let
$\rho_\ell:\Gal(\overline k|k) \to \GL(\Tell M)$ be the associated Galois
representation. The image of $\rho_\ell$ is a closed subgroup of $ \GL(\Tell
M)$, hence has the structure of an $\ell$--adic Lie group. We denote by $\fl^M
\subseteq \End_{\IQ_\ell}(\Vell M)$ its Lie algebra.
\end{para}

\vspace{4mm}
\begin{para}
Let $M$ be a 1--motive over a field of characteristic zero $k$. The deRham
realisation of $M$ is a finite dimensional vector space over $k$, which is
constructed as follows: Among the extensions of $M$ by vector groups there is a
universal one, given by
$$\begin{diagram}
\setlength{\dgARROWLENGTH}{4mm}
\node[3]{Y}\arrow{s}\arrow{e,=}\node{Y}\arrow{s}\\
\node{0}\arrow{e}\node{\cExt(M,\IG_a[-1])^\ast}\arrow{e}\node{G^\natural}\arrow{
e}\node{G}\arrow{e}\node{0}
\end{diagram}$$
We set $\VdR(M)=\Lie G^\natural$. This is a finite dimensional vector group over
$k$ which depends functorially on $M$, hence the weight filtration on $M$
defines a weight filtration on $\VdR(M)$. We define the Hodge filtration on
$\VdR(M)$ by $F^0\VdR(M) := \ker(\Lie G^\natural \to \Lie G)$. If $M$ is a
1--motive over a smooth variety $S$ over $k$, then the deRham--realisation
defines a finitely generated locally free $\cO_S$--module. This module comes
equipped with a canonical integrable connection
$$\nabla:\VdR(M) \to \VdR(M)\otimes_{\cO_S}\Omega^1_{S/k}$$
called the \emph{Gauss--Manin connection} (see \cite{AnBe11} \S4.2 for a construction). If $M$ is given by an abelian variety $A$ over $S$, then $\VdR(A)$ identifies with the dual of $H^1_\updR(A/S)$, and the Gauss--Manin connection is the classical one constructed by Katz and Oda by \emph{loc.cit.}, Lemma 4.5.
\end{para}

\vspace{4mm}
\begin{para}
There exist canonical isomorphisms comparing the Hodge realisation of a
1--motive with the $\ell$--adic and the deRham realisation. Given a 1--motive
$M$ over a finitely generated extension $k$ of $\IQ$, a complex embedding
$\sigma:k\to \IC$ and an extension of $\sigma$ to an algebraic closure
$\overline k$ of $k$, these are isomorphisms
$$\Tnot(\sigma^\ast M) \otimes_{\IZ} \IZ_\ell \xrightarrow{\:\:\cong\:\:}\Tell M
\qqet \Tnot(\sigma^\ast M) \otimes_{\IZ} \IC  \xrightarrow{\:\:\cong\:\:} \VdR M
\otimes_k\IC$$
of $\IZ_\ell$--modules and of complex vector--spaces respectively, where
$\sigma^\ast M$ is the pull--back of $M$ to $\spec\IC$ via $\sigma$. These
isomorphisms are natural in $M$, hence in particular respect the weight
filtration, and work also for families.
\end{para}

\vspace{4mm}
\begin{par}
We now present a special family of 1--motives, which shows that two 1--motives
can be smoothly deformed into into each other if they have the same graded
pieces for the weight filtration, that is, if they are built from the same
torus, abelian variety and lattice.
\end{par}

\vspace{4mm}
\begin{prop}\label{Pro:ModulispaceMain}
Let $T$ be a torus, $A$ be an abelian scheme and $Y$ be a lattice over a scheme $S$, and
set $M_0 := [Y\xrightarrow{\:0\:}(T\oplus A)]$. The fppf--presheaf on $S$ given
by
$$(i:U \rightarrow S) \quad \mapsto \quad \frac{\mbox{1--Motives $M$ over $U$
with $\gr^W_\ast(M)= i^\ast M_0$}}{\mbox{Isomorphisms $\alpha$ with
$\gr^W_\ast(\alpha) = \id_{i^\ast M_0}$}}$$
is representable by a $S$--scheme $X_S(T,A,Y)$ which is smooth over $S$. In
particular, this presheaf is a sheaf. More precisely, the $S$--schemes
$X_S(T,A,0)$ and $X_S(0,A,Y)$ are abelian schemes over $S$, and $X_S(T,0,Y)$ is
a torus over $S$, and there are isomorphisms of sheaves
$$X_S(T,A,0) \cong \cHom(T^\vee,A^\vee)  \qquad X_S(0,A,Y) \cong \cHom(Y,A)
\qquad X_S(T,0,Y) \cong \cHom(Y,T)$$
where $T^\vee$ is the character group of $T$ and $A^\vee$ the abelian scheme
dual to $A$. There is a canonical morphism
$$X_S(T,A,Y) \to X_S(T,A,0) \times_S X_S(0,A,Y)$$
which gives $X_S(T,A,Y)$ the structure of a $X_S(T,0,Y)$--torsor on $X_S(T,A,0)
\times_S X_S(0,A,Y)$.
\end{prop}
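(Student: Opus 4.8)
The plan is to build the scheme $X_S(T,A,Y)$ by successively analysing the extension data that a 1--motive $M$ with $\gr^W_\ast(M) = i^\ast M_0$ carries over the fixed graded pieces. Recall that such an $M$ amounts to: an extension $0\to T\to G\to A\to 0$ of group schemes, together with a homomorphism $u\colon Y\to G$ lifting the zero map $Y\to A$, i.e.\ a homomorphism $u\colon Y\to T$ once $G$ is fixed --- all of this modulo isomorphisms inducing the identity on the graded pieces. So the first step is to treat the three ``pure'' cases separately. Extensions of $A$ by $T$ are classified, by the theory of biextensions (or simply by the Barsotti--Weil formula), by homomorphisms $T^\vee\to A^\vee$ from the character group of $T$ to the dual abelian scheme; this gives the identification $X_S(T,A,0)\cong\cHom(T^\vee,A^\vee)$, and since $\cHom(T^\vee,A^\vee)$ is a product of copies of $A^\vee$ indexed by a basis of $T^\vee$ (fppf-locally on $S$), it is an abelian scheme. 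For $X_S(0,A,Y)$ the only datum is the map $u\colon Y\to A$, so this functor is literally $\cHom(Y,A)$, again fppf-locally a product of copies of $A$, hence an abelian scheme. For $X_S(T,0,Y)$ the datum is $u\colon Y\to T$, so the functor is $\cHom(Y,T)$, fppf-locally a product of copies of $T$, hence a torus.

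The second step is to assemble the general case. Given $M$ with all three graded pieces nonzero, forgetting $u$ on the torus part (equivalently, pushing out along $T\to 0$ is wrong; rather one records the extension class of $G$ and the induced map $Y\to A$, which is $0$) produces a well-defined point of $X_S(T,A,0)\times_S X_S(0,A,Y)$: the extension class of $0\to T\to G\to A\to 0$, and the (zero) map $Y\to A$. This is the canonical morphism in the statement. Its fibre over a point corresponding to a fixed extension $G$ and the fixed zero map is exactly the set of lifts $u\colon Y\to G$ of $0\colon Y\to A$, modulo the automorphisms of the configuration inducing the identity on graded pieces. The lifts of $0\colon Y\to A$ to $G$ form a torsor under $\Hom(Y,T)$ (two lifts differ by a map landing in $\ker(G\to A)=T$), and the relevant automorphism group acts trivially on this torsor because it must be the identity on $\gr^W$. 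Hence the fibre is a torsor under $X_S(T,0,Y)=\cHom(Y,T)$, and this identification is compatible with base change, so the morphism $X_S(T,A,Y)\to X_S(T,A,0)\times_S X_S(0,A,Y)$ is a torsor under the $S$-torus $X_S(T,0,Y)$ pulled back to the base. Representability of the total space then follows: a torsor under a smooth affine group scheme over a representable (indeed smooth) base is representable (torsors under affine groups are effective for the fppf topology), and it is smooth over $S$ because the base $X_S(T,A,0)\times_S X_S(0,A,Y)$ is smooth over $S$ and a torus-torsor is smooth. Representability by a scheme in particular forces the presheaf to be a sheaf.

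The main obstacle I expect is not any single one of these steps in isolation but the bookkeeping needed to check that the various identifications are the claimed ones \emph{as functors}, naturally in $U\to S$, and in particular that the automorphism group which one quotients by really does act trivially where claimed --- this is where one uses crucially that isomorphisms are required to restrict to $\id_{i^\ast M_0}$ on the graded pieces, which rigidifies the moduli problem and is exactly what makes it a scheme rather than a stack. A secondary point requiring a little care is the passage from ``fppf-locally a product of copies of $A$ (resp.\ $T$, resp.\ $A^\vee$)'' to a genuine abelian scheme (resp.\ torus) over $S$: one needs that $\cHom(Y,A)$ etc.\ are representable and that being an abelian scheme or a torus is fppf-local on the base, both of which are standard (the Hom-sheaf is representable because, Zariski-locally on $S$ where $Y$ is constant of rank $r$, it is just $A^r$, and these glue; similarly for the torus and for $\cHom(T^\vee,A^\vee)$). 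Once representability is in hand, the group-scheme structures on the pure pieces are immediate from the corresponding structures on $A$, $T$, $A^\vee$, and the final torsor assertion is the fibrewise computation already described.
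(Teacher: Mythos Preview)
There is a genuine error at the start. You assert that a 1--motive $M$ with $\gr^W_\ast(M) = i^\ast M_0$ consists of an extension $G$ of $A$ by $T$ together with a map $u\colon Y\to G$ \emph{lifting the zero map} $Y\to A$, i.e.\ factoring through $T$. This is wrong: the map $Y\to T\oplus A$ in $\gr^W_\ast(M)$ is \emph{always} zero, for any 1--motive whatsoever, simply because $Y$ sits in weight $0$ and $T\oplus A$ in weights $\leq -1$; so the condition $\gr^W_\ast(M)=M_0$ only fixes the graded pieces $Y,A,T$ and places no restriction on $u$. In particular the composite $\pi\circ u\colon Y\to A$ is unconstrained --- and this is exactly the datum that the factor $X_S(0,A,Y)\cong\cHom(Y,A)$ records. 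Your own identification of that factor with $\cHom(Y,A)$, rather than with a point, already contradicts your reading; with it, the canonical map to $X_S(T,A,0)\times_S X_S(0,A,Y)$ would land in $X_S(T,A,0)\times\{0\}$ and your torsor would miss every 1--motive with $\pi\circ u\neq 0$.

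Once this is corrected --- the canonical map sends $M$ to the pair (class of $G$, $\pi\circ u$), and the fibre over $(G,v)$ consists of lifts of $v$ to $G$, which is still a $\cHom(Y,T)$--torsor --- your argument works. It is then a mild reorganisation of the paper's proof: the paper first represents extensions of $A$ by $T$ by an abelian scheme $X'$ via Barsotti--Weil, and then represents $\cHom(Y,\mathcal G')$ over $X'$ (with $\mathcal G'$ the universal extension) by a semiabelian scheme $X\to X'$, from which the moduli description follows. Your fibration over the product is one further step in this d\'evissage, with the small advantage that it yields the torsor assertion in the statement directly.
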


\begin{proof}
The fppf presheaf on $S$ associating with $i:U\rightarrow S$ the group
$\Ext^1_U(i^\ast A, i^\ast T)$ is representable by an abelian scheme $p':X'\to S$
over $S$, by the Barsotti--Weil formula. This means that we have natural
bijections
$$\Mor_S(U,X') \quad \xrightarrow{\:\:\cong\:\:} \quad \frac{\mbox{Semiabelian
schemes on $U$, extensions of $i^\ast A$ by $i^\ast T$}}{\mbox{Isomorphisms
inducing the identity on $i^\ast T$ and $i^\ast A$ }}$$
where by a \emph{semiabelian scheme} we understand a a group scheme which is
globally an extension of an abelian scheme by a torus\footnote{This is
nonstandard terminology. By a semiabelian scheme over $S$ one usually
understands a group scheme over $S$ each of whose fibres is an extension of an
abelian scheme by a torus. This is the right thing to consider in order to study
degenerations of abelian schemes.} over $S$. Denoting by $\mathcal G'$ be the
semiabelian scheme over $X'$ corresponding via this bijection to the identity
map $\id_{X'}$, the above bijection is given by 
$$(f':U\to X') \:\:\mapsto\:\: {f'}^\ast \mathcal G'$$
Set $\mathcal Y' := {p'}^\ast Y$. The fppf presheaf on $X'$ associating with
$j:U \to X'$ the group $\Hom_U(j^\ast \mathcal Y', j^\ast \mathcal G')$ is
representable by a semiabelian scheme $q:X\to X'$ over $X'$, so we have natural
bijections
$$\Mor_{X'}(U,X) \quad \xrightarrow{\:\:\cong\:\:} \quad \mbox{Homomorphisms
$j^\ast \mathcal Y' \to j^\ast \mathcal G'$ of fppf--sheaves on $U$}$$
Define $\mathcal Y := q^\ast \mathcal Y'$ and $\mathcal G := q^\ast \mathcal
G'$, and let $\mathcal M := [u:\mathcal Y \to \mathcal G]$ be the 1--motive over
$X$ where $u$ is the morphism corresponding via this bijection to the identity
morphism $\id_X$. The above bijection is then given by sending a $j:U\rightarrow
X'$ to the 1--motive $j^\ast\mathcal M$. We claim that the scheme $X$,
considered as a scheme over $S$ via the composite $p := p'\circ q$, has the
required properties. Because $X'$ is smooth and connected over $S$ and $X$ is
smooth and connected over $X'$, the scheme $X$ is smooth and connected over $S$.
We now show that for every $S$--scheme $i:U\to S$ the natural map
$$\Mor_S(U,X) \quad \to \quad \frac{\mbox{1--Motives $M$ over $U$ with
$\gr^W_\ast(M)= i^\ast M_0$}}{\mbox{Isomorphisms $\alpha$ with
$\gr^W_\ast(\alpha) = \id_{i^\ast M_0}$}}$$
sending an $S$--morphism $(f:U\to X)$ to the 1--motive $f^\ast\mathcal M$ over
$U$ is a bijection. Indeed, to give an $S$--morphism $f$ of an $S$--scheme $U$
to $X$ is the same as to give an $S$--morphism $f':U\to X'$ and an
$X'$--morphism $g: U \to X$, where $U$ is now viewed as an $X'$--scheme via
$j=f'$:
$$\begin{diagram}
\node{U}\arrow{sse,b}{i}\arrow{se,t}{j=f'}\arrow[2]{e,t}{g=f}\node[2]{X}\arrow{sw,t}{q}\arrow{ssw,b}{p}\\
\node[2]{X'}\arrow{s,r}{p'}\\
\node[2]{S}
\end{diagram}$$
So, to give an $S$--morphism $f:U\to X$ is the same as to give an extension $G =
{f'}^\ast\mathcal G'$ of $i^\ast A$ by $i^\ast T$ on $U$ modulo appropriate
isomorphisms and a homomorphism of $i^\ast Y = j^\ast \mathcal Y'$ to $G =
j^\ast \mathcal G'$. This datum is exactly what a 1--motive $M$ over $U$ with
$\gr^\ast_W(M)=i^\ast M_0$ consists of, again modulo appropriate isomorphisms.
\end{proof}

\vspace{4mm}
\begin{rem}
Consider the case where $Y = \IZ$ and $T = \IG_m$. Then $X_S(T,A,Y)$ is a
$\IG_m$--bundle over $A\times A^\vee$, that is, an invertible sheaf. This sheaf
is the Poincar\'e sheaf. This shows that $X_S(T,A,Y)$ is not a group scheme,
except in the degenerate cases.
\end{rem}

\vspace{4mm}
\begin{rem}\label{Rem:HomExtAndComparison}
Let $M = [Y\to G]$ be a 1--motive over a scheme $S$, and write $M_A := M/W_{-2}M = [Y \to
A]$, notations being as in \ref{Par:1MotIntro}. We have already used that the
fppf--sheaves $\cHom(Y,G)$ and $\cExt^1(M_A,T)$ are representable by semiabelian
schemes over $S$. If $\ell$ is invertible on $S$, there are canonical
isomorphisms of $\ell$--adic sheaves
$$\Tnot\cHom(Y,G) \cong \Hom_{\IZ}(Y,\Tnot G) \qqet \Tell\cExt^1(M_A,T) \cong
\Hom_{\IZ_\ell}(\Tell M_A ,\Tell T)$$
on $S$, and similar isomorphisms of variations of Hodge--structures if $S$ is
smooth over $\IC$. These isomorphisms are compatible with the comparison
isomorphisms, meaning that the squares
$$\begin{diagram}
\setlength{\dgARROWLENGTH}{4mm}
\node{\Hom_\IZ(Y,\Tnot
G)\otimes\IZ_\ell}\arrow{s,l}{\cong}\arrow{e,t}{\cong}\node{\Hom_{\IZ_\ell}
(Y\otimes \IZ_\ell,\Tell G)} \arrow{s,l}{\cong}\node{\!\!\!\Hom_\IZ(\Tnot
M_A,\!\Tnot T)\!\otimes\!\IZ_\ell\!\!}\arrow{s,l}{\cong}\arrow{e,t} {\cong}
\node{\!\!\Hom_{\IZ_\ell}(\Tell M_A,\Tell T)} \arrow{s,l}{\cong}\\
\node{\Tnot\cHom(Y,
G)\otimes\IZ_\ell}\arrow{n}\arrow{e,t}{\cong}\node{\Tell\cHom(Y,
G)}\arrow{n}\node{\Tnot\cExt(M_A, T)\otimes\IZ_\ell}\arrow{n}
\arrow{e,t}{\cong}\node{\Tell\cExt(M_A, T)}\arrow{n}
\end{diagram}$$
commute.
\end{rem}

\vspace{14mm}
\section{Cohomological realisation of families of
1--motives}\label{Sec:GaloisInMT}

\begin{par}
In the previous section we have associated a $\IQ$--Lie algebra $\fh^M \subseteq
\End_\IQ(\Vnot M)$ with a 1--motive $M$ over $\IC$ (\ref{Para:Def:fhM}), and a
$\IQ_\ell$--Lie algebra $\fl^M \subseteq \End_{\IQ_\ell}(\Vell M)$ with a
1--motive $M$ over a field of characteristic $\neq \ell$ (\ref{Para:Def:flM}).
In this section we will show that if $M$ is a 1--motive over a field which is
finitely generated over $\IQ$, then the Lie algebra $\fl^M$ is contained in
$\fh^M\otimes \IQ_\ell$ via the comparison isomorphism. By naturality of the
comparison isomorphism, this inclusion is compatible with the weight filtration.
\end{par}

\vspace{4mm}
\begin{thm}\label{Thm:ElladicInMumfordTate}
Let $k$ be a field of finite transcendence degree over $\IQ$ and let
$\sigma:k\to \IC$ be an embedding. Let $M$ be a 1--motive over $k$, and identify
$\fh^{(\sigma^\ast M)}\otimes \IQ_\ell$ with a subalgebra of
$\End_{\IQ_\ell}(\Vell M)$ via the comparison isomorphism $\Vnot(\sigma^\ast M)
\otimes\IQ_\ell \cong \Vell M$. Then the Lie algebra $\fl^M$ is contained in
$\fh^{(\sigma^\ast M)} \otimes \IQ_\ell$.
\end{thm}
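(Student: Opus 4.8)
The plan is to reduce the statement to a comparison between a ``geometric'' monodromy-type group and the Mumford--Tate group, and then invoke the theorem of Deligne--Brylinski mentioned in the introduction (our Theorem~\ref{Thm:BrylinskiDeligne}). First I would spread $M$ out: since $k$ has finite transcendence degree over $\IQ$, choose a smooth affine $\IQ$-variety $S$ with function field $k$ and a $1$-motive $\cM$ over $S$ with generic fibre $M$, shrinking $S$ so that $\ell$ is invertible on $S$. By \ref{Para:IntroElladicRealis} and \ref{Para:IntroHodgeRealis}, $\Tell\cM$ is then a smooth $\ell$-adic sheaf on $S$ and, after base change to $\IC$ via $\sigma$, the family $(\Tnot\cM_s)_s$ is a variation of mixed Hodge structures. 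The generic Mumford--Tate group of this variation is, by definition of the Mumford--Tate group of a $1$-motive, the group whose Lie algebra is $\fh^{(\sigma^\ast M)}$, because the generic Hodge structure is $\Vnot(\sigma^\ast M)$.

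The key input is the comparison of the image of $\pi_1(S_{\overline\IQ}, \bar s)$ (or of $\pi_1(S_\IC)$) in $\GL(\Vell M)$ with the Mumford--Tate group. By the theorem of Deligne on the ``theorem of the fixed part'' for variations of mixed Hodge structures, and its $\ell$-adic counterpart (this is precisely the content attributed to Brylinski and Deligne in the introduction, and recorded below as Theorem~\ref{Thm:BrylinskiDeligne}), the Zariski closure of the image of the geometric fundamental group $\pi_1(S_{\overline\IQ})$ acting on $\Vell M$ is contained in the Mumford--Tate group $H^{(\sigma^\ast M)}_{\IQ_\ell}$ of the generic fibre. I would state this carefully and cite \cite{Brylinski}; the point is that the Hodge realisation being a VMHS forces the monodromy representation to land in (the derived group of, in the pure case — here in the full group, because of the unipotent radical) the Mumford--Tate group.

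Next I would organise the absolute Galois group of $k$ in an exact sequence relating it to the arithmetic and geometric fundamental groups of $S$. Choosing a geometric point of $S$ lying over $\spec k \hookrightarrow S$, one gets a continuous surjection $\Gal(\overline k|k) \twoheadrightarrow \pi_1(S,\bar s)$ compatible with the action on $\Tell M = (\Tell\cM)_{\bar s}$, and an exact sequence
$$
1 \to \pi_1(S_{\overline\IQ},\bar s) \to \pi_1(S,\bar s) \to \Gal(\overline\IQ|\IQ) \to 1.
$$
The image of $\Gal(\overline k|k)$ in $\GL(\Tell M)$ therefore has the same Zariski closure as the image of $\pi_1(S,\bar s)$. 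It thus suffices to control the image of the whole $\pi_1(S,\bar s)$, not just the geometric part. For this I would use the standard fact (Bogomolov, or the argument via the Weil conjectures / Serre's lemma) that the image of $\pi_1(S,\bar s)$ in $\GL(\Vell M)$ is, up to finite index, generated by the geometric monodromy together with the ``arithmetic'' part, and that passing to Lie algebras kills the finite-index ambiguity: one has $\fl^M = \Lie(\overline{\rho_\ell(\Gal(\overline k|k))})$ equal to the Lie algebra of the Zariski closure of $\rho_\ell(\pi_1(S,\bar s))$. Combined with the previous paragraph, this Zariski closure is contained in $H^{(\sigma^\ast M)}_{\IQ_\ell}$, hence $\fl^M \subseteq \fh^{(\sigma^\ast M)}\otimes\IQ_\ell$.

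The main obstacle is the passage from the \emph{geometric} fundamental group to the \emph{full} arithmetic fundamental group, i.e.\ showing that adjoining the arithmetic part does not enlarge the Zariski closure beyond the Mumford--Tate group. In the pure (abelian variety) case this is classical, but here one must check it respects the mixed structure; the clean way is to appeal directly to Theorem~\ref{Thm:BrylinskiDeligne}, which, as stated in the introduction, already packages the independence of the complex embedding and yields the inclusion of $\fh^M\otimes\IQ_\ell$ as a canonical subalgebra of $\End_{\IQ_\ell}(\Vell M)$ that \emph{contains} $\fl^M$ directly — so in fact the cleanest route is to deduce the theorem as a formal consequence of \ref{Thm:BrylinskiDeligne} plus the spreading-out argument, rather than redoing the Hodge-theoretic analysis by hand. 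Finally, naturality of the comparison isomorphism with respect to the weight filtration (\ref{Para:IntroElladicRealis}) gives for free that the inclusion is filtered, as remarked before the statement.
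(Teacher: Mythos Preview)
Your spreading--out approach has a fatal flaw, which you yourself identify but do not resolve. Take $k$ a number field: then your variety $S$ is finite over $\IQ$, the geometric fundamental group $\pi_1(S_{\overline\IQ})$ is trivial, and the monodromy argument says nothing at all. The entire content of the theorem lies in controlling the \emph{arithmetic} part $\Gal(\overline\IQ|\IQ)$, so the obstacle you flag is not a side issue but the whole problem. Even for $k$ of positive transcendence degree, the quotient $\Gal(\overline\IQ|\IQ)$ in your exact sequence contributes nontrivially, and bounding its image inside the Mumford--Tate group is exactly the number field case again; spreading out gains nothing.

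Your fallback, to ``appeal directly to Theorem~\ref{Thm:BrylinskiDeligne}'', misreads what that theorem says. Theorem~\ref{Thm:BrylinskiDeligne} is the statement that every Hodge cycle on a 1--motive is an \emph{absolute} Hodge cycle; it does not by itself assert any containment of $\fl^M$. The sentence in the introduction you cite only says that this theorem makes $\fh^M\otimes\IQ_\ell$ independent of the embedding $\sigma$, not that it contains $\fl^M$. The paper's actual deduction runs as follows: describe the Mumford--Tate group as the stabiliser in $\GL_{\Vnot M}$ of the bidegree $(0,0)$ elements in some tensor space $\Vnot^{\bf n}M$ (\ref{Par:MTAlternativeDesc}); by Theorem~\ref{Thm:BrylinskiDeligne} these Hodge cycles are absolute Hodge cycles, hence form a finite--dimensional $\IQ$--subspace of $\VIA^{\bf n}M$ which is Galois--stable and on which $\Gal(\overline k|k)$ acts through a finite quotient (Proposition~\ref{Pro:ActionOnAbsHdgFinite}); thus an open subgroup of Galois fixes all of them and therefore lands in $H^{(\sigma^\ast M)}(\IQ_\ell)$ via the comparison isomorphism. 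Passing to Lie algebras gives $\fl^M\subseteq\fh^{(\sigma^\ast M)}\otimes\IQ_\ell$. None of this argument appears in your proposal.
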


\begin{par}
For abelian varieties in place of $M$ this was shown by Deligne in \cite{Deli82}
(see also \cite{ChSch11}), essentially by proving that every Hodge cycle on an
abelian variety is an absolute Hodge cycle. For 1--motives, the corresponding
statement about absolute Hodge cycles was proven by J.-L. Brylinski
(\cite{Brylinski}, Th\'eor\`eme 2.2.5):
\end{par}

\vspace{4mm}
\begin{thm}[Brylinski, Deligne]\label{Thm:BrylinskiDeligne}
Let $M$ be a 1--motive over $k$. Every Hodge cycle of $M$ relative to some
embedding $\sigma: k\to \IC$ is an absolute Hodge cycle.
\end{thm}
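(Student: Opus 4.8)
The plan is to reduce the statement for a general 1--motive $M$ to the already-known case of abelian varieties, by exploiting the fact that $M$ is built by successive extensions out of a lattice $Y$, an abelian variety $A$ and a torus $T$. First I would recall what a Hodge cycle on $M$ is: fixing $\sigma:k\to\IC$, a Hodge cycle is a class in some tensor construction $T^{m,n}(\Vnot M) = \Vnot M^{\otimes m}\otimes (\Vnot M)^{\vee\otimes n}$ (with suitable Tate twists) which is rational, of weight $0$, and sits in $F^0$. To say it is \emph{absolute} Hodge means that under the comparison isomorphisms with the $\ell$--adic and de Rham realisations, the corresponding classes in $T^{m,n}(\Vell M)$ and $T^{m,n}(\VdR M)$ are defined over $k$ (Galois-invariant, resp.\ $F^0$ and $k$-rational) and, moreover, remain so after any change of complex embedding. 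The key structural input is Deligne's principle that absolute Hodge cycles form a Tannakian sub\-category closed under the standard operations, so it suffices to produce enough absolute Hodge cycles to cut out the Mumford--Tate group, or better, to verify the defining property on a spanning set.

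The main step is a d\'evissage along the weight filtration $0\subseteq W_{-2}M\subseteq W_{-1}M\subseteq M$ with graded pieces $T$, $A$, $Y$. For the abelian part $A=\gr^W_{-1}(M)$ the statement is exactly Deligne's theorem from \cite{Deli82}: every Hodge cycle on an abelian variety is absolutely Hodge. For the torus $T$ and the lattice $Y$ the Hodge structures are of CM type (indeed $\Vnot Y$ is a sum of Artin motives and $\Vnot T$ a Tate twist of one), and here the absolute Hodge property is classical — CM Hodge cycles are absolute, again a result of Deligne, ultimately because the relevant periods are controlled. The real content is therefore in the \emph{extension classes}: the 1--motive $M$ records, beyond its graded pieces, the class of $G$ in $\Ext^1(A,T)$ and the map $u:Y\to G$, i.e.\ classes in $\Ext^1$-groups that by Remark \ref{Rem:HomExtAndComparison} are themselves realised by sections of semiabelian schemes $\cExt^1(M_A,T)$ and $\cHom(Y,G)$. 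I would argue that these extension data, being morphisms of motives in the derived sense, automatically transport compatibly under all three realisations and under change of embedding — this is precisely the compatibility of the comparison isomorphisms recorded in Remark \ref{Rem:HomExtAndComparison} — so no new Hodge cycle is introduced by the extension that is not already ``motivic''. Combining: any Hodge cycle on $M$ lies in the Tannakian category generated by $\Vnot A$, $\Vnot T$, $\Vnot Y$ together with the extension classes, and on each of these generators the absolute Hodge property is known.

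Technically I would phrase the d\'evissage using the family of 1--motives from Proposition \ref{Pro:ModulispaceMain}: the moduli scheme $X_S(T,A,Y)$ is smooth over $S$ and carries a universal 1--motive $\mathcal M$ whose realisations form compatible variations/lisse sheaves, with $M$ obtained by pullback along a $k$-point. A Hodge cycle on $M$ that is a priori only defined at one point spreads out, by the theorem of the fixed part and Deligne's semisimplicity results for variations of mixed Hodge structure, to a global flat section over (an \'etale cover of) $X$; one then specialises at a CM point where the 1--motive degenerates to one with $u=0$ and split $G$, reducing to the case of $A\times T\times (\text{Artin})$ handled above, and checks that ``absolutely Hodge'' propagates along the family because the comparison isomorphisms are themselves defined in families.

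The hard part, as always in arguments of this Brylinski--Deligne type, is the propagation statement: knowing that a class is an absolute Hodge cycle at one CM fibre of the family and is a flat section of the de Rham local system does not by itself force it to be de Rham-rational (i.e.\ in $F^0$ and $k$-rational) at every fibre, nor Galois-equivariant for every embedding. This is exactly the subtle point Deligne resolved for abelian varieties via the ``Principle B'' (a flat section of a family of absolute Hodge cycles which is absolutely Hodge at one fibre is absolutely Hodge at every fibre), and the whole proof hinges on setting up the family $X_S(T,A,Y)$ and its realisations carefully enough that Principle B applies verbatim. Everything else — the explicit description of Hodge cycles on tori and lattices, the compatibility of extension classes, the Tannakian bookkeeping — is, while not entirely routine, of a formal nature once that principle is in hand.
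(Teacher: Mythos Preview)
Your proposal in paragraphs 3--4 is essentially the paper's proof: place $M$ as a fibre of the universal 1--motive over the smooth connected moduli space $X_k(T,A,Y)$ of Proposition~\ref{Pro:ModulispaceMain}, spread a Hodge cycle at $M$ to a flat local subsystem of $(0,0)$--classes (the monodromy on such classes is finite because $\gr^W_0$ is polarisable, so after a finite cover the system becomes constant), specialise to the split fibre $M_0 = [Y\xrightarrow{\:0\:}T\oplus A]$ where the statement reduces to Deligne's theorem on abelian varieties, and invoke Principle~B (Proposition~\ref{Pro:AnsoluteHodgeFamily} and Corollary~\ref{Cor:HodgeIsAbsoluteHodge} in the paper) to propagate back.

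Two small cautions. First, the fibre you specialise to is not a ``CM point'' in any standard sense --- it is simply the origin of the moduli space, where the 1--motive is split; the reduction to abelian varieties there (Lemma~\ref{Lem:BrylinskiDeligneSplit}) works because every tensor space of the realisation of $M_0$ is a direct factor of a tensor space of the realisation of $A$ once $A\neq 0$. Second, your paragraph~2 (direct d\'evissage via extension classes) is not a proof on its own and should not be read as one: the claim that a Hodge cycle on $M$ ``lies in the Tannakian category generated by $\Vnot A$, $\Vnot T$, $\Vnot Y$ together with the extension classes'' is not meaningful as stated, since extension classes are not objects of the category, and $\angl{\Vnot M}^\otimes$ is genuinely larger than $\angl{\Vnot(\gr^W_\ast M)}^\otimes$ whenever $M$ is not split up to isogeny. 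You correctly recognise this by passing to the family argument, which is where the actual work happens.
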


\begin{par}
After recalling the notion of absolute Hodge cycles we will give a proof of
theorem \ref{Thm:BrylinskiDeligne}, and then show how the statement about Lie
algebras follows from it. The proof of Brylinski's Theorem consists essentially
of a deformation argument, so we will be concerned with families of 1--motives
and their realisations. The idea is to show that if $M_1$ and $M_2$ are
1--motives such that $M_1$ can be smoothly deformed to $M_2$, then the statement
of Theorem \ref{Thm:BrylinskiDeligne} holds for $M_1$ if it holds for $M_2$. We
have already seen in Proposition \ref{Pro:ModulispaceMain} that every 1--motive
$M$ can be smoothly deformed to a split 1--motive. For split 1--motives the
statement of the theorem \ref{Thm:BrylinskiDeligne} is true by Deligne's Theorem
on absolute Hodge cycles on abelian varieties.
\end{par}

\begin{par}
I have decided to include a proof of theorem \ref{Thm:BrylinskiDeligne} to make
the text more self contained on one hand, and on the other hand because the
proof I present here seems a little more natural to me than Brylinski's. Indeed,
Brylinski's deformation process consists of using Hodge realisations in order to
produce an analytic family of 1--motives deforming a given 1--motive to a
1--motive which is split up to isogeny, and then to make this family algebraic
using GAGA (\cite{Brylinski}, Lemme 2.2.8.6). Having Proposition
\ref{Pro:ModulispaceMain} at hand, we can avoid all this.
\end{par}

\vspace{4mm}
\begin{para}
We fix for this section a field $k$ of of finite transcendence degree over $\IQ$
with algebraic closure $\overline k$. In order to handle realisations of a
motive $M$ over $k$ simultaneously, we introduce 
$$\IA_k := k \times \Big(\IQ\otimes\prod_{\!\!\!\!\!\ell \:
\mathrm{prime}\!\!\!\!\!}\IZ_\ell\Big) \qquad\qquad \VIA(M) := \VdR M \times
\Big(\IQ\otimes\prod_{\!\!\!\!\!\ell \: \mathrm{prime}\!\!\!\!\!}\Tell M\Big)$$
So $\IA_k$ is a commutative $\IQ$--algebra, and $\VIA(M)$ is a finitely
generated $\IA_k$--module. This works also in families: Let $S$ be an integral,
regular scheme of finite type over $k$ and let $M$ be a 1--motive over $S$. Then
we can consider the sheaf $\IA_S$ on $S_\upet$, and so $\VIA(M)$ is naturally a
sheaf of $\IA_S$--modules. We will work with tensor spaces of $\VIA(M)$. For a
finite family of pairs of nonnegative integers ${\bf n} = (n_i,n_i')_{i\in I}$,
set
$$\VIA^{\bf n}(M) = \bigoplus_{i\in I}\Big(\VIA(M)^{\otimes n_i} \otimes
(V_\IA(M)^\ast)^{\otimes n_i'}\Big)$$
where $(-)^\ast = \Hom(-,\IA_S)$. We refer to global sections of $\VIA^{\bf
n}(M)$ as \emph{tensors}. For every embedding $\sigma:k\to\IC$ there is a
canonical isomorphism of $\IA_{\sigma^\ast S}$--module sheaves
$$\alpha^\sigma: \Vnot^{\bf n}(\sigma^\ast M)\otimes_\IQ\IA_{\sigma^\ast S}
\xrightarrow{\:\:\cong\:\:} V_\IA^{\bf n}(\sigma^\ast M)$$
on the complex variety $\sigma^\ast S$, where $\Vnot^{\bf n}(\sigma^\ast M)$ is
the corresponding tensor space of variations of Hodge structures. These sheaves
are local systems for the complex topology.
\end{para}

\vspace{4mm}
\begin{defn}\label{Def:AbsHodgeCycle}
Let $M$ be a 1--motive over $k$. A tensor $t \in\Gamma(k,V_\IA^{\bf n}(M))$ for
some ${\bf n} = (n_i,n_i')_{i\in I}$ is called \emph{Hodge cycle relative to an
embedding} $\sigma:k\to\IC$ if the following holds:
\begin{enumerate}
 \item There exists an element $t_0 \in \Vnot^{\bf n}(\sigma^\ast M)$ such that
$t = \alpha^\sigma(t_0 \otimes 1)$.
 \item The deRham component $t_\mathrm{dR}$ of $t$ belongs to $F^0(\VdR^{\bf n}
M)\cap W_0(\VdR^{\bf n} M)$.
\end{enumerate}
An element $t \in \VIA^{\bf n}(M)$ is called \emph{absolute Hodge cycle} if it
is a Hodge cycle relative to all embeddings $\sigma:k\to \IC$.
\end{defn}

\vspace{4mm}
\begin{para}
In other words, the Hodge cycles relative to $\sigma:k\to\IC$ are the image of
elements of bidegree $(0,0)$ in $\Vnot^{\bf n}(M_\sigma)$ under the comparison
isomorphism $\alpha^\sigma$. They form therefore a finite dimensional
$\IQ$--linear subspace of $\VIA^{\bf n}(\sigma^\ast M)$.
\end{para}

\vspace{4mm}
\begin{prop}[Deligne]\label{Pro:AnsoluteHodgeFamily}
Let $S$ be a smooth connected scheme over $\overline k$ and let $s_0, s_1$ be closed
points of $S$. Let $M$ be a 1--motive over $S$, and let $t \in \Gamma(S,
\VIA^{\bf n}(M)$ be a tensor. Suppose that $t_\mathrm{dR}$ is annihilated by the
Gauss--Manin connection and that $(t_\mathrm{dR})_s$ is in $F^0(\VdR^{\bf n}
M_s)$ at every point $s\in S$. If $t_{s_0}$ is an absolute Hodge cycle, then so
is $t_{s_1}$.
\end{prop}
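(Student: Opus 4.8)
The plan is to reduce the statement to Deligne's principle B (the one proved for abelian varieties and systematically exploited in the Hodge cycles paper), using the full faithfulness of $\Tnot$ (see \ref{Para:IntroHodgeRealis}) and the functoriality of all realisations. The key point is that the hypotheses — $t_{\mathrm{dR}}$ is flat for the Gauss--Manin connection, and its fibres lie in $F^0$ everywhere — are precisely those under which Deligne's theory guarantees that the locus of points where a tensor is a Hodge cycle (relative to a fixed embedding) is both open and closed for the analytic topology, once it is nonempty. So the heart of the argument is to connect $s_0$ to $s_1$ by a chain to which this principle applies, and then to upgrade ``Hodge cycle relative to one $\sigma$'' to ``Hodge cycle relative to all $\sigma$'' using a spreading-out/conjugation argument.

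First I would fix an embedding $\tau:\overline k\to\IC$ and consider the complex analytic family $(\VIA^{\bf n}(M_s))_{s\in S(\IC)}$, which by the discussion preceding Definition \ref{Def:AbsHodgeCycle} carries a comparison isomorphism $\alpha^\tau$ with a tensor space of variations of mixed Hodge structures, and whose deRham part carries the Gauss--Manin connection. The hypothesis that $t_{\mathrm{dR}}$ is annihilated by $\nabla$ means that $t$, viewed through $\alpha^\tau$, is a \emph{flat} section of the underlying local system; since $S(\IC)$ is connected, $t$ is determined by its value at any single point, and in particular $t_{s_0}$ being a Hodge cycle relative to $\tau$ (which follows from $t_{s_0}$ being absolute Hodge) together with flatness and the everywhere-$F^0$ hypothesis forces $t_s$ to be of bidegree $(0,0)$ — hence a Hodge cycle relative to $\tau$ — at \emph{every} $s\in S(\IC)$. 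Concretely: the Hodge filtration varies holomorphically, the set where a flat section lies in $F^0\cap W_0$ is closed, and semicontinuity plus the hypothesis on fibres in $F^0$ makes it open; connectedness finishes it. This already gives that $t_{s_1}$ is a Hodge cycle relative to $\tau$.

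Next I would handle the passage from ``relative to $\tau$'' to ``relative to every $\sigma:\overline k\to\IC$''. Since any two embeddings of $\overline k$ into $\IC$ differ by an automorphism of $\IC$, and the formation of $M$, $S$, $\VIA^{\bf n}(M)$, the Gauss--Manin connection and the Hodge filtration are all defined over $\overline k$ hence commute with the action of $\Aut(\IC)$, a conjugate $\sigma = \gamma\circ\tau$ transforms the family $(S, M, t)$ into its $\gamma$-conjugate $(S^\gamma, M^\gamma, t^\gamma)$, with $s_0^\gamma, s_1^\gamma$ corresponding points. The flatness of $t_{\mathrm{dR}}$ and the $F^0$-condition on fibres are $\Aut(\IC)$-invariant statements, so the previous paragraph applies verbatim to the conjugated family; thus $t_{s_1}$ is a Hodge cycle relative to $\sigma = \gamma\circ\tau$ \emph{provided} $t_{s_0}$ is a Hodge cycle relative to $\sigma$. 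But that last proviso is exactly part of the hypothesis ``$t_{s_0}$ is an absolute Hodge cycle''. Since $\gamma$ was arbitrary, $t_{s_1}$ is a Hodge cycle relative to all embeddings of $\overline k$, i.e. an absolute Hodge cycle, as desired. (If one prefers to work with embeddings of $k$ rather than $\overline k$, one first pushes everything down to a model over a finitely generated subfield and uses that Hodge-cycle-ness is insensitive to finite extension of the base field, via the comparison isomorphisms being compatible with base change.)

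**The main obstacle** I anticipate is not the topological ``open and closed'' argument — that is routine once the hypotheses are unwound — but rather the bookkeeping needed to make the reduction to abelian varieties legitimate in the mixed (non-pure) setting: one must know that Deligne's principle B and the statement that Hodge cycles on abelian varieties are absolute survive the standard dévissage along the weight filtration of a 1--motive (tori, the lattice $Y$, and extensions), and that the comparison isomorphisms $\alpha^\sigma$ respect $W_\bullet$ and $F^\bullet$ compatibly across the whole family. This is where the earlier structural results — the functoriality of $\Tnot$, $\Tell$, $\VdR$, the compatibility of comparison isomorphisms with weights (see \ref{Para:IntroHodgeRealis}, \ref{Para:IntroElladicRealis}, and Remark \ref{Rem:HomExtAndComparison}), and ultimately Proposition \ref{Pro:ModulispaceMain} allowing deformation to a split 1--motive — have to be invoked carefully; but for the present proposition, which takes the family $M/S$ as given, the work is genuinely just the connectedness-plus-conjugation argument sketched above.
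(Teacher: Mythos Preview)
Your core argument is correct and close in spirit to the paper's, but there are two points worth flagging, and your last paragraph misidentifies where the real work lies.

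First, the conjugation step via $\Aut(\IC)$ is unnecessary. The paper simply fixes an \emph{arbitrary} embedding $\sigma$ from the outset and runs the parallel-transport argument once; since $\sigma$ was arbitrary, this already covers all embeddings. Your argument for the single embedding $\tau$ works verbatim with $\tau$ replaced by any $\sigma$, so there is nothing to transport by automorphisms of $\IC$. Concretely, the paper observes that the restriction maps $\Gamma(\sigma^\ast S,\Vnot^{\bf n}(\sigma^\ast M))\to \Vnot^{\bf n}(\sigma^\ast M_s)$ and $\Gamma(S,\VIA^{\bf n}(M))\to \VIA^{\bf n}(M_s)$ are injective (monodromy invariants), and that the intersection of the images of $\Gamma(S,\VIA^{\bf n}(M))$ and $\Vnot^{\bf n}(\sigma^\ast M_{s_0})$ inside $\VIA^{\bf n}(M_{s_0})$ is exactly the image of $\Gamma(\sigma^\ast S,\Vnot^{\bf n}(\sigma^\ast M))$ --- a one-line linear algebra fact obtained by choosing a $\IQ$-basis of $\IA$ containing $1$. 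So $t$ lifts to a global rational section $t^h$, whence $t_{s_1}$ is rational. This is your flat-transport argument made explicit.

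Second, you do not address the $W_0$ half of condition (2) in Definition~\ref{Def:AbsHodgeCycle}. The $F^0$ part is given by hypothesis at every point, but one still needs $(t_\updR)_{s_1}\in W_0$. The paper handles this by noting that the Gauss--Manin connection is functorial, hence preserves the weight filtration; since $t_\updR$ is horizontal and $(t_\updR)_{s_0}\in W_0$, the same holds at $s_1$. Your ``open and closed'' discussion is a red herring here: nothing of the sort is needed, since $F^0$ is assumed everywhere and $W_0$ and rationality are preserved by flat transport directly.

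Finally, your last paragraph is misplaced. This proposition requires no d\'evissage along the weight filtration and no reduction to abelian varieties; it is a pure parallel-transport statement valid for any family carrying the relevant realisations. The reduction to abelian varieties enters only later, in Lemma~\ref{Lem:BrylinskiDeligneSplit}, and is combined with this proposition (via Corollary~\ref{Cor:HodgeIsAbsoluteHodge} and the moduli space of Proposition~\ref{Pro:ModulispaceMain}) to prove Theorem~\ref{Thm:BrylinskiDeligne}.
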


\begin{proof}
\begin{par}
Under the assumptions of the proposition, we have to show that $t_{s_1}$ meets
the two conditions in Definition \ref{Def:AbsHodgeCycle}. We start with
condition (1). Fix an embedding $\sigma:k\to\IC$. We claim that for all $s\in S$
the natural maps
$$\Gamma(\sigma^\ast S,\Vnot^{\bf n}(\sigma^\ast M)) \to \Vnot^{\bf n}(\sigma^\ast M_s) \qqet \Gamma(S,\VIA^{\bf n}(M))\to \VIA^{\bf n}(M_s)$$
are injective. Indeed, $\Vnot^{\bf n}(\sigma^\ast M)$ is a local system of
finite dimensional $\IQ$--vector spaces on $\sigma^\ast S$, and $\Tell^{\bf
n}(M)$ is a locally constant $\ell$--adic sheaf on $S$, so for any $s\in
S(\overline k)$ the global sections of these sheaves can be regarded as the
fixed points of the respective fibres at $s$ under the monodromy action of the
\'etale fundamental group based at $s$:
$$\Gamma(\sigma^\ast S,\Vnot^{\bf n}(\sigma^\ast M)) \cong \Vnot^{\bf
n}(\sigma^\ast M_s)^{\pi_1^\upet(s,\sigma^\ast S)} \qqet \Gamma(S,\Tell^{\bf
n}(M)) \cong \Tell^{\bf n}(M_s)^{\pi_1^\upet(s,S)}$$
The map $\Gamma(S,\VdR^{\bf n}(M))\to \VdR^{\bf n}(M_s)$ is injective because
$\VdR^{\bf n}(M)$ is a finitely generated locally free $\cO_S$--module, so our
claim follows. Following Deligne, we consider now this diagram:
$$\begin{diagram}
\setlength{\dgARROWLENGTH}{5mm}
\node{\Gamma(\sigma^\ast S,\Vnot^{\bf n}(\sigma^\ast
M))}\arrow{s}\arrow{e}\node{\Gamma(\sigma^\ast S,\Vnot^{\bf n}(\sigma^\ast
M_\sigma))\otimes \IA)}\arrow{s}\arrow{e,t}{\cong}\node{\Gamma(S,\VIA^{\bf
n}(M))}\arrow{s}\\
\node{\Vnot^{\bf n}(\sigma^\ast M_s)}\arrow{e}\node{\Vnot^{\bf n}(\sigma^\ast
M_s)\otimes \IA}\arrow{e,t}{\cong}\node{\VIA^{\bf n}(M_s)}
\end{diagram}$$
The left hand horizontal maps are given by $x\mapsto x\otimes 1$, and the right
hand horizontal maps are the comparison isomorphisms. We have seen that the
vertical maps are injective. Consider the above diagram for $s=s_0$. We are
given $t \in \Gamma(S,\VIA^{\bf n}(M))$ and are told that its image in
$\VIA^{\bf n}(M_{s_0})$ comes from an element in $\Vnot^{\bf n}(\sigma^\ast
M_{s_0})$. The intersection of the images of $\Gamma(S,\VIA^{\bf n}(M))$ and
$\Vnot^{\bf n}(\sigma^\ast M_s)$ in $\VIA^{\bf n}(M_s)$ is exactly the image of
$\Gamma(\sigma^\ast S, \Vnot^{\bf n}(\sigma^\ast M))$ in $\VIA^{\bf n}(M_s)$ by
standard linear algebra (choose a $\IQ$--basis of $\IA$ containing $1$). So $t$
comes from a global section $t^h$ of $\Vnot^{\bf n}(\sigma^\ast M)$. The element
$t_{s_1} \in \VIA^{\bf n}(M_{s_1})$ comes thus from an element in $\Vnot^{\bf
n}(\sigma^\ast M_{s_1})$, namely from the image of $t^h$ in $\VIA^{\bf
n}(\sigma^\ast M_{s_1})$, and that is what condition (1) asks for.
\end{par}
\begin{par}
We now come to the second condition. We have $(t_\updR)_{s_1} \in F^0(V^{\bf
n}_\updR M_{s_1})$ by assumption. The Gauss--Manin connection is functorial,
hence preserves the weight filtration (\cite{AnBe11}, \S4.2). Since $t_\updR$ is
horizontal and $(t_\updR)_{s_0} \in W_0(V^{\bf n}_\updR M_{s_0})$ we must also
have $W_0(V^{\bf n}_\updR M_{s_1})$ as needed.
\end{par}
\end{proof}

\vspace{4mm}
\begin{cor}\label{Cor:HodgeIsAbsoluteHodge}
Let $S$ be a smooth connected $\overline k$--scheme and $M$ be a 1--motive over
$S$. Let $V$ be a local subsystem of a tensor space $V_0^{\bf n}(M)$ such that
$V_s$ consists of $(0,0)$--cycles for all $s\in S$ and of absolute Hodge cycles
for at least one $s_0\in S$. Then $V_s$ consists of absolute Hodge cycles for
all $s$.
\end{cor}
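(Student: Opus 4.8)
The plan is to reduce the assertion to Deligne's Proposition \ref{Pro:AnsoluteHodgeFamily}, which is precisely such a statement for a single global tensor. The difficulty is that the local subsystem $V$ need not admit nonzero global sections over $S$, since its monodromy may be nontrivial; so the first and main step is to show that \emph{the monodromy of $V$ is finite}. Fix an embedding $\sigma:\overline k\rightarrow\IC$ and view $V$ as a sub-variation of mixed Hodge structures of $\Vnot^{\bf n}(M)$ over the complex variety $\sigma^\ast S$. Since every element of every fibre $V_s$ is a $(0,0)$-cycle, each $V_s$ is a direct sum of copies of $\IQ(0)$; in particular $V$ is pure of weight $0$, so it injects into the polarizable variation $\gr^W_0\Vnot^{\bf n}(M)$ and inherits a polarization, i.e.\ a flat bilinear form on the underlying local system which is positive definite on each fibre. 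On the other hand $V$ also carries a flat $\IZ$-structure, cut out from the $\IZ$-local system $\Tnot^{\bf n}(M)$. The monodromy representation of $\pi_1(\sigma^\ast S)$ on a fibre $V_s$ thus has image in the intersection of the stabilizer of a lattice with the stabilizer of a positive definite form; the former is discrete and the latter is compact in $\GL(V_s\otimes\IR)$, so the monodromy of $V$ is finite.

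Let $\pi:S'\rightarrow S$ be the connected finite \'etale cover trivializing the monodromy of $V$, and choose a point $s_0'$ of $S'$ above $s_0$. On $S'$ every vector $v\in V_{s_0}=(\pi^\ast V)_{s_0'}$ is fixed by the monodromy action on $\Vnot^{\bf n}(\pi^\ast M)$, hence extends to a global flat section $t^v$ of $\pi^\ast V\subseteq\Vnot^{\bf n}(\pi^\ast M)$. Via the comparison isomorphism $t^v$ gives a section of $\VIA^{\bf n}$ over $\sigma^\ast S'$ whose de Rham component is horizontal for the Gauss--Manin connection; since $V_{s_0}$ consists of absolute Hodge cycles, this de Rham component is $\overline k$-rational at $s_0'$, hence (being horizontal on a connected scheme, and using that the Gauss--Manin connection is regular) $\overline k$-rational throughout, so that $t^v$ descends to a genuine tensor in $\Gamma(S',\VIA^{\bf n}(\pi^\ast M))$. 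Moreover $(t^v)_{s'}\in F^0(\VdR^{\bf n}(\pi^\ast M)_{s'})$ for every $s'$, because $(\pi^\ast V)_{s'}$ consists of $(0,0)$-cycles. Thus $t^v$ satisfies the hypotheses of Proposition \ref{Pro:AnsoluteHodgeFamily}, and since $(t^v)_{s_0'}=v$ is an absolute Hodge cycle, so is $(t^v)_{s_1'}$ for every closed point $s_1'$ of $S'$.

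Finally, every closed point $s_1$ of $S$ is the image of some such $s_1'$; the $1$--motives $M_{s_1}$ and $(\pi^\ast M)_{s_1'}$ coincide, and the vectors $(t^v)_{s_1'}$, as $v$ runs over a $\IQ$-basis of $V_{s_0}$, form a basis of $(\pi^\ast V)_{s_1'}=V_{s_1}$; hence $V_{s_1}$ consists of absolute Hodge cycles, which is the claim. I expect the main obstacle to be the first step, namely making $V$ constant on a finite \'etale cover of $S$; once this is done the rest is a direct application of Proposition \ref{Pro:AnsoluteHodgeFamily}, the remaining verifications (horizontality and $\overline k$-rationality of the de Rham component, and membership in $F^0$) being routine consequences of the facts that $V$ is fibrewise of type $(0,0)$ and absolute Hodge at $s_0$.
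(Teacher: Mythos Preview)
Your proof is correct and follows essentially the same approach as the paper's: show that the monodromy of $V$ is finite (via the polarisation on $\gr^W_0$), pass to a finite \'etale cover where $V$ becomes constant, and then apply Proposition~\ref{Pro:AnsoluteHodgeFamily} to each global section. The paper's proof is a terse sketch citing Deligne's argument from \cite{Deli82}; you have filled in two points the paper glosses over, namely that finiteness of monodromy uses the integral lattice in $\Tnot^{\bf n}(M)$ together with the positive-definite form (the form alone does not suffice over $\IQ$), and that the de~Rham component of the extended section is $\overline k$-rational (which is needed to obtain a genuine tensor in $\Gamma(S',\VIA^{\bf n})$ rather than merely an analytic one).
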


\begin{proof}
The proof is litterally the same as the proof of 2.15 in \cite{Deli82}. The
argument is the following: If $V$ is constant, every element of
$V_{s_0}$ extends to a global section of $V$, and we are done by Proposition
\ref{Pro:AnsoluteHodgeFamily}. In general, observe that $\gr^W_0 (V^{\bf
n}_0(M))$ has a polarisation, so there is a rational, positive definite bilinear
form on $\gr^W_0 (V^{\bf n}_0(M))$ which is compatible with the action of
$\pi^1(S,s_0)$. Hence the image of $\pi^1(S,s_0)$ in $\GL(V_{s_0})$ is finite.
After passing to a finite cover of $S$, the local system $V$ becomes constant,
and we are done.
\end{proof}

\vspace{4mm}
\begin{lem}\label{Lem:BrylinskiDeligneSplit}
Let $M_0 = [Y \xrightarrow{\:0\:}(T \oplus A)]$ be a split 1--motive over $k$.
Every Hodge cycle of $M_0$ relative to some embedding $\sigma: k\to \IC$ is an
absolute Hodge cycle.
\end{lem}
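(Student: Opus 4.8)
The statement to prove, Lemma \ref{Lem:BrylinskiDeligneSplit}, concerns a \emph{split} 1--motive $M_0 = [Y\xrightarrow{\,0\,}(T\oplus A)]$, so its Hodge realisation decomposes as a direct sum of the Hodge realisations of $Y$, $A$ and $T$, namely $\Vnot M_0 \cong (Y\otimes\IQ)\oplus \Vnot A \oplus \Vnot T$, with $\Vnot T = (T^\vee\otimes\IQ)(1)$ a sum of copies of $\IQ(1)$. The plan is to reduce the statement to Deligne's theorem that every Hodge cycle on an abelian variety is absolute (\cite{Deli82}), which is exactly the content the excerpt invokes for split motives. The first step is to express any tensor space $\Vnot^{\bf n}(\sigma^\ast M_0)$ as a direct summand of a tensor space built from $\Vnot A$ and Tate twists $\IQ(m)$ alone: since $\IQ(0)=\IQ$, the lattice part $Y\otimes\IQ$ contributes only trivial Hodge structures of weight $0$, and $\Vnot T$ contributes Tate twists, both of which can be absorbed into the tensor/twist bookkeeping. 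Concretely, I would observe that all three pieces $Y\otimes\IQ$, $\Vnot A$, $\Vnot T$ and their duals are, up to Tate twist, sub-/quotient-objects of tensor powers of $H_1$ of a single abelian variety (take $A$ itself, noting $\IQ(1)$ occurs in $H_1$ of any elliptic curve, which one may adjoin to $A$ if $A=0$); hence $\Vnot^{\bf n}(\sigma^\ast M_0)$ and its $(0,0)$--part are cut out inside a tensor space of $\Vnot(A')$ for a suitable abelian variety $A'$ over $k$.

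The second step is to transport this decomposition through the comparison isomorphisms. The $\ell$--adic and de Rham realisations of $M_0$ split compatibly with the weight filtration in exactly the same way as the Hodge realisation — this is functoriality of $\Tell$ and $\VdR$ applied to the splitting $M_0\cong [Y\to 0]\oplus[0\to A]\oplus[0\to T]$ — and the comparison isomorphisms of \ref{Para:IntroElladicRealis} and the de Rham comparison are natural, hence respect these splittings. Therefore a tensor $t\in\Gamma(k,\Vnot^{\bf n}(M_0))$ being a Hodge cycle relative to $\sigma$ (resp.\ an absolute Hodge cycle) in the sense of Definition \ref{Def:AbsHodgeCycle} can be checked componentwise under the identification of $\Vnot^{\bf n}(\sigma^\ast M_0)$ with a summand of a tensor space of $\Vnot(A')$: the two conditions (the class comes from a rational $(0,0)$ Hodge class, and its de Rham component lies in $F^0\cap W_0$) are preserved and reflected by the splitting because all the relevant structures — Hodge filtration, weight filtration, $\alpha^\sigma$ — are compatible with it.

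The third and final step is then a direct citation: under the identification just made, $t$ corresponds to a Hodge cycle on the abelian variety $A'$ (in the sense of \cite{Deli82}, allowing Tate twists, which is harmless since $\IQ(1)$ is itself an absolute Hodge class), and Deligne's main theorem says this cycle is absolute Hodge, i.e.\ a Hodge cycle relative to \emph{every} embedding $\sigma':k\to\IC$. Running the compatibility of Step 2 backwards for each $\sigma'$ shows $t$ is a Hodge cycle of $M_0$ relative to every $\sigma'$, which is the definition of an absolute Hodge cycle. I expect the only real subtlety — the ``hard part'', such as it is — to be the bookkeeping in Step 1: making precise how the lattice part $Y\otimes\IQ$ and the torus part $\Vnot T$ fit into a tensor construction over an abelian variety in a way that is defined \emph{over $k$} and compatible with all three realisations simultaneously, rather than just over $\IC$. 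Once that identification is in place over $k$, Steps 2 and 3 are formal, and the lemma follows. This is the base case that feeds, via Proposition \ref{Pro:ModulispaceMain} and Corollary \ref{Cor:HodgeIsAbsoluteHodge}, into the proof of Theorem \ref{Thm:BrylinskiDeligne} for arbitrary 1--motives.
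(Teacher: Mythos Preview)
Your proposal is correct and follows essentially the same approach as the paper's proof: reduce to Deligne's theorem on abelian varieties by recognising that the realisations of the split 1--motive decompose as a direct sum of Tate-type pieces and $\Vnot A$, so that every tensor space of $M_0$ sits as a direct factor of a tensor space of an abelian variety. The paper is terser---it first reduces WLOG to $T$ split and $Y$ constant, then dismisses the case $A=0$ as trivial rather than adjoining an elliptic curve as you suggest---but the substance is identical.
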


\begin{proof}
Without loss of generality we may assume that $T$ is split and that $Y$ is
constant. We can also assume that $A$ is not trivial. But then, all tensor
spaces associated with realisations of $M_0$ can be obtained as direct factors of tensor spaces associated
with realisations of  $A$, and we know by the main result in \cite{Deli82} that every Hodge cycle
for $A$ is an absolute Hodge cycle.
\end{proof}

\vspace{4mm}
\begin{proof}[Proof of Theorem \ref{Thm:BrylinskiDeligne}]
Let $M_1$ be a 1--motive over $k$, let $\sigma:k\to \IC$ be a complex embedding
and let $t_1 \in \VIA(M_1)$ be a Hodge cycle relative $\sigma$. We have to show
that $t_1$ is an absolute Hodge cycle. Set $M_0 := \gr^W_\ast(M_1) = [Y
\xrightarrow{\:0\:}(T \oplus A)]$ and consider the smooth connected scheme $X :=
X_k(T,A,Y)$ and the universal 1--motive $M$ on $X$ from Proposition
\ref{Pro:ModulispaceMain}. The 1--motives $M_0$ and $M_1$ are isomorphic to the
fibres of $M$ in $k$--rational points $x_0, x_1 \in X(k)$, so we can deduce
Theorem \ref{Thm:BrylinskiDeligne} from Corollary \ref{Cor:HodgeIsAbsoluteHodge}
and Lemma \ref{Lem:BrylinskiDeligneSplit}.
\end{proof}

\vspace{4mm}
\begin{par}
It remains to deduce Theorem \ref{Thm:ElladicInMumfordTate} from Theorem
\ref{Thm:BrylinskiDeligne}. We start with the the following proposition,
analogous to Proposition 2.9.b of \cite{Deli82}.
\end{par}

\begin{prop}\label{Pro:ActionOnAbsHdgFinite}
Let $M$ be a 1--motive over $k$ and define $C_{AH}^{\bf n}$ to be the subspace
of absolute Hodge cycles in the tensor space $\VIA^{\bf n}(M)$. The Galois group
$\Gamma:= \Gal(\overline k|k)$ leaves $C_{AH}^{\bf n}$ invariant, and the action
of $\Gamma$ on $C_{AH}^{\bf n}$ factors over a finite quotient of $\Gamma$.
\end{prop}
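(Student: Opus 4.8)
The plan is to follow Deligne's proof of Proposition~2.9.b of \cite{Deli82}, of which the present statement is the exact analogue for $1$--motives. Throughout we compute $C_{AH}^{\bf n}$ after base change to $\overline k$, so that it is a $\IQ$--subspace of $\VIA^{\bf n}(M)$ on which $\Gamma$ acts through its usual action on the $\ell$--adic components and, $\overline k$--semilinearly, on $\VdR^{\bf n}(M)\otimes_k\overline k$ (fixing the $k$--subspaces $F^0$ and $W_0$). First I record two preliminaries. Fix an embedding $\sigma:\overline k\to\IC$. (i) $C_{AH}^{\bf n}$ is a finite dimensional $\IQ$--vector space: an absolute Hodge cycle is in particular a Hodge cycle relative to $\sigma$, and by the discussion after Definition~\ref{Def:AbsHodgeCycle} these form, via the comparison isomorphism $\alpha^\sigma$, a finite dimensional $\IQ$--subspace of $\VIA^{\bf n}(M)$, namely the image of the bidegree $(0,0)$ part of $\Vnot^{\bf n}(\sigma^\ast M)$. (ii) Each component map $t\mapsto t_\ell\in\Vell^{\bf n}(M)$ is injective on $C_{AH}^{\bf n}$, since via $\alpha^\sigma$ it is induced by the injection $\Vnot^{\bf n}(\sigma^\ast M)\hookrightarrow\Vnot^{\bf n}(\sigma^\ast M)\otimes\IQ_\ell=\Vell^{\bf n}(M)$.

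Next I would verify that $\Gamma$ preserves $C_{AH}^{\bf n}$ and acts $\IQ$--linearly. Condition~(2) of Definition~\ref{Def:AbsHodgeCycle} is preserved because $\Gamma$ fixes $F^0$ and $W_0$ in the de Rham component. For condition~(1) one invokes the standard functoriality of the comparison isomorphisms under automorphisms of $\overline k$: for $\gamma\in\Gamma$ the maps $\alpha^\tau$ and $\alpha^{\tau\gamma}$ are intertwined by $\gamma$, so a tensor $t$ is a Hodge cycle relative to $\tau\gamma$ if and only if $\gamma t$ is a Hodge cycle relative to $\tau$; as an absolute Hodge cycle is a Hodge cycle relative to \emph{every} embedding of $\overline k$ into $\IC$, so is $\gamma t$. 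Hence $\gamma(C_{AH}^{\bf n})=C_{AH}^{\bf n}$, and by (ii) we may regard $C_{AH}^{\bf n}$ as a $\IQ$--subspace of $\Vell^{\bf n}(M)$ on which $\gamma$ acts by the restriction of a $\IQ_\ell$--linear automorphism, so the resulting representation $\rho:\Gamma\to\GL_\IQ(C_{AH}^{\bf n})$ is indeed $\IQ$--linear.

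It remains to prove that $\rho$ has finite image. I would first pass to weight zero: a rational element of $F^0\cap W_0$ lying in $W_{-1}$ defines a morphism of mixed Hodge structures $\IQ(0)\to W_{-1}\Vnot^{\bf n}(\sigma^\ast M)$, which vanishes for weight reasons, so the projection to $\gr^W_0$ is injective on $C_{AH}^{\bf n}$ and we may view $C_{AH}^{\bf n}$ as a $\Gamma$--stable $\IQ$--subspace of $\gr^W_0\Vell^{\bf n}(M)$. The image of $\Gamma$ in $\GL(\Vell^{\bf n}(M))$ stabilises the $\IZ_\ell$--lattice $\Tell^{\bf n}(M)$, hence is compact, hence so is its image in $\GL(C_{AH}^{\bf n}\otimes\IQ_\ell)$; so $\rho(\Gamma)$ stabilises some $\IZ_\ell$--lattice there, and for all but finitely many $\ell$ it stabilises the standard lattice relative to a fixed $\IQ$--basis of $C_{AH}^{\bf n}$. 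Therefore $\rho(\Gamma)\cdot\Lambda_0$ has bounded denominators at every prime for any lattice $\Lambda_0\subset C_{AH}^{\bf n}$, hence is again a lattice $\Lambda$, which is $\rho(\Gamma)$--stable. Finally, $\gr^W_0\Vnot^{\bf n}(\sigma^\ast M)$ is a polarisable pure Hodge structure of weight $0$: a polarisation is built from the Weil pairing on $H_1(A(\IC))$ and from positive definite forms on the lattices $Y$ and on the cocharacter lattice of $T$ (these lattices have finite Galois image, since they are continuous representations of the profinite group $\Gamma$ on discrete groups), all of which are algebraically defined and hence Galois--equivariant on the $\ell$--adic realisations. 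The associated $\IQ$--valued symmetric bilinear form $\psi$ is positive definite on the space of rational $(0,0)$--classes, in particular on $C_{AH}^{\bf n}$, and is preserved by $\rho(\Gamma)$. Thus $\rho(\Gamma)$ lies in the automorphism group of the lattice $\Lambda$ equipped with the positive definite form $\psi$, which is finite; this proves the proposition.

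The step I expect to cost the most care is the verification in the second paragraph that being an absolute Hodge cycle is stable under $\Gamma$: this forces one to pin down exactly how $\Gamma$ acts on the de Rham and $\ell$--adic data and to check that the comparison isomorphisms attached to the various complex embeddings of $\overline k$ are exchanged by Galois --- this is precisely where the word ``absolute'' does its work --- together with the production, in the third paragraph, of a $\Gamma$--equivariant positive definite form (Deligne's polarisation trick). Note that the argument uses only the formalism of absolute Hodge cycles, not Theorem~\ref{Thm:BrylinskiDeligne}; the two are combined only afterwards, in the deduction of Theorem~\ref{Thm:ElladicInMumfordTate}.
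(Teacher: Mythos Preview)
Your argument is correct, but it takes a considerably longer route than the paper's. The paper disposes of the finiteness in three lines: the kernel $N$ of $\rho:\Gamma\to\GL_\IQ(C_{AH}^{\bf n})$ is closed (detected via continuity of the $\ell$--adic action for any single $\ell$), so $\Gamma/N$ is profinite; on the other hand $\Gamma/N$ injects into $\GL_\IQ(C_{AH}^{\bf n})$, which is countable since $C_{AH}^{\bf n}$ is a finite--dimensional $\IQ$--vector space. A profinite group which is countable is finite, and that is the whole proof.

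Your approach instead follows Deligne's original argument for abelian varieties: produce a $\rho(\Gamma)$--stable $\IZ$--lattice and a $\rho(\Gamma)$--invariant positive definite form, so that $\rho(\Gamma)$ lands in a finite orthogonal group. This works, but two steps are handled a bit breezily. First, the assertion ``for all but finitely many $\ell$ it stabilises the standard lattice'' is not justified as written; knowing only that each $\ell$--adic image is compact gives \emph{some} stable $\IZ_\ell$--lattice, not control over which one. The clean fix is to observe directly that $\Gamma$ stabilises $(C_{AH}^{\bf n}\otimes\IQ_\ell)\cap\Tell^{\bf n}(M) = (C_{AH}^{\bf n}\cap\Tnot^{\bf n}(M))\otimes\IZ_\ell$ for \emph{every} $\ell$, whence a global stable $\IZ$--lattice $\Lambda_1 = C_{AH}^{\bf n}\cap\Tnot^{\bf n}(M)$ without any patching. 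Second, the Galois--equivariance of the polarisation form on $\gr^W_0$ deserves a word more: you need that the Weil pairing is algebraically defined (via the Poincar\'e bundle) so that it matches under comparison, and that the chosen forms on $Y$ and $T^\vee$ are Galois--invariant (averaging over the finite image, as you note). None of this is wrong, but the paper's ``profinite and countable implies finite'' bypasses all of it.
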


\begin{proof}
That $\Gamma$ leaves $C_{AH}^{\bf n}$ invariant is immediate from the definition
of absolute Hodge cycles. Then, observe that for any prime number $\ell$ the map
$C_{AH}^{\bf n} \to \Vell^{\bf n}(M)$ is injective, and that the subgroup $N$ of
$\Gamma$ fixing $C_{AH}^{\bf n}$ is closed. The quotient $\Gamma/N$ is a
profinite group, and can be identified with a subgroup of the countable group
$\GL(C_{AH}^{\bf n})$. Hence $\Gamma/N$ must be finite.
\end{proof}

\vspace{4mm}
\begin{para}\label{Par:MTAlternativeDesc}
In what follows, we will use the following alternative description of the
Mumford--Tate group. Let $V$ be a rational mixed Hodge structure, and denote
also by $V$ its underlying rational vector space. The algebraic group $\GL_V$
acts naturally on the tensor spaces
$$V^{\bf n} = \bigoplus_{i\in I}\Big(V^{\otimes n_i} \otimes (V^\ast)^{\otimes
n_i'} \Big)$$
and the Mumford-Tate group of $V$, which is a subgroup of $\GL_V$, leaves all
elements of bidegree $(0,0)$ fixed. It follows from \cite{Deli82}, Proposition
3.1.c and the remark following it, that conversely the Mumford--Tate group of
$V$ is the largest subgroup of $\GL_V$ which fixes all elements of bidegree
$(0,0)$ in all tensor spaces of $V$. Because $\GL_V$ is noetherian, there exists
a tensor space $V^{\bf n}$ such that the Mumford--Tate group of $V$ is the
stabiliser in $\GL_V$ of the bidegree $(0,0)$ elements in $V^{\bf n}$.
\end{para}

\vspace{4mm}
\begin{proof}[Proof of Theorem \ref{Thm:ElladicInMumfordTate}]
Let us fix a tensor space $\Vnot^{\bf n} (\sigma^\ast M)$ such that the
Mumford--Tate group $H^{(\sigma^\ast M)}$ of $\Vnot(\sigma^\ast M)$ is equal to
the stabiliser in $\GL_{\Vnot(\sigma^\ast M)}$ of elements of bidegree $(0,0)$
in $\Vnot^{\bf n} (\sigma^\ast M)$. Such a tensor space exists, as explained in
\ref{Par:MTAlternativeDesc}. Denote by $C_{AH}^{\bf n}$ the finite dimensional
$\IQ$--linear subspace of absolute Hodge cycles in $\VIA^{\bf n}(M)$. By Theorem
\ref{Thm:BrylinskiDeligne} this subspace is equal to the image in $\VIA^{\bf
n}(M)$ of elements in $\Vnot^{\bf n}(\sigma^\ast M)$ of bidegree $(0,0)$ via the
comparison isomorphism
$$\Vnot^{\bf n}(\sigma^\ast M) \otimes_\IQ\IA
\xrightarrow{\:\:\cong\:\:}\VIA^{\bf n}(M)$$
By Proposition \ref{Pro:ActionOnAbsHdgFinite} there is an open subgroup
$\Gamma'$ of $\Gamma$ such that the action of $\Gamma'$ on $C_{AH}^{\bf n}$ is
trivial. In particular the image of $\Gamma'$ in the automorphisms of
$\Vell^{\bf n}(M)$ fixes the images of elements in $\Vnot^{\bf n}(\sigma^\ast
M)$ of bidegree $(0,0)$ under the comparison isomorphism $\Vnot^{\bf
n}(\sigma^\ast M) \otimes\IQ_\ell \cong \Vell^{\bf n}(M)$. The image of
$\Gamma'$ in the group of $\IQ_\ell$--linear automorphisms of $\Vell M$ is
therefore contained in the $\IQ_\ell$--points of $H^{(\sigma^\ast M)}$, and
because $\Gamma'$ is of finite index in $\Gamma$ this shows that $\fl^M$ is
contained in $\fh^{(\sigma^\ast M)}\otimes\IQ_\ell$ as we wanted to show.
\end{proof}

%%%%%%%%%%%%%%%%%%%%%%%%%%%%%%%%%%%%%%%%%%%%%%%%%%%%%%%%%%%%%%%%%%%%%%%%%%%%%%

\vspace{14mm}
\section{Construction of the unipotent motivic fundamental
group}\label{Sec:UnipoMotFG}

\begin{par}
In this section we construct the Lie algebra of the unipotent motivic
fundamental group of a 1--motive. A construction of this object in terms of
biextensions and cubist symmetric torsors was proposed by P.~Deligne
(\cite{Bert03}). Our construction is more elementary, but has the disadvantage
that it is not a priori clear why it should produce the right thing. Deligne has
remediated this, I have reproduced his comments in the appendix.
\end{par}
\begin{par}
We restrict ourselves to 1--motives defined over a field. On one hand, our main
theorems are about 1--motives over fields and on the other hand, the
construction over a more general base scheme would involve delicate and
unrelated questions about semiabelian group schemes.
\end{par}

\vspace{4mm}
\begin{para}
\begin{par}
The idea of the motivic fundamental group of a 1--motive is the following: Let
$k$ be a field, and suppose for a moment that there exists a Tannakian category
$\cM_k$ of mixed motives over $k$ with rational coefficients. Let $M \in \cM_k$
be a 1-motive and write $\angl{M}^\otimes$ for the Tannakian subcategory of
$\cM_k$ generated by $M$. The motivic Galois group $\pi_\upmot(M)$ of $M$ is
defined to be the Tannakian fundamental group of $\angl{M}^\otimes$. The weight
filtration $W_\ast$ on $M$ defines a filtration on the group $\pi_\upmot(M)$ and
also on its Lie algebra, which we denote by the same letter $W_\ast$ and also
call weight filtration. The first filtration step $W_{-1}\pi_\upmot(M)$ is the
unipotent radical of $\pi_\upmot(M)$, because pure motives are semisimple
objects. We are interested in its Lie algebra 
$$W_{-1}(\Lie \pi_\upmot (M)) = \Lie W_{-1}(\pi_\upmot (M))$$
This is a Lie algebra object in the category of motives whose underlying mixed
motive has weights $-1$ and $-2$. From the point of view of 1--motives, it is a
semiabelian variety, say $P(M)$, which is moreover equipped with a Lie algebra
structure. We want to construct this semiabelian variety.
\end{par}
\begin{par}
Our plan of action for this section is the following: Given a 1--motive $M$ we
will construct geometrically a semiabelian variety $P(M)$ and declare it to be
$W_{-1}(\Lie \pi_\upmot (M))$. To justify our declaration, we establish in
sections \ref{Sec:CompMotivicHodge} and  \ref{Sec:CompMotivicGalois} canonical
isomorphisms 
$$ W_{-1}(\fh^M) \to \Vnot P(M) \qqet  W_{-1}(\fl^M) \to \Vell P(M)$$
of rational Hodge structures and of Galois representations respectively. The
semiabelian variety $P(M)$ comes equipped with a Lie bracket, and these
isomorphisms are both compatible with Lie brackets, that is, they are
isomorphisms of Lie algebra objects. This structure is important, but not for
our construction. For the sake of completeness we discuss it in the next
section, where we also check that our construction coincides with Deligne's up
to a canonical isomorphism.
\end{par}
\end{para}

\vspace{4mm}
\begin{para}\label{Par:ConstructionU(M)}
Let $M = [u:Y\to G]$ be a 1--motive over a field $k$. We start with constructing
a semiabelian variety $U(M)$ over $k$, which will contain $P(M)$. Write $M_A :=
M/W_{-2}M = [Y\to A]$. The two semiabelian varieties $\cHom(Y,G)$ and
$\cExt^1(M_A,T)$ are extensions of the abelian varieties $\cHom(Y,A)$ and
$\cExt^1(A,T)$ respectively by the torus $\cHom(Y,T)$. We define a semiabelian
variety $U(M)$ by requiring the short sequence of fppf--sheaves on $k$
$$0\to\cHom(Y,T) \xrightarrow{\:\:(+,-)\:\:}\cHom(Y,G)\times \cExt^1(M_A,T)
\xrightarrow{\qquad}  U(M) \to 0$$
to be exact. The first arrow is given on points by sending $t$ to the pair
$(\iota_1(t),-\iota_2(t))$, where $\iota_1$ is obtained by applying $\cHom(Y,-)$
to the morphism $T\to G$ and where $\iota_2$ is obtained by applying
$\cExt^1(-,T)$ to the map $M_A \to Y[1]$. Representability of $U(M)$ by a
semiabelian variety is not a problem. The map $u$ corresponds to a $k$--rational
point $u$ of $\cHom(Y,G)$, and viewing $M$ as an extension of $M_A$ by $T$ we
also get a global section $\eta$ on $\cExt^1(M_A,T)$. Denote by $\overline u$
the image of $(u, \eta)$ in $U(M)(k)$.
\end{para}

\vspace{4mm}
\begin{defn}\label{Def:P(M)}
Let $M$ be a 1--motive over a field $k$. We write  $P(M)$ for the smallest
semiabelian subvariety of $U(M)$ which contains $n\overline u$ for some nonzero
$n\in \IZ$, and name it \emph{Lie algebra of the unipotent motivic fundamental
group of $M$}.
\end{defn}

\vspace{4mm}
\begin{par}
Alternatively, we could declare $P(M)$ to be the connected component of the
unity of the Zariski closure of $\IZ\overline u$. It is clear that the
construction of $P(M)$ is compatible with base change. We continue by checking
that that the realisations of $U(M)$ are canonically isomorphic to the weight
$(-1)$ part of the linear endomorphisms of the corresponding realisation of $M$,
thus showing that $U(M)$ is the right habitat for $P(M)$.
\end{par}

\vspace{4mm}
\begin{prop}\label{Pro:Alpha0AlphaEll}
Let $M$ be a 1--motive over $\IC$, or over a field $k$ of characteristic $\neq
\ell$. Respectively, there are canonical isomorphisms 
$$\alpha_0: \Vnot U(M) \xrightarrow{\:\:\cong\:\:} W_{-1}\End_\IQ(\Vnot M) \qqet
\alpha_\ell: \Vell U(M) \xrightarrow{\:\:\cong\:\:} W_{-1}\End_{\IQ_\ell}(\Vell
M)$$
of Hodge structures and of Galois representations.
\end{prop}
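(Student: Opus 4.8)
The plan is to produce the isomorphism at the level of the defining exact sequences, exploiting the identifications of Remark \ref{Rem:HomExtAndComparison} and the fact that both realisations $\Vnot$ and $\Vell$ are exact tensor functors on 1--motives. First I would describe $W_{-1}\End(\Vnot M)$ (resp.\ $W_{-1}\End_{\IQ_\ell}(\Vell M)$) explicitly in terms of the weight filtration $0\subseteq W_{-2}\subseteq W_{-1}\subseteq W_0 = \Vnot M$, whose graded pieces are $\Vnot T$, $\Vnot A = H_1(A(\IC),\IQ)$ and $Y\otimes\IQ$. An endomorphism $f$ with $f(W_n)\subseteq W_{n-1}$ is determined by three components: $Y\otimes\IQ \to \Vnot A$, $\Vnot A \to \Vnot T$, and $Y\otimes\IQ\to \Vnot T$, subject to no further constraint once one remembers that $W_{-1}\End$ is the kernel of $\End(\Vnot M)\to \End(\gr^W\Vnot M)$ composed with the weight filtration; concretely there is a canonical short exact sequence
$$0 \to \Hom(\gr^W_0, \gr^W_{-2}) \to W_{-1}\End(\Vnot M) \to \Hom(\gr^W_0,\gr^W_{-1})\oplus \Hom(\gr^W_{-1},\gr^W_{-2}) \to 0,$$
i.e.\ $0\to \Hom(Y,\Vnot T)\to W_{-1}\End(\Vnot M)\to \Hom(Y,\Vnot A)\oplus \Hom(\Vnot A,\Vnot T)\to 0$, and similarly $\ell$--adically. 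The point is that this sequence is visibly parallel to the defining sequence
$$0\to\cHom(Y,T)\to\cHom(Y,G)\times\cExt^1(M_A,T)\to U(M)\to 0$$
of \ref{Par:ConstructionU(M)} after applying $\Vnot$ (resp.\ $\Vell$).

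The key steps, in order: (1) apply the exact functor $\Vnot$ to the sequence defining $U(M)$, getting a short exact sequence with left term $\Vnot\cHom(Y,T)$ and middle term $\Vnot\cHom(Y,G)\oplus \Vnot\cExt^1(M_A,T)$; (2) use the canonical identifications $\Vnot\cHom(Y,G)\cong \Hom(Y,\Vnot G) = \Hom(Y,W_{-1}\Vnot M)$ and $\Vnot\cExt^1(M_A,T)\cong\Hom(\Vnot M_A,\Vnot T) = \Hom(\Vnot M/W_{-2},\Vnot T)$ from Remark \ref{Rem:HomExtAndComparison}, together with $\Vnot\cHom(Y,T)\cong\Hom(Y,\Vnot T)$; (3) build the comparison map: an element of $\Hom(Y,W_{-1}\Vnot M)$ together with an element of $\Hom(\Vnot M/W_{-2},\Vnot T)$ patch to a well-defined endomorphism of $\Vnot M$ lowering weights by one, precisely because the two pieces agree on the overlap $\Hom(Y,\Vnot T)$ up to the sign built into the map $(+,-)$; this yields a map from the middle term of (1) to $W_{-1}\End(\Vnot M)$ killing $\Hom(Y,\Vnot T)$, hence a map $\alpha_0\colon \Vnot U(M)\to W_{-1}\End_\IQ(\Vnot M)$; (4) check $\alpha_0$ is an isomorphism by comparing it with the canonical sequence displayed above — it induces the identity on sub and quotient (the sub being $\Hom(Y,\Vnot T)$ modulo the antidiagonal, i.e.\ $\Hom(Y,\Vnot T)$ again, and the quotient being $\Hom(Y,\Vnot A)\oplus\Hom(\Vnot A,\Vnot T)$), so the five lemma applies; (5) run the identical argument with $\Vell$ in place of $\Vnot$, using the $\ell$--adic half of Remark \ref{Rem:HomExtAndComparison}, and note that every map in sight is Galois-equivariant (resp.\ a morphism of Hodge structures) because it is obtained by applying a realisation functor to morphisms of 1--motives and fppf-sheaves.

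The main obstacle is step (3)/(4): making the patching map canonical and checking that it is an isomorphism rather than merely injective or surjective. Concretely one must verify that the diagram
$$\begin{diagram}
\setlength{\dgARROWLENGTH}{4mm}
\node{\Hom(Y,\Vnot T)}\arrow{s,=}\arrow{e}\node{\Hom(Y,\Vnot G)\oplus\Hom(\Vnot M_A,\Vnot T)}\arrow{s}\arrow{e}\node{\Vnot U(M)}\arrow{s,r}{\alpha_0}\\
\node{\Hom(Y,\Vnot T)}\arrow{e}\node{W_{-1}\End(\Vnot M)}\arrow{e}\node{\Hom(Y,\Vnot A)\oplus\Hom(\Vnot A,\Vnot T)}
\end{diagram}$$
commutes with the middle vertical arrow being the patching map and that this middle arrow is an isomorphism onto $W_{-1}\End(\Vnot M)$; this last point is where one genuinely uses that $\Vnot M$ is an extension of $\Vnot M/W_{-2}$ by $\Vnot T$ \emph{and} an extension built from $\Vnot G$ over $Y\otimes\IQ$, so that the pushout/pullback description of endomorphisms lowering the weight matches the fibre product $\cHom(Y,G)\times\cExt^1(M_A,T)$ modulo $\cHom(Y,T)$. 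Once this linear-algebra lemma about weight-lowering endomorphisms of a three-step filtered object is isolated and proven, everything else is formal; I would expect to state that lemma separately and then apply it twice.
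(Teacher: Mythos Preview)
Your approach is correct and essentially the same as the paper's. The paper writes down the patching map explicitly as $(f,g)\mapsto f\circ\pi_Y + g\circ\pi_{M_A}$, checks well--definedness modulo the antidiagonal $\Hom(Y,\Vnot T)$, and then produces an explicit inverse using a chosen linear section $s:Y\otimes\IQ\to\Vnot M$ and retraction $r:\Vnot M\to\Vnot T$; you replace the explicit inverse by a five--lemma comparison of the weight filtrations, which is equally valid.

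Two minor points of presentation. First, the displayed diagram is misaligned: the top row has $\Vnot U(M)$ as a \emph{cokernel} while the bottom row has $W_{-1}\End(\Vnot M)$ as the \emph{middle} term, so these are not corresponding positions for a five--lemma. What you actually want is to compare the two extensions
\[
0\to \Hom(Y,\Vnot T)\to \Vnot U(M)\to \Hom(Y,\Vnot A)\oplus\Hom(\Vnot A,\Vnot T)\to 0
\]
(coming from the semiabelian structure $0\to U_T\to U\to U_A\to 0$) and
\[
0\to \Hom(Y,\Vnot T)\to W_{-1}\End(\Vnot M)\to \Hom(Y,\Vnot A)\oplus\Hom(\Vnot A,\Vnot T)\to 0
\]
(the weight filtration on $W_{-1}\End$), with $\alpha_0$ as the middle vertical map; then the five lemma applies directly. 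Second, the ``middle arrow'' $\Hom(Y,\Vnot G)\oplus\Hom(\Vnot M_A,\Vnot T)\to W_{-1}\End(\Vnot M)$ is not an isomorphism but a surjection with kernel the antidiagonal; it is the induced map $\alpha_0$ on the quotient $\Vnot U(M)$ that is the isomorphism.
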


\begin{proof}
The constructions and verifications are analogous for the Hodge and the
$\ell$--adic realisations, so we only treat the case of $\ell$--adic
realisation. We identify $\Vell T$ and $\Vell G$ with subspaces of $\Vell M$,
and write $\pi_Y$ and $\pi_{M_A}$ for the canonical projections onto $Y \otimes
\IQ_\ell$ and $\Vell M_A$. There are natural isomorphisms of Galois
representations
$$\Vell\cHom(Y,G) \cong \Hom_{\IQ_\ell}(Y\otimes \IQ_\ell,\Vell G) \qqet
\Vell\cExt^1(M_A,T) \cong \Hom_{\IQ_\ell}(\Vell M_A,\Vell T)$$
We can therefore represent elements of $\Vell U(M)$ by pairs $(f,g)$ where
$f:Y\otimes \IQ_\ell \to \Vell G$ and $g:\Vell M_A \to \Vell T$ are
$\IQ_\ell$--linear functions. We set
$$\alpha_\ell(f,g) = f \circ \pi_Y + g \circ \pi_{M_A} \qquad \in
W_{-1}\End_{\IQ_\ell}(\Vell M)$$
This yields a well defined map. Indeed, two pairs $(f,g)$ and $(f',g')$
represent the same element of $\Vell U(M)$ if and only if there exists a
$\IQ_\ell$--linear function $h: Y\otimes \IQ_\ell \to \Vell T$ such that $f-f' =
h$ and $g-g' = - h \circ \pi_Y$. So we have 
$$(f-f') \circ \pi_Y + (g-g') \circ \pi_{M_A} = h \circ \pi_Y - h \circ \pi_Y =
0$$
The map $\alpha_\ell:\Vell U \to W_{-1}\End_{\IQ_\ell}(\Vell M)$ thus defined is
linear, and also Galois equivariant. An inverse to $\alpha_\ell$ can be obtained
as follows. Choose a $\IQ_\ell$--linear section $s$ of $\pi_Y:\Vell M \to \Vell
Y$ and a $\IQ_\ell$--linear retraction $r$ of the inclusion $\Vell T \to \Vell
M$. For $\gamma \in W_{-1}\End_\IQ(\Vell M)$ we set
$$\alpha^{-1}_\ell(\gamma) = (f-h,g)$$
where $f$,$g$ and $h$ defined by $f = \gamma \circ s$, $g \circ \pi_{M_A} = r
\circ \gamma$ and $h = r \circ \gamma \circ s$. This makes sense because we
have 
$$\gamma(\Vell M)\subseteq \Vell G \qqet \gamma(\Vell G) \subseteq \Vell T \qqet
\gamma(\Vell T) = \{0\}$$
by definition of the weight filtration on $\End(\Vell M)$. To check that
$\alpha^{-1}_\ell$ is an inverse to $\alpha_\ell$ is straightforward.
\end{proof}

\vspace{4mm}
\begin{par}
We end this section with a technical definition, which we will use later in
sections \ref{Sec:CompMotivicHodge} and \ref{Sec:CompMotivicGalois}:
\end{par}

\begin{defn}\label{Def:AllIsogenyTypes}
Let $G$ and $\widetilde G$ be semiabelian varieties over an algebraically closed
field $\overline k$. We say that \emph{$\widetilde G$ contains all isogeny types
of $G$} if there exists an integer $n \geq 0$ and a morphism with finite kernel
$\gr^W_\ast(G) \to \gr^W_\ast(\widetilde G)^n$.
\end{defn}

\vspace{4mm}
\begin{lem}\label{Lem:GContainsAllIsogTypes}
Let $M=[Y\to G]$ be a 1--motive over an algebraically closed field $\overline
k$. The semiabelian variety $G$ contains all isogeny types of $P(M)$.
\end{lem}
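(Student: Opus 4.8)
The claim is that $G$ contains all isogeny types of $P(M)$, i.e.\ that there is an integer $n$ and a morphism with finite kernel $\gr^W_\ast P(M) \to \gr^W_\ast(G)^n$. Since $\gr^W_\ast$ decomposes into the torus part and the abelian part, it suffices to produce such maps separately for the tori $\gr^W_{-2}P(M) \to (\gr^W_{-2}G)^n = T^n$ and for the abelian parts $\gr^W_{-1}P(M) \to (\gr^W_{-1}G)^n = A^n$. The strategy is to first do this for $U(M)$ in place of $P(M)$ and then restrict, since $P(M)$ is by definition a semiabelian subvariety of $U(M)$ and $\gr^W_\ast$ is exact, so a morphism with finite kernel on $\gr^W_\ast U(M)$ restricts to one on $\gr^W_\ast P(M)$.

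First I would read off from the defining exact sequence in \ref{Par:ConstructionU(M)} that $\gr^W_\ast U(M)$ is a quotient, up to isogeny, of $\gr^W_\ast\bigl(\cHom(Y,G)\times\cExt^1(M_A,T)\bigr)$, hence it is enough to handle each of the two factors. For $\cHom(Y,G)$: its weight filtration has graded pieces $\cHom(Y,T)$ (weight $-2$) and $\cHom(Y,A)$ (weight $-1$). Choosing an isogeny $\IZ^r \to Y$ over $\overline k$ (where $r = \operatorname{rank} Y$), applying $\cHom(-,T)$ and $\cHom(-,A)$ gives morphisms with finite kernel $\cHom(Y,T)\to T^r$ and $\cHom(Y,A)\to A^r$. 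For $\cExt^1(M_A,T)$: its weight $-2$ part is again $\cHom(Y,T)$, handled as above, while its weight $-1$ part is $\cExt^1(A,T)$, which by the Barsotti--Weil formula is isogenous to $(A^\vee)^s$ where $s = \dim T$; and $A^\vee$ is isogenous to $A$, giving a morphism with finite kernel $\cExt^1(A,T) \to A^{s}$ up to composing with an isogeny $A^\vee\to A$ (which exists over $\overline k$ after choosing a polarisation). Assembling these and accounting for the isogeny relating $\gr^W_\ast U(M)$ to the product yields the desired morphism $\gr^W_\ast U(M)\to \gr^W_\ast(G)^N$ for a suitable $N$.

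The one place that needs a genuine idea rather than bookkeeping is handling the torus factor $\gr^W_{-2}P(M)$: we must map it to a power of $T = \gr^W_{-2}G$, not merely to a power of $\cHom(Y,T)$. But $\cHom(Y,T)$ is itself isogenous to $T^r$ by the argument above (pick an isogeny $\IZ^r\to Y$), so this reduces to the previous point. Similarly for the abelian parts we land in powers of $A = \gr^W_{-1}G$ directly from the $\cHom(Y,A)$ factor, and from $\cExt^1(A,T)$ via $A^\vee \sim A$. Thus every isogeny type appearing in $\gr^W_\ast U(M)$ — and \emph{a fortiori} in the sub-semiabelian variety $\gr^W_\ast P(M)$ — appears in a power of $\gr^W_\ast G = T\oplus A$.

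**Main obstacle.** The only subtlety I anticipate is the passage from $\cExt^1(A,T)$ to a power of $A$: over a general field this requires $A^\vee$ to be isogenous to $A$, which can fail, but the lemma is stated over an algebraically closed field where a polarisation (hence an isogeny $A\to A^\vee$) always exists, so this causes no trouble. Everything else is a matter of chasing the weight filtration through the exact sequence defining $U(M)$, using exactness of $\gr^W_\ast$ and the fact that composites of morphisms with finite kernel (and products of such) again have finite kernel, together with the elementary observation that passing to a subobject only shrinks the collection of isogeny types that occur.
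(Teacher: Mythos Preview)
Your proposal is correct and follows essentially the same approach as the paper: reduce from $P(M)$ to $U(M)$ via the inclusion, then compute the graded pieces of $U(M)$ using the choices $Y\simeq\IZ^r$, $T\simeq\IG_m^s$, and a polarisation $A^\vee\to A$ (available over $\overline k$) to embed everything into a power of $T\oplus A$. The paper does this slightly more directly by writing out $\gr^W_{-1}U(M)=\cHom(Y,A)\oplus\cExt^1(A,T)$ and $\gr^W_{-2}U(M)=\cHom(Y,T)$, whereas you pass through the product $\cHom(Y,G)\times\cExt^1(M_A,T)$ and then to the quotient; both routes amount to the same computation.
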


\begin{proof}
We have $P(M)\subseteq U(M)$ by definition, so we can as well show that
$\gr^W_\ast(G)$ contains all isogeny types of $U(M)$. Write $G$ as an extension
of an abelian variety $A$ by a torus $T$. We choose an isogeny $A^\vee\to A$ and
isomorphisms $Y\simeq \IZ^r$ and $T \simeq \IG_m^s$. These choices induce a
morphism
$$\gr_{-1}^WU(M) = \cHom(Y,A) \oplus \cExt^1(A,T) \simeq A^r \oplus (A^\vee)^s
\to A^{r+s}$$
with finite kernel. We get also an isomorphism $\gr_{-2}^WU(M) = \cHom(Y,T)
\simeq \IG_m^{r+s}$, hence we can find a morphism from $\gr_\ast^W(U(M))$ to
$\gr^W_\ast(G)^{r+s}$ with finite kernel as needed.
\end{proof}

\vspace{14mm}
\section{Comments on Lie structures}\label{Sec:CommentsOnLie}

\begin{par}
In this section we  explain the Lie algebra structure on $P(M)$ and compare our
construction of $P(M)$ with the construction presented in \cite{Bert03}. We will
not use this comparison later, and the reader who is only interested in the
proof of our main theorems can skip it. We fix a field $k$ of characteristic
zero. 
\end{par}

\vspace{4mm}
\begin{para}\label{Par:WhatIsLieBracket}
What is a Lie bracket on a semiabelian variety $G$ over $k$? Naively, that
should be an alternating bilinear map $G\times G \to G$ satisfying the Jacobi
identity, but there are no such maps except for the zero map. There are two ways
out, one via the theory of biextensions, the other via homological algebra. We
choose to formulate our constructions in terms of homological algebra, so a Lie
bracket should be a morphism
$$\beta: G[-1] \otimes^\IL G[-1] \to G[-1]$$
in the derived category of fppf--sheaves\footnote{Recall that we have chosen to
place the complexes $[Y\to G]$ associated with 1--motives in degrees $0$ and
$1$. With this convention the 1--motive $[\IZ\to 0]$ is a neutral object for the
tensor product of complexes, as it should be.} on $\spec k$, which is graded
antisymmetric and satisfies the Jacobi--identity. Instead of $\beta$ we can also
give its adjoint $\ad_\beta: G[-1] \to \IR\cHom(G,G)$. The object
$\IR\cHom(G,G)$ is homologically concentrated in degrees $0$ and $1$, and given
in degree $1$ by the sheaf $\cExt^1(A,T)$ which is representable by an abelian
variety. Therefore, $\beta$ is uniquely determined by a morphism of abelian
varieties
$$\ad_\beta:A\to \cExt^1(A,T)$$
Any morphism $A\to \cExt^1(A,T)$ yields a morphism $G[-1]\otimes^\IL G[-1] \to
G[-1]$, so it remains to express antisymmetry and the Jacobi identity for
$\beta$ in terms of its adjoint. The Jacobi--identity comes for free, because
$\beta$ factors over a map $A[-1] \otimes^\IL A[-1] \to T[-1]$, so the derived
algebra is contained in the centre. This shows in particular that the Lie
algebra object $(G,\beta)$ is necessarily nilpotent -- from the point of view of
motives this was already clear for weight reasons. Denoting by $T^\vee$ the
group of characters of $T$, the abelian variety dual to $\cExt(A,T)$ is
canonically isomorphic to $T^\vee \otimes A$, and the morphism dual to
$\ad_\beta$ is then given by a morphism $T^\vee \otimes A \to A^\vee$, or
equivalently by a homomorphism of Galois modules
$$\lambda: T^\vee \to \Hom_{\overline k}(A,A^\vee)$$
From this point of view the antisymmetry condition is easy to express: The image
of $\lambda$ must be contained in the subgroup of selfdual homomorphisms $A\to
A^\vee$, that is, $\lambda(\chi) = \lambda(\chi)^\vee$ must hold for all
$\chi\in T^\vee$. We make this our definition:
\end{para}

\vspace{4mm}
\begin{defn}\label{Def:LieBracketOnSemiab}
Let $G$ be a semiabelian variety over $k$, extension of an abelian variety $A$
by a torus $T$. Write  $A^\vee$ for the abelian variety dual to $A$ and $T^\vee$
for the group of characters of $T$. A \emph{Lie algebra structure on $G$} is a
homomorphism of Galois modules
$$\lambda: T^\vee \to \Hom_{\overline k}(A,A^\vee)$$
such that $\lambda(\chi) = \lambda(\chi)^\vee$ holds for all $\chi\in T^\vee$.
\end{defn}

\vspace{4mm}
\begin{para}
This definition makes sense over any base scheme in place of $k$. To give a Lie
algebra structure on the semiabelian variety $G$ is the same as to give a Lie
algebra on the associated split semiabelian variety $\gr_\ast^W(G) = T \oplus
A$. Given a Lie algebra structure $\lambda: T^\vee \to \Hom_{\overline
k}(A,A^\vee)$ and a realisation functor, say $\Vell$, one gets a map
$$\Vell A \otimes \Vell A \to \Vell T$$
which equips the vector space $\Vell G$ with the structure of a nilpotent Lie
algebra. The derived Lie algebra $[\Vell G,\Vell G]$ is contained in $\Vell T$,
and in fact equal to $\Vell(T')$, where $T'\subseteq T$ is the subtorus with
character group $T^\vee/\ker(\lambda)$ modulo torsion. A polarisation $\psi:A\to
A^\vee$ defines a Lie algebra structure on $\IG_m\oplus A$, and the Lie bracket
one obtains from this is the classical Weil pairing.
\end{para}

\vspace{4mm}
\begin{para}
Let $M$ be a 1--motive over $k$. The semiabelian variety $U(M)$ carries a
canonical Lie algebra structure. Set $U_T := \cHom(Y,T)$ and $U_A := \cHom(Y,A)
\oplus \cHom(T^\vee,A^\vee)$, so $U(M)$ is an extension of $U_A$ by $U_T$. The
dual $U_A^\vee$ of $U_A$ is canonically isomorphic to $(Y\otimes A^\vee) \oplus
(T^\vee \otimes A)$, and the character group of $U_T$ is canonically isomorphic
to $Y\otimes T^\vee$. So we can define
$$\lambda: (Y\otimes T^\vee) \to \Hom_{\overline k}(U_A,U_A^\vee) \qquad\qquad
\lambda(y\otimes\chi)(f,g) = \big(y\otimes g(\chi),\chi\otimes f(y)\big)$$
We leave it to the reader to check that we have indeed
$\lambda(y\otimes\chi)=\lambda(y\otimes\chi)^\vee$ for all $y\in Y$ and $\chi\in
T^\vee$, and that the induced Lie bracket on realisations, say on $\Vnot U(M)$,
is given by
$$[(f,g),(f',g')] = (g\circ f',-f'\circ g)$$
for all $(f,g),(f',g') \in \Hom_\IQ(Y\otimes \IQ,\Vnot G)\oplus \Hom_\IQ(\Vnot
M_A,\Vnot T)$. This shows in particular that the canonical isomorphisms
$\alpha_0$ and $\alpha_\ell$ from Proposition \ref{Pro:Alpha0AlphaEll} are
isomorphisms of Lie algebras.
\end{para}

\vspace{4mm}
\begin{para}
The abelian variety $U_A$ contains the special rational point $\overline v$
coming from the 1--motive $M$, the image of $\overline u$ in $U_A$. We can
recover $U$ from  $\lambda$, $\overline v$ and its graded pieces $U_T$ and
$U_A$. Indeed, the dual of $U$ is given by the morphism
$$(Y\otimes T^\vee) \to U_A^\vee \qquad\qquad (y\otimes\chi) \mapsto
\lambda(y\otimes\chi)(\overline v) = \big(y\otimes v^\vee(\chi),\chi\otimes
v(y)\big)$$
The same formula must then hold for the subvariety $P(M)$ of $U(M)$, whose dual
is a quotient of the 1--motive $[(Y\otimes T^\vee) \to U_A^\vee]$. This is what
is meant in \cite{Bert03} by saying that the unipotent radical of the Lie
algebra of $\pi_\upmot(M)$ is the semiabelian variety defined by the adjoint
action of the semisimplification of the Lie algebra of $W_{-1}\pi_\upmot(M)$ on
itself. 
\end{para}

\vspace{4mm}
\begin{para}
It remains to explain why $P(M)$ is a Lie subobject of $U(M)$. This is indeed
nothing special to $P(M)$ and $U(M)$, so let us consider any abelian variety
$A$, torus $T$, a Lie algebra structure 
$$\lambda:T^\vee\to\Hom_{\overline k}(A,A^\vee)$$
and a rational point $a\in A(k)$. The Lie algebra structure $\lambda$ and the
point $a$ define an extension $G$ of $A$ by $T$, namely the dual of the
1--motive
$$[w: T^\vee \to A^\vee] \qquad\qquad w(\chi)=\lambda(\chi)(a)$$
In this situation, the following is true:
\end{para}

\vspace{4mm}
\begin{prop}\label{Pro:SmallestLieSubobject}
Let $A'\subseteq A$ be the connected component of the algebraic subgroup of $A$
generated by $a$, and let $G' \subseteq G$ be any semiabelian subvariety whose
projection to $A$ equals $A'$. Then $G'$ is a Lie subobject of $G$.
\end{prop}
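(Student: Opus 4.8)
The plan is to translate ``$G'$ is a Lie subobject of $(G,\beta)$'' into an explicit condition on $\lambda$, and then to check that condition using the way $A'$ is defined. Write $T':=G'\cap T$, so that $G'$ is an extension of $A'$ by $T'$, and let $K\subseteq T^\vee$ be the subgroup of characters of $T$ trivial on $T'$, so $T^\vee/K=(T')^\vee$. For $\chi\in T^\vee$ let $\phi_\chi\colon A\to (A')^\vee$ denote the composite of $\lambda(\chi)\colon A\to A^\vee$ with the restriction map $A^\vee\to (A')^\vee$; then $\phi_\chi|_{A'}\colon A'\to (A')^\vee$ is the restrict--corestrict of $\lambda(\chi)$ to $A'$. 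By \ref{Par:WhatIsLieBracket} the bracket $\beta$ factors through a map $b\colon A[-1]\otimes^\IL A[-1]\to T[-1]$, and since $G'\to A$ has image $A'$, the restriction of $\beta$ to $G'[-1]\otimes^\IL G'[-1]$ factors through the restriction of $b$ to $A'[-1]\otimes^\IL A'[-1]$. As this lands in $T[-1]\subseteq G[-1]$ and $T[-1]\cap G'[-1]=T'[-1]$, the subvariety $G'$ is a Lie subobject precisely when that restriction takes values in $T'[-1]$, i.e.\ when the further composite $A'[-1]\otimes^\IL A'[-1]\to (T/T')[-1]$ is zero. By adjunction, and because $\cHom(A',T/T')=0$, this composite is determined by a homomorphism $A'\to\cExt^1(A',T/T')\cong\Hom(K,(A')^\vee)$ — the ``restriction'' of $\ad_\beta=\lambda$ — whose component at $\chi\in K$ is $\phi_\chi|_{A'}$. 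Hence $G'$ is a Lie subobject if and only if $\phi_\chi|_{A'}=0$ for every $\chi\in K$.

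Next I would extract information from the hypothesis $G'\subseteq G$. Since $G'\to A'$ is surjective with kernel $T'$, a direct check shows that the push-out of $G'$ along $T'\hookrightarrow T$ is canonically the pull-back $G_{A'}:=G\times_A A'$. Consequently the class of $G_{A'}$ in $\cExt^1(A',T)$ lies in the image of $\cExt^1(A',T')$, and therefore maps to $0$ in $\cExt^1(A',T/T')$. On the other hand, $G_{A'}$ is the pull-back of $G$ along $A'\hookrightarrow A$, so under $\cExt^1(A,T)\cong\Hom(T^\vee,A^\vee)$ the class of $G$ is $\chi\mapsto\lambda(\chi)(a)=w(\chi)$, the class of $G_{A'}$ in $\cExt^1(A',T)\cong\Hom(T^\vee,(A')^\vee)$ is $\chi\mapsto\phi_\chi(a)$, and its image in $\cExt^1(A',T/T')\cong\Hom(K,(A')^\vee)$ is the restriction of this to $K$. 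Vanishing of the latter says exactly that $\phi_\chi(a)=0$ in $(A')^\vee$ for every $\chi\in K$.

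To conclude, I would fix $\chi\in K$ and observe that $\ker\phi_\chi$ is a closed algebraic subgroup of $A$ containing $a$, hence containing the algebraic subgroup of $A$ generated by $a$, and in particular its identity component $A'$. Therefore $\phi_\chi|_{A'}=0$, which by the first paragraph is exactly the condition for $G'$ to be a Lie subobject.

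The step I expect to be the main obstacle is the first one: matching ``Lie subobject'' in the homological formulation of \ref{Par:WhatIsLieBracket} with the condition $\phi_\chi|_{A'}=0$ for $\chi\in K$, keeping careful track of which maps are restrictions and which are corestrictions, and invoking $\cHom(\text{abelian variety},\text{torus})=0$ in the right places. The remaining two steps are short formal manipulations with extensions of commutative group schemes and with the definition of $A'$.
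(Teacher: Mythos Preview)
Your proof is correct and follows essentially the same route as the paper's. Both arguments reduce ``$G'$ is a Lie subobject'' to the vanishing of $\iota^\vee\circ\lambda(\chi)\circ\iota$ (your $\phi_\chi|_{A'}$) for $\chi$ in the kernel $K$ of $T^\vee\to(T')^\vee$, then observe that $\iota^\vee(\lambda(\chi)(a))=0$ for such $\chi$, and conclude because $a$ generates $A'$. The only difference is packaging: where you pass through push-outs/pull-backs of extensions and the identifications $\cExt^1(A',T)\cong\Hom(T^\vee,(A')^\vee)$, the paper simply writes down the commutative square dual to $G'\hookrightarrow G$ and reads off $\iota^\vee(w(\chi))=w'(\kappa^\vee(\chi))=0$ directly. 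Your concern about the first step is unwarranted --- the paper treats that translation as immediate from Definition~\ref{Def:LieBracketOnSemiab}.
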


\begin{par}
\noindent For example if $g$ is a preimage of $a$ in $G$, then $G'$ could be the
connected component of the algebraic subgroup of $G$ generated by $g$. This is
what we have in our concrete situation $P(M) \subseteq U(M)$.
\end{par}

\begin{proof}[Proof of \ref{Pro:SmallestLieSubobject}]
We suppose without loss of generality that $a$ is a rational point of $A'$, so
$A'$ is the Zariski closure of $\IZ a$. Denote by $[w':{T'}^\vee \to {A'}^\vee]$
the 1--motive dual to $G'$. The dual of the inclusion $G'\to G$ is then a
commutative square
$$\begin{diagram}
\setlength{\dgARROWLENGTH}{4mm}
\node{T^\vee}\arrow{s,l}{\kappa^\vee}\arrow{e,t}{w}\node{A^\vee}\arrow{s,r}{
\iota^\vee}\\
\node{{T'}^\vee} \arrow{e,t}{w'}\node{{A'}^\vee}
\end{diagram}$$
with surjective vertical maps dual to the inclusions $\kappa:T'\to T$ and
$\iota:A'\to A$. To say that $G'$ is a Lie subobject of $G$ is to say that the
arrow $\lambda'$ in the diagram
$$\begin{diagram}
\setlength{\dgARROWLENGTH}{5mm}
\node{T^\vee}\arrow{s,l}{\kappa^\vee}\arrow[2]{e,t}{\lambda}\node[2]{\Hom_{
\overline k}(A,A^\vee)\hspace{-10mm}}\arrow{s,r}{f \mapsto \iota^\vee\circ
f\circ\iota}\\
\node{{T'}^\vee} \arrow[2]{e,t,--}{\lambda'}\node[2]{\Hom_{\overline
k}({A'},{A'}^\vee)\hspace{-10mm}}
\end{diagram}$$
exists. Let $\chi\in T^\vee$ be a character such that $\kappa^\vee(\chi)=0$. We
must show that the endomorphism $\iota^\vee \circ \lambda(\chi) \circ \iota$ of
$A'$ is trivial. Indeed, we have
$$\iota^{\vee}(\lambda(\chi)(a)) = \iota^\vee(w(\chi)) = w'(\kappa(\chi)) =
w'(0)=0$$
so $a \in A'$ belongs to the kernel of this endomorphism, and because $a$
generates $A'$ as an algebraic group, we must have $\iota^\vee \circ
\lambda(\chi) \circ \iota=0$.
\end{proof}

%=================================================
%5555555555555555555555555555555555555555555555555555
%=================================================

\vspace{14mm}
\section{Comparison of the motivic fundamental group with the Mumford--Tate
group}\label{Sec:CompMotivicHodge}

\begin{par}
In this section, we show that for every 1--motive $M$ over $\IC$ the mixes Hodge substructures  $W_{-1}\fh^M$ and $\Vnot P(M)$ of $\End_\IQ(\Vnot M)$ are equal, via the isomorphism $\alpha_0$ from Proposition \ref{Pro:Alpha0AlphaEll}. We write $\Gamma$ for the absolute Hodge group over $\IQ$, that is, the Tannakian fundamental group of the category of mixed rational Hodge structures. So $\Gamma$ is a group scheme over $\IQ$ which acts on the underlying rational vector space of every mixed rational Hodge structure, in such a way that we have an equivalence of categories
$$\mathrm{MHS}_\IQ \xrightarrow{\:\:\cong\:\:} \{\mbox{Finite dimensional $\IQ$--linear representations of $\Gamma$}\}$$
which is compatible with duals and tensor products. We look at mixed Hodge structures, and in particular at Hodge realisations of 1--motives, as $\IQ$--vector spaces together with an action of $\Gamma$. The Mumford--Tate group of a Hodge structure $V$ is then just the image of $\Gamma$ in $\GL_V$. For a 1--motive $M$ we write $\Gamma_M$ for the maximal subgroup of $\Gamma$ acting trivially on $\Vnot M$, in particular if notations are as in \ref{Par:1MotIntro}, then $\Gamma_{\gr^W_\ast\!\! M} = \Gamma_{T\oplus A \oplus Y}$ is the largest subgroup of $\Gamma$ acting trivially on all pure subquotients of $\Vnot M$.
\end{par}

\vspace{4mm}
\begin{para}\label{Par:DefinitionTheta0}
Let $M = [u:Y\to G]$ be a 1--motive over $\IC$ and set $U := U(M)$. The action
of $\Gamma$ on $\Vnot M$ is given by a group homomorphism $\rho_0:\Gamma \to
\GL_{\Vnot M}$, whose image is by definition the Mumford--Tate group of $M$. The
subgroup  $\Gamma_{\gr^W_\ast\!\! M}$ acts on $\Vnot M$ by unipotent
automorphisms, and we have 
$$\log(\rho_0(\gamma)) = (\rho_0(\gamma)-1) -
\textstyle\frac12(\rho_0(\gamma)-1)^2\qquad  \in W_{-1}\End(\Vnot M)$$
We have constructed a canonical isomorphism $\alpha_0: \Vnot U \to
W_{-1}\End(\Vnot M)$, and by composing we get a map $\vartheta_0 :=
\alpha_0^{-1}\circ\log\circ\rho_0$. The image of
$\vartheta_0:\Gamma_{\gr^W_\ast\!\! M}\to \Vnot U$ is a Lie subalgebra of $\Vnot
U$, isomorphic via $\alpha_0$ to the Lie algebra $W_{-1}\fh^M$. Here is the
picture:
$$\begin{diagram}
\setlength{\dgARROWLENGTH}{4mm}
\node[2]{\Gamma_{\gr^W_\ast\!\!
M}}\arrow[2]{s,r}{\log\circ\rho_0}\arrow{ssw,t}{\vartheta_0}\arrow{e,t}{
\subseteq}\node{\Gamma}\arrow{s,r}{\rho_0}\\
\node[3]{\GL_{\Vnot M}}\\
\node{\hspace{-6mm}\Vnot P(M) \subseteq \Vnot
U(M)}\arrow{e,t}{\alpha_0}\node{W_{-1}\End(\Vnot M)\hspace{-4mm}}
\end{diagram}$$
More explicitly, the map $\vartheta_0$ is given as follows: Choose a section
$s:Y\otimes \IQ \to \Vnot M$ and a retraction $r:\Vnot M \to \Vnot T$. Then,
$\vartheta_0(\gamma)$ is represented by the pair 
\begin{equation}\label{Eqn:Theta0Explicit1}
(f-h-\textstyle\frac12e,g) \quad \in \Hom_\IQ(Y\otimes \IQ,\Vnot G)\times
\Hom_\IQ(\Vnot M_A, \Vnot T)\tag{$\ast$}
\end{equation}
where $f,g,h$ and $e$ are given by
\begin{equation}\label{Eqn:Theta0Explicit2}
f(y) = \gamma s(y)-s(y) \qquad g(a) = r(\gamma \widetilde a- \widetilde a)
\qquad h = r \circ f \qquad e(y) = \gamma^2 s(y)-2\gamma s(y) +
s(y)\tag{$\ast\ast$}
\end{equation}
for all $y\in Y$ and $a \in \Vnot M_A$. In the second equality, $\widetilde a$
is any element of $\Vnot M$ mapping to $a\in \Vnot M_A$ and $\Vnot T$ is
understood to be contained in $\Vnot G$. The class of $(f-h-\frac12e,g)$ in
$\Vnot U(M)$ is independent of the choice of $s$ and $r$. The main result of
this section is:
\end{para}

\vspace{4mm}
\begin{thm}\label{Thm:Comparison0}
The image of the map $\vartheta_0:\Gamma_{\gr^W_\ast\!\! M}\to \Vnot U(M)$ is
equal to $\Vnot P(M)$. In other words, the map $\alpha_0$ induces an
isomorphism 
$$\Vnot P(M) \xrightarrow{\:\:\cong\:\:} W_{-1}\fh^M$$
of rational Hodge structures.
\end{thm}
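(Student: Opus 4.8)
The plan is to compare the image of $\vartheta_0$ with $\Vnot P(M)$ by using the Tannakian description of the Mumford--Tate group together with the explicit formulas $(\ast)$ and $(\ast\ast)$. First I would recall that $P(M)$ is, by Definition \ref{Def:P(M)}, the smallest semiabelian subvariety of $U(M)$ containing a nonzero multiple of $\overline u$, so that $\Vnot P(M)$ is the smallest sub-Hodge-structure of $\Vnot U(M)$ containing the image of the point $\overline u$ under the map $\Vnot$ applied to ``multiplication by $\overline u$'' from $\IZ$; equivalently, $\Vnot P(M)$ is the smallest $\Gamma$-subrepresentation of $\Vnot U(M)$ whose corresponding semiabelian subvariety contains a multiple of $\overline u$. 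The point of the proof is that \emph{both} the image of $\vartheta_0$ and $\Vnot P(M)$ can be characterised as the Lie algebra of the unipotent radical of the Mumford--Tate group, one on the ``geometric'' side and one on the ``Hodge-theoretic'' side, so that $\alpha_0$ identifies them.

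Concretely I would proceed in these steps. \emph{Step 1:} Using \ref{Par:DefinitionTheta0}, the image of $\vartheta_0$ is the Lie algebra $W_{-1}\fh^M$ transported into $\Vnot U(M)$ via $\alpha_0^{-1}$; this is already essentially stated there (the image of $\Gamma_{\gr^W_\ast\! M}\to\GL_{\Vnot M}$ is the unipotent radical of the Mumford--Tate group, its logarithm lands in $W_{-1}\End(\Vnot M)$, and $\alpha_0$ is an isomorphism of Lie algebras by the computations of Section \ref{Sec:CommentsOnLie}). So it suffices to show that $\alpha_0^{-1}(W_{-1}\fh^M) = \Vnot P(M)$, i.e.\ that the image of $\vartheta_0$ is exactly $\Vnot P(M)$. \emph{Step 2, the inclusion $\Vnot P(M)\subseteq\im\vartheta_0$:} Since $M$ is an extension whose class in $\Ext$-groups is encoded by the point $\overline u\in U(M)(\IC)$, a Hodge-theoretic argument (Deligne's description of the $\Ext^1$ in $\mathrm{MHS}$ and the fact that $\Gamma_{\gr^W_\ast\! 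M}$ acts by the ``unipotent part'') shows that $\overline u$, viewed in $\Vnot U(M)$ after choosing $\ell^{-n}$-type divisions, lies in the Lie algebra $\im\vartheta_0$; more precisely, the extension class of $\Vnot M$ determines, and is determined by, a homomorphism out of $\Gamma_{\gr^W_\ast\! M}$ whose image under $\vartheta_0$ generates a subrepresentation containing $\overline u$. As $\im\vartheta_0$ is a sub-Hodge-structure of $\Vnot U(M)$ containing (a multiple of) $\overline u$, minimality of $P(M)$ gives $\Vnot P(M)\subseteq\im\vartheta_0$. \emph{Step 3, the reverse inclusion:} Here one uses that $\alpha_0(\im\vartheta_0)=W_{-1}\fh^M$ is the Lie algebra of the unipotent radical of the Mumford--Tate group $H^M$, and that $H^M$ is generated, over the subgroup fixing $\gr^W$, by the monodromy coming from the extension datum $\overline u$ alone --- this is the content of Deligne's theory: the unipotent radical of $H^M$ is as small as the weight-graded pieces and the extension class $\overline u$ allow. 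Translated through $\alpha_0$, this says $\im\vartheta_0$ is contained in the smallest sub-Hodge-structure of $\Vnot U(M)$ ``generated by $\overline u$'', which is $\Vnot P(M)$.

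The main obstacle is Step 3: making precise the claim that the unipotent radical of the Mumford--Tate group of a mixed Hodge structure with fixed graded pieces is ``generated'' by the single extension class $\overline u\in\Vnot U(M)$. This requires knowing that $U(M)$ really is the universal receptacle for the extension data of $M$ --- which is why Proposition \ref{Pro:Alpha0AlphaEll} identifies $\Vnot U(M)$ with $W_{-1}\End(\Vnot M)$ --- together with a result that the Mumford--Tate group of a mixed Hodge structure is controlled by its graded Mumford--Tate group and the classes in $W_{-1}\End$ coming from the Hodge filtration on the extension, i.e.\ Deligne's ``semisimplicity'' analysis of the unipotent part (cf.\ the weight filtration on $\fh^M$ and the fact that $W_{-1}\fh^M$ is the nilpotent radical). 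I would handle this by choosing a tensor space as in \ref{Par:MTAlternativeDesc} realising $H^M$ as a stabiliser, observing that the corresponding absolute Hodge cycles cut out exactly the condition defining $P(M)$ inside $U(M)$, and invoking Theorem \ref{Thm:BrylinskiDeligne} to turn ``Hodge'' into ``absolute Hodge'' so that the algebraic subgroup generated by $\overline u$ matches the Mumford--Tate picture. Once both inclusions are in place, the final isomorphism $\Vnot P(M)\xrightarrow{\cong}W_{-1}\fh^M$ is just $\alpha_0$ restricted to $\Vnot P(M)$.
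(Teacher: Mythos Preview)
Your proposal has genuine gaps, and the overall architecture does not match what is needed.

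The central confusion is in Step~2. The point $\overline u$ lives in $U(M)(\IC)$, not in $\Vnot U(M)$, so the phrase ``a sub-Hodge-structure of $\Vnot U(M)$ containing (a multiple of) $\overline u$'' is not meaningful. The link between $\overline u$ and the Hodge realisation is cohomological: $\overline u$ determines a class $\kappa_0(\overline u\otimes 1)\in H^1(\Gamma,\Vnot U(M))$, i.e.\ a \emph{cocycle} with values in $\Vnot U(M)$, not an element of $\Vnot U(M)$. The paper constructs this Kummer map $\kappa_0$ explicitly and proves it is injective and natural, then shows (Lemma~\ref{Lem:Vartheta0OnPurePart}) that $\vartheta_0$ agrees with $\kappa_0(\overline u\otimes 1)$ after projecting to the abelian quotient $U_A$ and after restricting to $\Gamma_{G\oplus M_A}$. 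This, together with a weight-filtration d\'evissage (Lemma~\ref{Lem:SSDetectionSub0}), yields $\im(\vartheta_0)\subseteq \Vnot P(M)$. Note this is the inclusion opposite to the one you try first; your minimality argument for $\Vnot P(M)\subseteq\im(\vartheta_0)$ rests on the category error just described and does not go through.

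Step~3 is also too vague, and the invocation of Theorem~\ref{Thm:BrylinskiDeligne} is misplaced: that result compares Hodge cycles under different complex embeddings and is used in Section~\ref{Sec:GaloisInMT} to put the Galois image inside the Mumford--Tate group; it has no role in the purely Hodge-theoretic Theorem~\ref{Thm:Comparison0}. The actual argument for the remaining inclusion $\Vnot P(M)\subseteq\im(\vartheta_0)$ in the paper is: a Hochschild--Serre computation (Lemma~\ref{Lem:HSSSIso0}) gives $H^1(\fh^M,\Vnot\widetilde G)\cong\Hom_{\fh^M}(W_{-1}\fh^M,\Vnot\widetilde G)$; combining this with injectivity of $\kappa_0$ and Corollary~\ref{Cor:HodgeDividing} one shows that restriction $\Hom_\Gamma(\Vnot P,\Vnot\widetilde G)\to\Hom_\Gamma(\im\vartheta_0,\Vnot\widetilde G)$ is injective (Lemma~\ref{Lem:InjectViaDividing0}); finally a semisimplicity detection lemma (Lemma~\ref{Lem:DetectionOnSplit0}), using that $\widetilde G=\gr^W_\ast G$ contains all isogeny types of $P(M)$, forces $\im\vartheta_0=\Vnot P(M)$. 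None of these ingredients appears in your sketch.
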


\vspace{4mm}
\begin{para}\label{Par:ConstructionKappa0}
We begin with an auxiliary construction. Let $G$ be a semiabelian variety over
$\IC$, and let us construct a $\IQ$--linear map 
$$\kappa_0: G(\IC) \otimes\IQ \to H^1(\Gamma,\Vnot G)$$
as follows: Given a complex point $x\in G(\IC)$ we consider the 1--motive $M_x
:= [\IZ\xrightarrow{\:1\mapsto x\:} G]$. The weight filtration on $M_x$ induces
a long exact sequence of rational vector spaces starting with
$$0 \to (\Vnot G)^\Gamma \to (\Vnot M_x)^\Gamma \to \IQ
\xrightarrow{\:\:\partial\:\:} H^1(\Gamma,\Vnot G) \to \cdots$$
and we set $\kappa_0(x \otimes 1) = \partial(1)$. Explicitly, elements of the
integral Hodge realisation $\Tnot M_x$ are pairs $(v,n)\in\Lie G(\IC)\times \IZ$
with $\exp(v)=nx$. We define $\kappa_0(x \otimes 1)$ to be the class of the
cocycle $\gamma \mapsto \gamma(v,1)-(v,1)$ where $v\in \Lie G(\IC)$ is any
element such that $\exp(v)=x$.
\end{para}

\vspace{4mm}
\begin{prop}\label{Pro:Kappa0Properties}
The map $\kappa_0$ constructed in \ref{Par:ConstructionKappa0} is injective and
natural in $G$.
\end{prop}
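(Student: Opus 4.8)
The plan is to establish the two asserted properties of $\kappa_0$ separately, exploiting that $\kappa_0$ is realised as a connecting homomorphism in Galois (equivalently, $\Gamma$-) cohomology.

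\emph{Naturality.} Given a morphism of semiabelian varieties $\varphi: G \to G'$ over $\IC$ and a point $x \in G(\IC)$, I would observe that $\varphi$ induces a morphism of $1$--motives $M_x = [\IZ \xrightarrow{1 \mapsto x} G] \to M'_{\varphi(x)} = [\IZ \xrightarrow{1 \mapsto \varphi(x)} G']$ compatible with the weight filtrations, hence a morphism of the associated long exact sequences in $\Gamma$--cohomology. Naturality of connecting homomorphisms then gives the commuting square relating $\kappa_0$ for $G$ and for $G'$; on the explicit cocycle level this is just the statement that if $\exp(v) = x$ then $\Lie(\varphi)(v)$ is a logarithm of $\varphi(x)$, so $\gamma \mapsto \gamma(v,1) - (v,1)$ maps to $\gamma \mapsto \gamma(\Lie(\varphi)(v),1) - (\Lie(\varphi)(v),1)$.

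\emph{Injectivity.} Here I would work with the explicit description. Suppose $\kappa_0(x \otimes 1) = 0$, i.e.\ the cocycle $\gamma \mapsto \gamma(v,1) - (v,1)$ is a coboundary, where $\exp(v) = x$. Writing the coboundary condition out, this means there is a fixed $w \in \Vnot G$ with $\gamma(v,1) - (v,1) = \gamma w - w$ for all $\gamma$, equivalently $(v,1) - w \in (\Vnot M_x)^\Gamma$; unwinding, this says the extension of $\IQ$ by $\Vnot G$ given by $\Vnot M_x$ splits $\Gamma$--equivariantly. Via Deligne's full faithfulness of the Hodge realisation functor on $1$--motives (stated in \ref{Para:IntroHodgeRealis}) — or more elementarily, since a $\Gamma$--splitting is the same as a splitting of mixed Hodge structures, and a splitting of $M_x$ as a $1$--motive — one concludes that $M_x$ is isomorphic to the split $1$--motive $[\IZ \xrightarrow{0} G]$, which forces $x$ to be a torsion point of $G(\IC)$, hence $x \otimes 1 = 0$ in $G(\IC) \otimes \IQ$. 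The cleanest route is probably: a class in $H^1(\Gamma, \Vnot G)$ lying in the image of $\kappa_0$ is the obstruction to lifting $1 \in \IQ = \Vnot \IZ$ to a $\Gamma$--invariant (equivalently, Hodge) element of $\Vnot M_x$; its vanishing means $\Vnot M_x \cong \Vnot G \oplus \IQ$ as mixed Hodge structures, and by full faithfulness $M_x$ is split, i.e.\ $x$ is torsion.

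\emph{Main obstacle.} The naturality half is essentially formal. The real content is injectivity, and the crux is translating "$\kappa_0(x \otimes 1) = 0$" into "$x$ is torsion." I expect the cleanest argument to route through the full faithfulness of $\Tnot$ (resp.\ $\Vnot$) on $1$--motives recalled in \ref{Para:IntroHodgeRealis}: a $\Gamma$--equivariant (Hodge) splitting of the weight filtration on $\Vnot M_x$ lifts to a splitting of the $1$--motive $M_x$ itself, and a splitting of $[\IZ \xrightarrow{1\mapsto x} G]$ is exactly a homomorphism $\IZ \to G$ killing $x$ after the inclusion $Y = \IZ$, which over $\IC$ happens precisely when $x$ is torsion. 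One should double-check the case $G$ non-trivial versus $G = 0$, but these are degenerate and cause no difficulty.
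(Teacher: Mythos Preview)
Your proposal is correct and follows essentially the same route as the paper: both argue that if the cocycle $\gamma \mapsto \gamma(v,1)-(v,1)$ is a coboundary, then the weight filtration on the Hodge realisation of $M_x$ splits, and then invoke Deligne's full faithfulness of Hodge realisation on 1--motives to conclude that $x$ is torsion. The only cosmetic difference is that the paper works integrally: it chooses $n>0$ with $nw \in \Tnot G$, exhibits the $\Gamma$--invariant element $(nv-nw,n) \in \Tnot M_x$, and applies full faithfulness of $\Tnot$ to produce an actual morphism of 1--motives $[\IZ \to 0] \to [n\IZ \to G]$ showing $nx=0$. Your phrasing ``$M_x$ is isomorphic to the split 1--motive'' should strictly read ``isogenous'', since you are using the rational realisation $\Vnot$; but this is harmless, as full faithfulness of $\Vnot$ on the isogeny category follows from that of $\Tnot$, and a splitting up to isogeny is exactly the statement that $x$ is torsion.
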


\begin{proof}
Let $x \in G(\IC)$ be a complex point such that the cocycle $c:\gamma \mapsto
\gamma(v,1)-(v,1)$ is a coboundary, where $v\in \Lie G(\IC)$ is such that
$\exp(v)=x$. We have to show that $x$ is a torsion point. Indeed, since $c$ is a
coboundary, there exists an element $w \in \Vnot G \subseteq \Lie G(\IC)$ such that
$c(\gamma) = \gamma w - w$ for all $\gamma \in \Gamma$. Let $n>0$ be an integer
such that $nw\in\Tnot G = \ker(\exp)$. Then $(nv-nw,n)$ is a $\Gamma$--invariant
element of $\Tnot M_x$, hence the linear map $\IZ \to \Tnot [n\IZ\to G]$ sending
$1$ to $(nv-nw,n)$ is a morphism of Hodge structures. From this we get a
morphism of 1--motives
$$\begin{diagram}
\setlength{\dgARROWLENGTH}{4mm}
\node{\IZ}\arrow{s,l}{\cdot n}\arrow[2]{e}\node[2]{0}\arrow{s}\\
\node{n\IZ}\arrow[2]{e,t}{n\mapsto nx}\node[2]{G}
\end{diagram}$$
because integral Hodge realisation is a fully faithful functor by \cite{Deli74},
10.1.3. This shows that $nx=0$. Naturality of $\kappa_0$ follows from naturality
of the Hodge realisation.
\end{proof}

\vspace{4mm}
\begin{prop}\label{Pro:DividingOnePoint0}
Let $M =[\IZ \xrightarrow{\:\:u\:\:} G]$ be a 1--motive over $\IC$. The class
$\kappa_0(u(1) \otimes 1)$ restricts to zero in $H^1(\Gamma_M,\Vnot G)$.
\end{prop}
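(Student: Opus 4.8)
The plan is to unwind the definition of $\kappa_0$ and to use the functorial behaviour of the Hodge realisation together with the construction of $\Gamma_M$. Recall that $\kappa_0(u(1)\otimes 1)$ is the class of the cocycle $\gamma\mapsto \gamma(v,1)-(v,1)$ in $H^1(\Gamma,\Vnot G)$, where $v\in \Lie G(\IC)$ is chosen with $\exp(v)=u(1)$, and that $(v,1)$ lives in $\Tnot M\subseteq \Lie G(\IC)\times\IZ$. The point is that $\Gamma_M$ is, by definition, the maximal subgroup of $\Gamma$ acting trivially on $\Vnot M$. Hence for every $\gamma\in\Gamma_M$ the element $(v,1)\in \Vnot M$ is fixed, so $\gamma(v,1)=(v,1)$, and the restricted cocycle is identically zero. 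This already shows that the class restricts to zero in $H^1(\Gamma_M,\Vnot G)$ — in fact the cocycle itself restricts to zero, which is even stronger.

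Let me reorganise this into the clean argument I would actually write. First I would recall the exact sequence from \ref{Par:ConstructionKappa0}, applied to $M=M_{u(1)}$ itself (here $Y=\IZ$ and $M=M_x$ with $x=u(1)$): restricting the Galois/Hodge-group action to $\Gamma_M$ we get
$$0 \to (\Vnot G)^{\Gamma_M} \to (\Vnot M)^{\Gamma_M} \to \IQ \xrightarrow{\:\partial\:} H^1(\Gamma_M,\Vnot G) \to \cdots$$
and $\kappa_0(u(1)\otimes 1)$ restricted to $\Gamma_M$ is precisely $\partial(1)$ for this sequence. Next, since $\Gamma_M$ acts trivially on $\Vnot M$ by definition, we have $(\Vnot M)^{\Gamma_M}=\Vnot M$, so the map $(\Vnot M)^{\Gamma_M}\to\IQ$ is surjective (it is the weight-graded projection $\Vnot M\to \gr^W_0\Vnot M=Y\otimes\IQ=\IQ$, which is onto). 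Exactness then forces $\partial=0$, hence $\partial(1)=0$, which is exactly the claim.

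The only thing needing a word of care is the compatibility between the two descriptions of $\kappa_0$ (the connecting-homomorphism description and the explicit cocycle $\gamma\mapsto\gamma(v,1)-(v,1)$), and the fact that restriction of $\kappa_0(u(1)\otimes 1)$ along $\Gamma_M\hookrightarrow\Gamma$ agrees with the connecting map of the restricted sequence — but this is just functoriality of the long exact cohomology sequence in the group, so no real obstacle arises. I do not anticipate a hard step here; the statement is essentially a tautology once one observes that $(v,1)$ is an element of $\Vnot M$ and that $\Gamma_M$ fixes $\Vnot M$ pointwise by construction. If one prefers to avoid even mentioning the connecting map, the one-line proof is: for $\gamma\in\Gamma_M$ we have $\gamma(v,1)=(v,1)$ since $(v,1)\in\Tnot M\subseteq\Vnot M$ and $\Gamma_M$ acts trivially on $\Vnot M$, so the cocycle representing $\kappa_0(u(1)\otimes1)$ vanishes on $\Gamma_M$, a fortiori its class does.
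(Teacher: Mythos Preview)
Your proof is correct and follows essentially the same approach as the paper: the paper's proof is the one-line version you give at the end, namely that $(v,1)\in\Vnot M$ is fixed by $\Gamma_M$ by definition, so the cocycle $\gamma\mapsto\gamma(v,1)-(v,1)$ vanishes identically on $\Gamma_M$. Your additional reformulation via the connecting homomorphism of the restricted long exact sequence is a perfectly valid alternative phrasing of the same observation.
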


\begin{proof}
Choose $v\in\Lie G(\IC)$ such that $\exp(v)=u(1)$, so that the pair
$(v,1)\in\Lie G(\IC)\times\IZ$ defines an element of $\Vnot M$. By definition of
$\Gamma_M$ we have $\gamma(v,1) = (v,1)$ and hence $\kappa_0(\gamma)=0$ for all
$\gamma \in \Gamma_M$. 
\end{proof}

\vspace{4mm}
\begin{prop}\label{Pro:GroupInclusions0}
Let $M=[u:Y\to G]$ be a 1--motive over $\IC$ and consider the 1--motives 
$$M_U := [\IZ \xrightarrow{\:1\mapsto \overline u\:} U(M)] \qqet M_P := [n\IZ
\xrightarrow{\:n\mapsto n\overline u\:} P(M)]$$
where $n\geq 1$ is an integer such that the point $n\overline u$ of $U(M)$
belongs to $P(M)$ and $\overline u$ is as in Definition \ref{Def:P(M)}. The
inclusions $\Gamma_M \:=\: \Gamma_{M_U} \:\subseteq\: \Gamma_{M_P}$ hold in
$\Gamma$.
\end{prop}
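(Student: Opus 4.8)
The plan is to establish the two inclusions separately, using in each case the fact that the Hodge realisation functor $\Tnot$ is fully faithful (\cite{Deli74}, 10.1.3) to translate triviality of a Galois-like action into an actual morphism of $1$-motives.

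First I would treat the equality $\Gamma_M = \Gamma_{M_U}$. Recall $\Gamma_M$ is the largest subgroup of $\Gamma$ acting trivially on $\Vnot M$, and $\Gamma_{M_U}$ the largest subgroup acting trivially on $\Vnot M_U = \Vnot[\IZ\xrightarrow{1\mapsto\overline u}U(M)]$. The inclusion $\Gamma_{M_U}\subseteq\Gamma_M$ (in the sense that $\Gamma_{M_U}$ acts trivially on $\Vnot M$, hence $\Gamma_{M_U}\subseteq\Gamma_M$ as subgroups) will be immediate once one observes that all the pure pieces of $\Vnot M$ occur among the pure pieces of $\Vnot U(M)$, together with $\IQ=\Vnot\IZ$: indeed $\gr^W_\ast U(M)$ is built from $\cHom(Y,A)$, $\cExt^1(A,T)$ and $\cHom(Y,T)$, so the abelian and toric parts of $M$ themselves do not literally appear. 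So a little care is needed: I would instead argue directly at the level of realisations. By Proposition \ref{Pro:Alpha0AlphaEll} there is a canonical $\Gamma$-equivariant isomorphism $\alpha_0:\Vnot U(M)\xrightarrow{\cong} W_{-1}\End_\IQ(\Vnot M)$, and an element $\gamma\in\Gamma$ acts trivially on $\Vnot U(M)$ if and only if it acts trivially on $W_{-1}\End(\Vnot M)$. Now a $\gamma\in\Gamma_{\gr^W_\ast M}$ acts trivially on all of $\Vnot M$ (i.e.\ lies in $\Gamma_M$) if and only if $\log\rho_0(\gamma)=0$, if and only if the corresponding conjugation action on $W_{-1}\End(\Vnot M)$ is trivial \emph{and} $\gamma$ fixes the element $\vartheta_0(\gamma)\otimes$(appropriate data) --- more cleanly: $\gamma$ acts trivially on $\Vnot M$ iff $\gamma$ acts trivially on $\Vnot M_U$, because $\Vnot M_U$ is an extension of $\IQ$ by $\Vnot U(M)$ whose extension class is exactly $\kappa_0(\overline u\otimes 1)$, and unwinding the cocycle description in \ref{Par:ConstructionKappa0} together with the explicit formulae \eqref{Eqn:Theta0Explicit2} shows that the cocycle $\gamma\mapsto\gamma(v,1)-(v,1)$ representing the action on $\Vnot M_U$ records precisely the data $f,g,h,e$ attached to the action of $\gamma$ on $\Vnot M$. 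Thus $\gamma$ kills this cocycle (and acts trivially on $\Vnot U(M)$, equivalently on $\Vnot M_A$ and $\Vnot T$) if and only if $\gamma\in\Gamma_M$. This gives $\Gamma_M=\Gamma_{M_U}$.

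Next I would prove $\Gamma_{M_U}\subseteq\Gamma_{M_P}$. Here $M_P=[n\IZ\xrightarrow{n\mapsto n\overline u}P(M)]$ and $n$ is chosen so that $n\overline u\in P(M)(\IC)$. There is an evident morphism of $1$-motives $M_P\to M_U$ given by the inclusion $P(M)\hookrightarrow U(M)$ and the multiplication-by-$n$ map $n\IZ\hookrightarrow\IZ$; applying $\Vnot$ and using that $\Vnot(n\IZ\hookrightarrow\IZ)$ is an isomorphism, one gets a map $\Vnot M_P\to\Vnot M_U$ which is an isomorphism onto a sub-Hodge structure (in fact $\Vnot P(M)\hookrightarrow\Vnot U(M)$ is the inclusion of a sub, and the weight-zero pieces agree). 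Hence any $\gamma$ acting trivially on $\Vnot M_U$ acts trivially on its sub $\Vnot M_P$, i.e.\ $\Gamma_{M_U}\subseteq\Gamma_{M_P}$. Equivalently, and perhaps more transparently: by Proposition \ref{Pro:DividingOnePoint0} applied to $M_U$, the class $\kappa_0(\overline u\otimes 1)$ restricts to zero in $H^1(\Gamma_{M_U},\Vnot U(M))$; since $n\overline u$ lands in $P(M)$ and $\kappa_0$ is natural (Proposition \ref{Pro:Kappa0Properties}), the class $\kappa_0(n\overline u\otimes 1)=n\,\kappa_0(\overline u\otimes 1)$ restricts to zero in $H^1(\Gamma_{M_U},\Vnot P(M))$ as well, which says exactly that $\Gamma_{M_U}$ fixes the relevant element of $\Vnot M_P$, hence $\Gamma_{M_U}\subseteq\Gamma_{M_P}$.

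The main obstacle I anticipate is the first equality $\Gamma_M=\Gamma_{M_U}$: one must be careful that triviality of the $\Gamma$-action on $\Vnot U(M)\cong W_{-1}\End(\Vnot M)$ is about the \emph{ambient} action on that Hom-space, not the conjugation action, and one has to separately ensure that $\Gamma_{M_U}$ (which by definition also fixes $\overline u$-related data and the pure pieces $\Vnot M_A$, $\Vnot T$, $Y\otimes\IQ$) forces triviality on all of $\Vnot M$. This is exactly where full faithfulness of $\Tnot$ enters, in the same spirit as the proof of Proposition \ref{Pro:Kappa0Properties}: a $\Gamma_{M_U}$-invariant splitting datum for the extension defining $\Vnot M$ produces, after clearing denominators, a genuine morphism of $1$-motives splitting $M$ up to isogeny over $\IC$, which is what trivially of the action means. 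Once that translation is in place the rest is bookkeeping with the explicit cocycles of \ref{Par:ConstructionKappa0} and the formulae \eqref{Eqn:Theta0Explicit1}, \eqref{Eqn:Theta0Explicit2}.
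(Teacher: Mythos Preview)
Your argument for $\Gamma_{M_U}\subseteq\Gamma_{M_P}$ is correct and is exactly the paper's part (c): $\Vnot M_P$ is a sub-Hodge-structure of $\Vnot M_U$, so anything acting trivially on the latter acts trivially on the former.

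For the equality $\Gamma_M=\Gamma_{M_U}$, however, your approach is genuinely different from the paper's and, as written, has a real gap. The paper argues purely Tannakianly: it exhibits explicit morphisms of $1$--motives showing that $\Vnot M$ is a subquotient of a tensor construction on $\Vnot M_U$ (part a), and that $\Vnot M_U$ is a subquotient of $\Hom_\IQ(Y\otimes\IQ,\Vnot M)\oplus\Hom_\IQ(\Vnot M,\Vnot T)$, hence lies in $\angl{\Vnot M}^\otimes$ (part b). No cocycle computation is used.

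Your direct route runs into trouble at the step ``$\gamma$ acts trivially on $\Vnot U(M)\cong W_{-1}\End_\IQ(\Vnot M)$, equivalently on $\Vnot M_A$ and $\Vnot T$''. The $\Gamma$--action on $W_{-1}\End_\IQ(\Vnot M)$ is the conjugation action, and triviality of conjugation does \emph{not} force triviality of the action on the graded pieces of $\Vnot M$. Concretely, $\gr^W_{-1}\Vnot U(M)\cong\Hom(Y\otimes\IQ,\Vnot A)\oplus\Hom(\Vnot A,\Vnot T)$: if $Y=0$ this carries no information about $\gamma|_{\Vnot A}$ at all, and even when $Y\neq 0$ you are implicitly using that $\Gamma$ acts trivially on $Y\otimes\IQ$ to extract $\gamma|_{\Vnot A}$ from the first summand --- none of this is said. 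The final paragraph, invoking full faithfulness of $\Tnot$ to produce ``a splitting of $M$ up to isogeny'', is not what is needed: triviality of $\gamma$ on $\Vnot M$ has nothing to do with $M$ being split, and the sentence does not yield a proof. Similarly, for the inclusion $\Gamma_M\subseteq\Gamma_{M_U}$ you need $\Vnot M_U$ (the \emph{extension}, not just $\Vnot U(M)$) to lie in $\angl{\Vnot M}^\otimes$; knowing the extension class is $\kappa_0(\overline u\otimes 1)$ does not by itself place the extension in that subcategory. The paper's explicit morphism in part (b) is precisely what supplies this.
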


\begin{proof}
We write $\angl{V}$ for the Tannakian subcategory of the category of rational
Hodge structures generated by a Hodge structure $V$. We have to show
$$\mathrm{a)}\quad \Vnot M \in \angl{\Vnot M_U} \qquad \mathrm{b)}\quad \Vnot
M_U \in \angl{\Vnot M} \qquad \mathrm{c)}\quad \Vnot M_P \in \angl{\Vnot M_U}$$
For a), consider the morphisms of 1--motives given by the following commutative
squares:
$$\begin{diagram}
\setlength{\dgARROWLENGTH}{4mm}
\node{\IZ}\arrow{s,=}\arrow[2]{e,t}{1\mapsto \overline
u}\node[2]{U(M)}\arrow{s,r}{\mathrm{proj}} \node[3]{Y\otimes\IZ}
\arrow{s,l}{\cong} \arrow[3]{e,t}{y\otimes 1\mapsto y\otimes u}\node[3]{Y\otimes
\cHom(Y,G)\hspace{-20mm}}\arrow{s,r}{y\otimes f \mapsto f(y)}\\
\node{\IZ}\arrow[2]{e,b}{1\mapsto
u}\node[2]{\cHom(Y,G)}\node[3]{Y}\arrow[3]{e,b}{u}\node[3]{G}
\end{diagram}$$
Both morphisms induce surjective morphisms of Hodge structures. The left hand
diagram shows that $\Vnot[\IZ\to \cHom(Y,G)]$ belongs to $\angl{\Vnot M_U}$,
hence also the Hodge structure
$$(Y\otimes\IQ) \otimes \Vnot[\IZ\to \cHom(Y,G)] \cong \Vnot[Y\otimes\IZ \to
Y\otimes\cHom(Y,G)]$$
The right hand morphism shows that also $\Vnot M$ belongs to $\angl{\Vnot M_U}$.
The verification of b) is similar, here we consider the morphism of 1--motives
given by
$$\begin{diagram}
\setlength{\dgARROWLENGTH}{4mm}
\node{\IZ}\arrow{s,l}{1\mapsto(\id,\id)}\arrow[7]{e,t}{1\mapsto
(u,\eta)}\node[7]{\cHom(Y,G)\oplus\cExt^1(M_A,T)}\arrow{s,=}\\
\node{\cHom(Y,Y)\oplus\cHom(T,T)}\arrow[7]{e,b}{(f,g)\mapsto (u\circ f, g_\ast
\eta)}\node[7]{\cHom(Y,G)\oplus\cExt^1(M_A,T)}
\end{diagram}$$
where $\eta\in \Ext^1_k(M_A,T)$ is the extension class defined by $M$. This
diagram induces an injection of Hodge structures. The Hodge structure associated
with the lower row is isomorphic to the direct sum of $\Hom_\IQ(Y\otimes\IQ,
\Vnot M)$ and $\Hom_\IQ(\Vnot M,\Vnot T)$, hence belongs to $\angl{\Vnot M}$.
Hence also the Hodge structure associated with the upper row belongs to
$\angl{\Vnot M}$, and $\Vnot M_U$ is a quotient of this Hodge structure by
definition of $U(M)$. Finally, c) is obvious since $\Vnot M_P$ is a substructure
of $\Vnot M_U$.
\end{proof}

\vspace{4mm}
\begin{cor}\label{Cor:HodgeDividing}
Let $M=[u:Y\to G]$ be a 1--motive over $\IC$ and set $\widetilde G := \gr^W_\ast\!\! G =  T \oplus A$. Let $n\geq 1$ be an integer such that the point $n\overline u$
of $U(M)$ belongs to $P(M)$. For every $\psi\in\Hom_\IC(P(M),\widetilde G)$, the
cohomology class $\kappa_0(\psi(n\overline u)\otimes 1)\in H^1(\Gamma,
\Vnot\widetilde G)$ restricts to zero in $H^1(\Gamma_M, \Vnot\widetilde G)$.
\end{cor}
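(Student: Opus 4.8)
The plan is to deduce this corollary directly from the propositions that precede it, essentially by transport of structure along the maps $\kappa_0$ and the morphism $\psi$. First I would observe that $\psi \in \Hom_\IC(P(M),\widetilde G)$ is a morphism of semiabelian varieties, and that $\kappa_0$ is natural in the semiabelian variety by Proposition \ref{Pro:Kappa0Properties}. Applying naturality to $\psi$ (or rather to a morphism $P(M) \to \widetilde G$ extending $\psi$, noting $\widetilde G = \gr^W_\ast G$), we get a commutative square relating $\kappa_0$ on $P(M)$ and $\kappa_0$ on $\widetilde G$, which reduces the problem to showing that $\kappa_0(n\overline u \otimes 1) \in H^1(\Gamma, \Vnot P(M))$ restricts to zero in $H^1(\Gamma_M, \Vnot P(M))$.

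Next I would apply Proposition \ref{Pro:DividingOnePoint0} to the 1--motive $M_P = [n\IZ \xrightarrow{\, n \mapsto n\overline u\,} P(M)]$ from Proposition \ref{Pro:GroupInclusions0}. That proposition tells us that $\kappa_0(n\overline u \otimes 1)$ restricts to zero in $H^1(\Gamma_{M_P}, \Vnot P(M))$. So what remains is to compare the two subgroups $\Gamma_M$ and $\Gamma_{M_P}$ of $\Gamma$: if $\Gamma_M \subseteq \Gamma_{M_P}$, then restriction factors through $H^1(\Gamma_{M_P}, \Vnot P(M)) \to H^1(\Gamma_M, \Vnot P(M))$, and a class vanishing in the former vanishes in the latter. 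But this inclusion is exactly the content of Proposition \ref{Pro:GroupInclusions0}, which gives $\Gamma_M = \Gamma_{M_U} \subseteq \Gamma_{M_P}$. Chaining these three facts together — naturality of $\kappa_0$, vanishing over $\Gamma_{M_P}$, and the group inclusion $\Gamma_M \subseteq \Gamma_{M_P}$ — yields the claim.

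A small technical point I would need to address carefully: the element $n\overline u$ is a point of $P(M)$, so $\kappa_0(n\overline u \otimes 1)$ a priori lives in $H^1(\Gamma, \Vnot P(M))$, and I want its image in $H^1(\Gamma, \Vnot \widetilde G)$ under the map induced by $\Vnot P(M) \to \Vnot \widetilde G = \Vnot \gr^W_\ast G$. The morphism $\psi: P(M) \to \widetilde G = T \oplus A$ need not factor through $P(M)$'s own graded pieces in an obvious compatible way, so I would phrase naturality as: the diagram
\[
\begin{diagram}
\setlength{\dgARROWLENGTH}{4mm}
\node{P(M)(\IC)\otimes\IQ}\arrow{s,l}{\psi}\arrow{e,t}{\kappa_0}\node{H^1(\Gamma,\Vnot P(M))}\arrow{s,r}{\psi_\ast}\\
\node{\widetilde G(\IC)\otimes\IQ}\arrow{e,t}{\kappa_0}\node{H^1(\Gamma,\Vnot\widetilde G)}
\end{diagram}
\]
commutes, which is precisely naturality of $\kappa_0$ applied to the morphism $\psi$. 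Then $\psi_\ast \kappa_0(n\overline u\otimes 1) = \kappa_0(\psi(n\overline u)\otimes 1)$, and restricting to $\Gamma_M$ gives zero by the argument above.

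The main obstacle is not really a conceptual one — the corollary is a formal consequence of the three preceding propositions — but rather making sure the group-theoretic bookkeeping is airtight: specifically, confirming that the restriction maps $H^1(\Gamma, -) \to H^1(\Gamma_{M_P}, -) \to H^1(\Gamma_M, -)$ compose correctly given $\Gamma_M \subseteq \Gamma_{M_P}$, and that the integer $n$ chosen here can be taken to be the same as the one in Proposition \ref{Pro:GroupInclusions0} (which it can, since both require only that $n\overline u \in P(M)$). Everything else is a direct invocation of results already established in this section.
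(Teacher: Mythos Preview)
Your proposal is correct and follows essentially the same route as the paper: naturality of $\kappa_0$ under $\psi$ (the paper phrases this via the morphism of 1--motives $(\id_\IZ,\psi):M_P\to M_\psi$, but that is exactly how naturality is checked), then Proposition~\ref{Pro:DividingOnePoint0} applied to $M_P$ gives vanishing over $\Gamma_{M_P}$, and Proposition~\ref{Pro:GroupInclusions0} gives $\Gamma_M\subseteq\Gamma_{M_P}$ so that restriction to $\Gamma_M$ factors through $\Gamma_{M_P}$. Your worry about $\psi$ not factoring through graded pieces is unnecessary---naturality of $\kappa_0$ in the semiabelian variety is all that is used, and you invoke it correctly.
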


\begin{proof}
Set $P := P(M)$, define 1--motives $M_P := [\IZ  \xrightarrow{\:1\mapsto
n\overline u\:} P]$ and $M_\psi := [\IZ  \xrightarrow{\:1\mapsto \psi(n\overline
u)\:} \widetilde G]$. The pair $(\id_\IZ,\psi)$ defines a morphism of 1--motives
$M_P\to M_\psi$, and we find a commutative diagram
$$\begin{diagram}
\setlength{\dgARROWLENGTH}{4mm}
\node{P(\IC) \otimes\IQ}\arrow{s,l}{\psi\otimes
1}\arrow{e,t}{\kappa_0}\node{H^1(\Gamma,\Vnot P)}\arrow{s}\arrow{e,t}{\res}
\node{H^1(\Gamma_M, \Vnot P)}\arrow{s}\\
\node{\widetilde G(\IC) \otimes\IQ}\arrow{e,t}{\kappa_0}\node{H^1(\Gamma,\Vnot
\widetilde G)}\arrow{e,t}{\res}\node{H^1(\Gamma_M,\Vnot \widetilde G)}
\end{diagram}$$
where all vertical maps are induced by $\psi$. The restriction map
$H^1(\Gamma,\Vnot P) \to H^1(\Gamma_M, \Vnot P)$ factors over $H^1(\Gamma_{M_P},
\Vnot P)$ by Proposition \ref{Pro:GroupInclusions0}, hence $\kappa_0(n\overline
u\otimes 1)$ maps to zero in $H^1(\Gamma_M, \Vnot P)$ by Proposition
\ref{Pro:DividingOnePoint0}. 
\end{proof}

\vspace{4mm}
\begin{para}\label{Par:SetupVartheta0OnPurePart}
With the help of the map $\kappa_0$ and its properties we established so far, we
can show that the image of $\vartheta_0$ is contained in $\Vnot P(M)$. Let $M =
[u:Y\to G]$ be a 1--motive over $\IC$ and write $M_A := M/W_{-2}M = [Y\to A]$
and $U := U(M)$. Let 
$$\pi: U \to U_A := \cHom(Y,A) \oplus \cExt^1(A,T)$$ 
be the projection onto the abelian quotient $U_A$ of $U$ and let $\iota$ be the
inclusion of $\Gamma_{G \oplus M_A}$ into $\Gamma_{\gr^W_\ast\!\! M}$. We
consider the two composition maps
$$U(\IC)\otimes\IQ \xrightarrow{\:\:\kappa_0\:\:} H^1(\Gamma_{\gr^W_\ast\!\!
M},\Vnot U) \xrightarrow{\:\:(\Vnot\pi)_\ast\:\:} H^1(\Gamma_{\gr^W_\ast\!\!
M},\Vnot U_A) \cong \Hom(\Gamma_{\gr^W_\ast\!\! M},\Vnot U_A)$$
and
$$U(\IC)\otimes\IQ \xrightarrow{\:\:\kappa_0\:\:} H^1(\Gamma_{\gr^W_\ast\!\!
M},\Vnot U) \xrightarrow{\:\:\iota^\ast\:\:} H^1(\Gamma_{G \oplus M_A},\Vnot U)
\cong \Hom(\Gamma_{G \oplus M_A},\Vnot U)$$
These send $\overline u \otimes 1$ to the homomorphisms $(\Vnot\pi)\circ
\kappa_0(\overline u \otimes 1)$ and $\kappa_0(\overline u \otimes 1)\circ
\iota$ respectively. Here we have used that $\Gamma_{\gr^W_\ast M}$ acts
trivially on $\Vnot U_A$ and that $\Gamma_{G \oplus M_A}$ acts trivially on
$\Vnot U$. The following lemma explains the relation between $\vartheta_0$ and
$\kappa_0$:
\end{para}

\vspace{4mm}
\begin{lem}\label{Lem:Vartheta0OnPurePart} % Lemma 5.9
Notations being as in \ref{Par:SetupVartheta0OnPurePart}, the equalities
$$(\Vnot\pi)\circ \vartheta_0 = (\Vnot\pi)\circ \kappa_0(\overline u \otimes 1)
\qqet \vartheta_0 \circ \iota = \kappa_0(\overline u \otimes 1)\circ \iota$$
hold in $\Hom(\Gamma_{\gr^W_\ast\!\! M},\Vnot U_A)$ and in $\Hom(\Gamma_{G
\oplus M_A},\Vnot U)$ respectively.
\end{lem}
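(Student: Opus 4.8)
The plan is to compare the two homomorphisms through their explicit cocycle formulas. Since $\Gamma_{\gr^W_\ast M}$ acts trivially on the pure Hodge structure $\Vnot U_A$ and $\Gamma_{G\oplus M_A}$ acts trivially on $\Vnot U$ — both being assembled from $Y$, $\Vnot A$, $\Vnot T$ — a homomorphism to either target is the same thing as a $1$--cocycle with values in a trivial module, so it suffices to check equality of cocycles. For $\vartheta_0$ we use the description $(\ast)$, $(\ast\ast)$ from \ref{Par:DefinitionTheta0}; for $\kappa_0(\overline u\otimes1)$, unwinding \ref{Par:ConstructionKappa0} for the $1$--motive $M_{\overline u}=[\IZ\xrightarrow{1\mapsto\overline u}U]$ gives the cocycle $\gamma\mapsto\gamma(v,1)-(v,1)=(\gamma v-v,0)\in\Vnot U$, for any $v\in\Lie U(\IC)$ with $\exp v=\overline u$.

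For the first equality I would apply $\Vnot\pi\colon\Vnot U\to\Vnot U_A$. On the $\vartheta_0$--side the maps $h=r\circ f$ and $e(y)=(\gamma-1)^2s(y)$ take values in $\Vnot T$ (for $e$, because each factor $\gamma-1$ lowers the weight, so $e(y)\in W_{-2}\Vnot M=\Vnot T$), hence are annihilated by $\Vnot\pi$, as are the higher terms of $\log$; thus $(\Vnot\pi)\circ\vartheta_0(\gamma)$ is precisely the class in $W_{-1}\End(\Vnot M)/W_{-2}\End(\Vnot M)\cong\Vnot U_A$ of $\gamma-1$, i.e.\ the pair $(\bar f_\gamma,\bar g_\gamma)$ of maps induced by $\gamma$ on the graded pieces $\gr^W_0\Vnot M\to\gr^W_{-1}\Vnot M$ and $\gr^W_{-1}\Vnot M\to\gr^W_{-2}\Vnot M$. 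On the $\kappa_0$--side, naturality of $\kappa_0$ (Proposition \ref{Pro:Kappa0Properties}) gives $(\Vnot\pi)\circ\kappa_0(\overline u\otimes1)=\kappa_0(\pi(\overline u)\otimes1)$; one checks $\pi(\overline u)=(u_A,[G])$ in $\cHom(Y,A)(\IC)\oplus\cExt^1(A,T)(\IC)$, where $u_A\colon Y\to A$ is the structure map of $M_A$ and $[G]\in\Ext^1_\IC(A,T)$ the class of $G$, and by naturality again $\kappa_0$ is additive along this direct sum. It then remains to match $\bar f_\gamma$ with $\kappa_0(u_A\otimes1)(\gamma)$ and $\bar g_\gamma$ with $\kappa_0([G]\otimes1)(\gamma)$. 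For the first, evaluating the $1$--motive $[\IZ\xrightarrow{1\mapsto u_A}\cHom(Y,A)]$ at points $y_0\in Y$ produces morphisms to $M_A$, and by functoriality of extension classes this reduces $\kappa_0(u_A\otimes1)=\bar f_\bullet$ to the tautology that the extension $0\to\Vnot A\to\Vnot M_A\to Y\otimes\IQ\to0$, split over $\IQ$, has cocycle $\gamma\mapsto\gamma-1$ read on $\gr^W$. For the second, $\kappa_0([G]\otimes1)$ is, under the identification $\Vnot\cExt^1(A,T)=\Hom(\Vnot A,\Vnot T)$ of Remark \ref{Rem:HomExtAndComparison}, the class of the extension of Hodge structures $0\to\Vnot T\to\Vnot G\to\Vnot A\to0$, whose cocycle on $\gr^W$ is again $\bar g_\bullet$.

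For the second equality I would restrict to $\gamma\in\Gamma_{G\oplus M_A}$, which acts trivially on $\Vnot G$ and on $\Vnot M_A$. Then $f(y)=(\gamma-1)s(y)\in W_{-1}\Vnot M=\Vnot G$ is fixed by $\gamma$, so $e(y)=(\gamma-1)f(y)=0$, and $(\gamma-1)\widetilde a\in W_{-2}\Vnot M=\Vnot T$, so $g(a)=(\gamma-1)\widetilde a$ no longer depends on the retraction $r$; hence $\vartheta_0(\gamma)$ is represented by $(f-h,g)$ with these simplified ingredients. On the other side, since $\overline u$ is the image of $(u,\eta)$ one takes $v$ to be the image in $\Lie U(\IC)$ of a lift of $(u,\eta)$ along $\exp$, built from a set--theoretic lift $\widetilde u\colon Y\to\Lie G(\IC)$ of $u$ and a lift of the extension class $\eta$; expressing $\gamma v-v$ in the coordinates $\Hom(Y\otimes\IQ,\Vnot G)\oplus\Hom(\Vnot M_A,\Vnot T)$ of $\Vnot U$ and comparing with $(f-h,g)$, the two agree. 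The needed compatibility of the $\Gamma$--action on $\Vnot M_{\overline u}$ with that on $\Vnot M$ is exactly the one furnished by the explicit morphisms of $1$--motives relating $M_{\overline u}$ and $M$ in the proof of Proposition \ref{Pro:GroupInclusions0}.

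The step I expect to be the main obstacle is this matching of cocycles. Both $\vartheta_0$ and $\kappa_0(\overline u\otimes1)$ are, in the end, functions on the absolute Hodge group, one expressed transcendentally via exponential lifts and the Tannakian action on $\Vnot M_{\overline u}$, the other via the graded pieces of the Tannakian action on $\Vnot M$ together with $\alpha_0$. Reconciling them requires transporting a single cocycle carefully through the chain of identifications — $\alpha_0$ of Proposition \ref{Pro:Alpha0AlphaEll}, the pull--back square defining $\Tnot(-)$ of \ref{Para:IntroHodgeRealis}, the isomorphisms $\Vnot\cHom(Y,A)=\Hom(Y,\Vnot A)$ and $\Vnot\cExt^1(A,T)=\Hom(\Vnot A,\Vnot T)$, and the $1$--motive morphisms of Proposition \ref{Pro:GroupInclusions0} — and keeping track of signs; everything else is formal.
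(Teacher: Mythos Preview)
Your approach is essentially the paper's, but you have not isolated the one manoeuvre that makes the verification clean. The paper does \emph{not} compare $\vartheta_0$ and $\kappa_0(\overline u\otimes 1)$ using an arbitrary section $s$ and retraction $r$; it first chooses a lift $x=(x_f,x_g)\in\Lie\cHom(Y,G)\oplus\Lie\cExt^1(M_A,T)$ of $\overline u$ and then \emph{uses this very lift} to define $s(y)=(x_f(y),y)$ and to define $r$ as the dual of $\chi\mapsto(x_g(\chi),\chi)$. With this single choice, both sides of each equality are written in the same coordinates from the start, and the comparison becomes an identity rather than a matching problem. Your treatment of the first equality via naturality of $\kappa_0$ and the splitting $\pi(\overline u)=(u_A,[G])$ is correct and slightly more conceptual than the paper's direct computation; that part is fine.

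The incompleteness is in your second equality. You construct $v$ from a lift $\widetilde u$ of $u$ and a lift of $\eta$, but you leave $s$ and $r$ as the unspecified choices from \ref{Par:DefinitionTheta0}. Then you assert that $(f-h,g)$ and $\gamma v-v$ agree in $\Vnot U$. Since the representative $(f-h,g)$ depends on $s,r$ while $\gamma v-v$ depends on the lift $v$, this agreement is not automatic: the two representatives need to differ by an element in the image of $\cHom(Y,T)$ under $(+,-)$, and nothing you have written forces that. Invoking the morphisms from Proposition~\ref{Pro:GroupInclusions0} does not supply this; those morphisms relate the \emph{representations} $\Vnot M$ and $\Vnot M_U$, not your particular choices of splittings. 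The fix is exactly the paper's: take $s$ and $r$ to be the ones manufactured out of $x_f,x_g$. Then for $\gamma\in\Gamma_{G\oplus M_A}$ one finds $f=h$ (since $f$ already lands in $\Vnot T$ and $r|_{\Vnot T}=\id$) and $e=0$, and $\vartheta_0(\gamma)$ is represented by the single map $h(y)=\gamma(x_f(y),y)-(x_f(y),y)$, which under $\Vnot\cHom(Y,G)\cong\Hom(Y\otimes\IQ,\Vnot G)$ is literally $\gamma(x_f,1)-(x_f,1)=\gamma(x,1)-(x,1)=\kappa_0(\overline u\otimes 1)(\gamma)$.
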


\begin{proof}
\begin{par}
Write $M_U = [\IZ\xrightarrow{\:1\mapsto \overline u\:} U]$ and choose $x\in\Lie
U$ with $\exp(x)=\overline u$. We can represent $x$ by a pair
$$(x_f,x_g) \in \Lie\cHom(Y, G) \oplus \Lie\cExt^1(M_A, T)$$
with $\exp x_f = u$ and $\exp x_g = \eta$, the class of $M$ in $\Ext^1(M_A,T)$.
Regarding $x_f$ as a group homomorphism $x_f:Y\to \Lie G$, we get 
a section $s:Y\otimes \IQ \to \Vnot M$ defined by $s(y) = (x_f(y),y)$. Regarding
$x_g$ as a group homomorphism $x_g:T^\vee \to \Lie(M_A^\vee)$ we get a section
$T^\vee \otimes \IQ \to \Vnot M^\vee$ defined by $\chi \mapsto
(x_g(\chi),\chi)$. The linear dual of this section is a retraction $r: \Vnot M
\to \Vnot T$. With the help of the section $s$ and the retraction $r$ we can
describe $\vartheta_0$ explicitly as in \ref{Par:DefinitionTheta0}, so
$\vartheta_0(\gamma)$ is the class of $(f-h-\frac12e,g)$ where $f,g,h$ and $e$
depend on $s,r$ and $\gamma$ according to equation $(\ast\ast)$ in
\ref{Par:DefinitionTheta0}.
\end{par}
\begin{par}
Having this explicit description at hand, we can verify the left hand formula of
the lemma. Applying its left hand term to $\gamma \in \Gamma_{\gr^W_\ast\!\! M}$
we get the element of $\Vnot U_A$ given by the pair
$$\Vnot\pi\textstyle\big(f-h-\frac12e,g\big) = \big(\Vnot\pi \circ f
,\Vnot\pi\circ g\big) \in \Hom(Y\otimes \IQ,\Vnot A) \oplus \Hom(\Vnot A,\Vnot
T)$$
On the right hand side we find, using the definition of $\kappa_0$
$$(\Vnot\pi)\circ \kappa_0(\overline u \otimes 1)(\gamma) =
(\Vnot\pi)(\gamma(x,1)-(x,1)) = \gamma(\pi x,1)-(\pi x,1) \in \Vnot U_A$$
where $\pi x$ is the image of $x$ in $\Lie U_A$. This element is given by the
pair
$$\big(\gamma(\pi x_f,1)-(\pi x_f,1) , \gamma(\pi x_g,1)-(\pi x_g,1)\big) \in
\Vnot\cHom(Y,A)\oplus\Vnot\cExt^1(M_A,T)$$
which if viewed as an element of $\Hom(Y\otimes \IQ,\Vnot A) \times \Hom(\Vnot
A,\Vnot T)$ equals $(\Vnot\pi \circ f ,\Vnot\pi\circ g)$.
\end{par}
\begin{par}
We now come to the second formula. Let us fix an element $\gamma \in \Gamma_{G
\oplus M_A}$, so $\gamma$ acts trivially on $\Vnot G$ and on $\Vnot M_A$. Hence
we have $f=g=h$ and $e=0$, so $\vartheta_0(\gamma)$ is given by the homomorphism
$$h\in\Hom(Y\otimes\IQ, \Vnot T) \qquad h(y)=\gamma s_f(y)-s_f(y)$$
Under the canonical isomorphism $\Vnot\cHom(Y,G) \cong \Hom(Y\otimes\IQ,\Vnot
G)$ this homomorphism $h$ corresponds to the element
$$\gamma(x_f,1)-(x_f,1) = \gamma(x,1)-(x,1)$$
which equals $\kappa_0(\overline u\otimes 1)(\gamma)$ by definition of
$\kappa_0$.
\end{par}
\end{proof}

\vspace{4mm}
\begin{lem}\label{Lem:SSDetectionSub0} % Lemma 5.10
Let $0\to T \to G  \xrightarrow{\:\:\pi\:\:} A \to 0$ be a semiabelian variety
over $\IC$, let $G' \subseteq G$ be a semiabelian subvariety and let $V
\subseteq \Vnot G$ be a Hodge substructure. If the inclusions $\pi(V) \subseteq
\Vnot\pi(G')$ and $V\cap \Vnot T \subseteq \Vnot G'$ hold, then $V$ is contained
in $\Vnot G'$. (Think of it as exactness of $\gr^W_\ast$).
\end{lem}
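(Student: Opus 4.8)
The plan is to reduce the statement to the vanishing of a single morphism of mixed Hodge structures and then kill that morphism for weight reasons. Write $T' := G'\cap T$ for the torus part of $G'$ and $A' := \pi(G')$ for its abelian part, so that $G'$ is an extension of $A'$ by $T'$, and the quotient $G/G'$ is again a semiabelian variety, extension of $A/A'$ by the torus $T/T'$. Since the Hodge realisation is exact on semiabelian varieties, applying $\Vnot$ to $0\to G'\to G\to G/G'\to 0$ yields a short exact sequence of mixed Hodge structures
$$0 \to \Vnot G' \to \Vnot G \xrightarrow{\:q\:} \Vnot(G/G') \to 0 ,$$
and it identifies $W_{-2}\Vnot G' = \Vnot T'$ with $\ker\big(\Vnot T \to \Vnot(T/T')\big)$ and $\pi(\Vnot G') = \Vnot A'$ with $\ker\big(\Vnot A \to \Vnot(A/A')\big)$. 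Since $\Vnot G' = \ker q$, it suffices to show that the composite $\phi : V\hookrightarrow \Vnot G \xrightarrow{q} \Vnot(G/G')$ is zero.

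First I would look at $\phi$ on the bottom weight step. The induced weight filtration on the sub-Hodge-structure $V$ has $W_{-2}V = V\cap\Vnot T$, and the restriction of $q$ to $\Vnot T = W_{-2}\Vnot G$ is the map $\Vnot T \to \Vnot(T/T') \hookrightarrow \Vnot(G/G')$. By the second hypothesis $V\cap\Vnot T \subseteq \Vnot G'\cap\Vnot T = \Vnot T'$, which is exactly the kernel of $\Vnot T \to \Vnot(T/T')$; hence $\phi$ annihilates $W_{-2}V$ and therefore factors through a morphism $\bar\phi : \gr^W_{-1}V \to \Vnot(G/G')$. Now $\gr^W_{-1}V$ is pure of weight $-1$, and $\pi$ identifies it with $\pi(V)\subseteq\Vnot A$; composing $\bar\phi$ with the projection $\Vnot(G/G')\to\gr^W_{-1}\Vnot(G/G') = \Vnot(A/A')$ gives the map $\pi(V)\hookrightarrow\Vnot A \to \Vnot(A/A')$, which vanishes because $\pi(V)\subseteq\Vnot A' = \ker(\Vnot A\to\Vnot(A/A'))$ by the first hypothesis. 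Thus $\bar\phi$ takes values in $W_{-2}\Vnot(G/G') = \Vnot(T/T')$, which is pure of weight $-2$.

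The last step is where the Hodge structure, and not merely the weight filtration, is genuinely used: a morphism of mixed Hodge structures from an object pure of weight $-1$ to one pure of weight $-2$ must be zero, since its image would be a sub-Hodge-structure of the target that is pure of weight $-1$ and pure of weight $-2$ at once. Hence $\bar\phi=0$, so $\phi=0$, i.e.\ $V\subseteq\ker q = \Vnot G'$.

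The argument itself is a short diagram chase, so the only thing requiring care is bookkeeping: that $G/G'$ is semiabelian with torus part $T/T'$ and abelian part $A/A'$, and that $\Vnot$ applied to $0\to G'\to G\to G/G'\to 0$ is exact and carries the weight steps $\Vnot T'$ and the image $\pi(\Vnot G') = \Vnot A'$ to the appropriate kernels. These follow from the construction of the Hodge realisation recalled in \ref{Para:IntroHodgeRealis} — concretely $\Vnot$ of a semiabelian variety is $H_1$ of its complex points with rational coefficients, which is exact because complex semiabelian varieties are $K(\pi,1)$'s with abelian, torsion-free fundamental group. I do not expect any genuine obstacle beyond this verification.
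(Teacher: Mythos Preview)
Your proof is correct and follows essentially the same approach as the paper: both reduce to showing that the composite $V\to \Vnot(G/G')$ vanishes by checking it on the weight $-2$ piece (using the hypothesis $V\cap\Vnot T\subseteq\Vnot G'$), then on the weight $-1$ quotient, where the first hypothesis forces the image into $\Vnot(T/T')$ and the weight mismatch kills it. The paper packages this as a three-column diagram chase with the two outer vertical maps zero; your version simply unwinds that chase explicitly.
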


\begin{proof}
Consider the following diagram with exact rows:
$$\begin{diagram}
\setlength{\dgARROWLENGTH}{4mm}
\node{0}\arrow{e}\node{V\cap\Vnot
T}\arrow{s,l}{0}\arrow{e}\node{V}\arrow{s}\arrow{e}\node{\pi(V)}\arrow{s,l}{0}
\arrow{e}\node{0}\\
\node{0}\arrow{e}\node{\Vnot(T/(T\cap
G'))}\arrow{e}\node{\Vnot(G/G')}\arrow{e}\node{\Vnot(A/\pi(G'))}\arrow{e}\node{0
}
\end{diagram}$$
The left and right vertical maps are zero by hypothesis, and we have to show
that the middle vertical map is zero as well. This follows by diagram chase,
using that there are no nonzero morphisms of Hodge structures $\pi(V)\to
\Vnot(T/(T\cap G'))$. Indeed, $\pi(V)$ is pure of weight $-1$ and
$\Vnot(T/(T\cap G'))$ is pure of weight $-2$.
\end{proof}

\vspace{4mm}
\begin{prop}\label{Pro:Theta0InVnotP} % Proposition 5.11
The image of the map  $\vartheta_0:\Gamma_{\gr^W_\ast\!\! M}\to \Vnot U$ is
contained in $\Vnot P$.
\end{prop}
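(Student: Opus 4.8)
The plan is to combine Lemma~\ref{Lem:Vartheta0OnPurePart}, Corollary~\ref{Cor:HodgeDividing} and Lemma~\ref{Lem:SSDetectionSub0}, applying the last of these to the semiabelian variety $U = U(M)$, the subvariety $P = P(M) \subseteq U$, and a suitable Hodge substructure $V$ of $\Vnot U$ encoding the image of $\vartheta_0$. First I would fix $\gamma \in \Gamma_{\gr^W_\ast\!\! M}$ and observe that, since the image of $\vartheta_0$ is a Lie subalgebra $W_{-1}\fh^M$ of $\Vnot U$ (as recorded in \ref{Par:DefinitionTheta0}), it suffices to show $\vartheta_0(\gamma) \in \Vnot P$ for each such $\gamma$; equivalently, taking $V$ to be the $\IQ$-span of the values of $\vartheta_0$, we must verify the two hypotheses of Lemma~\ref{Lem:SSDetectionSub0} for the triple $(U, P, V)$, namely $\Vnot\pi(V) \subseteq \Vnot\pi(P)$ and $V \cap \Vnot U_T \subseteq \Vnot P$, where $\pi: U \to U_A$ is the projection to the abelian quotient and $U_T = \cHom(Y,T)$.

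For the first inclusion, I would use the left-hand equality of Lemma~\ref{Lem:Vartheta0OnPurePart}, which identifies $(\Vnot\pi)\circ\vartheta_0$ with $(\Vnot\pi)\circ\kappa_0(\overline u\otimes 1)$ as a homomorphism $\Gamma_{\gr^W_\ast\!\! M}\to \Vnot U_A$. The point is that this homomorphism is, up to the isomorphism $H^1(\Gamma_{\gr^W_\ast\!\! M},\Vnot U_A)\cong \Hom(\Gamma_{\gr^W_\ast\!\! M},\Vnot U_A)$, nothing but $(\Vnot\pi)_\ast\kappa_0(\overline u\otimes 1)$; and by Corollary~\ref{Cor:HodgeDividing} applied with $\psi = \pi|_P$ composed with a splitting of $U_A\to\gr^W_\ast U_A$ (more precisely, using that $\kappa_0(\psi(n\overline u)\otimes 1)$ dies in $H^1(\Gamma_M,\Vnot\widetilde G)$ for all $\psi\in\Hom(P,\widetilde G)$), the values of this homomorphism land in $\Vnot\pi(P) = \Vnot P_A \subseteq \Vnot U_A$. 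Since $\Gamma_{\gr^W_\ast\!\! M}$ acts trivially on $\Vnot U_A$ there is no cohomological subtlety here — everything takes place among honest homomorphisms — so this step is essentially a bookkeeping translation of Corollary~\ref{Cor:HodgeDividing}.

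For the second inclusion, I would restrict $\gamma$ further to $\Gamma_{G\oplus M_A}\subseteq\Gamma_{\gr^W_\ast\!\! M}$, because $\vartheta_0(\gamma)$ lies in $\Vnot U_T$ exactly when $\gamma$ acts trivially on $\Vnot G$ and $\Vnot M_A$ (inspect $(\ast)$ and $(\ast\ast)$: one needs $f = g = h$, $e = 0$, so that the pair $(f-h-\tfrac12 e, g)$ degenerates to $(h,0)$ with $h$ valued in $\Vnot T$). The right-hand equality of Lemma~\ref{Lem:Vartheta0OnPurePart} then says $\vartheta_0\circ\iota = \kappa_0(\overline u\otimes 1)\circ\iota$ on $\Gamma_{G\oplus M_A}$, and again Corollary~\ref{Cor:HodgeDividing}, via the restriction $H^1(\Gamma,\Vnot P)\to H^1(\Gamma_M,\Vnot P)$ factoring through $H^1(\Gamma_{M_P},\Vnot P)$ (Proposition~\ref{Pro:GroupInclusions0}) and Proposition~\ref{Pro:DividingOnePoint0}, forces these values into $\Vnot P$. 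Feeding both inclusions into Lemma~\ref{Lem:SSDetectionSub0} gives $V\subseteq\Vnot P$, hence $\im\vartheta_0\subseteq\Vnot P$.

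The main obstacle I anticipate is purely organizational rather than mathematical: one must be careful that the Hodge substructure $V$ one feeds into Lemma~\ref{Lem:SSDetectionSub0} is genuinely a sub\emph{structure} (i.e.\ Galois/Hodge stable) and not merely a $\IQ$-subspace, and that the two ``$\kappa_0$ versus $\vartheta_0$'' identifications of Lemma~\ref{Lem:Vartheta0OnPurePart} are applied with exactly matching choices of section $s$ and retraction $r$ — the proof of that lemma already pins down $s$ and $r$ in terms of a chosen $x\in\Lie U$ with $\exp x = \overline u$, so I would simply reuse that same $x$ throughout. Once the identifications are lined up, the containment is immediate from Corollary~\ref{Cor:HodgeDividing}.
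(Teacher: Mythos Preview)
Your overall architecture is right: reduce to the two inclusions of Lemma~\ref{Lem:SSDetectionSub0}, then use Lemma~\ref{Lem:Vartheta0OnPurePart} to rewrite both inclusions in terms of the cocycle $\kappa_0(\overline u\otimes 1)$. The gap is in how you finish. You invoke Corollary~\ref{Cor:HodgeDividing} to conclude that the values of $(\Vnot\pi)\circ\kappa_0(\overline u\otimes 1)$ on $\Gamma_{\gr^W_\ast\!\! M}$ land in $\Vnot\pi(P)$, but that corollary says something different: it says that classes of the form $\kappa_0(\psi(n\overline u)\otimes 1)$ die after \emph{restricting to the smaller group} $\Gamma_M$. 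Vanishing on $\Gamma_M$ does not constrain where a cocycle takes its values on the larger group $\Gamma_{\gr^W_\ast\!\! M}$, so the implication you assert is not justified. The same issue recurs in your treatment of the second inclusion.

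The paper closes both inclusions with a much more elementary observation that bypasses Corollary~\ref{Cor:HodgeDividing} entirely: since $n\overline u \in P(\IC)$ for some nonzero $n$, naturality of $\kappa_0$ (Proposition~\ref{Pro:Kappa0Properties}) applied to the inclusion $P\hookrightarrow U$ shows that the class $\kappa_0(\overline u\otimes 1)$ is represented by a cocycle taking values in $\Vnot P$. Hence $(\Vnot\pi)\circ\kappa_0(\overline u\otimes 1)$ takes values in $\Vnot\pi(P)$ and $\kappa_0(\overline u\otimes 1)\circ\iota$ takes values in $\Vnot P$, which are exactly the two inclusions you need. Corollary~\ref{Cor:HodgeDividing} is saved for later (Lemma~\ref{Lem:InjectViaDividing0}), where restriction to $\Gamma_M$ genuinely matters. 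Your worry about $V=\im\vartheta_0$ being a Hodge substructure is fine: $\im\vartheta_0 = \alpha_0^{-1}(W_{-1}\fh^M)$ and $\alpha_0$ is an isomorphism of Hodge structures.
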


\begin{proof}
This is a consequence of Lemmas \ref{Lem:Vartheta0OnPurePart} and
\ref{Lem:SSDetectionSub0}. Indeed, by Lemma \ref{Lem:SSDetectionSub0} it is
enough to check that the inclusions
$$(\Vnot\pi)(\im(\vartheta_0)) \subseteq \Vnot \pi(P) \qqet
\im(\vartheta_0)\cap\Vnot U_T \subseteq \Vnot P$$
hold. An element $\gamma\in \Gamma_{\gr^W_\ast\!\! M}$ belongs to $\Gamma_{G
\oplus M_A}$ if and only if it acts trivially on $\Vnot U_A$. Hence by Lemma
\ref{Lem:Vartheta0OnPurePart}, these inclusions are the same as
$$\im((\Vnot\pi) \circ \kappa_0(\overline u\otimes 1)) \subseteq \Vnot \pi(P)
\qqet \im(\kappa_0(\overline u\otimes 1) \circ\iota) \subseteq \Vnot P$$
But since $n\overline u \in P$ for some nonzero $n$, the class
$\kappa_0(\overline u\otimes 1)$ comes from a cocycle which takes values in
$\Vnot P$.
\end{proof}

\vspace{4mm}
\begin{lem}\label{Lem:HSSSIso0} % Lemma 5.12
Let $M=[Y\to G]$ be a 1--motive over $\IC$ and set $\widetilde G := \gr^\ast_WG
= T\oplus A$. The map
$$H^1(\fh^M,\Vnot \widetilde G) \to \Hom_{\fh^M}(W_{-1}\fh^M,\Vnot \widetilde
G)$$
given by restriction of cocycles is an isomorphism.
\end{lem}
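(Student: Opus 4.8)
The plan is to recognise the map in question as the edge homomorphism in the five--term exact sequence of the Hochschild--Serre spectral sequence attached to the extension of Lie algebras
\[
0 \to W_{-1}\fh^M \to \fh^M \to \gr^W_0\fh^M \to 0 .
\]
Write $\mathfrak n := W_{-1}\fh^M$, the nilpotent radical, and $\mathfrak r := \gr^W_0\fh^M$, which is reductive. By the definition of the weight filtration on $\fh^M$, the ideal $\mathfrak n$ kills every graded piece $\gr^W_i\Vnot M$, hence acts trivially on $\Vnot\widetilde G=\Vnot T\oplus\Vnot A$; so $\Vnot\widetilde G$ is naturally an $\mathfrak r$--module, $H^0(\mathfrak n,\Vnot\widetilde G)=\Vnot\widetilde G$, and $H^1(\mathfrak n,\Vnot\widetilde G)=\Hom(\mathfrak n/[\mathfrak n,\mathfrak n],\Vnot\widetilde G)$ with its induced $\mathfrak r$--action. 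First I would write down the five--term sequence
\[
0 \to H^1(\mathfrak r,\Vnot\widetilde G) \to H^1(\fh^M,\Vnot\widetilde G) \xrightarrow{\ \res\ } H^1(\mathfrak n,\Vnot\widetilde G)^{\mathfrak r} \to H^2(\mathfrak r,\Vnot\widetilde G),
\]
observe that the middle arrow is restriction of cocycles, and use the identification $H^1(\mathfrak n,\Vnot\widetilde G)^{\mathfrak r}=\Hom_{\mathfrak r}(\mathfrak n/[\mathfrak n,\mathfrak n],\Vnot\widetilde G)=\Hom_{\fh^M}(\mathfrak n,\Vnot\widetilde G)$ (an $\fh^M$--equivariant map out of $\mathfrak n$ automatically annihilates $[\mathfrak n,\mathfrak n]$, since $\mathfrak n$ acts trivially on the target, and the residual action on $\mathfrak n/[\mathfrak n,\mathfrak n]$ factors through $\mathfrak r$). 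Thus everything reduces to the vanishing of $H^1(\mathfrak r,\Vnot\widetilde G)$ and $H^2(\mathfrak r,\Vnot\widetilde G)$.

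For that I would first check that $\Vnot\widetilde G$ has no nonzero $\mathfrak r$--invariants. The weight cocharacter of the mixed Hodge structure $\Vnot M$ gives an element $h_0\in W_0\fh^M$ acting on $\gr^W_i\Vnot M$ as multiplication by $i$; its class $\bar h_0\in\mathfrak r$ therefore acts on $\Vnot A$ by $-1$ and on $\Vnot T$ by $-2$, hence invertibly on $\Vnot\widetilde G$, so that $(\Vnot\widetilde G)^{\mathfrak r}\subseteq\ker\bar h_0=0$ (the cases $A=T=0$ being trivial). Since $\mathfrak r$ is reductive and the coefficients are over a field of characteristic zero, for any finite--dimensional $\mathfrak r$--module $W$ one has $H^\ast(\mathfrak r,W)\cong H^\ast(\mathfrak r,\IQ)\otimes_\IQ W^{\mathfrak r}$: decomposing $W$ into $\mathfrak r$--isotypic components, the part without invariants is killed in all cohomological degrees by the Casimir of the semisimple part together with the Koszul complex for the central torus (the standard extension of Whitehead's lemmas to reductive Lie algebras). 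Applying this with $W=\Vnot\widetilde G$ and $W^{\mathfrak r}=0$ gives $H^\ast(\mathfrak r,\Vnot\widetilde G)=0$, in particular in degrees $1$ and $2$, so the five--term sequence shows that $\res$ is an isomorphism.

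The routine inputs here are the reductivity of $\mathfrak r$, already recorded in the text, and the fact that the Mumford--Tate group of a mixed Hodge structure contains the weight cocharacter, which is standard Tannakian yoga. The point needing a little care is the vanishing $H^\ast(\mathfrak r,\Vnot\widetilde G)=0$; its essential content is that $\Vnot\widetilde G$ has no $\mathfrak r$--fixed vectors, which rests on the presence of $\bar h_0$ in $\mathfrak r$, i.e.\ on $\gr^W$ of $\Vnot M$ being genuinely nonzero in negative weights. I expect the write--up of the reductive Whitehead vanishing to be the lengthiest step, although it is entirely standard, and the rest to be purely formal.
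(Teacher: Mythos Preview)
Your proof is correct and follows the same Hochschild--Serre strategy as the paper. The one place where you take a longer route is the vanishing of $H^i(\mathfrak r,\Vnot\widetilde G)$: you already have the central element $\bar h_0\in\mathfrak r$ acting invertibly on $\Vnot\widetilde G$, and Sah's lemma then kills $H^i(\mathfrak r,\Vnot\widetilde G)$ for all $i$ in one line, with no need for reductivity or the Whitehead--type argument you sketch. This is exactly what the paper does (with the element $-\bar h_0$), so the step you anticipate being lengthiest is in fact immediate.
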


\begin{proof}
The Hochschild--Serre spectral sequence for Lie algebra cohomology associated
with the Lie algebra extension $0\to W_{-1}\fh^M \to \fh^M \to \fh^{\widetilde
G} \to 0$ yields the following exact sequence in low degrees
$$H^1(\fh^{\widetilde G},\Vnot \widetilde G) \to H^1(\fh^M,\Vnot \widetilde G)
\to \Hom_{\fh^M}(W_{-1}\fh^M,\Vnot \widetilde G) \to H^2(\fh^{\widetilde
G},\Vnot \widetilde G)$$
so it suffices to show that the first and last term in this sequence vanish. To
do so, it suffices by Sah's lemma to show that there exists a central element
$x\in \fh^{\widetilde G}$ which acts as an automorphism on $\Vnot \widetilde G$.
But this is clear since $\fh^{\widetilde G}$ contains an element which acts as
the identity on $\Vnot A$ and as multiplication by $2$ on $\Vnot T$.
\end{proof}

\vspace{4mm}
\begin{lem}\label{Lem:InjectViaDividing0} % Lemma 5.13
Let $M=[u:Y\to G]$ be a 1--motive over $\IC$ and set $\widetilde G := \gr^W_\ast G =  T \oplus A$. The map $\alpha_0^\ast: \Hom_\Gamma(\Vnot P(M), \Vnot\widetilde G) \to \Hom_\Gamma(W_{-1}\fh^M, \Vnot\widetilde G)$ given by $\alpha_0^\ast(f)=f\circ\alpha_0^{-1}$ is injective.
\end{lem}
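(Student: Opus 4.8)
The plan is to combine Deligne's full faithfulness theorem with the injectivity of $\kappa_0$, using that $n\overline u$ ``generates'' $P(M)$. First I would rephrase the hypothesis. By \ref{Par:DefinitionTheta0} together with Proposition \ref{Pro:Theta0InVnotP}, the image of $\vartheta_0$ is the Hodge substructure $\alpha_0^{-1}(W_{-1}\fh^M)$ of $\Vnot P(M)$, so an element $f\in\Hom_\Gamma(\Vnot P(M),\Vnot\widetilde G)$ lies in $\ker\alpha_0^\ast$ precisely when $f$ annihilates $\im\vartheta_0$. Since $\Vnot P(M)$ and $\Vnot\widetilde G$ are Hodge realisations of semiabelian varieties and $\Tnot$ is fully faithful (\cite{Deli74}, 10.1, recalled in \ref{Para:IntroHodgeRealis}), we may write $Nf=\Vnot\psi$ for some integer $N\ge 1$ and some homomorphism $\psi\colon P(M)\to\widetilde G$ of semiabelian varieties over $\IC$; it then suffices to prove $\psi=0$. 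Moreover, once we know that $\psi(n\overline u)$ is a torsion point we are done: $\ker(\psi)^\circ$ is then a semiabelian subvariety of $U(M)$ containing a nonzero multiple of $\overline u$, hence equals $P(M)$ by the minimality in Definition \ref{Def:P(M)}, so that $\psi=0$ and $f=0$.

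It remains to deduce from ``$\Vnot\psi$ annihilates $\im\vartheta_0$'' that $\psi(n\overline u)$ is torsion. By injectivity of $\kappa_0$ (Proposition \ref{Pro:Kappa0Properties}) it is enough to show that $\kappa_0(\psi(n\overline u)\otimes 1)=0$ in $H^1(\Gamma,\Vnot\widetilde G)$. By Corollary \ref{Cor:HodgeDividing} this class restricts to zero in $H^1(\Gamma_M,\Vnot\widetilde G)$; since $\Gamma_M$ and $\Gamma_{\gr^W_\ast M}$ act trivially on $\Vnot\widetilde G$ and the quotient $H:=\Gamma/\Gamma_{\gr^W_\ast M}$ is reductive (so that $H^1(H,\Vnot\widetilde G)=H^2(H,\Vnot\widetilde G)=0$), inflation--restriction shows that restriction $H^1(\Gamma,\Vnot\widetilde G)\hookrightarrow H^1(\Gamma_{\gr^W_\ast M},\Vnot\widetilde G)$ is injective and, combined with Lemma \ref{Lem:HSSSIso0}, that the classes which additionally die over $\Gamma_M$ form the subspace $\Hom_{\fh^M}(W_{-1}\fh^M,\Vnot\widetilde G)$. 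Under this identification $\kappa_0(\psi(n\overline u)\otimes 1)$ corresponds, up to a nonzero scalar, to the homomorphism $\xi\mapsto\Vnot\psi(\alpha_0^{-1}(\xi))$ on $W_{-1}\fh^M$; this is what Lemma \ref{Lem:Vartheta0OnPurePart} and the explicit formulae for $\vartheta_0$ in \ref{Par:DefinitionTheta0} and for $\kappa_0$ in \ref{Par:ConstructionKappa0} are for. That homomorphism is zero because $\Vnot\psi$ annihilates $\im\vartheta_0=\alpha_0^{-1}(W_{-1}\fh^M)$, whence $\kappa_0(\psi(n\overline u)\otimes1)=0$, $\psi(n\overline u)$ is torsion, and $\psi=0$.

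The step I expect to be the main obstacle is the identification of the cohomology class $\kappa_0(\psi(n\overline u)\otimes1)$ with the $\fh^M$--linear map $\xi\mapsto\Vnot\psi(\alpha_0^{-1}(\xi))$. The subtlety is that $\vartheta_0=\alpha_0^{-1}\circ\log\circ\rho_0$ is not a cocycle: it differs from a genuine cocycle representing $\kappa_0$ by the term $\frac12(\rho_0(\gamma)-1)^2$, which lies in the weight $(-2)$ part of $\Vnot U(M)$. One must check that, after pushing forward along $\psi$, this discrepancy becomes a coboundary, so that it contributes nothing to the cohomology class; equivalently, the argument should be arranged so that only the abelian--quotient component of $\vartheta_0$ --- the one computed by Lemma \ref{Lem:Vartheta0OnPurePart} --- together with a cobounding of the weight $(-2)$ part actually enters. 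The rest is routine bookkeeping with inflation--restriction and the vanishing of cohomology of the reductive group $H$.
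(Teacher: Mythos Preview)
Your proof is correct and follows essentially the same route as the paper's. The paper packages the argument by first constructing an auxiliary map $\beta_0:\Hom_\Gamma(\Vnot P,\Vnot\widetilde G)\to\Hom_\Gamma(W_{-1}\fh^M,\Vnot\widetilde G)$ via the diagram
\[
\Hom_\IC(P,\widetilde G)\otimes\IQ \xrightarrow{\:\psi\mapsto\kappa_0(\psi(n\overline u)\otimes n^{-1})\:} H^1(\fh^M,\Vnot\widetilde G) \xrightarrow{\:\cong\:} \Hom_{\fh^M}(W_{-1}\fh^M,\Vnot\widetilde G),
\]
shows $\beta_0$ is injective (same minimality argument you use), and then checks $\beta_0=\alpha_0^\ast$ by the direct computation
\[
\beta_0(\Vnot\psi)(\log\rho_0(\gamma)) = \kappa_0(\psi(n\overline u)\otimes\tfrac1n)(\gamma) = \Vnot\psi(\gamma(w,1)-(w,1)) = \Vnot\psi(\vartheta_0(\gamma)),
\]
invoking Lemma~\ref{Lem:Vartheta0OnPurePart} for the last step. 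Your version runs the same logic in contrapositive form: take $f$ with $\alpha_0^\ast(f)=0$, realise $f$ as $\Vnot\psi$ up to an integer, and deduce $\kappa_0(\psi(n\overline u)\otimes 1)=0$ via exactly the same cohomological diagram (Corollary~\ref{Cor:HodgeDividing}, Lemma~\ref{Lem:HSSSIso0}), whence $\psi=0$ by minimality of $P(M)$.

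The ``main obstacle'' you flag --- the weight~$(-2)$ discrepancy between $\vartheta_0$ and the genuine cocycle $\kappa_0(\overline u\otimes 1)$ --- is precisely the content of the final equality above, and the paper handles it just as tersely as you do, by pointing to Lemma~\ref{Lem:Vartheta0OnPurePart}. So your identification of this as the delicate step, and your proposed resolution (the discrepancy lies in $\Vnot U_T$ and the argument can be arranged so that only the parts controlled by Lemma~\ref{Lem:Vartheta0OnPurePart} matter), matches the paper's treatment.
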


\begin{proof}
Set $P := P(M)$ for brevity. We will construct in a first step another injective map $\beta_0:\Hom_\Gamma(\Vnot P, \Vnot\widetilde G)\to \Hom_\Gamma(W_{-1}\fh^M, \Vnot \widetilde G)$, and prove in a second step that the equality $\alpha_0^\ast = \beta_0$ holds. For the construction of $\beta_0$ we use the following diagram
$$\begin{diagram}
\setlength{\dgARROWLENGTH}{4mm}
\node[3]{\Hom_\IC(P,\widetilde G)\otimes\IQ}\arrow{s,r}{(\ast)}\arrow{sw,--}\\
\node{0}\arrow{e}\node{H^1(\fh^M,\Vnot \widetilde
G)}\arrow{s,l}{\cong}\arrow{e}\node{H^1(\Gamma,\Vnot\widetilde
G)}\arrow{e,t}{\res}\node{H^1(\Gamma_M, \Vnot\widetilde G)}\\
\node[2]{\Hom_\Gamma(W_{-1}\fh^M,\Vnot\widetilde G)}
\end{diagram}$$
where the map $(\ast)$ is $\IQ$--linear and sends $\psi \otimes 1$ to $\kappa_0(\psi(n\overline u) \otimes n^{-1})$ for some integer $n$ such that $n\overline u \in P(k)$. By Corollary \ref{Cor:HodgeDividing} the map $(\ast)$ lifts to $H^1(\fh^M,\Vnot \widetilde G)$ as indicated. The isomorphism on the left is given by Lemma \ref{Lem:HSSSIso0}. Let $\beta_0$ be the composition
$$\beta_0 : \Hom_\Gamma(\Vnot P, \Vnot\widetilde G) \cong \Hom_\IC(P,\widetilde G)\otimes\IQ \to \Hom_\Gamma(W_{-1}\fh^M,\Vnot\widetilde G)$$
The map $\beta_0$ is injective because $(\ast)$ is so. Indeed, let $\psi\in\Hom(P,\widetilde G)$ be a homomorphism such that $\kappa_0(\psi(n\overline u) \otimes n^{-1}) = 0$. Then, since $\kappa_0$ is injective by Proposition \ref{Pro:Kappa0Properties} we have $\psi(n\overline u)=0$ and hence $\ker\psi$ is a subgroup of $P$ containing $n\overline u$. We must then have $\ker\psi = P$ by definition of $P$, so $\psi=0$. It remains to check that we have $\alpha_0^\ast = \beta_0$. Because all maps are $\IQ$--linear, we only have to check that for every $\psi\in\Hom(P,\widetilde G)$ and every $\gamma\in \Gamma_{\gr^W_\ast\!\! M}$ the equality
$$\alpha_0^\ast(V_0\psi)(\log\rho_0(\gamma)) = \beta_0(V_0\psi)(\log\rho_0(\gamma))$$
holds in $\Vnot\widetilde G$. The left hand side is equal to $\Vnot\psi(\vartheta_0(\gamma))$. Let $w\in\Lie U(\IC)$ be an element with $\exp(w)= \overline u$, and set $v := \psi(nw) \in \Lie\widetilde G$. We then have $\exp(v)=\psi(n\overline u)$ and using Lemma \ref{Lem:Vartheta0OnPurePart}
$$\textstyle \beta_0(\Vnot\psi)(\log\rho_0(\gamma)) = \kappa_0(\psi(n\overline u)\! \otimes\! \frac 1n)(\gamma) = \frac 1n(\gamma(v,\!1)-(v,\!1)) = \Vnot\psi(\gamma ( w,\!1)-( w,\!1)) = \Vnot\psi(\vartheta_0(\gamma))$$
as we wanted to show.
\end{proof}

\vspace{4mm}
\begin{lem}\label{Lem:DetectionOnSplit0}
Let $G$ be a semiabelian variety over $\IC$ and let $\widetilde G$ be a split
semiabelian variety containing all isogeny types of $G$ (Definition
\ref{Def:AllIsogenyTypes}). Let $V$ be a Hodge substructure of $\Vnot G$. If the
restriction map
$$\Hom_\Gamma(\Vnot G, \Vnot \widetilde G) \to \Hom_\Gamma(V, \Vnot \widetilde
G)$$
is injective, then $V$ is equal to $\Vnot G$.
\end{lem}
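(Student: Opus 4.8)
The plan is to argue by contraposition: assuming $V\neq \Vnot G$, I will produce a nonzero morphism of Hodge structures $\Vnot G\to\Vnot\widetilde G$ vanishing on $V$, contradicting injectivity of the restriction map. First I would pass to the nonzero mixed Hodge structure $Q_0:=\Vnot G/V$, whose weights lie in $\{-1,-2\}$. If $\gr^W_{-1}Q_0\neq 0$ I set $Q:=\gr^W_{-1}Q_0$, a nonzero quotient of $Q_0$ pure of weight $-1$; otherwise $Q_0$ is pure of weight $-2$ and I set $Q:=Q_0$. In either case $Q$ is a nonzero pure quotient of $\Vnot G$. Since morphisms of mixed Hodge structures are strictly compatible with $W$, the surjection $\Vnot G\twoheadrightarrow Q$ factors through $\gr^W_{-1}\Vnot G=\Vnot A$ when $Q$ has weight $-1$ (the restriction to $W_{-2}\Vnot G$ is a map between pure structures of different weights, hence zero), and is surjective already on $W_{-2}\Vnot G=\Vnot T$ when $Q$ has weight $-2$; in both cases $Q$ is a quotient of one of the summands of $\gr^W_\ast\Vnot G=\Vnot T\oplus\Vnot A$.

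Next I would invoke semisimplicity. The Hodge structure $\gr^W_\ast\Vnot G=\Vnot T\oplus\Vnot A$ is a direct sum of polarizable pure Hodge structures, hence a semisimple object of $\mathrm{MHS}_\IQ$, so the quotient $Q$ is in fact isomorphic to a direct summand of $\gr^W_\ast\Vnot G$. Because $\widetilde G$ is split and contains all isogeny types of $G$, Definition \ref{Def:AllIsogenyTypes} supplies an integer $n$ and a morphism with finite kernel $\gr^W_\ast G\to\widetilde G^{\,n}$; applying $\Vnot$ turns this into an injection of Hodge structures $\gr^W_\ast\Vnot G\hookrightarrow(\Vnot\widetilde G)^{\,n}$. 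The target is again a sum of polarizable pure Hodge structures, hence semisimple, so $\gr^W_\ast\Vnot G$, and with it $Q$, is a direct summand of $(\Vnot\widetilde G)^{\,n}$. As $Q\neq 0$ this forces some projection $Q\to\Vnot\widetilde G$ to be nonzero, i.e. $\Hom_\Gamma(Q,\Vnot\widetilde G)\neq 0$.

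Finally, composing $\Vnot G\twoheadrightarrow\Vnot G/V\twoheadrightarrow Q$ with a nonzero map $Q\to\Vnot\widetilde G$ yields a nonzero element of $\Hom_\Gamma(\Vnot G,\Vnot\widetilde G)$ whose restriction to $V$ vanishes, contradicting the hypothesis; hence $V=\Vnot G$. The only point that needs a little care is the claim that a nonzero pure quotient of $\Vnot G$ is already a quotient of $\gr^W_\ast\Vnot G$, which is where the bound on the weights of $\Vnot G$ and the strictness of morphisms of mixed Hodge structures enter; the rest is the semisimplicity of polarizable Hodge structures together with the definition of ``contains all isogeny types''.
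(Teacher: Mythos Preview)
Your argument is correct. The key steps --- strictness of morphisms of mixed Hodge structures for the weight filtration, semisimplicity of polarisable pure Hodge structures, and the use of the ``contains all isogeny types'' hypothesis to produce an injection $\gr^W_\ast\Vnot G\hookrightarrow(\Vnot\widetilde G)^n$ --- are all in order, and together they give exactly the nonzero morphism $\Vnot G\to\Vnot\widetilde G$ vanishing on $V$ that you need.

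The paper organises the same ingredients differently. Rather than working by contraposition with a single quotient $Q$, it performs an explicit d\'evissage (spelled out in detail for the $\ell$--adic analogue, Lemma~\ref{Lem:DetectionOnSplitEll}): it splits $\widetilde G=\widetilde T\oplus\widetilde A$, applies $\Hom_\Gamma(-,\Vnot\widetilde A)$ to the short exact sequence $0\to V\cap\Vnot T\to V\to\pi(V)\to 0$ and its ambient sequence to deduce $\pi(V)=\Vnot A$, and then applies $\Hom_\Gamma(-,\Vnot\widetilde T)$ to conclude $V\cap\Vnot T=\Vnot T$. Your approach is more direct and avoids tracking two separate diagrams; the paper's d\'evissage has the advantage of running verbatim in the $\ell$--adic and mod--$\ell$ settings, where one cannot simply pass to a graded quotient of $\Vell G/V$ and invoke semisimplicity of the ambient category, but must instead exploit semisimplicity of $\Vell A$ and $\Vell T$ separately together with weight arguments on Frobenius eigenvalues.
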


\begin{proof}
In the case where $G$ is an abelian variety or a torus, this is clear by
semisimplicity of the category of pure polarisable rational Hodge structures.
The general case can be proved by d\'evissage, writing $G$ as an extension of an
abelian variety by a torus. We give a detailed proof for the analogous statement
about Galois representations (Lemma \ref{Lem:DetectionOnSplitEll}).
\end{proof}

\vspace{4mm}
\begin{proof}[Proof of Theorem \ref{Thm:Comparison0}]
The image $W_{-1}\fh^M$ of the map $\vartheta_0$ is contained in $\Vnot P$ by
Proposition \ref{Pro:Theta0InVnotP}. By Lemma \ref{Lem:InjectViaDividing0}, the
restriction map $\Hom_\Gamma(\Vnot P, \Vnot\widetilde G) \to
\Hom_\Gamma(\im\vartheta_0, \Vnot\widetilde G)$ is injective, and we know by
Lemma \ref{Lem:GContainsAllIsogTypes} that $G$ contains all isogeny of $P$.
Hence the equality $\im\vartheta_0=\Vnot P$ must hold by Lemma
\ref{Lem:DetectionOnSplit0}.
\end{proof}

%=================================================
%6666666666666666666666666666666666666666666666666666
%=================================================

\vspace{14mm}
\section{Comparison of the motivic fundamental group with the image of
Galois}\label{Sec:CompMotivicGalois}

\begin{par}
Let $k$ be field which is finitely generated over its prime field, and let $\overline k$ be an algebraic closure of $k$. Let $\ell$ be a prime number different from the characteristic of $k$. In this section, we show that for every 1--motive $M$ over $k$ the equality $W_{-1}\fl^M = \Vell P(M)$ holds, where $\Vell P(M)$ is seen as a subspace of $\End(\Vell M)$ via the map $\alpha_\ell$ from Proposition \ref{Pro:Alpha0AlphaEll}. In analogy with the previous section we write $\Gamma$ for the absolute Galois group $\Gal(\overline k|k)$, and $\Gamma_M$ for the subgroup of $\Gamma$ consisting of those elements which act trivially on $\Tell M$. For a commutative group $C$, we introduce the notation
$$C\hotimes\IZ_\ell := \lim_{i\geq 0}C/\ell^iC$$
There is a canonical map $C\to C\hotimes \IZ_\ell$ whose kernel consists of the $\ell$--divisible elements of $C$, and we write $c\hotimes 1$ for the image of $c\in C$ under this map. 
\end{par}

\vspace{4mm}
\begin{para}\label{Par:DefinitionThetaEll}
Let $M = [u:Y\to G]$ be a 1--motive over $k$ and set $U := U(M)$ and $P := P(M)$. The action of
$\Gamma$ on $\Vell M$ is given by a group homomorphism $\rho_\ell:\Gamma \to
\GL_{\Vell M}$. The group $\Gamma_{\gr^W_\ast\!\! M}$ consisting of those
elements of $\Gamma$ which act trivially on $\gr^W_\ast\!\! \Vell M$ acts on
$\Vell M$ by unipotent automorphisms, and we have 
$$\log\rho_\ell(\gamma) = (\rho_\ell(\gamma)-1) -
\textstyle\frac12(\rho_\ell(\gamma)-1)^2\qquad  \in W_{-1}\End(\Vell M)$$
We have constructed a canonical isomorphism $\alpha_\ell: \Vell U \to
W_{-1}\End(\Vell M)$, and by composing we get a map $\vartheta_\ell :=
\alpha_\ell^{-1}\circ\log\circ\rho_\ell$. The image of $\vartheta_\ell:\Gamma_{\gr^W_\ast\!\! M}\to \Vell U$ is, except for $\ell = 2$, a $\IZ_\ell$--submodule of $\Vell U$. The $\IQ_\ell$--linear span of $\im(\vartheta_\ell)$ contains $\im(\vartheta_\ell)$ as an open subset, and is a Lie subalgebra of $\Vell U$, isomorphic via $\alpha_\ell$ to the Lie algebra $W_{-1}\fl^M$. The overall picture is similar to that in \ref{Par:DefinitionTheta0}:
$$\begin{diagram}
\setlength{\dgARROWLENGTH}{4mm}
\node[2]{\Gamma_{\gr^W_\ast\!\! M}}\arrow[2]{s,r}{\log\circ\rho_\ell}\arrow{ssw,t}{\vartheta_\ell}\arrow{e,t}{ \subseteq}\node{\Gamma}\arrow{s,r}{\rho_\ell}\\
\node[3]{\GL_{\Vell M}}\\
\node{\hspace{-18mm}\Tell P\subseteq \Vell P \subseteq \Vell U}\arrow{e,t}{\alpha_\ell}\node{W_{-1}\End(\Vell M)\hspace{-4mm}}
\end{diagram}$$
For the record, the map $\vartheta_\ell$ is explicitly given as follows: Choose
a section $s:Y\otimes \IQ_\ell \to \Vell M$ and a retraction $r:\Vell M \to
\Vell T$. Then, $\vartheta_\ell(\gamma)$ is represented by the pair 
\begin{equation}\label{Eqn:ThetaEllExplicit1}
(f-h-\textstyle\frac12e,g) \quad \in \Hom_{\IQ_\ell}(Y\otimes \IQ_\ell,\Vell
G)\times \Hom_{\IQ_\ell}(\Vell M_A, \Vell T)\tag{$\ast$}
\end{equation}
where $f,g,h$ and $e$ are given by
\begin{equation}\label{Eqn:ThetaEllExplicit2}
f(y) = \gamma s(y)-s(y) \qquad g(a) = r(\gamma \widetilde a- \widetilde a)
\qquad h = r \circ f \qquad e(y) = \gamma^2 s(y)-2\gamma s(y) +
s(y)\tag{$\ast\ast$}
\end{equation}
for all $y\in Y$ and $a \in \Vell M_A$. In the second equality, $\widetilde a$
is any element of $\Vell M$ mapping to $a\in \Vell M_A$ and $\Vell T$ is
understood to be contained in $\Vell G$. The main result of this section is the
following theorem, which in the case of a 1--motive of the form $[Y \to A]$ for
some abelian variety $A$ specialises to a Theorem of Ribet (\cite{Ribe76, Ribe79}, see
also the appendix of \cite{Hind88}). 
\end{para}

\vspace{4mm}
\begin{thm}\label{Thm:ComparisonEll}
The image of the map $\vartheta_\ell:\Gamma_{\gr^W_\ast\!\! M}\to \Vell U(M)$ is
contained and open in $\Vell P(M)$. In other words, the map $\alpha_\ell$
induces an isomorphism 
$$\Vell P(M) \xrightarrow{\:\:\cong\:\:} W_{-1}\fl^M$$
of Galois representations. Provided the condition $(\star)$ below holds for $A =
\gr^W_{-1}(M)$, the image of the map $\vartheta_\ell:\Gamma_{\gr^W_\ast\!\!
M}\to \Vell U(M)$ is equal to $\Tell P(M)$ for all but finitely many prime
numbers $\ell$.
\begin{enumerate}
\item[$(\star)$] Let $k'|k$ be a finite extension. For all but finitely many primes $\ell$ the group $H^1(L, A[\ell]\times \mu_\ell)$ is trivial, where $L$ denotes the image of $\Gal(\overline k|k')$ in $\GL(\Tell A \times \IZ_\ell(1))$. 
\end{enumerate}
\end{thm}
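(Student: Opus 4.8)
The plan is to transpose the argument of Section~\ref{Sec:CompMotivicHodge} from Hodge theory to the $\ell$--adic world, replacing mixed Hodge structures by $\ell$--adic Galois representations, the absolute Hodge group by $\Gamma=\Gal(\overline k|k)$, and the Lie algebra cohomology of $\fh^M$ by the continuous cohomology of the compact $\ell$--adic Lie group $\rho_\ell(\Gamma)$, which by Lazard's theorem agrees, with $\IQ_\ell$--coefficients, with the Lie algebra cohomology of $\fl^M$. The first task is to build the $\ell$--adic counterpart of the auxiliary map from~\ref{Par:ConstructionKappa0}: for a semiabelian variety $G$ over $k$ one defines a $\IQ$--linear map
$$\kappa_\ell:G(\overline k)\hotimes\IZ_\ell\longrightarrow H^1\big(\Gamma,\Tell G\big)$$
sending $x\hotimes 1$ to the class of the cocycle $\gamma\mapsto\gamma(\widetilde x)-\widetilde x$, where $\widetilde x\in\Tell[\IZ\xrightarrow{1\mapsto x}G]$ lifts a compatible system of $\ell^i$--division points of $x$; equivalently, $\kappa_\ell$ is the connecting map in the cohomology sequence of the weight filtration of $[\IZ\to G]$. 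Using that $\Tell$ is fully faithful on $1$--motives over a field finitely generated over its prime field (\ref{Para:IntroElladicRealis}), one shows as in Propositions~\ref{Pro:Kappa0Properties} and~\ref{Pro:DividingOnePoint0} that $\kappa_\ell$ is injective and natural in $G$, and that for $M=[\IZ\xrightarrow{u}G]$ the class $\kappa_\ell(u(1)\hotimes 1)$ restricts to $0$ in $H^1(\Gamma_M,\Tell G)$. One also records the $\ell$--adic form of Proposition~\ref{Pro:GroupInclusions0}, the inclusions $\Gamma_M=\Gamma_{M_U}\subseteq\Gamma_{M_P}$, which follow from the same diagrams of $1$--motives together with full faithfulness of $\Tell$; the morphisms used induce \emph{surjective}, resp.\ \emph{injective}, maps of $\Tell$, so these inclusions already hold at the level of $\IZ_\ell$--lattices.

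\textbf{The first two assertions.} With these preliminaries the argument of Section~\ref{Sec:CompMotivicHodge} transposes directly. One first proves $\im\vartheta_\ell\subseteq\Vell P(M)$, and even $\im\vartheta_\ell\subseteq\Tell P(M)$ for $\ell\neq 2$, by the $\ell$--adic analogue of Proposition~\ref{Pro:Theta0InVnotP}: the $\ell$--adic analogue of Lemma~\ref{Lem:Vartheta0OnPurePart} expresses $(\Vell\pi)\circ\vartheta_\ell$ and $\vartheta_\ell\circ\iota$ through $\kappa_\ell(\overline u\hotimes 1)$; since $n\overline u\in P(M)$, this class is represented by a cocycle with values in $\Vell P(M)$; and the $\ell$--adic analogue of Lemma~\ref{Lem:SSDetectionSub0} applies, being valid because there is no nonzero Galois morphism from the pure weight $-1$ to the pure weight $-2$ constituents, weights being well defined over a finitely generated base thanks to the Weil bounds. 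For the reverse inclusion, write $W_{-1}\fl^M$ for the $\IQ_\ell$--span of $\im\vartheta_\ell$, which $\alpha_\ell$ identifies with the nilpotent radical of $\fl^M$ (\ref{Par:DefinitionThetaEll}). The $\ell$--adic analogue of Lemma~\ref{Lem:InjectViaDividing0} shows that the restriction map $\alpha_\ell^\ast:\Hom_\Gamma(\Vell P(M),\Vell\widetilde G)\to\Hom_\Gamma(W_{-1}\fl^M,\Vell\widetilde G)$ is injective; its proof uses the $\ell$--adic analogue of Lemma~\ref{Lem:HSSSIso0} --- the Hochschild--Serre sequence for $0\to W_{-1}\fl^M\to\fl^M\to\fl^{\widetilde G}\to 0$ together with Sah's lemma applied to a suitable central element of $\fl^{\widetilde G}$ acting without zero eigenvalue on $\Vell\widetilde G$ --- together with the injectivity of $\kappa_\ell$ and the $\ell$--adic analogue of Corollary~\ref{Cor:HodgeDividing}. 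Since $G$ contains all isogeny types of $P(M)$ (Lemma~\ref{Lem:GContainsAllIsogTypes}), the $\ell$--adic analogue of Lemma~\ref{Lem:DetectionOnSplit0} then forces $W_{-1}\fl^M=\Vell P(M)$; being a $\IZ_\ell$--submodule of $\Vell P(M)$ which spans it, $\im\vartheta_\ell$ is open in $\Vell P(M)$. This establishes the first two assertions and the isomorphism $\Vell P(M)\xrightarrow{\cong}W_{-1}\fl^M$ of Galois representations induced by $\alpha_\ell$.

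\textbf{The integral refinement.} Assume $(\star)$ holds for $A=\gr^W_{-1}(M)$, and replace $\overline u$ by $n\overline u$ so that $\overline u\in P(M)(k)$. For $\ell\neq 2$ the set $\im\vartheta_\ell$ is an open $\IZ_\ell$--submodule of the free module $\Tell P(M)$, so by Nakayama it suffices to prove that the reduction $\bar\vartheta_\ell:\Gamma_{\gr^W_\ast M}\to P(M)[\ell]$ is surjective for all but finitely many $\ell$. Encoding the $\ell$--division tower of $\overline u$ by the $1$--motive $M_P=[\IZ\xrightarrow{1\mapsto\overline u}P(M)]$ and passing to $\ell$--torsion, one identifies $\coker\bar\vartheta_\ell$ with a subquotient of $H^1(L_\ell,P(M)[\ell])$, where $L_\ell$ is the image of $\Gamma$ acting on $\gr^W_\ast(M)[\ell]$. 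By Proposition~\ref{Pro:Alpha0AlphaEll} the Galois module $\Vell U(M)\cong W_{-1}\End(\Vell M)$, hence $\Vell P(M)$ as well, is assembled from $\cHom(Y,A)$, $\cExt^1(A,T)$ and $\cHom(Y,T)$; after a finite extension $k'|k$ that splits $T$, trivialises the Galois action on $Y$ and trivialises a fixed isogeny between $\cExt^1(A,T)$ and a power of $A^\vee$, and for $\ell$ prime to the degrees of the finitely many isogenies so introduced, $P(M)[\ell]$ becomes an $L$--subquotient of a module built from finitely many copies of $A[\ell]$, $A^\vee[\ell]=\Hom(A[\ell],\mu_\ell)$ and $\mu_\ell$, where $L$ is the group in $(\star)$. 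Since $(\star)$ gives $H^1(L,A[\ell]\times\mu_\ell)=0$ for all but finitely many $\ell$, and, by the cohomology of the relevant finite groups of Lie type, the vanishing propagates to the $L$--tensor and $L$--subquotient constructions appearing --- in particular to the $A^\vee[\ell]$--pieces and killing the connecting maps from $H^0$ that would otherwise obstruct ``subquotients inherit $H^1=0$'' --- one concludes $\coker\bar\vartheta_\ell=0$ for all but finitely many $\ell$, as desired.

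\textbf{Main obstacle.} The first two paragraphs are a faithful transcription of Section~\ref{Sec:CompMotivicHodge}, so the new work sits in the integral refinement. The delicate points there are the precise identification of $\coker\bar\vartheta_\ell$ with a Galois cohomology group, and, above all, the dévissage reducing its vanishing to the single hypothesis $(\star)$: one must track the weight filtrations on $P(M)[\ell]$ and on the comparison module, the appearance of $A^\vee[\ell]=\Hom(A[\ell],\mu_\ell)$ rather than $A^\vee$ itself and how $(\star)$ controls its cohomology, the bounded--degree isogenies and the finite base change $k'|k$, and the connecting homomorphisms from $H^0$. (In characteristic zero $(\star)$ is a theorem of Serre type; in positive characteristic it is open, which is why the refinement is stated conditionally.)
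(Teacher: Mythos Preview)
Your treatment of the first two assertions is essentially the paper's own proof: you construct the Kummer map $\kappa_\ell$, establish the $\ell$--adic analogues of Lemmas~\ref{Lem:Vartheta0OnPurePart}, \ref{Lem:SSDetectionSub0}, \ref{Lem:HSSSIso0}, \ref{Lem:InjectViaDividing0} and~\ref{Lem:DetectionOnSplit0}, and conclude exactly as in Section~\ref{Sec:CompMotivicHodge}. Two small corrections: the injectivity of $\kappa_\ell$ comes directly from the Kummer sequence and does not need full faithfulness of $\Tell$; and the inclusion $\im\vartheta_\ell\subseteq\Tell P(M)$ holds not for all $\ell\neq 2$ but only for $\ell$ odd and prime to the integer $n$ with $n\overline u\in P(M)$ (your later ``replace $\overline u$ by $n\overline u$'' does not repair this, since $\overline u$ is determined by $M$).

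The integral refinement, however, has a genuine gap, and it is precisely the one you flag as the main obstacle. Your strategy is to show $H^1(L_\ell,P(M)[\ell])=0$ and deduce surjectivity of $\bar\vartheta_\ell$ by Nakayama. But condition~$(\star)$ only gives $H^1(L,A[\ell]\times\mu_\ell)=0$. Vanishing of $H^1$ does \emph{not} propagate to subquotients or to tensor constructions such as $A^\vee[\ell]\cong\Hom(A[\ell],\mu_\ell)$: for a subquotient $W$ of $V$ one only gets that $H^1(L,W)$ is controlled by $H^0$ of adjacent pieces, and these $H^0$'s need not vanish. Your appeal to ``the cohomology of the relevant finite groups of Lie type'' is not an argument, and in positive characteristic --- where $(\star)$ is only a hypothesis, not a consequence of a scalar-in-the-centre theorem --- there is no evident way to upgrade $(\star)$ to the vanishing you need. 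So as written, the refinement uses a strictly stronger hypothesis than the theorem allows.

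The paper avoids this by \emph{not} trying to kill $H^1(L,P(M)[\ell])$. Instead it runs the Hom--detection argument integrally: it proves a mod--$\ell$ analogue of Lemma~\ref{Lem:HSSSIso0} showing that $H^1(L^M,\widetilde G[\ell])\to\Hom_\Gamma(W_{-1}L^M,\widetilde G[\ell])$ is injective for almost all $\ell$, using $(\star)$ only to kill $H^1(L^{\widetilde G},\widetilde G[\ell])$, which is \emph{exactly} what $(\star)$ says once $T$ is split. This feeds into a mod--$\ell$ version of Lemma~\ref{Lem:InjectViaDividing0}, giving injectivity of $\Hom_\Gamma(\Tell P,\widetilde G[\ell])\to\Hom_\Gamma(\im\vartheta_\ell,\widetilde G[\ell])$ for almost all $\ell$; a mod--$\ell$ detection lemma (analogue of Lemma~\ref{Lem:DetectionOnSplit0}) then forces $\im\vartheta_\ell=\Tell P$. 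The point is that the detection method only ever requires cohomological input for the \emph{split} module $\widetilde G[\ell]=A[\ell]\oplus T[\ell]$, never for $P(M)[\ell]$ itself, and that is why $(\star)$ as stated suffices.
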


\begin{par}
The condition $(\star)$ holds for all abelian varieties if $k$ is of
characteristic zero. Indeed, a refinement by Serre of a theorem of Bogomolov
(\cite{SerreLetter}, theorem on p.59) guarantees (for a fixed abelian variety
$A$ over $k$) that for all but finitely many $\ell$, there exists $\gamma\in \Gal(\overline k|k)$ which acts as multiplication by a scalar $\lambda$ on $\Tell A$, such that $\lambda^2 \not \equiv 1 \bmod \ell$. If $A$ has a polarisation of degree prime to $\ell$, then $\gamma$ acts as $\lambda^2$ on $\IZ_\ell(1)$, as one can see from the Weil pairing. For such $\ell$ we have $H^n(L,\Tell A\times \IZ_\ell(1)) = 0$ for all $n\geq 0$ by Sah's Lemma, and hence $H^1(L,A[\ell] \times \mu_\ell) = 0$.
\end{par}

\vspace{4mm}
\begin{para}\label{Par:ConstructionKappaEll}
We start with the construction of a map $\kappa_\ell$ analogous to $\kappa_0$ in
the previous section. Let $K|k$ be a Galois extension contained in $\overline k$
and let $G$ be a semiabelian variety over $K$. By $H^1(K,\Tell G)$ we mean here,
and in all that follows, the group of continuous cocycles $\Gal(\overline
k|K) \to \Tell G$ for the $\ell$--adic topology on $\Tell G$, modulo
coboundaries. We construct the map
$$\kappa_\ell: G(K)\hotimes \IZ_\ell \to H^1(K,\Tell G)$$
as follows: The multiplication--by--$\ell^i$ map on $G(\overline k)$ induces a
long exact cohomology sequence, from where we can cut out the injection
$G(K)/\ell^i G(K) \to H^1(K,G[\ell^i])$. By taking limits over $i$ we get the map $\kappa_\ell$, since continuous cochain cohomology commutes with limits of compact modules.
\end{para}

\vspace{4mm}
\begin{prop}\label{Pro:KappaEllProperties}
The map $\kappa_\ell$ constructed in \ref{Par:ConstructionKappaEll} is injective
and natural in $G$ and $K$.
\end{prop}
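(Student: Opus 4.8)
The argument will be a direct Kummer-theoretic one, in the spirit of but considerably simpler than the proof of Proposition \ref{Pro:Kappa0Properties}. Recall that $\kappa_\ell$ is built by passing to the limit in the injections
$$\delta_i\colon G(K)/\ell^i G(K) \hookrightarrow H^1(K,G[\ell^i])$$
coming from the long exact cohomology sequence attached to the short exact sequence of discrete $\Gal(\overline k|K)$--modules $0 \to G[\ell^i] \to G(\overline k) \xrightarrow{\ \ell^i\ } G(\overline k) \to 0$; here surjectivity of multiplication by $\ell^i$ on $G(\overline k)$ holds because $G$ is a smooth connected commutative group scheme over $\overline k$ and $\ell\neq\mathrm{char}\,k$, and $\ker\delta_i = \ell^i G(K)$.

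First I would record that these sequences form a projective system. The $\ell^{i+1}$--Kummer sequence maps to the $\ell^i$--one via multiplication by $\ell$ on the subobject $G[\ell^{i+1}]\to G[\ell^i]$ --- which is precisely the transition map of the system defining $\Tell G$ --- via multiplication by $\ell$ on the source copy of $G(\overline k)$ and via the identity on the target copy. Functoriality of connecting homomorphisms then shows that $\delta_{i+1}$ and $\delta_i$ are compatible via the natural surjection $G(K)/\ell^{i+1}G(K)\to G(K)/\ell^i G(K)$ on sources and via the transition map $H^1(K,G[\ell^{i+1}])\to H^1(K,G[\ell^i])$ (induced by multiplication by $\ell$) on targets. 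Taking the inverse limit, the sources assemble to $G(K)\hotimes\IZ_\ell$ by definition, and the targets assemble to $H^1(K,\Tell G)$: continuous cochain cohomology of a profinite group commutes with inverse limits of finite modules with surjective transition maps --- equivalently, one checks by hand that a continuous cocycle with values in $\Tell G = \lim_i G[\ell^i]$ is the same as a compatible system of continuous cocycles with values in the $G[\ell^i]$, and that a compatible system of coboundaries lifts to a coboundary because the associated system of finite ``choice sets'' $\{v_i\in G[\ell^i] : c_i(\gamma) = \gamma v_i - v_i\}$ has surjective transition maps and hence nonempty inverse limit. Under this identification $\kappa_\ell = \lim_i \delta_i$.

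Injectivity of $\kappa_\ell$ is then immediate, since the inverse limit functor is left exact and each $\delta_i$ is injective. Naturality is formal: a morphism $\phi\colon G\to G'$ of semiabelian varieties over $K$ induces a morphism of Kummer sequences, hence compatibility of the $\delta_i$ --- and so of $\kappa_\ell$ --- with the maps $G(K)\hotimes\IZ_\ell\to G'(K)\hotimes\IZ_\ell$ and $H^1(K,\Tell G)\to H^1(K,\Tell G')$ induced by $\phi$; and for an inclusion $K\subseteq K'$ of Galois subextensions of $\overline k|k$, the restriction maps $H^1(K,G[\ell^i])\to H^1(K',G[\ell^i])$ are compatible with the inclusions $G(K)\hookrightarrow G(K')$ through the $\delta_i$, and this passes to the limit.

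The only point that is not pure formalism is the commutation of continuous cohomology with the inverse limit over $i$, together with the bookkeeping that the transition maps appearing on the $H^1(K,G[\ell^i])$ are exactly those induced by multiplication by $\ell$ --- so that their limit is genuinely $H^1(K,\Tell G)$ and not a twist of it. Neither is a real obstacle: the former is a standard fact about continuous cochain cohomology (the obstructing $\lim^1$ vanishes because $H^0(K,G[\ell^i])$ is finite), and the latter is just a careful reading of the map of Kummer sequences above.
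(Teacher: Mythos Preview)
Your argument is correct and follows exactly the same approach as the paper: injectivity from left exactness of inverse limits applied to the injections $G(K)/\ell^i G(K)\hookrightarrow H^1(K,G[\ell^i])$, and naturality as a formal consequence of the construction. The paper's proof is a two-line sketch of precisely this, while you have carefully unpacked the bookkeeping on transition maps and the commutation of continuous cohomology with the limit (which the paper already records in \ref{Par:ConstructionKappaEll}).
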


\begin{proof}
Injectivity of $\kappa_\ell$ follows from injectivity of $G(K)/\ell^i G(K) \to
H^1(K,G[\ell^i])$ and left exactness of limits. Naturality in $G$ and $K$ is
obvious from the construction.
\end{proof}

\vspace{4mm}
\begin{prop}\label{Pro:DividingOnePointEll}
Let $M=[u: \IZ \to G]$ be a 1--motive over $k$ given by $u(1)=p \in G(k)$. The
class $\kappa_\ell(p \hotimes 1) \in H^1(\Gamma,\Tell G)$ restricts to zero in
$H^1(\Gamma_M,\Tell G)$ (continuous cochain cohomology).
\end{prop}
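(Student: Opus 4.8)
The plan is to imitate the proof of the Hodge--theoretic analogue, Proposition~\ref{Pro:DividingOnePoint0}: everything will follow once $\kappa_\ell(p\hotimes 1)$ is identified with the connecting homomorphism attached to the weight filtration on $\Tell M$.

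Concretely, write $M=[u:\IZ\to G]$ with $u(1)=p\in G(k)$. Applying $\Tell$ to the weight filtration of $M$ yields a short exact sequence of $\Gamma$--modules
\[
0\longrightarrow \Tell G\longrightarrow \Tell M\longrightarrow \IZ_\ell\longrightarrow 0,
\]
where the quotient $\IZ_\ell=\Tell\gr^W_0 M$ carries the trivial $\Gamma$--action; reduced mod $\ell^i$ this is $0\to G[\ell^i]\to M[\ell^i]\to\IZ/\ell^i\IZ\to 0$, using the explicit description of $M[\ell^i]$ recalled in \ref{Para:IntroElladicRealis}. First I would check that the connecting map $\IZ/\ell^i\IZ\to H^1(\Gamma,G[\ell^i])$ of this sequence sends $1$ to the class of $\gamma\mapsto\gamma b_i-b_i$ for any $b_i\in G(\overline k)$ with $\ell^i b_i=p$: lift $1$ to the class of the pair $(1,b_i)$ in $M[\ell^i]$ and apply $\gamma\mapsto\gamma(1,b_i)-(1,b_i)=(0,\gamma b_i-b_i)$. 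This is exactly the image of $p$ under the Kummer injection $G(k)/\ell^iG(k)\hookrightarrow H^1(\Gamma,G[\ell^i])$ from \ref{Par:ConstructionKappaEll}. Passing to the limit over $i$ — legitimate since continuous cochain cohomology commutes with the relevant limits, as noted in \ref{Par:ConstructionKappaEll} — this shows that $\kappa_\ell(p\hotimes 1)\in H^1(\Gamma,\Tell G)$ is represented by the continuous cocycle
\[
c:\quad\gamma\ \longmapsto\ \gamma\widetilde p-\widetilde p,
\]
where $\widetilde p\in\Tell M$ is any element mapping to $1\in\IZ_\ell$ (such a $\widetilde p$ exists by exactness of the displayed sequence; concretely one may choose a compatible system $b_i\in G(\overline k)$ with $\ell^i b_i=p$ and $\ell b_{i+1}=b_i$, and take $\widetilde p=\big([(1,b_i)]\big)_i$). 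Since $\Gamma$ acts trivially on $\IZ_\ell$ and fixes $p\in G(k)$, the element $\gamma\widetilde p-\widetilde p$ lies in $\ker(\Tell M\to\IZ_\ell)=\Tell G$, so $c$ is indeed a $\Tell G$--valued cocycle.

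It then remains only to restrict to $\Gamma_M$. By definition $\Gamma_M$ acts trivially on $\Tell M$, hence $\gamma\widetilde p=\widetilde p$ and $c(\gamma)=0$ for every $\gamma\in\Gamma_M$; thus the restriction of $c$ to $\Gamma_M$ is the zero cocycle, and therefore $\res\,\kappa_\ell(p\hotimes 1)=0$ in $H^1(\Gamma_M,\Tell G)$, as claimed.

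I expect no real obstacle here: the only point requiring a little care is the compatibility of the two descriptions of $\kappa_\ell$ used above — the one via $G(k)/\ell^iG(k)\hookrightarrow H^1(\Gamma,G[\ell^i])$ of \ref{Par:ConstructionKappaEll} and the one via the connecting map of the weight sequence of $M$ — but this is a routine diagram chase once $M[\ell^i]$ is written out, and the limit/continuity issue is already covered by the remark in \ref{Par:ConstructionKappaEll}. The proposition is essentially the $\ell$--adic transcription of Proposition~\ref{Pro:DividingOnePoint0}, with the line $\gamma(v,1)=(v,1)$ there replaced by $\gamma\widetilde p=\widetilde p$.
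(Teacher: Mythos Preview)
Your argument is correct and follows essentially the same approach as the paper: both identify the Kummer class $\kappa_\ell(p\hotimes 1)$ with a cocycle of the form $\gamma\mapsto\gamma x-x$ for an element $x$ of $\Tell M$ (or its reduction modulo $\ell^i$) lifting $1$, and conclude by the triviality of the $\Gamma_M$--action on $\Tell M$. The paper phrases this at finite level, noting that the preimage of $1$ in $\Tell M/\ell^i\Tell M$ is precisely the set of $\ell^i$--division points of $p$, while you work directly with a lift $\widetilde p\in\Tell M$; the content is the same.
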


\begin{proof}
We must show that $\Gamma_M$ leaves all $\ell^i$--division points of $p$ fixed.
Indeed, $\Tell M/\ell^i\Tell M$ is an extension of $\IZ/\ell^i\IZ$ by
$G[\ell^i]$, and the inverse image of $1\in \IZ/\ell^i\IZ$ in $\Tell
M/\ell^i\Tell M$ is the set of $\ell^i$--division points of $p$.
\end{proof}

\vspace{4mm}
\begin{prop}\label{Pro:GroupInclusionsEll}
Let $M=[u:Y\to G]$ be a 1--motive over $k$ and consider the 1--motives 
$$M_U := [\IZ \xrightarrow{\:1\mapsto \overline u\:} U(M)] \qqet M_P := [n\IZ
\xrightarrow{\:n\mapsto n\overline u\:} P(M)]$$
where $n\geq 1$ is an integer such that the point $n\overline u$ of $U(M)$
belongs to $P(M)$ and $\overline u$ is as in Definition \ref{Def:P(M)}. The
inclusions $\Gamma_M \:\subseteq\: \Gamma_{M_U} \:\subseteq\: \Gamma_{M_P}$ hold
in $\Gamma$, and $\Gamma_M$ has finite index in $\Gamma_{M_U}$.
\end{prop}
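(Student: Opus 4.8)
The plan is to run the $\ell$--adic analogue of the proof of Proposition~\ref{Pro:GroupInclusions0}, with $\Vell$ and the Tannakian category of $\IQ_\ell$--linear $\Gamma$--representations playing the roles of the Hodge realisation and $\mathrm{MHS}_\IQ$. Two elementary facts will be used throughout: first, a morphism of $1$--motives which on each $\gr^W_i$ is surjective (resp.\ injective) after $\otimes\IQ_\ell$ induces a surjection (resp.\ injection) on $\Vell$; second, since $\ker(\Gamma\to\GL(\Tell N))=\ker(\Gamma\to\GL(\Vell N))$ for every $N$, the group $\Gamma_N$ depends only on the rational representation $\Vell N$, and whenever $\Vell M_2\in\angl{\Vell M_1}^\otimes$ one has $\Gamma_{M_1}\subseteq\Gamma_{M_2}$.

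\emph{Step 1: $\Gamma_M\subseteq\Gamma_{M_U}$.} As in part~(b) of the proof of Proposition~\ref{Pro:GroupInclusions0} there is a morphism of $1$--motives $N\to N'$, injective on $\Vell$, where $N=[\IZ\xrightarrow{1\mapsto(u,\eta)}\cHom(Y,G)\oplus\cExt^1(M_A,T)]$ and $N'$ is the ``internal Hom'' $1$--motive whose $\ell$--adic realisation is canonically $\Hom_{\IQ_\ell}(Y\otimes\IQ_\ell,\Vell M)\oplus\Hom_{\IQ_\ell}(\Vell M,\Vell T)$. As $Y\otimes\IQ_\ell=\gr^W_0\Vell M$ and $\Vell T=\gr^W_{-2}\Vell M$ are subquotients of $\Vell M$, the object $\Vell N'$ lies in $\angl{\Vell M}^\otimes$, hence so does its subobject $\Vell N$. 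Since $M_U$ is the quotient of $N$ by the connected group $\cHom(Y,T)$ placed in degree $1$, the map $N\to M_U$ is surjective on each $\gr^W\otimes\IQ_\ell$ and therefore $\Vell N\to\Vell M_U$ is surjective; thus $\Vell M_U\in\angl{\Vell M}^\otimes$ and $\Gamma_M\subseteq\Gamma_{M_U}$.

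\emph{Step 2: $\Gamma_{M_U}\subseteq\Gamma_{M_P}$.} The lattice isomorphism $n\IZ\cong\IZ$ identifies $M_P$ with $[\IZ\xrightarrow{1\mapsto n\overline u}P(M)]$, and the inclusion $P(M)\hookrightarrow U(M)$ together with multiplication by $n$ on the lattice is a morphism $M_P\to M_U$ of $1$--motives which is an isomorphism on $\gr^W_0\otimes\IQ_\ell$ and injective on $\gr^W_{-1}$ and $\gr^W_{-2}$. Hence $\Vell M_P\hookrightarrow\Vell M_U$, and any $\gamma$ acting trivially on $\Tell M_U$ acts trivially on the submodule $\Vell M_P$, so $\Gamma_{M_U}\subseteq\Gamma_{M_P}$.

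\emph{Step 3: $[\Gamma_{M_U}:\Gamma_M]<\infty$.} This is the only point where the statement is genuinely weaker than its Hodge counterpart (where $\Gamma_M=\Gamma_{M_U}$): over $\IC$ the lattice $Y$ carries no extra structure and lies in every Tannakian subcategory, whereas here $Y$ is a nontrivial Galois module which $\Vell M_U$ only determines after tensoring with the realisations of $A$ and $T$. I would pass to the finite Galois extension $K|k$ over which $Y$ becomes a constant Galois module, so that $\Gal(\overline k|K)$ has finite index in $\Gamma$. Since $M\mapsto U(M)$ and $M\mapsto P(M)$ commute with base change, $\Gamma_{M_U}\cap\Gal(\overline k|K)=\Gamma_{(M_K)_U}$ and $\Gamma_M\cap\Gal(\overline k|K)=\Gamma_{M_K}$. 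Over $K$ the representation $Y\otimes\IQ_\ell$ is trivial, hence automatically in $\angl{\Vell(M_K)_U}^\otimes$, so the argument of part~(a) of the proof of Proposition~\ref{Pro:GroupInclusions0} goes through over $K$ and gives $\Vell M_K\in\angl{\Vell(M_K)_U}^\otimes$, i.e.\ $\Gamma_{(M_K)_U}\subseteq\Gamma_{M_K}$. Combining with Step~1, $\Gamma_{M_U}\cap\Gal(\overline k|K)\subseteq\Gamma_M\subseteq\Gamma_{M_U}$, and as $[\Gamma_{M_U}:\Gamma_{M_U}\cap\Gal(\overline k|K)]\le[\Gamma:\Gal(\overline k|K)]<\infty$ we conclude $[\Gamma_{M_U}:\Gamma_M]<\infty$.

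The conceptual content is modest; the real work is the bookkeeping in Steps~1 and~3 --- checking via the ``connected kernel'' principle that the displayed morphisms of $1$--motives are surjective (resp.\ injective) on $\Vell$, that $\Vell N'$ is indeed the asserted sum of internal Homs, and that forming $U(M)$ and $P(M)$ commutes with base change --- so that the Tannakian arguments of Proposition~\ref{Pro:GroupInclusions0} transport verbatim. I expect the main obstacle to be precisely the reduction to a constant lattice in Step~3: this is the one place requiring a new idea beyond the Hodge case, and it is exactly what forces the conclusion to be a finite--index statement rather than an equality.
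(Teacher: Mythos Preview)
Your proof is correct and follows essentially the same approach as the paper: the paper's proof is a three-sentence sketch that points back to Proposition~\ref{Pro:GroupInclusions0} and says ``after replacing $k$ by a finite extension over which $Y$ is constant, even the equality $\Gamma_M=\Gamma_{M_U}$ holds,'' which is exactly your Step~3, while your Steps~1 and~2 unpack what the paper leaves implicit. Your observation that the part~(b) argument of Proposition~\ref{Pro:GroupInclusions0} already gives $\Gamma_M\subseteq\Gamma_{M_U}$ over $k$ itself (since $Y\otimes\IQ_\ell$ and $\Vell T$ are subquotients of $\Vell M$ regardless of whether $Y$ is constant) is a small clarification the paper omits.
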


\begin{proof}
The Galois representation $\Vell M_P$ is a subrepresentation of $\Vell M_U$, so
$ \Gamma_{M_U} \:\subseteq\: \Gamma_{M_P}$ holds trivially. As for the other
inclusion, after replacing $k$ by a finite extension over which $Y$ is constant,
even the equality $\Gamma_M \:=\: \Gamma_{M_U}$ holds. The proof consists of
recognising $\Vell M$ and $\Vell M_U$ as subquotients of products of each other,
as in the proof of \ref{Pro:GroupInclusions0}. 
\end{proof}

\vspace{4mm} % Corollary 6.7
\begin{cor}\label{Cor:EllDividing}
Let $M=[u:Y\to G]$ be a 1--motive over $k$ and let $n\geq 1$ be an integer such that the point $n\overline u$ of $U(M)$ belongs to $P(M)$. Set $\widetilde G := \gr^W_\ast\! \! G =  T \oplus A$. For every $\psi\in\Hom_k(P(M),\widetilde G)$, the cohomology class $\kappa_\ell(\psi(n\overline u)\hotimes 1)\in H^1(\Gamma, \Tell\widetilde G)$ restricts to zero in $H^1(\Gamma_M, \Tell\widetilde G)$.
\end{cor}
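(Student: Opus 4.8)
The plan is to reproduce, \emph{mutatis mutandis}, the proof of Corollary \ref{Cor:HodgeDividing}, replacing the Hodge realisation and the map $\kappa_0$ by the $\ell$--adic realisation and the map $\kappa_\ell$ constructed in \ref{Par:ConstructionKappaEll}. Set $P := P(M)$ and introduce the two 1--motives
$$M_P := [\IZ \xrightarrow{\:1\mapsto n\overline u\:} P] \qqet M_\psi := [\IZ \xrightarrow{\:1\mapsto \psi(n\overline u)\:} \widetilde G]$$
which make sense because $n\overline u$ is a $k$--rational point of $P(M)$ and $\psi$ is a $k$--morphism, so $\psi(n\overline u)\in\widetilde G(k)$. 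The pair $(\id_\IZ,\psi)$ is a morphism of 1--motives $M_P\to M_\psi$, inducing $\Tell\psi:\Tell P \to \Tell\widetilde G$ on $\ell$--adic realisations.

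First I would use naturality of $\kappa_\ell$ in $G$ (Proposition \ref{Pro:KappaEllProperties}) to obtain the commutative diagram
$$\begin{diagram}
\setlength{\dgARROWLENGTH}{4mm}
\node{P(k)\hotimes\IZ_\ell}\arrow{s,l}{\psi}\arrow{e,t}{\kappa_\ell}\node{H^1(\Gamma,\Tell P)}\arrow{s}\arrow{e,t}{\res}\node{H^1(\Gamma_M,\Tell P)}\arrow{s}\\
\node{\widetilde G(k)\hotimes\IZ_\ell}\arrow{e,t}{\kappa_\ell}\node{H^1(\Gamma,\Tell\widetilde G)}\arrow{e,t}{\res}\node{H^1(\Gamma_M,\Tell\widetilde G)}
\end{diagram}$$
in which all vertical maps are induced by $\psi$ and $H^1$ denotes continuous cochain cohomology throughout. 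By Proposition \ref{Pro:GroupInclusionsEll} we have $\Gamma_M\subseteq\Gamma_{M_P}$, so the restriction map $H^1(\Gamma,\Tell P)\to H^1(\Gamma_M,\Tell P)$ factors through $H^1(\Gamma_{M_P},\Tell P)$. Applying Proposition \ref{Pro:DividingOnePointEll} to the 1--motive $M_P$ shows that $\kappa_\ell(n\overline u\hotimes 1)$ dies in $H^1(\Gamma_{M_P},\Tell P)$, hence already in $H^1(\Gamma_M,\Tell P)$. Chasing the diagram, using $(\Tell\psi)_\ast\,\kappa_\ell(n\overline u\hotimes 1) = \kappa_\ell(\psi(n\overline u)\hotimes 1)$ and the fact that restriction commutes with $(\Tell\psi)_\ast$, we conclude that $\kappa_\ell(\psi(n\overline u)\hotimes 1)$ restricts to zero in $H^1(\Gamma_M,\Tell\widetilde G)$, which is the assertion.

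I do not expect a genuine obstacle: the argument is a formal diagram chase of exactly the same shape as in the Hodge case, and each ingredient — naturality of $\kappa_\ell$, the inclusions among the various $\Gamma_{(-)}$, and the division--point vanishing — has already been established above (see \ref{Par:ConstructionKappaEll}, \ref{Pro:KappaEllProperties}, \ref{Pro:DividingOnePointEll} and \ref{Pro:GroupInclusionsEll}). The only points deserving a line of care are that $\kappa_\ell(\psi(n\overline u)\hotimes 1)$ is legitimate (this needs $n\overline u\in P(k)$ and $\psi$ defined over $k$) and that the factorisation of $\res$ through $H^1(\Gamma_{M_P},-)$ is the inflation--restriction sequence for continuous cochain cohomology, which holds just as in the discrete setting.
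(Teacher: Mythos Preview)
Your proof is correct and follows exactly the approach the paper intends: the paper's own proof merely says this is a consequence of Propositions \ref{Pro:DividingOnePointEll} and \ref{Pro:GroupInclusionsEll} ``the same way \ref{Cor:HodgeDividing} was a consequence of \ref{Pro:DividingOnePoint0} and \ref{Pro:GroupInclusions0}'', and you have spelled out precisely that diagram chase. One small remark: the factorisation of $\res$ through $H^1(\Gamma_{M_P},-)$ is just functoriality of restriction along the chain $\Gamma_M\subseteq\Gamma_{M_P}\subseteq\Gamma$, not the inflation--restriction sequence per se.
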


\begin{proof}
This is a consequence of \ref{Pro:DividingOnePointEll} and \ref{Pro:GroupInclusionsEll}, the same way \ref{Cor:HodgeDividing} was a consequence of \ref{Pro:DividingOnePoint0} and  \ref{Pro:GroupInclusions0}.
\end{proof}

\vspace{4mm}
\begin{para}\label{Par:SetupVarthetaEllOnPurePart}
We now come to the relation between $\kappa_\ell$ and $\vartheta_\ell$. Let $M =
[u:Y\to G]$ be a 1--motive over $k$ and write $M_A := M/W_{-2}M = [Y\to A]$ and
$U := U(M)$. Let 
$$\pi: U \to U_A := \cHom(Y,A) \oplus \cHom(T^\vee,A^\vee)$$ 
be the projection onto the abelian quotient $U_A$ of $U$ and let $\iota$ be the
inclusion of $\Gamma_{G \oplus M_A}$ into $\Gamma_{\gr^W_\ast\!\! M}$. We
consider the two composition maps
$$U(k)\hotimes\IZ_\ell \xrightarrow{\:\:\kappa_\ell\:\:}
H^1(\Gamma_{\gr^W_\ast\!\! M},\Tell U) \xrightarrow{\:\:(\Tell\pi)_\ast\:\:}
H^1(\Gamma_{\gr^W_\ast\!\! M},\Tell U_A) \cong \Hom(\Gamma_{\gr^W_\ast\!\!
M},\Tell U_A)$$
and
$$U(k)\hotimes\IZ_\ell \xrightarrow{\:\:\kappa_\ell\:\:}
H^1(\Gamma_{\gr^W_\ast\!\! M},\Tell U) \xrightarrow{\:\:\iota^\ast\:\:}
H^1(\Gamma_{G \oplus M_A},\Tell U) \cong \Hom(\Gamma_{G \oplus M_A},\Tell U)$$
These send $\overline u \hotimes 1$ to the homomorphisms $(\Tell\pi)\circ
\kappa_\ell(\overline u \hotimes 1)$ and $\kappa_\ell(\overline u \hotimes
1)\circ \iota$ respectively. Here we have used that $\Gamma_{\gr^W_\ast M}$ acts
trivially on $\Tell U_A$ and that $\Gamma_{G \oplus M_A}$ acts trivially on
$\Tell U$.
\end{para}

\vspace{4mm}
\begin{lem}\label{Lem:VarthetaEllOnPurePart}
Notations being as in \ref{Par:SetupVarthetaEllOnPurePart}, the equalities
$$(\Tell\pi)\circ \vartheta_\ell = (\Tell\pi)\circ \kappa_\ell(\overline u
\hotimes 1) \qqet \vartheta_\ell \circ \iota = \kappa_\ell(\overline u \hotimes
1)\circ \iota$$
hold in $\Hom(\Gamma_{\gr^W_\ast\!\! M},\Tell U_A)$ and in $\Hom(\Gamma_{G
\oplus M_A},\Tell U)$ respectively.
\end{lem}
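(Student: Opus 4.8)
The plan is to prove the lemma by essentially transcribing the proof of Lemma~\ref{Lem:Vartheta0OnPurePart}, replacing throughout the exponential exact sequence $0\to\Tnot G\to\Lie G(\IC)\xrightarrow{\exp}G(\IC)\to0$ used there by the inverse system of Kummer sequences used in the construction of $\kappa_\ell$ in~\ref{Par:ConstructionKappaEll}. First I would fix a compatible system $(\overline u_i)_{i\geq0}$ of $\ell^i$--division points of $\overline u$ in $U(\overline k)$, i.e.\ $\overline u_0=\overline u$ and $\ell\,\overline u_{i+1}=\overline u_i$; it exists because $U(\overline k)$ is $\ell$--divisible, and it takes over the role played in~\ref{Lem:Vartheta0OnPurePart} by a point $x\in\Lie U(\IC)$ with $\exp x=\overline u$. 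Since $\overline u$ is the image of $(u,\eta)$ with $u\in\cHom(Y,G)(k)$ and $\eta\in\cExt^1(M_A,T)(k)$ (see~\ref{Par:ConstructionU(M)}), I would choose the $\overline u_i$ to be the images of compatible division systems $(u_i)$ of $u$ and $(\eta_i)$ of $\eta$. Regarding $u_i$ as a homomorphism $Y\to G$ with $\ell^iu_i=u$, the system $(u_i)$ gives a $\IZ_\ell$--linear section $s\colon Y\otimes\IZ_\ell\to\Tell M$ of the weight--$0$ projection, namely $s(y)=\big((y,u_i(y))\big)_i\in\Tell M$; dually, $(\eta_i)$ gives, via $\Tell\cExt^1(M_A,T)\cong\Hom_{\IZ_\ell}(\Tell M_A,\Tell T)$ and Cartier duality for $\ell$--adic realisations (both compatible with $\Tell$ by Remark~\ref{Rem:HomExtAndComparison}), a $\IZ_\ell$--linear retraction $r\colon\Tell M\to\Tell T$ of $\Tell T\hookrightarrow\Tell M$. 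Finally, unwinding~\ref{Par:ConstructionKappaEll} identifies $\kappa_\ell(\overline u\hotimes1)$ with the class of the continuous cocycle $\gamma\mapsto\big(\gamma\overline u_i-\overline u_i\big)_i\in\Tell U$.

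With $s$ and $r$ so chosen, $\vartheta_\ell(\gamma)$ is represented by the pair $(f-h-\tfrac12e,g)$ of~\eqref{Eqn:ThetaEllExplicit2}, and the two asserted equalities then drop out of the same termwise comparison as in~\ref{Lem:Vartheta0OnPurePart}. For the first equality: $e(y)=(\gamma-1)^2s(y)$ lies in $W_{-2}\Vell M=\Vell T$ because $\gamma\in\Gamma_{\gr^W_\ast\!\! M}$ is unipotent for the weight filtration, and $h=r\circ f$ is $\Vell T$--valued by construction, so both $h$ and $e$ lie in $\ker(\Tell G\to\Tell A)$ and are annihilated by $\Tell\pi$; hence $(\Tell\pi)\circ\vartheta_\ell(\gamma)=(\Tell\pi\circ f,\Tell\pi\circ g)$, which after the identifications of Remark~\ref{Rem:HomExtAndComparison} is exactly the pair attached to the division system $(\pi\overline u_i)$ of $\pi\overline u$, i.e.\ $(\Tell\pi)\circ\kappa_\ell(\overline u\hotimes1)(\gamma)$. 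For the second equality: for $\gamma\in\Gamma_{G\oplus M_A}$ both $(\gamma-1)s(y)$ and $(\gamma-1)\widetilde a$ take values in $\ker(\Vell M\to\Vell M_A)=\Vell T$, so $h=r\circ f=f$, $e=0$, and $\vartheta_\ell(\gamma)$ is represented by $(0,g)$ with $g(a)=\gamma\widetilde a-\widetilde a$ independently of $r$; unwinding the defining sequence of $U$ in~\ref{Par:ConstructionU(M)} and the definition of $\kappa_\ell$ then identifies the class of $(0,g)$ with $\kappa_\ell(\overline u\hotimes1)(\gamma)$ restricted along $\iota$, exactly as in the Hodge case.

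The manipulations in the second paragraph are routine and follow line by line the Hodge case; the one step requiring genuine care is the passage in the first paragraph away from the $\exp$--picture. Concretely, one must check that the division systems $(u_i)$, $(\eta_i)$ and $(\overline u_i)$ assemble into $\IZ_\ell$--linear, $\Gamma$--equivariant maps on Tate modules, and not merely mod~$\ell^i$; that the Cartier--duality construction of $r$ is compatible with passage to $\Tell$; and that $\kappa_\ell(\overline u\hotimes1)$ is computed by the cocycle above, which uses that continuous--cochain cohomology commutes with the relevant inverse limits of compact modules. Once this is set up, both identities follow formally. (As in~\ref{Par:DefinitionThetaEll}, the $\tfrac12$ in~\eqref{Eqn:ThetaEllExplicit1} is responsible for $\im\vartheta_\ell$ being only a $\IZ_\ell$--lattice when $\ell\neq2$, but it is irrelevant to the identities proved here, which are identities of $\IQ_\ell$--valued cocycles.)
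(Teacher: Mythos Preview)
Your proposal is correct and follows essentially the same approach as the paper: choose compatible $\ell$--division sequences $(u_i)$ and $(\eta_i)$ of $u$ and $\eta$, use them to build the section $s$ and retraction $r$, and then redo the computation of Lemma~\ref{Lem:Vartheta0OnPurePart}. The paper in fact stops after constructing $s$ and $r$ and simply says ``the remainder of the proof is then literally the same as the proof of \ref{Lem:Vartheta0OnPurePart}'', so your write--up is more explicit than the paper's. One cosmetic difference: for the second equality you represent $\vartheta_\ell(\gamma)$ by the pair $(0,g)$, while the paper represents it by the single homomorphism $h=f\in\Hom(Y,\Tell T)$; these are the same element of $\Tell U$ under the defining equivalence relation, since $g=h\circ\pi_Y$ when $\gamma\in\Gamma_{G\oplus M_A}$.
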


\begin{proof}
Let us choose an $\ell$--division sequence of $\overline u$, which we may
represent by two sequences of points
$$(u_i)_{i=0}^\infty \quad\mbox{in}\:\:\Hom_{\overline k}(Y,G) \qqet
(\eta_i)_{i=0}^\infty \quad\mbox{in}\:\:\Ext_{\overline k}^1(M_A,T)$$
with $u_0=u$ and $\ell u_i=u_{i-1}$, and with $\eta_0 = \eta$ and $\ell \eta_i
=\eta_{i-1}$. Here $\eta \in \Ext_k^1(M_A,T)$ is the class given by
$M$. The $\eta_i$'s define extensions $G_i$ of $A$ by $T$ together with maps
$m_i:G\to G_i$. With the help of these division sequences we construct a section
$s:Y\otimes \IZ_\ell \to \Tell M$ and a retraction $r:\Tell M \to \Tell T$ as
follows: 
$$s((y_i)_{i=0}^\infty) = (y_i,u_i(y_i))_{i=0}^\infty \qqet
r(y_i,x_i)_{i=0}^\infty = m_i(x_i)_{i=0}^\infty$$
Using the section $s$ and the retraction $r$ we can write down the map
$\vartheta_\ell$ as in \ref{Par:DefinitionThetaEll}, equations  ($\ast$) and
($\ast\ast$). The remainder of the proof of \ref{Lem:VarthetaEllOnPurePart} is
then litterally the same as the proof of \ref{Lem:Vartheta0OnPurePart}.
\end{proof}

\vspace{4mm}
\begin{lem}\label{Lem:SSDetectionSubEll}% Lemma 6.10
Let $0\to T \to G  \xrightarrow{\:\:\pi\:\:} A \to 0$ be a semiabelian variety over $k$, and let $G'$ be a semiabelian subvariety of $G$. A Galois invariant $\IZ_\ell$--submodule  $X$ of $\Tell G$ is contained in $\Tell G'$ if and only if the inclusions $\pi(X) \subseteq \Tell\pi(G')$ and $X\cap \Tell T \subseteq \Tell G'$ hold.
\end{lem}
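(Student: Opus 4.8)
The plan is to copy the proof of Lemma \ref{Lem:SSDetectionSub0} almost verbatim, with Hodge structures replaced by $\ell$--adic Galois representations and with the input ``there are no nonzero morphisms of Hodge structures between pure objects of weights $-1$ and $-2$'' replaced by the analogous weight statement for Galois representations over a field finitely generated over its prime field. The implication from left to right is immediate: it follows from $\pi(\Tell G')=\Tell\pi(G')$ and $\Tell G'\cap\Tell T=\Tell(G'\cap T)\subseteq\Tell G'$, both of which one reads off on $\ell^n$--torsion. For the reverse implication I will use that the functor $\Tell$ is exact on short exact sequences of semiabelian varieties over $k$: for $0\to G_1\to G_2\to G_3\to 0$ the map $\ell^n$ is an isogeny on each $G_i$ because $\ell$ is different from the characteristic of $k$, so $0\to G_1[\ell^n]\to G_2[\ell^n]\to G_3[\ell^n]\to 0$ is exact, and one passes to the limit.

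Assume now the two inclusions hold. I would form the commutative ladder of $\Gamma$--modules with exact rows whose top row is $0\to X\cap\Tell T\to X\to\pi(X)\to 0$ (tautologically exact, $\pi(X)$ being the image of $X$ under $\Tell\pi$), whose bottom row is obtained by applying $\Tell$ to the exact sequence of semiabelian varieties $0\to T/(T\cap G')\to G/G'\to A/\pi(G')\to 0$, and whose three vertical maps are induced by $T\hookrightarrow G\twoheadrightarrow G/G'$. By hypothesis both outer verticals vanish: the left one factors through $\Tell G'\cap\Tell T=\Tell(G'\cap T)$, which dies in $\Tell(T/(T\cap G'))$, and the right one factors through $\Tell\pi(G')$, which dies in $\Tell(A/\pi(G'))$. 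A diagram chase then shows that the middle vertical $X\to\Tell(G/G')$ takes values in $\ker\big(\Tell(G/G')\to\Tell(A/\pi(G'))\big)=\Tell(T/(T\cap G'))$ and annihilates $X\cap\Tell T$, so it factors through a $\Gamma$--equivariant homomorphism $\overline{\phi}\colon\pi(X)\to\Tell(T/(T\cap G'))$. Once $\overline{\phi}=0$, the middle vertical is zero, so $X\subseteq\ker\big(\Tell G\to\Tell(G/G')\big)=\Tell G'$, as required.

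It remains to prove $\overline{\phi}=0$, and since the target is torsion free it suffices to show $\overline{\phi}\otimes\IQ_\ell=0$. But $\pi(X)\otimes\IQ_\ell$ is a $\Gamma$--subrepresentation of $\Vell A$, which is pure of weight $-1$, whereas $T/(T\cap G')$ is a torus, so $\Tell(T/(T\cap G'))\otimes\IQ_\ell$ is pure of weight $-2$. As $k$ is finitely generated over its prime field, there is no nonzero $\Gamma$--equivariant map from a representation pure of weight $-1$ to one pure of weight $-2$ --- a nonzero such map would have an image that is simultaneously pure of weight $-1$, being a quotient of the source, and pure of weight $-2$, being a subrepresentation of the target. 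Hence $\overline{\phi}\otimes\IQ_\ell=0$.

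The only genuine content is this last step: over a finitely generated field it rests on the purity of $\Vell A$ (equivalently, the absence of Tate twists among its $\Gamma$--subquotients), which here takes the place of the weight argument automatically available in the Hodge-theoretic Lemma \ref{Lem:SSDetectionSub0}; in positive characteristic this is the Weil conjectures, and in characteristic zero one spreads $A$ out over a finitely generated $\IZ$--algebra and specialises. The one other point needing (routine) care is the exactness of $\Tell$ on semiabelian varieties in positive characteristic, used both to build the bottom row of the ladder and to identify $\ker\big(\Tell G\to\Tell(G/G')\big)$ with $\Tell G'$.
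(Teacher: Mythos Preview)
Your proof is correct and follows essentially the same approach as the paper, which simply says the argument is analogous to Lemma~\ref{Lem:SSDetectionSub0} and that the vanishing of Galois-equivariant maps $\pi(X)\to \Tell(T/(T\cap G'))$ ``can be seen for example by looking at absolute values of eigenvalues of Frobenius elements.'' You have spelled out exactly this, including the passage from $\IZ_\ell$- to $\IQ_\ell$-coefficients via torsion-freeness of the target and the exactness of $\Tell$ needed to set up the bottom row.
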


\begin{proof}
The proof is analogous to the proof of \ref{Lem:SSDetectionSub0}. That there are no nonzero Galois equivariant maps $\pi(X)\to \Tell(T/(T\cap G'))$ can be seen for example by looking at absolute values of eigenvalues of Frobenius elements.
\end{proof}

\vspace{4mm}% Proposition 6.11
\begin{prop}\label{Pro:ThetaEllInVellP}
The image of the map $\vartheta_\ell:\Gamma_{\gr^W_\ast\!\! M}\to \Vell U$ is contained in $\Vell P$, and even in $\Tell P$ for all but finitely many $\ell \neq \mathrm{char}(k)$.
\end{prop}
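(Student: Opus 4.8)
The plan is to run the argument of Proposition~\ref{Pro:Theta0InVnotP} in the $\ell$-adic world, with Lemma~\ref{Lem:VarthetaEllOnPurePart} in place of Lemma~\ref{Lem:Vartheta0OnPurePart}, Lemma~\ref{Lem:SSDetectionSubEll} in place of Lemma~\ref{Lem:SSDetectionSub0}, and the map $\kappa_\ell$ together with Proposition~\ref{Pro:KappaEllProperties} in place of $\kappa_0$. Keep the notations of \ref{Par:SetupVarthetaEllOnPurePart}: $\pi\colon U\to U_A$ is the abelian quotient and $\iota$ the inclusion $\Gamma_{G\oplus M_A}\hookrightarrow\Gamma_{\gr^W_\ast M}$. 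Since $\Gamma_{\gr^W_\ast M}$ is normal in $\Gamma$ and $\alpha_\ell$ is Galois-equivariant, the $\IQ_\ell$-span $X$ of $\im(\vartheta_\ell)$ is a Galois-stable subspace of $\Vell U$, so by the $\IQ_\ell$-coefficient version of Lemma~\ref{Lem:SSDetectionSubEll} (whose proof, via absolute values of Frobenius eigenvalues, is coefficient-insensitive) it suffices to prove the two inclusions $\pi(X)\subseteq\Vell\pi(P)$ and $X\cap\Vell U_T\subseteq\Vell P$.

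Next I would feed in Lemma~\ref{Lem:VarthetaEllOnPurePart}, i.e.\ $(\Tell\pi)\circ\vartheta_\ell=(\Tell\pi)\circ\kappa_\ell(\overline u\hotimes 1)$ on $\Gamma_{\gr^W_\ast M}$ and $\vartheta_\ell\circ\iota=\kappa_\ell(\overline u\hotimes 1)\circ\iota$ on $\Gamma_{G\oplus M_A}$. Exactly as in the proof of Proposition~\ref{Pro:Theta0InVnotP}---using that an element of $\Gamma_{\gr^W_\ast M}$ lies in $\Gamma_{G\oplus M_A}$ precisely when it acts trivially on $\Vell U$, so that $\im(\vartheta_\ell)\cap\Vell U_T=\vartheta_\ell(\Gamma_{G\oplus M_A})$---the two inclusions reduce to $\im\bigl((\Tell\pi)\circ\kappa_\ell(\overline u\hotimes 1)\bigr)\subseteq\Tell\pi(P)$ and $\im\bigl(\kappa_\ell(\overline u\hotimes 1)\circ\iota\bigr)\subseteq\Tell P$ (after $\otimes\IQ_\ell$). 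The key input is then that $n\overline u\in P(k)$ for some $n\neq 0$: in $U(k)\hotimes\IZ_\ell$ one has $n(\overline u\hotimes 1)=(n\overline u)\hotimes 1$, so by naturality of $\kappa_\ell$ (Proposition~\ref{Pro:KappaEllProperties}) the class $n\,\kappa_\ell(\overline u\hotimes 1)$ is pulled back from $H^1(\Gamma,\Tell P)$; hence, after scaling by $\tfrac1n$ and modifying by a coboundary, the cocycle representing $\kappa_\ell(\overline u\hotimes 1)$ becomes a cocycle valued in $\Vell P$. Restricting such a cocycle to $\Gamma_{\gr^W_\ast M}$, which acts trivially on $\Vell U_A$, and to $\Gamma_{G\oplus M_A}$, which acts trivially on $\Vell U$, makes the coboundary ambiguity disappear, so on these subgroups the relevant homomorphisms take values in $\Vell\pi(P)$, resp.\ $\Vell P$; intersecting with the lattice $\Tell U$ gives the integral inclusions, and Lemma~\ref{Lem:SSDetectionSubEll} then yields $\im(\vartheta_\ell)\subseteq\Vell P$.

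For the refinement I would show $\vartheta_\ell(\gamma)$ is already integral when $\ell\neq 2$. For $\gamma\in\Gamma_{\gr^W_\ast M}$ the operator $N:=\rho_\ell(\gamma)-1$ preserves $\Tell M$, shifts the weight filtration down by one, and satisfies $N^3=0$; hence $\log\rho_\ell(\gamma)=N-\tfrac12 N^2$ lies in $W_{-1}\End_{\IZ_\ell}(\Tell M)$ as soon as $2$ is a unit in $\IZ_\ell$. Since $\alpha_\ell$ is, for all but finitely many $\ell$, an isomorphism of $\IZ_\ell$-modules $\Tell U\cong W_{-1}\End_{\IZ_\ell}(\Tell M)$---the defining sequence of $U(M)$ staying exact on passing to $\ell$-adic Tate modules and the identifications of Remark~\ref{Rem:HomExtAndComparison} being integral---one gets $\vartheta_\ell(\gamma)\in\Tell U$. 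Combining with $\im(\vartheta_\ell)\subseteq\Vell P$ and the equality $\Tell P=\Tell U\cap\Vell P$ (valid because $P\subseteq U$ is a closed subgroup, so $\Tell U/\Tell P\hookrightarrow\Tell(U/P)$ is torsion-free) gives $\im(\vartheta_\ell)\subseteq\Tell P$ for all but finitely many $\ell$.

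The main obstacle is the same cocycle bookkeeping as in the Hodge case: one must make sure that after the division trick the $\Vell P$-valued cocycle representing $\kappa_\ell(\overline u\hotimes 1)$, once restricted to the two subgroups on which the relevant graded pieces carry the trivial action, genuinely produces \emph{homomorphisms} with the asserted images---this is precisely what the identification $\im(\vartheta_\ell)\cap\Vell U_T=\vartheta_\ell(\Gamma_{G\oplus M_A})$ and the passage through $H^1(\Gamma,\Vell P)$ encode. A secondary, purely technical point, needed only for the refinement, is to determine for which $\ell$ the isomorphism $\alpha_\ell$ is integral, i.e.\ to check that taking $\ell$-adic Tate modules keeps the short exact sequence defining $U(M)$ exact.
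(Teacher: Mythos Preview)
Your argument for the containment $\im(\vartheta_\ell)\subseteq \Vell P$ is correct and is precisely the paper's proof: invoke Lemma~\ref{Lem:VarthetaEllOnPurePart} and naturality of $\kappa_\ell$ to reduce to the two graded inclusions, and then apply (the $\IQ_\ell$--version of) Lemma~\ref{Lem:SSDetectionSubEll}. Your cocycle bookkeeping---replacing a cocycle for $\kappa_\ell(\overline u\hotimes 1)$ by one valued in $\Vell P$ and observing that the coboundary contribution vanishes upon restriction to $\Gamma_{\gr^W_\ast\! M}$ (for the $U_A$--part) and to $\Gamma_{G\oplus M_A}$ (for the full $U$)---is exactly the mechanism implicit in the paper's one--line reference to the proof of Proposition~\ref{Pro:Theta0InVnotP}.

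For the refinement the paper proceeds slightly differently: it applies Lemma~\ref{Lem:SSDetectionSubEll} \emph{integrally}, noting that for odd $\ell$ the set $\im(\vartheta_\ell)$ is a $\IZ_\ell$--submodule of $\Tell U$, and that for $\ell\nmid n$ the equalities $(\Tell\pi)\circ\vartheta_\ell = n^{-1}(\Tell\pi)\circ\kappa_\ell(n\overline u\hotimes 1)$ and $\vartheta_\ell\circ\iota = n^{-1}\kappa_\ell(n\overline u\hotimes 1)\circ\iota$ already take values in $\Tell\pi(P)$ and $\Tell P$. Your route---first $\im(\vartheta_\ell)\subseteq\Vell P$, then $\im(\vartheta_\ell)\subseteq\Tell U$ via integrality of $\log$ and of $\alpha_\ell^{-1}$, and finally $\Tell U\cap\Vell P=\Tell P$ because $\Tell U/\Tell P\hookrightarrow\Tell(U/P)$ is torsion--free---is a clean and correct alternative; it trades the direct application of the integral detection lemma for a saturation argument, and has the minor advantage that the only prime you must exclude for integrality of $\log$ is $\ell=2$, with the finitely many further exclusions coming solely from the integrality of $\alpha_\ell$.
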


\begin{proof}
This follows from \ref{Lem:VarthetaEllOnPurePart}, \ref{Lem:SSDetectionSubEll} and naturality of the map $\kappa_\ell$, the same way \ref{Pro:Theta0InVnotP} follows from \ref{Lem:Vartheta0OnPurePart}, \ref{Lem:SSDetectionSub0} and naturality of the map $\kappa_0$. Indeed, if $n\geq 1$ is an integer such that the point $n\overline u$ of $U$ belongs to $P$, then Lemma \ref{Lem:VarthetaEllOnPurePart} tells us that we have 
$$(\Tell\pi)\circ \vartheta_\ell = n^{-1}\cdot(\Tell\pi)\circ\kappa_\ell(n\overline u \hotimes 1) \qqet \vartheta_\ell \circ\iota = n^{-1}\cdot\kappa_\ell(n\overline u \hotimes 1)\circ\iota$$
hence $\im(\vartheta_\ell) \subseteq \Tell P$ holds by  \ref{Lem:SSDetectionSubEll}  as soon as $\ell$ is odd and does not divide $n$ (mind that $\im(\vartheta_2)$ is in general not a $\IZ_2$--module). 
\end{proof}

\vspace{4mm}% Lemma 6.12
\begin{lem}\label{Lem:HSSSEll}
Let $M=[Y\to G]$ be a 1--motive over $k$ and set $\widetilde G := \gr^W_\ast G = T \oplus A$.  Of the following statements, (a) holds for all, and (b) for all but finitely many primes $\ell \neq \mathrm{char}(k)$
\begin{enumerate}
 \item[(a)] The map $H^1(\fl^M,\Vell \widetilde G)\to \Hom_{\fl^M}(W_{-1}\fl^M,\Vell\widetilde G)$ that sends the class of a cocycle $c$ to the restriction of $c$ to $W_{-1}\fl^M$ is an isomorphism.
 \item[(b)] Suppose that the condition $(\star)$ in Theorem \ref{Thm:ComparisonEll} holds for the abelian variety $A$. Let $L^M$ be the image of $\Gamma$ in $\GL(\Tell M)$ and write $W_{-1}L^M$ for the subgroup of those elments acting trivially on $\Tell(\gr^W_\ast\!\! M)$. The map $H^1(L^M, \widetilde G[\ell])\to \Hom_\Gamma(W_{-1}L^M,\widetilde G[\ell])$ that sends the class of a cocycle $c$ to the restriction of $c$ to $W_{-1}L^M$ is injective.
\end{enumerate}
\end{lem}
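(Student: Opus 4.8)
The plan for both parts is to run the low-degree exact sequence — Hochschild--Serre for Lie-algebra cohomology in (a), inflation--restriction for group cohomology in (b) — attached to the extension of $\fl^M$ (resp.\ $L^M$) by its weight-lowering normal subobject, so that each assertion reduces to a cohomology vanishing for the reductive (resp.\ mod-$\ell$) quotient acting on $\Vell\widetilde G$ (resp.\ $\widetilde G[\ell]$). For (a): the weight filtration gives a Lie-algebra extension $0\to W_{-1}\fl^M\to\fl^M\to\fl^{\widetilde G}\to 0$, because $\fl^M/W_{-1}\fl^M$ is the image of $\fl^M$ under $\gr^W_\ast$, i.e.\ the Lie algebra of the Galois image on $\gr^W_\ast\Vell M$, and that only sees $\Tell T\oplus\Tell A=\Tell\widetilde G$ (the action on $Y\otimes\IZ_\ell$ is finite). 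Since $W_{-1}\fl^M$ consists of weight-lowering operators it acts by zero on $\gr^W_\ast\Vell M$, hence on $\Vell\widetilde G$, so the associated exact sequence in low degrees reads, exactly as in the proof of Lemma~\ref{Lem:HSSSIso0},
$$H^1(\fl^{\widetilde G},\Vell\widetilde G)\to H^1(\fl^M,\Vell\widetilde G)\to\Hom_{\fl^M}(W_{-1}\fl^M,\Vell\widetilde G)\to H^2(\fl^{\widetilde G},\Vell\widetilde G),$$
and it suffices to kill the two outer terms.

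For that I would use that $\Vell\widetilde G=\Vell T\oplus\Vell A$ is a faithful semisimple representation of $G^{\widetilde G}$, the Zariski closure of the Galois image — semisimplicity of $\Vell A$ being Faltings' theorem in characteristic zero and Zarhin's in positive characteristic — so that $G^{\widetilde G}$ has reductive connected component and $\fl^{\widetilde G}$ is reductive (this is algebraic-group theory over $\IQ_\ell$, insensitive to $\mathrm{char}(k)$). Writing $\fl^{\widetilde G}=\mathfrak c\oplus\mathfrak s$ with $\mathfrak c$ central and $\mathfrak s$ semisimple, Whitehead's two lemmas give $H^1(\mathfrak s,-)=H^2(\mathfrak s,-)=0$ for any finite-dimensional module, while the cohomology of the abelian $\mathfrak c$ (acting diagonalizably, being a torus) is $\Lambda^\bullet\mathfrak c^\ast$ tensored with $\mathfrak c$-invariants; chasing the Hochschild--Serre sequence for $\mathfrak s\subseteq\fl^{\widetilde G}$ one gets that $H^1(\fl^{\widetilde G},\Vell\widetilde G)$ and $H^2(\fl^{\widetilde G},\Vell\widetilde G)$ are both governed by $(\Vell\widetilde G)^{\fl^{\widetilde G}}=(\Vell\widetilde G)^{\Gal(\overline k|k')}$ for a suitable finite extension $k'$, and this vanishes because $\Vell\widetilde G$ has strictly negative weights (the cyclotomic character on $\Vell T$ has infinite order, Frobenius acts on $\Vell A$ with eigenvalues of absolute value $q^{1/2}$ at a specialization). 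In characteristic zero one may instead invoke Sah's lemma together with Bogomolov's theorem on homotheties, precisely as in Lemma~\ref{Lem:HSSSIso0}.

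For (b): here $W_{-1}L^M=\ker(L^M\to Q)$, where $Q$ is the image of $\Gamma$ on $\Tell\gr^W_\ast M$, and $W_{-1}L^M$ acts trivially on $\widetilde G[\ell]=A[\ell]\oplus T[\ell]$; inflation--restriction then identifies the kernel of the restriction map $H^1(L^M,\widetilde G[\ell])\to\Hom_\Gamma(W_{-1}L^M,\widetilde G[\ell])$ with the inflation image $H^1(Q,\widetilde G[\ell])$, so the task is to show $H^1(Q,\widetilde G[\ell])=0$ for all but finitely many $\ell$. I would pass to a finite (Galois) extension $k'$ over which $Y$ is constant and $T$ is split, say $T\cong\IG_m^s$. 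Then the image $Q_{k'}$ of $\Gal(\overline k|k')$ on $\Tell\gr^W_\ast M$ coincides with its image $L^{\widetilde G}_{k'}$ on $\Tell\widetilde G$, and since $\Tell T\cong\IZ_\ell(1)^{\oplus s}$ carries the diagonal cyclotomic action this is isomorphic to the group $L$ of condition $(\star)$, with $\widetilde G[\ell]\cong A[\ell]\oplus\mu_\ell^{\oplus s}$ as a module over it. Condition $(\star)$ gives $H^1(L,A[\ell]\oplus\mu_\ell)=0$, hence $H^1(L,A[\ell])=H^1(L,\mu_\ell)=0$, hence $H^1(Q_{k'},\widetilde G[\ell])=0$, for all but finitely many $\ell$; and for $\ell\nmid[k':k]$ the correction term $H^1(Q/Q_{k'},\widetilde G[\ell])$ vanishes (a finite prime-to-$\ell$ group with $\ell$-group coefficients), so $H^1(Q,\widetilde G[\ell])\hookrightarrow H^1(Q_{k'},\widetilde G[\ell])$ and we are done.

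The main obstacle is the only genuinely non-formal ingredient, namely the reductivity of $\fl^{\widetilde G}$ — equivalently, semisimplicity of the rational Tate module of an abelian variety over a finitely generated field, for which we lean on the theorems of Faltings and Zarhin — together with the purity input guaranteeing $(\Vell\widetilde G)^{\Gal(\overline k|k')}=0$. Everything else, in particular the finitely many excluded primes (those coming from $(\star)$, from $[k':k]$, and from the finite orders of the Galois actions on $Y$ and on the character group of $T$), is absorbed formally by enlarging $k$ and discarding small $\ell$.
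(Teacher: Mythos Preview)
Your argument is correct and follows the same Hochschild--Serre / inflation--restriction strategy as the paper. The only differences are in packaging: for (a), the paper cites an adaptation of Serre's vanishing criterion from \cite{Serre2} to obtain $H^i(\fl^{\widetilde G},\Vell\widetilde G)=0$ in one stroke, whereas you unpack this via reductivity of $\fl^{\widetilde G}$ (Faltings/Zarhin semisimplicity), Whitehead's lemmas for the semisimple part, and a weight argument showing $(\Vell\widetilde G)^{\fl^{\widetilde G}}=0$ to handle the centre --- this is essentially what Serre's criterion amounts to. For (b), the paper runs the five--term sequence over $k'$ and then descends by taking $\Gal(k'|k)$--invariants using Maschke's theorem, while you instead kill $H^1(Q,\widetilde G[\ell])$ directly via inflation--restriction for the normal subgroup $Q_{k'}\subseteq Q$ and the prime--to--$\ell$ order of the quotient; these two descents are equivalent (the paper's identification $H^1((L^M)',\widetilde G[\ell])^{\Gal(k'|k)}\cong H^1(L^M,\widetilde G[\ell])$ is precisely your inflation--restriction step).
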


\begin{proof}
The Lie subalgebra $W_{-1}\fl^M$ of $\fl^M$ consist of those elements which act trivially on $\Vell \widetilde G$. In low degrees, the Hochschild--Serre spectral sequences associated with the Lie algebra extension $0\to W_{-1}\fl^M\to \fl^M \to \fl^{\widetilde G} \to 0$ yields the exact sequence
$$H^1(\fl^{\widetilde G},\Vell\widetilde G) \to H^1(\fl^M,\Vell\widetilde G) \xrightarrow{\:\:\ast\:\:} H^1(W_{-1}\fl^M,\Vell\widetilde G)^{\fl^{\widetilde G}}\to H^2(\fl^{\widetilde G},\Vell\widetilde G)$$
We can identify $H^1(W_{-1}\fl^M,\Vell\widetilde G)$ with $\Hom_{\IQ_\ell}(W_{-1}\fl^M,\Vell\widetilde G)$, and under this identification the map $(\ast)$ is given by restricting cocycles as in the statement of the lemma. Hence, to finish the proof it suffices to show that the first and last term in the above sequence is trivial. Indeed, it follows from an adaptation of Serre's vanishing criterion given in \cite{Serre2} that $H^i(\fl^{\widetilde G},\Vell \widetilde G)$ is zero for all $i\geq 0$. This shows (a). For (b), let us choose a finite Galois extension $k'$ of $k$ such that $T$ is split and such that $Y$ is constant over $k'$. Denote by $(L^{\widetilde G})'$ and $(L^M)'$ the images of $\Gal(\overline k|k')$ in $\GL(\Tell \widetilde G)$ and $\GL(\Tell M)$ respectively. The subgroup $W_{-1}(L^M)'$ of $(L^M)'$ consist then of those elements which act trivially on $\Tell \widetilde G$. As in the Lie algebra case, we obtain an exact sequence 
$$H^1((L^{\widetilde G})', \widetilde G[\ell]) \to H^1((L^M)', \widetilde G[\ell]) \to H^1(W_{-1}(L^M)',\widetilde G[\ell])^{(L^M)'}\to H^2((L^{\widetilde G})', \widetilde G[\ell])$$
upon which $\Gal(k'|k)$ acts. By $(\star)$, the first group in this sequence is trivial for almost all $\ell$, and it follows from Maschke's theorem that if moreover $\ell$ does not divide the order of the finite group $\Gal(k'|k)$, then the sequence obtained by taking invariants is exact, and reads
$$0 \to H^1(L^M, \widetilde G[\ell]) \to H^1(W_{-1}L^M,\widetilde G[\ell])^\Gamma\to H^2(L^{\widetilde G}, \widetilde G[\ell])$$
where the first map is given by restriction of cocycles, hence the claim.
\end{proof}

\vspace{4mm}
\begin{lem}\label{Lem:InjectViaDividingEll}% Lemma 6.13
Let $M=[Y\to G]$ be a 1--motive over $k$ and set $\widetilde G := \gr^W_\ast G = T \oplus A$ and $P := P(M)$. Of the following statements, (a) holds for all, and (b) for all but finitely many primes $\ell \neq \mathrm{char}(k)$:
\begin{enumerate}
\item[(a)] The map $\alpha_\ell^\ast: \Hom_\Gamma(\Vell P, \Vell \widetilde G) \to \Hom_\Gamma(W_{-1}\fl^M, \Vell\widetilde G)$ given by $\alpha_\ell^\ast(f) = f\circ\alpha_\ell^{-1}$ is injective. 
\item[(b)] Suppose that the condition $(\star)$ in Theorem \ref{Thm:ComparisonEll} holds for the abelian variety $A$. The map $\Hom_\Gamma(\Tell P, \widetilde G[\ell]) \to \Hom_\Gamma(\im(\vartheta_\ell), \widetilde G[\ell])$ sending a homomorphism to its restriction to $\im(\vartheta_\ell)$, is well defined and injective.
\end{enumerate}
\end{lem}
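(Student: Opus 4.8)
The plan is to prove part~(a) by transcribing the proof of Lemma \ref{Lem:InjectViaDividing0} into the $\ell$--adic setting, and part~(b) by reducing that argument modulo~$\ell$, where condition~$(\star)$ enters through Lemma \ref{Lem:HSSSEll}(b). Throughout write $\widetilde G=\gr^W_\ast G=T\oplus A$ and $P=P(M)$, and fix $n\geq 1$ with $n\overline u\in P(k)$, $\overline u$ as in Definition \ref{Def:P(M)}.

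\emph{Part (a).} First I would construct an injective map $\beta_\ell\colon\Hom_\Gamma(\Vell P,\Vell\widetilde G)\to\Hom_\Gamma(W_{-1}\fl^M,\Vell\widetilde G)$ and then check $\alpha_\ell^\ast=\beta_\ell$, following \ref{Lem:InjectViaDividing0} step by step. Since $k$ is finitely generated over its prime field, the Tate--Faltings type theorem recalled in \ref{Para:IntroElladicRealis} gives $\Hom_\Gamma(\Vell P,\Vell\widetilde G)\cong\Hom_k(P,\widetilde G)\otimes\IQ_\ell$. The map $\psi\mapsto\kappa_\ell(\psi(n\overline u)\hotimes n^{-1})$ into $H^1(\Gamma,\Vell\widetilde G)$ lifts to $H^1(\fl^M,\Vell\widetilde G)$ by Corollary \ref{Cor:EllDividing}, and Lemma \ref{Lem:HSSSEll}(a) identifies that group with $\Hom_\Gamma(W_{-1}\fl^M,\Vell\widetilde G)$; the composite is $\beta_\ell$. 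It is injective because $\kappa_\ell$ is (Proposition \ref{Pro:KappaEllProperties}): if $\kappa_\ell(\psi(n\overline u)\hotimes n^{-1})=0$ then $\psi(n\overline u)$ is torsion, so the connected component of $\ker\psi$ is a semiabelian subvariety of $P$ containing a nonzero multiple of $\overline u$, hence all of $P$ by minimality, so $\psi=0$ (if $n\overline u$ is torsion, $P=0$ and everything is vacuous). The equality $\alpha_\ell^\ast=\beta_\ell$ is then checked by evaluating both sides on $\log\rho_\ell(\gamma)$ for $\gamma\in\Gamma_{\gr^W_\ast\!\! M}$, exactly as in \ref{Lem:InjectViaDividing0}, invoking Lemma \ref{Lem:VarthetaEllOnPurePart}. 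This gives (a) for every $\ell\neq\mathrm{char}(k)$.

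\emph{Part (b).} I would discard a finite set of primes: $2$, $\mathrm{char}(k)$, the divisors of $n$, the primes for which $\im\vartheta_\ell\not\subseteq\Tell P$ fails (Proposition \ref{Pro:ThetaEllInVellP}) --- this secures well--definedness, as $\im\vartheta_\ell$ is then a $\IZ_\ell$--submodule of $\Tell P$ --- the primes for which the mod~$\ell$ Faltings--Jannsen theorem (\ref{Para:IntroElladicRealis}) or Lemma \ref{Lem:HSSSEll}(b) fails, and the primes for which the injective homomorphism $\psi\mapsto\psi(n\overline u)$, $\Hom_k(P,\widetilde G)\hookrightarrow\widetilde G(k)$, loses injectivity on reduction modulo $\ell$ (a finite set, since $\Hom_k(P,\widetilde G)$ is finitely generated and the saturation of its image in $\widetilde G(k)$ is a finitely generated group of finite index over it). For such $\ell$, $\Hom_\Gamma(\Tell P,\widetilde G[\ell])=\Hom_\Gamma(P[\ell],\widetilde G[\ell])\cong\Hom_k(P,\widetilde G)\otimes\IZ/\ell$. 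Reducing the construction of part~(a) modulo $\ell$ yields a map $\Hom_k(P,\widetilde G)\otimes\IZ/\ell\to H^1(\Gamma,\widetilde G[\ell])$, sending $\psi$ to the Kummer class of $n^{-1}\psi(n\overline u)$; it is injective by the choice of $\ell$, since the Kummer map is injective and the reduction of $\psi\mapsto\psi(n\overline u)$ is. By the mod~$\ell$ form of Corollary \ref{Cor:EllDividing} this class lifts through inflation to $H^1(L^M,\widetilde G[\ell])$, and Lemma \ref{Lem:HSSSEll}(b) makes restriction to $\Hom_\Gamma(W_{-1}L^M,\widetilde G[\ell])$ injective. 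Finally, the reduction modulo $\ell$ of the comparison in Lemma \ref{Lem:VarthetaEllOnPurePart} identifies this composite with the restriction map to $\im\vartheta_\ell$ of the statement, which is therefore injective.

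\emph{Main obstacle.} Part~(a) is a routine transcription. The substance is in (b): first, making precise the mod~$\ell$ analogue of Lemma \ref{Lem:VarthetaEllOnPurePart}, so that ``restriction to $\im\vartheta_\ell$'' really coincides with the cohomological composite through $H^1(L^M,\widetilde G[\ell])$ --- here one must deal with the quadratic correction terms in the definition of $\vartheta_\ell$ and with the fact that $\vartheta_\ell$ is only $\log$ of a representation, not a homomorphism; second, verifying that all the ``for all but finitely many $\ell$'' clauses invoked can be imposed simultaneously. The observation that $\im\vartheta_\ell$ is a $\IZ_\ell$--module only away from $\ell=2$ is what forces (b) to be stated for almost all $\ell$, in contrast with (a).
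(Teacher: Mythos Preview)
Your proposal is correct and follows essentially the same route as the paper's proof: construct an auxiliary injection via $\kappa_\ell$ and Corollary~\ref{Cor:EllDividing}, pass through the Hochschild--Serre isomorphism of Lemma~\ref{Lem:HSSSEll}, and identify the result with $\alpha_\ell^\ast$ (resp.\ restriction) using Lemma~\ref{Lem:VarthetaEllOnPurePart}. The only point you elide that the paper makes explicit is the passage in part~(a) from the group cohomology $H^1(L^M,\Vell\widetilde G)$ (which is what inflation--restriction and Corollary~\ref{Cor:EllDividing} actually produce) to the Lie algebra cohomology $H^1(\fl^M,\Vell\widetilde G)^\Gamma$ needed to invoke Lemma~\ref{Lem:HSSSEll}(a); the paper cites Lazard's theorem (\cite{Lazard}, V.2.4.10) for this, and in part~(b) it makes the link between $\im(\vartheta_\ell)$ and $W_{-1}L^M$ explicit via the exponential, and invokes Mordell--Weil and Dirichlet to justify that $\widetilde G(k)$ is (finite)$\oplus$(free), which underlies your saturation argument.
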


\begin{proof}
As in the Hodge situation, will construct an auxiliary, injective map, and prove in a second step that this map is equal to $\alpha_\ell^\ast$, respectively the restriction. Write $L^M$ and $W_{-1}L^M$ for the image of $\Gamma$, respectively of $\Gamma_{\gr^W_\ast\!M}$ in $\GL(\Tell M)$. We start with (a), using this commutative diagram:
$$\begin{diagram}
\setlength{\dgARROWLENGTH}{4mm}
\node[3]{\Hom_k(P,\widetilde G)\otimes\IQ_\ell}\arrow{s,r}{(\ast)}\arrow{sw,--}\\
\node{0}\arrow{e}\node{H^1(L^M, \Vell \widetilde G)}\arrow{e}\arrow{s,l}{\cong}\node{H^1(k,\Vell \widetilde G)}\arrow{e,t}{\res}\node{H^1(k_M,\Vell \widetilde G)}\\
\node[2]{\Hom_\Gamma(W_{-1}\fl^M,\Vell\widetilde G)}
\end{diagram}$$
The map $(\ast)$ sends a homomorphism $\psi$ to the element $n^{-1}\kappa_\ell(\psi(nu) \hotimes 1)$, where $n\geq 1$ is an integer such that $n\overline u$ belongs to $P$. This map is injective by minimality of $P$ and injectivity of the map $\kappa_\ell$, and its composite with the restriction is zero by Corollary \ref{Cor:EllDividing}, hence the dashed arrow. The lower vertical isomorphism is obtained by taking $\Gamma$--fixed
points of the isomorphism of Lemma \ref{Lem:HSSSEll} and taking into account
that $H^1(L^M,\Vell \widetilde G) = H^1(\fl^M,\Vell \widetilde G)^\Gamma$ by
Lazard's theorem comparing Lie group cohomology with Lie algebra cohomology
(\cite{Lazard}, V.2.4.10). As in the proof of \ref{Lem:InjectViaDividing0} it follows from Lemma \ref{Lem:VarthetaEllOnPurePart} that the so obtained injection
$$\Hom_\Gamma(\Vell P, \Vell\widetilde G)\cong\Hom_k(P,G)\otimes\IQ_\ell  \to H^1(L^M,\Vell \widetilde G)$$
is equal to $\alpha_\ell^\ast$, hence $\alpha_\ell^\ast$ is injective. The modifications for part (b) are as follows: For those $\ell$ which do not divide $n$, we consider the diagram
$$\begin{diagram}
\setlength{\dgARROWLENGTH}{4mm}
\node[3]{\Hom_k(P,\widetilde G)\otimes\IZ/\ell\IZ}\arrow{s,r}{(\ast)}\arrow{sw,--}\\
\node{0}\arrow{e}\node{H^1(L^M, \widetilde G[\ell])}\arrow{e}\arrow{s}\node{H^1(k, \widetilde G[\ell])}\arrow{e,t}{\res}\node{H^1(k_M, \widetilde G[\ell])}\\
\node[2]{\Hom_\Gamma(\im(\vartheta_\ell),\widetilde G[\ell])}
\end{diagram}$$
where ($\ast$) sends $\psi\otimes 1$ to the class of the cocycle $\sigma \mapsto n^{-1}(\sigma\psi(u_1)-\psi(u_1))$,  and $u_1\in P(\overline k)$ is any point such that $\ell u_1 = n\overline u$. It follows essentially from the Mordell--Weil Theorem and Dirichlet's Unit Theorem that the group $\widetilde G(k)$ is isomorphic to a direct sum of a free group and a finite group. The injective map $\Hom_k(P,\widetilde G) \to \widetilde G(k)$ sending $\psi$ to $\psi(n\overline u)$ remains thus injective after tensoring with $\IZ/\ell\IZ$ for all but finitely many $\ell$, and hence ($\ast$) is injective for all but finitely many $\ell$. The vertical map is given by sending the class of a cocycle $c$ to the composition 
$$\im(\vartheta_\ell) \xrightarrow{\:\exp\:} W_{-1}L^M \xrightarrow{\:\subseteq\:} L^M \xrightarrow{\:\:c\:\:}\widetilde G[\ell]$$ 
which is a group homomorphism as long as $\ell\neq 2$. By Lemma \ref{Lem:HSSSEll} and because the exponential is bijective, the vertical map is injective for all but finitely many $\ell$. For almost all $\ell$, the map $\Hom(P,\widetilde G)\otimes\IZ/\ell\IZ \to \Hom_\Gamma(\Tell P, G[\ell])$ is an isomorphism, and we find that the composite map
$$\Hom_\Gamma(\Tell P,\widetilde G[\ell]) \xrightarrow{\:\:\cong\:\:} \Hom_k(P,\widetilde G)\otimes\IZ/\ell\IZ \to H^1(L^M,\widetilde G[\ell]) \to \Hom_\Gamma(\im(\vartheta_\ell),\widetilde G[\ell])$$
is well defined and injective for almost all $\ell$. That this map is the restriction is again a direct computation using the equalities from Lemma \ref{Lem:VarthetaEllOnPurePart} modulo $\ell$.
\end{proof}

\vspace{4mm}
\begin{lem}\label{Lem:DetectionOnSplitEll}
Let $G$ be a semiabelian variety and let $\widetilde G$ be a split semiabelian variety over $k$ containing (over $\overline k$) all isogeny types of $G$ (Definition \ref{Def:AllIsogenyTypes}). There exists an open subgroup $\Gamma'$ of $\Gamma$ such that of the following statements, (a) holds for all, and (b) for all but finitely many primes $\ell$: 
\begin{enumerate}
\item[(a)] If the restriction map $\Hom_{\Gamma'}(\Vell G, \Vell \widetilde G) \to \Hom_{\Gamma'}(V, \Vell \widetilde G)$ is injective for a Galois invariant $\IQ_\ell$--linear subspace $V$ of $\Vell G$, then $V$ is equal to $\Vell G$.
\item[(b)] If the restriction map $\Hom_{\Gamma'}(G[\ell], \widetilde G[\ell]) \to \Hom_{\Gamma'}(V, \widetilde G[\ell])$ is injective for a Galois invariant subgroup $V$ of $G[\ell]$, then $V$ is equal to $G[\ell]$.
\end{enumerate}
\end{lem}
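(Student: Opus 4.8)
The plan is to prove (a) and (b) uniformly by d\'evissage along the short exact sequence $0 \to T \to G \to A \to 0$, reducing both statements to the semisimplicity of the Galois modules attached to the pure pieces $A$ and $T$ of $G$, together with the remark that Definition~\ref{Def:AllIsogenyTypes} forces every simple Galois constituent of those pieces to embed into $\widetilde G$. First I would fix a finite extension $k'$ of $k$ and set $\Gamma' := \Gal(\overline k|k')$, chosen so that the morphism with finite kernel $\varphi\colon \gr^W_\ast(G) \to (\gr^W_\ast\widetilde G)^n$ supplied by Definition~\ref{Def:AllIsogenyTypes} is defined over $k'$; then $\varphi$ is $\Gamma'$-equivariant. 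Since a morphism of semiabelian varieties respects weight filtrations and $\widetilde G = \widetilde T \oplus \widetilde A$ by splitness, $\varphi$ induces morphisms with finite kernel $T \to \widetilde T^n$ and $A \to \widetilde A^n$. Passing to $\Vell$, resp.\ to $\ell$-torsion, yields $\Gamma'$-equivariant injections $\Vell T \hookrightarrow (\Vell\widetilde T)^n$ and $\Vell A \hookrightarrow (\Vell\widetilde A)^n$ for every $\ell$, and $T[\ell] \hookrightarrow (\widetilde T[\ell])^n$, $A[\ell] \hookrightarrow (\widetilde A[\ell])^n$ whenever $\ell$ does not divide $\#\ker\varphi$. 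Because a simple submodule of a finite direct sum is a submodule of one of the summands, it follows that every simple $\Gamma'$-subquotient of $\Vell A$ or of $\Vell T$ embeds into $\Vell\widetilde G$, and, for all but finitely many $\ell$, every simple $\Gamma'$-subquotient of $A[\ell]$ or of $T[\ell]$ embeds into $\widetilde G[\ell]$.

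For part (a), note that the kernel of the restriction map $\Hom_{\Gamma'}(\Vell G,\Vell\widetilde G) \to \Hom_{\Gamma'}(V,\Vell\widetilde G)$ is $\Hom_{\Gamma'}(\Vell G/V,\Vell\widetilde G)$ by left exactness of $\Hom$, so the hypothesis says this group vanishes and it remains to show $W := \Vell G/V = 0$. The module $W$ is an extension of a quotient $W_A$ of $\Vell A$ by a quotient $W_T$ of $\Vell T$, and both $W_A$ and $W_T$ are semisimple: $\Vell A$ is a semisimple $\Gamma'$-module by the semisimplicity theorem for $\ell$-adic Tate modules of abelian varieties over finitely generated fields (Faltings, Tate, Zarhin), and $\Vell T$ is semisimple by Maschke's theorem, being up to the twist by $\IQ_\ell(1)$ a $\IQ_\ell$-representation of the finite Galois group of a splitting field of $T$. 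Hence if $W \neq 0$ then either $W_A \neq 0$, and composing $W \twoheadrightarrow W_A$ with the projection of $W_A$ onto a simple summand $S$ and with an embedding $S \hookrightarrow \Vell\widetilde G$ produces a nonzero element of $\Hom_{\Gamma'}(W,\Vell\widetilde G)$; or $W_A = 0$, so $W = W_T$ is a nonzero semisimple quotient of $\Vell T$ and the same recipe, applied to a simple summand of $W_T$, again gives a nonzero map to $\Vell\widetilde G$. In either case we contradict the hypothesis, so $W = 0$ and $V = \Vell G$.

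For part (b), I would run the identical argument modulo $\ell$. The sequence $0 \to T[\ell] \to G[\ell] \to A[\ell] \to 0$ is exact, surjectivity on the right holding because $T(\overline k)$ is $\ell$-divisible, so $W := G[\ell]/V$ is an extension of a quotient of $A[\ell]$ by a quotient of $T[\ell]$. For all but finitely many $\ell$ the $\Gamma'$-modules $A[\ell]$ and $T[\ell]$ are semisimple: for $A[\ell]$ this is the mod-$\ell$ form of the semisimplicity theorem for the Galois action on the torsion of an abelian variety over a finitely generated field, valid outside a finite set of primes, and for $T[\ell]$ it follows from Maschke's theorem as soon as $\ell$ is prime to the degree of a splitting field of $T$. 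Discarding in addition the primes dividing $\#\ker\varphi$, so that the resulting exceptional set depends only on $G$, $\widetilde G$ and the chosen $\varphi$ and not on $V$, the computation of part (a) carries over verbatim with $\Vell$ replaced by $\ell$-torsion and gives $V = G[\ell]$ for every such $\ell$.

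The core of the proof is thus the d\'evissage together with the elementary remark on simple constituents; the step I expect to demand the most care is the bookkeeping in part (b) --- assembling the finite set of excluded primes (the bad primes for mod-$\ell$ semisimplicity of $A[\ell]$, the primes dividing $\#\ker\varphi$, and the primes dividing the degree of a splitting field of $T$) and making sure each cited input is available over finitely generated fields of positive characteristic as well as in characteristic~$0$.
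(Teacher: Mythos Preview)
Your proof is correct and rests on the same two inputs as the paper's: the d\'evissage of $G$ as an extension of $A$ by $T$, and the semisimplicity of the $\Gamma'$-modules $\Vell A$ (Faltings, Tate, Zarhin, Mori) and $\Vell T$ (Maschke). The organisation differs slightly. The paper keeps $V$ as a subspace and applies $\Hom_{\Gamma'}(-,\Vell\widetilde A)$ and then $\Hom_{\Gamma'}(-,\Vell\widetilde T)$ to the inclusion of short exact sequences $0 \to V\cap\Vell T \to V \to \pi V \to 0$ into $0 \to \Vell T \to \Vell G \to \Vell A \to 0$, invoking weight reasons to kill the cross terms $\Hom(\Vell T,\Vell\widetilde A)$ and $\Hom(\Vell A,\Vell\widetilde T)$; the second step requires a small diagram chase involving $\Ext^1_{\Gamma'}(\Vell A,\Vell\widetilde T)$. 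You instead pass immediately to the cokernel $W=\Vell G/V$, observe it is an extension of a semisimple quotient of $\Vell A$ by a semisimple quotient of $\Vell T$, and exhibit a nonzero map $W\to\Vell\widetilde G$ by projecting onto a simple constituent. This is a touch more direct and sidesteps both the weight argument and the $\Ext^1$ diagram; the paper's route, on the other hand, makes the two-step structure of the d\'evissage more visible. For part~(b) both treatments are sketches (the paper just says ``the modular case follows along the same lines''), and your bookkeeping of the excluded primes is adequate.
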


\begin{proof}
Write $G$ as an extension of an abelian variety $A$ by a torus $T$, and $\widetilde G$ as a sum of an abelian variety $\widetilde A$ and a torus $\widetilde T$. We now replace $k$ by a finite Galois extension such that there exists an integer $n$ and morphisms $A \to \widetilde A^n$ and $T\to\widetilde T^n$ with finite kernels defined over $k$, and show that the statements hold for $\Gamma' = \Gamma$. Consider the commutative diagram with exact rows:
$$\begin{diagram}
\setlength{\dgARROWLENGTH}{4mm}
\node{0}\arrow{e}\node{V\cap \Vell
T}\arrow{s,l}{\subseteq}\arrow{e}\node{V}\arrow{s,l}{\subseteq}\arrow{e}\node{
\pi V}\arrow{s,l}{\subseteq}\arrow{e}\node{0}\\
\node{0}\arrow{e}\node{\Vell T}\arrow{e}\node{\Vell
G}\arrow{e,t}{\pi}\node{\Vell A}\arrow{e}\node{0}
\end{diagram}$$
We apply $\Hom_\Gamma(-,\Vell \widetilde A)$ to this diagram and get
$$\begin{diagram}
\setlength{\dgARROWLENGTH}{4mm}
\node{0}\arrow{e}\node{\Hom_\Gamma(\pi V, \Vell \widetilde
A)}\arrow{e,t}{\cong}\node{\Hom_\Gamma(V,\Vell\widetilde A)}\arrow{e}\node{0}\\
\node{0}\arrow{e}\node{\Hom_\Gamma(\Vell A,\Vell\widetilde
A)}\arrow{n}\arrow{e,t}{\cong}\node{\Hom_\Gamma(\Vell G, \Vell\widetilde
A)}\arrow{n}\arrow{e}\node{0}
\end{diagram}$$
using that $\Hom(\Vell T,\Vell \widetilde A)=0$ and $\Hom(V\cap \Vell T,\Vell \widetilde A)=0$ for weight reasons. The vertical maps are injective
by hypothesis. The Galois representations $\Vell A$ and $\Vell \widetilde A$ are
semisimple as Galois modules (by Faltings in characteristic zero, and by Tate,
Zahrin and Mori in positive characteristic), and all simple factors appearing in
$\Vell A$ also appear in $\Vell \widetilde A$ by hypothesis. Hence injectivity
of the left hand vertical map implies the equality $\pi V = \Vell A$. Next, we
apply $\Hom_\Gamma(-, \Vell\widetilde T)$ instead, and find
$$\begin{diagram}
\setlength{\dgARROWLENGTH}{4mm}
\node{0}\arrow{e}\node{\Hom_\Gamma(V,\Vell\widetilde
T)}\arrow{e}\node{\Hom_\Gamma(V\cap \Vell T,\Vell\widetilde T)}\arrow{e}
\node{\Ext^1_\Gamma(\pi V,\Vell\widetilde T)}\\
\node{0}\arrow{e}\node{\Hom_\Gamma(\Vell G, \Vell\widetilde
T)}\arrow{n}\arrow{e}\node{\Hom_\Gamma(\Vell T, \Vell\widetilde
T)}\arrow{n}\arrow{e}\node{\Ext^1_\Gamma(\Vell A,\Vell\widetilde T)}\arrow{n,=}
\end{diagram}$$
using that $\Hom(\Vell A, \Vell\widetilde T)=0$ for weight reasons. The right hand side vertical map is injective by hypothesis, and we have already shown that the left hand vertical map is the identity. The middle vertical map is therefore injective. The Galois representation $\Vell T$ is semisimple because $\Vell T \otimes\IQ_\ell(-1)$ is so by Maschke's Theorem, hence the middle vertical map can only be injective if the equality $V\cap \Vell T = \Vell T$ holds, and so we are done for (a). The modular case follows along the same lines.
\end{proof}

\vspace{4mm}
\begin{proof}[Proof of Theorem \ref{Thm:ComparisonEll}]
The image of the map $\vartheta_\ell$ is contained in $\Vell P$ for all, and even in $\Tell P$ for almost all $\ell \neq \mathrm{char}(k)$ by Proposition \ref{Pro:ThetaEllInVellP}. In particular we have $W_{-1}\fl^M \subseteq \Vell P$. With the notations from Lemma \ref{Lem:InjectViaDividingEll}, said lemma tells us that the restriction maps 
$$\Hom_\Gamma(\Vell P, \Vell\widetilde G) \to \Hom_\Gamma(W_{-1}\fl^M, \Vell\widetilde G) \qqet \Hom_\Gamma(\Tell P, \widetilde G[\ell]) \to \Hom_\Gamma(\im\vartheta_\ell, \widetilde G[\ell])$$
are well defined and injective for all, respectively almost all $\ell$, provided the condition ($\star$) holds. By Lemma \ref{Lem:GContainsAllIsogTypes}, $\widetilde G$ contains all isogeny types of $P$. Hence the equality $W_{-1}\fl^M = \Vell P$ must hold for all, and $\im\vartheta_\ell=\Tell P$ must hold for almost all $\ell$ by Lemma \ref{Lem:DetectionOnSplitEll}.
\end{proof}

%=================================================
%7777777777777777777777777777777777777777777777777777
%=================================================

\vspace{14mm}
\section{Compatibility of the comparison isomorphisms}\label{Sec:Compatibility}

\begin{par}
In this section we prove Theorems 1 and 2 as stated in the introduction, and
also show that the comparison isomorphisms between realisations of $M$ and of
$P(M)$ are compatible. We start with the proof of Theorem 2, where we only have
to assemble results:
\end{par}

\vspace{4mm}
\begin{proof}[Proof of Theorem 2]
For every 1--motive $M$ over a field $k$ we have constructed a semiabelian variety $P(M)$ in \ref{Def:P(M)}, and the construction is compatible with base change as required in statement (1) of the theorem. Statements (2) and (3) are the contents of Theorems \ref{Thm:Comparison0} and \ref{Thm:ComparisonEll} respectively.
\end{proof}

\vspace{4mm}
\begin{para}
We now come to the compatibility problem: Let $M$ be a 1--motive over a finitely generated field $k$ of characteristic zero, choose a complex embedding $k \to \IC$ and a prime number $\ell$, and
denote by $\overline k$ the algebraic closure of $k$ in $\IC$. The comparison
isomorphism $\Vnot M \otimes \IQ_\ell \to \Vell M $ induces an isomorphism
$$\End_\IQ(\Vnot M) \otimes \IQ_\ell \xrightarrow{\:\:\cong\:\:}
\End_{\IQ_\ell}(\Vell M)$$
and we have identified the Lie algebra $\fl^M $ with a subalgebra of $\fh^M
\otimes \IQ_\ell$ via this isomorphism. In sections \ref{Sec:CompMotivicHodge}
and \ref{Sec:CompMotivicGalois} we constructed canonical maps
$$W_{-1}\fh^M \to \Vnot P(M) \qqet W_{-1}\fl^M \to \Vell P(M)$$
and proved that they are isomorphisms. The object $P(M)$ is a semiabelian
variety, and we also have the comparison isomorphism $\Vnot P(M) \otimes
\IQ_\ell \to \Vell P(M)$. Putting things together, we get the following diagram
\begin{equation}\label{Eqn:CompatibilityDgr}
\begin{diagram}
\node[2]{\End_\IQ(\Vnot M) \otimes
\IQ_\ell}\arrow{e,t}{\cong}\node{\End_{\IQ_\ell}(\Vell M)}\\
\node{W_{-1}\fh^M \otimes
\IQ_\ell}\arrow{ne,t}{\subseteq}\arrow{se,tb}{\mathrm{Theorem\:
\ref{Thm:Comparison0}}} {\cong} \node[3]{W_{-1}\fl^M}\arrow{nw,t} {\subseteq}
\arrow{sw,tb} {\mathrm{Theorem\: \ref{Thm:ComparisonEll}}}{\cong}\\
\node[2]{\Vnot P(M)\otimes\IQ_\ell}\arrow{e,t}{\cong}\node{\Vell P(M)}
\end{diagram}\tag{$\ast$}
\end{equation}
We next check:
\end{para}

\vspace{4mm}
\begin{prop}\label{Pro:Compatibility}
The diagram (\ref{Eqn:CompatibilityDgr}) commutes.
\end{prop}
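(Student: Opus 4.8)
The strategy is to reduce everything to the one-dimensional case, where all the maps are given by the explicit cocycle formulas defining $\kappa_0$ and $\kappa_\ell$, and to use the comparison isomorphism for a single $1$-motive of the form $[\IZ \to G]$, whose compatibility with $\kappa_0$ and $\kappa_\ell$ is essentially built into the definitions. First I would note that the two triangles in (\ref{Eqn:CompatibilityDgr}) (the one with vertex $W_{-1}\fh^M\otimes\IQ_\ell$ and the one with vertex $W_{-1}\fl^M$) commute by construction: $\alpha_0$ and $\alpha_\ell$ are induced from the same abstract construction of $\alpha$ applied to the Hodge, resp. $\ell$-adic, realisation functor, and the comparison isomorphism $\Vnot U(M)\otimes\IQ_\ell \cong \Vell U(M)$ is natural in $M$, hence compatible with the description of $\alpha_0$, $\alpha_\ell$ given in the proof of Proposition \ref{Pro:Alpha0AlphaEll} (this is exactly the content of Remark \ref{Rem:HomExtAndComparison}, that the isomorphisms $\Vnot\cHom(Y,G)\cong\Hom(Y,\Vnot G)$ etc. are compatible with the comparison isomorphisms). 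So the real task is the outer square: that the composite $W_{-1}\fh^M\otimes\IQ_\ell \xrightarrow{\sim} \Vnot P(M)\otimes\IQ_\ell \xrightarrow{\sim} \Vell P(M)$, via Theorem \ref{Thm:Comparison0} and the comparison isomorphism of $P(M)$, agrees with the composite $W_{-1}\fh^M\otimes\IQ_\ell \subseteq \End_\IQ(\Vnot M)\otimes\IQ_\ell \xrightarrow{\sim} \End_{\IQ_\ell}(\Vell M) \supseteq W_{-1}\fl^M \xrightarrow{\sim} \Vell P(M)$.

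To attack this I would work not with elements of $\fh^M$ directly but through the map $\vartheta_0$, resp. $\vartheta_\ell$, from $\Gamma_{\gr^W_\ast M}$, since by Theorems \ref{Thm:Comparison0} and \ref{Thm:ComparisonEll} the images of these maps are precisely $W_{-1}\fh^M$ (resp.\ $W_{-1}\fl^M$), viewed inside $\Vnot P(M)$ (resp.\ $\Vell P(M)$). The key technical point is the commutativity of a cube relating $\vartheta_0$, $\vartheta_\ell$, the comparison isomorphism on $\Vell(\text{---})$, and the reduction/limit maps $\Gamma_{\gr^W_\ast M}(\text{Hodge side}) \to \Gamma_{\gr^W_\ast M}(\ell\text{-adic side})$ — i.e.\ the maps induced by the comparison isomorphism on $\log\circ\rho$. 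Concretely: the absolute Hodge group $\Gamma^{\mathrm{Hdg}}$ and the Galois group $\Gamma^{\mathrm{Gal}}$ act compatibly through $\GL(\Vnot M\otimes\IQ_\ell)=\GL(\Vell M)$ once the comparison isomorphism is fixed (this is Theorem \ref{Thm:ElladicInMumfordTate} together with the naturality underlying Brylinski's theorem \ref{Thm:BrylinskiDeligne}), so $\rho_0$ tensored with $\IQ_\ell$ and $\rho_\ell$ land in the same group; applying $\log$ and $\alpha^{-1}$ (which are compatible by the previous paragraph) gives that $\vartheta_0\otimes\IQ_\ell$ and $\vartheta_\ell$ have the same image in $\Vell U(M)$, namely $W_{-1}\fl^M = \alpha_\ell(\vartheta_\ell(\Gamma_{\gr^W_\ast M}))$.

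The final step is to identify this common image with $\Vell P(M)$ in a way compatible with the comparison isomorphism $\Vnot P(M)\otimes\IQ_\ell\cong\Vell P(M)$. Here I would use Lemma \ref{Lem:InjectViaDividing0} and Lemma \ref{Lem:InjectViaDividingEll}: both $\alpha_0^\ast$ and $\alpha_\ell^\ast$ were shown to be injective by exhibiting them as equal to maps $\beta_0$, $\beta_\ell$ built from $\kappa_0(\psi(n\overline u)\otimes n^{-1})$, resp.\ $n^{-1}\kappa_\ell(\psi(n\overline u)\hotimes 1)$. Since $\kappa_0$ and $\kappa_\ell$ are compatible with the comparison isomorphism for the $1$-motive $M_\psi = [\IZ\xrightarrow{1\mapsto\psi(n\overline u)}\widetilde G]$ — which follows because both are defined via the connecting homomorphism of the weight filtration on $M_\psi$, and the comparison isomorphism respects the weight filtration and is natural — the maps $\beta_0$ and $\beta_\ell$ fit into a commuting square with the comparison isomorphisms of $P(M)$ and of $\widetilde G$. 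Running this through for a separating family of $\psi\in\Hom_k(P(M),\widetilde G)$ (which detect elements of $\Vnot P(M)$, resp.\ $\Vell P(M)$, by Lemma \ref{Lem:DetectionOnSplit0}, resp.\ \ref{Lem:DetectionOnSplitEll}, since $\widetilde G$ contains all isogeny types of $P(M)$ by Lemma \ref{Lem:GContainsAllIsogTypes}) pins down the isomorphism $\Vnot P(M)\otimes\IQ_\ell\cong\Vell P(M)$ uniquely, forcing the outer square of (\ref{Eqn:CompatibilityDgr}) to commute. The main obstacle I anticipate is purely bookkeeping: keeping straight which of the several occurrences of ``$\Gamma$'' and which comparison isomorphism is in play at each corner, and checking that the explicit cocycle formula $\gamma\mapsto \gamma(v,1)-(v,1)$ defining $\kappa_0$ really does match its $\ell$-adic counterpart under the $\IZ_\ell$-comparison isomorphism $\Tnot M_\psi\otimes\IZ_\ell\cong\Tell M_\psi$ — this is where one has to use that the comparison isomorphism is an isomorphism of $1$-motive realisations in the sense of \ref{Para:IntroElladicRealis}, not merely of filtered vector spaces.
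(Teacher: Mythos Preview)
Your first paragraph already contains the whole proof, but you do not seem to realise it. The paper's argument is exactly what you describe there and then stops: insert the semiabelian variety $U(M)$ between $P(M)$ and $W_{-1}\End$, so that the hexagon~(\ref{Eqn:CompatibilityDgr}) expands into a diagram with the central square
\[
\begin{diagram}
\setlength{\dgARROWLENGTH}{4mm}
\node{W_{-1}\End_\IQ(\Vnot M)\otimes\IQ_\ell}\arrow{e,t}{\cong}\node{W_{-1}\End_{\IQ_\ell}(\Vell M)}\\
\node{\Vnot U(M)\otimes\IQ_\ell}\arrow{n,l}{\alpha_0\otimes 1}\arrow{e,t}{\cong}\node{\Vell U(M)}\arrow{n,r}{\alpha_\ell}
\end{diagram}
\]
flanked by two side triangles (commuting by definition of $\vartheta_0$, $\vartheta_\ell$) and top/bottom squares (commuting by naturality of the comparison isomorphism with respect to the inclusions $W_{-1}\End\subseteq\End$ and $P(M)\subseteq U(M)$). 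The central square commutes by Remark~\ref{Rem:HomExtAndComparison}, exactly as you say. Since the isomorphisms of Theorems~\ref{Thm:Comparison0} and~\ref{Thm:ComparisonEll} are nothing but the restrictions of $\alpha_0^{-1}$ and $\alpha_\ell^{-1}$ to $W_{-1}\fh^M$ and $W_{-1}\fl^M$, commutativity of~(\ref{Eqn:CompatibilityDgr}) follows immediately.

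Your second and third paragraphs are therefore unnecessary. The detour through $\kappa_0$, $\kappa_\ell$, the auxiliary maps $\beta_0$, $\beta_\ell$, and a separating family of homomorphisms $\psi:P(M)\to\widetilde G$ would eventually give a correct proof, but it reproves from scratch something that is already contained in the definition of the maps in~(\ref{Eqn:CompatibilityDgr}). The point you are missing is that the vertical isomorphisms in the hexagon are \emph{literally} restrictions of $\alpha_0^{-1}$ and $\alpha_\ell^{-1}$; they were not constructed by some independent procedure that would need to be reconciled with $\alpha$ after the fact. Once you unpack this, the ``outer square'' you set yourself as the real task is the same as the $\alpha$-square you already dealt with, composed with two naturality squares. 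There is no second step.
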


\begin{proof}
We consider the group scheme $\Gamma^{\Hdg}_{\gr^\ast\! M}$ over $\spec\IQ$ and
the profinite group $\Gamma^{\Gal}_{\gr^\ast\! M}$, given by
$$\Gamma^{\Hdg}_{\gr^\ast\! M} = \{\gamma\in \Gamma^{\Hdg} \tq
\gamma|_{\Vnot(\gr^\ast\! M)}=\id\} \qqet \Gamma^{\Gal}_{\gr^\ast\! M} =
\{\gamma\in\Gamma^{\Gal}\tq \gamma|_{\Vell(\gr^\ast\! M)}=\id\}$$
respectively, where $\Gamma^{\Hdg}$ denotes the absolute Hodge group, and
$\Gamma^{\Gal}$ the absolute Galois group of $k$. In the following diagram, the
triangles on the left and on the right commute by definition of $\vartheta_0$
and $\vartheta_\ell$:
$$\begin{diagram}
\setlength{\dgARROWLENGTH}{5mm}
\node[2]{\End_\IQ(\Vnot M) \otimes\IQ_\ell}
\arrow{e,t}{\cong}\node{\End_{\IQ_\ell}(\Vell M)}\\
\node{\Gamma^{\Hdg}_{\gr^\ast\!
M}}\arrow{e,t}{\log\circ\rho_0}\arrow{se,b}{\vartheta_0}\node{W_{-1}
\End_\IQ(\Vnot M) \otimes\IQ_\ell} \arrow{n,l}{\subseteq}\arrow{e,t}{\cong}
\node{W_{-1}\End_{\IQ_\ell}(\Vell M)}\arrow{n,l}{\subseteq} \node{
\Gamma^{\Gal}_{\gr^\ast\!
M}}\arrow{w,t}{\log\circ\rho_\ell}\arrow{sw,b}{\vartheta_\ell}\\
\node[2]{\Vnot U(M)\otimes\IQ_\ell}\arrow{n,lr}{\alpha_0\otimes
1}{\cong}\arrow{e,t}{\cong} \node{\Vell U(M)}\arrow{n,lr}{\alpha_\ell}{\cong}\\
\node[2]{\Vnot P(M)\otimes\IQ_\ell}\arrow{n,l}{\subseteq}\arrow{e,t}{\cong}
\node{\Vell P(M)}\arrow{n,l}{\subseteq}
\end{diagram}$$
By definition, $W_{-1}\fh^M\otimes\IQ_\ell$ is the $\IQ_\ell$--linear span of
$\im(\log\circ\rho_0)$, and $W_{-1}\fl^M$ is the $\IQ_\ell$--linear span of
$\im(\log\circ\rho_\ell)$. The top and the bottom square in the above diagram
commute by naturality of the comparison isomorphisms, and all that remains is to
show that the central square commutes. By construction of the semiabelian
variety $U(M)$, see \ref{Par:ConstructionU(M)}, and naturality of comparison
isomorphisms, this follows from the commutativity of the two squares pictured in
\ref{Rem:HomExtAndComparison}.
\end{proof}

\vspace{4mm}
\begin{proof}[Proof of Theorem 1]
In the situation of Theorem 1, we know by Theorem \ref{Thm:ElladicInMumfordTate} that $\fl^M$ is contained
in $\fh^M \otimes \IQ_\ell$ once we identify $\End(\Vell M)$ with $\End(\Vnot M) \otimes \IQ_\ell$ via the
comparison isomorphism $\Vell M \cong \Vnot M \otimes \IQ_\ell$. By Proposition
\ref{Pro:Compatibility}, the inclusion $\fl^M  \subseteq \fh^M \otimes \IQ_\ell$
is compatible with the comparison isomorphism $\Vell P(M) \cong \Vnot(M)\otimes
\IQ_\ell$, hence an equality (for dimension reasons, this would also follow
without referring to \ref{Pro:Compatibility}).
\end{proof}

%=================================================
%7777777777777777777777777777777777777777777777777777
%=================================================

\vspace{14mm}
\section{Corollaries}\label{Sec:Corollaries}

\begin{par}
We propose in this last section to illustrate how our theorems can be used to
aggress questions about the geometry and arithmetic of semiabelian varieties. We
use them here to continue some work started by Ribet and Jacquinot in
\cite{Jacq87} about so--called \emph{deficient points} on semiabelian varieties.
We stick here to semiabelian varieties over number fields. Motivated by
\cite{Bert11}, it would be equally interesting to consider semiabelian varieties
defined over smooth curves over $\IC$.
\end{par}

\vspace{4mm}
\begin{para}\label{Par:DeficientSetup}
Let $k$ be a number field with algebraic closure $\overline k$, let $M=[Y
\xrightarrow{\:\:u\:\:} G]$ be a 1--motive over $k$, and denote by $k_M
\subseteq \overline k$ the fixed field of the pointwise stabiliser of $\Tell M$
in $\Gal(\overline k|k)$. Equivalently, $k_M$ is the smallest subfield of
$\overline k$ over which $Y$ is constant and all $\ell$--division points of
$u(Y)$ are defined.
\end{para}

\vspace{4mm}
\begin{defn}\label{Def:DeficientPoint}
A point $Q\in G(k)$ is called \emph{deficient} if it is $\ell$--divisible in the
group $G(k_M)$. We write $D_M(k)$ for the subgroup of $G(k)$ of deficient
points. 
\end{defn}

\vspace{4mm}
\begin{para}
Our goal is to describe the group $D_M(k)$, which has been studied in
\cite{Jacq87} in the case of a 1--motive of the form $[0\to G]$ where $G$ is an
extension of an abelian variety by $\IG_m$. We will show here that it is
independent of the prime $\ell$, finitely generated of rank $\leq r$ where $r$
only depends on the dimension of $\Vell M$, say. To do so, we will give a
geometrical construction of $D_M(k)$, roughly in the following way: Recall that
the semiabelian variety $P := P(M)$ associated with $M$ in \ref{Def:P(M)} is
supposed to be a Lie algebra object, acting on $G$. So we can, in a sense yet to
be clarified, consider derivations of $P$ with values in $G$. The deficient
points will, up to multiplying by integers, be the images of $n\overline u$
under such derivations, were $n\overline u$ is a multiple of the rational
$\overline u$ point on $U(M)$ defined by $M$, see \ref{Par:ConstructionU(M)}.
That derivations play a key role in the construction of deficient points is
already visible in \cite{Jacq87}, the construction there being attributed to
Breen. We first state an immediate corollary to Theorem \ref{Thm:ComparisonEll},
which shows that the notion of deficiency does not depend on the prime $\ell$.
This corollary can also be utilised to extend the definition of deficient points
to 1--motives over an arbitrary base.
\end{para}

\vspace{4mm}
\begin{cor}[To Theorem \ref{Thm:ComparisonEll}]\label{Cor:DeficientGeometric}
Let $M=[Y\xrightarrow{\:\:u\:\:}G]$ be a 1--motive over $k$, let $Q\in G(k)$ be
a rational point and define $M^+ = [Y\oplus\IZ \xrightarrow{\:\:u^+\:\:}G]$ by
$u^+(y,n)=u(y)+nQ$. The following are equivalent:
\begin{enumerate}
 \item The point $Q\in G(k)$ is deficient for one (or for all) primes $\ell$.
 \item The map $P(M^+)\to P(M)$ induced by the canonical morphism $M\to M^+$ is
an isogeny.
\end{enumerate}
\end{cor}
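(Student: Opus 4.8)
The plan is to prove the two conditions equivalent through a chain of equivalences, all but the last one purely formal. Write $f\colon M\to M^{+}$ for the canonical morphism (the inclusion $Y\hookrightarrow Y\oplus\IZ$ together with $\id_{G}$), $\overline{u}^{+}$ for the distinguished point of $U(M^{+})$, and $f_{P}\colon P(M^{+})\to P(M)$ for the induced map of semiabelian varieties. First I would note that $f_{P}$ is the restriction to $P(M^{+})\subseteq U(M^{+})$ of the morphism $U(M^{+})\to U(M)$ that restricts homomorphisms along $Y\hookrightarrow Y\oplus\IZ$ and $M_{A}\to M_{A}^{+}$ in the presentation of \ref{Par:ConstructionU(M)}; this morphism sends $\overline{u}^{+}$ to $\overline u$, so the image of $P(M^{+})$ is a connected subgroup of the Zariski closure of $\IZ\overline u$ containing a nonzero multiple of $\overline u$, hence equals $P(M)$. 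Thus $f_{P}$ is always surjective, and applying $\Vell$ to $0\to(\ker f_{P})^{0}\to P(M^{+})\to P(M)\to 0$ shows that $f_{P}$ is an isogeny if and only if the induced map on $\ell$-adic realisations is injective; by Theorem \ref{Thm:ComparisonEll} this map is $W_{-1}\fl^{M^{+}}\cong\Vell P(M^{+})\to\Vell P(M)\cong W_{-1}\fl^{M}$.

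Next I would identify that map concretely. The morphism $f$ yields a Galois-equivariant exact sequence $0\to\Vell M\to\Vell M^{+}\to\IQ_{\ell}\to 0$ with trivial action on the quotient, and $W_{-1}\Vell M^{+}=\Vell G\subseteq\Vell M$; hence every $\phi\in W_{-1}\End(\Vell M^{+})$ maps $\Vell M^{+}$ into $\Vell G\subseteq\Vell M$ and restricts to an element of $W_{-1}\End(\Vell M)$. A direct check with the formula for $\alpha_{\ell}$ in Proposition \ref{Pro:Alpha0AlphaEll} shows that under $\alpha_{\ell}$ and $\alpha_{\ell}^{+}$ the map $\Vell U(M^{+})\to\Vell U(M)$ is precisely this restriction, so $W_{-1}\fl^{M^{+}}\to W_{-1}\fl^{M}$ is restriction to $\Vell M$, with kernel the kernel of the projection $\fl^{M^{+}}\to\fl^{M}$. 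Now the extension $\Vell M^{+}$ of $\IQ_{\ell}$ by $\Vell M$ has class the image of $\kappa_{\ell}(Q\hotimes 1)\in H^{1}(\Gamma,\Tell G)$ under $\Tell G\hookrightarrow\Tell M$, represented by a cocycle $c$ with $c(\gamma)\in\Tell G$ for $\gamma\in\Gamma_{M}$; the image of $\Gamma_{M}$ in $\GL(\Vell M^{+})$ fixes $\Vell M$ pointwise and acts through $c$, so $\ker(\fl^{M^{+}}\to\fl^{M})$ is its Lie algebra, i.e.\ the $\IQ_{\ell}$-span inside $\Vell G$ of the closed subgroup $c(\Gamma_{M})\subseteq\Tell G$. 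Since $\Tell G$ is torsion free, this span is zero if and only if $c|_{\Gamma_{M}}=0$, that is, $\res_{k_{M}}\kappa_{\ell}(Q\hotimes 1)=0$ in $H^{1}(\Gamma_{M},\Tell G)$.

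To finish, I would translate this into deficiency: by naturality of $\kappa_{\ell}$ in the base field (Proposition \ref{Pro:KappaEllProperties}), $\res_{k_{M}}\kappa_{\ell}(Q\hotimes 1)$ is the image of $Q$ under the injective map $\kappa_{\ell}\colon G(k_{M})\hotimes\IZ_{\ell}\to H^{1}(\Gamma_{M},\Tell G)$, so it vanishes exactly when $Q$ lies in $\bigcap_{i}\ell^{i}G(k_{M})$, i.e.\ when $Q$ is deficient. Reading the chain of equivalences from both ends gives the equivalence of (1) and (2); and since (2) mentions no prime $\ell$, deficiency at one prime is the same as deficiency at every prime. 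The step that requires genuine care is the identification of the induced map on $\ell$-adic realisations with the restriction map $W_{-1}\fl^{M^{+}}\to W_{-1}\fl^{M}$ — equivalently, the compatibility of $\alpha_{\ell}$ with $f$ — together with the bookkeeping showing that the extension class of $\Vell M^{+}$ is the Kummer cocycle $\kappa_{\ell}(Q\hotimes 1)$; everything downstream of that is formal.
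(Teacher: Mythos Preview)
Your proof is correct and follows essentially the same approach as the paper's: both use the exact sequence $0\to\Vell M\to\Vell M^{+}\to\IQ_\ell\to 0$, identify the obstruction to $\fl^{M^{+}}\to\fl^{M}$ being an isomorphism with the Kummer data of $Q$ over $k_M$, and invoke Theorem \ref{Thm:ComparisonEll} to translate this into the isogeny statement for $P(M^{+})\to P(M)$. The only difference is presentational: the paper phrases the key step via the field extension $k_{M^{+}}|k_M$ and its Galois group sitting inside $\Vell M$ as a free $\IZ_\ell$--module, whereas you phrase it via the Kummer cocycle $\kappa_\ell(Q\hotimes 1)$ and its restriction to $\Gamma_M$; these are the same object. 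Your version is in fact more explicit on two points the paper leaves implicit --- the surjectivity of $f_P$ and the compatibility of $\alpha_\ell$ with the map $U(M^{+})\to U(M)$ --- so nothing is missing.
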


\begin{proof}
The morphisms of 1--motives $M\to M^+ \to [\IZ\to 0]$ induce a short exact
sequence of $\ell$--adic representations $0\to \Vell M \to \Vell M^+ \to
\IQ_\ell \to 0$. Define fields $k_M$ and $k_{M^+}$ as in
\ref{Par:DeficientSetup}, so $k_{M^+}$ is a Galois extension of $k_M$. The
Galois group $\Gal(k_{M^+}|k_M)$ identifies canonically with a compact subgroup
of $\Vell M$, hence is commutative and has the structure of a finitely
generated, free $\IZ_\ell$--module.\\
The point $Q \in G(k)$ is deficient if and only the field extension
$k_{M^+}|k_M$ is trivial. This in turn is the case if and only if
$\Gal(k_{M^+}|k_M)$ is trivial, or, yet in other words, if the Lie algebra
morphism $\fl^{M^+}\to \fl^M$ is an isomorphism. The graded quotients of weight
$0$ of $\fl^{M^+}$ and $\fl^M$ are the same, hence, by Theorem
\ref{Thm:ComparisonEll}, the map $\fl^{M^+}\to \fl^M$ is an isomorphism
precisely if $\Vell P(M^+)\to \Vell P(M)$ is an isomorphism.
\end{proof}

\vspace{4mm}
\begin{para}\label{Par:DerivationsDef}
We now explain what we mean with derivations of $P$ with values in $G$.  Let $P$
be a semiabelian variety, extension of an abelian variety $A_P$ by a torus
$T_P$, equipped with a Lie structure $\lambda$, and let $G$ be a semiabelian
variety, extension of $A$ by $T$, equipped with an  action $\alpha$ of $P$. The
Lie structure and the action are given by maps of Galois modules
$$\lambda: T_P^\vee \to \Hom_{\overline k}(A_P,A_P^\vee) \qqet \alpha:T^\vee \to
\Hom_{\overline k}(A_P,A^\vee)$$
as we have explained in \ref{Par:WhatIsLieBracket}. Along the same lines as in
\ref{Par:WhatIsLieBracket}, we figure that a
\emph{derivation}\footnote{actually: a $\pi_1^\upmot(M)$--equivariant
derivation} of $P$ into $G$ corresponds to a pair of morphisms of $\overline
k$--group schemes $\partial = (\partial_T,\partial_A)$ from $T_P$ to $T$ and
from $A_P$ to $A$ respectively, such that the Leibniz rule 
$$(\partial_A^\vee\circ\alpha(\chi))+(\alpha(\chi)^\vee \circ \partial_A)  =
\lambda(\partial^\vee_T(\chi))$$
holds for all $\chi\in T^\vee$. This is an equality in $\Hom_{\overline
k}(A_P,A_P^\vee)$. We denote by $\Der_{\overline k}(P,G)$ the group of all such
derivations from $P$ to $G$. It is a subgroup of $\Hom_{\overline
k}(T_P,T)\times \Hom_{\overline k}(A_P,A)$, hence it is finitely generated and
free, and comes equipped with an action of $\Gal(\overline k|k)$. 
\end{para}

\vspace{4mm}
\begin{para}\label{Par:DerivationsSetup}
Back to our concrete situation, let $M=[u:Y\to G]$ be a 1--motive, where $G$ is
an extension of $A$ by $T$. Let $P = P(M)$ be the semiabelian variety defined in
\ref{Def:P(M)}, and write $T_P$ for the torus part and $A_P$ for the abelian
quotient of $P$. Recall that $A_P$ is an abelian subvariety of $\cHom(Y,A)\times
\cHom(T^\vee,A^\vee)$ and $T_P$ a subtorus of $\cHom(Y,T)$. We identify
$A_P^\vee$ with a quotient of $(Y \otimes A^\vee) \times (T^\vee \otimes A)$ and
$T_P^\vee$ with a quotient of $Y\otimes T^\vee$. The dual of $P$ is the
morphism 
$$w:T_P^\vee \to A_P^\vee \qquad w(y\otimes\chi) = w(y\otimes \chi) = (y\otimes
v^\vee(\chi), \chi\otimes v(y))$$ 
where $v:Y\to A$ is the composite of $u$ with the projection $G\to A$, and
$v^\vee:T^\vee\to A^\vee$ is the corresponding map in the 1--motive dual to $M$.
The maps $\lambda$, defining the Lie algebra structure of $P = P(M)$, and
$\alpha$, defining the action of $P$ on $G$ are given by
$$\lambda:Y\otimes T^\vee \to \Hom(A_P,A_P^\vee) \qquad \lambda(y\otimes
\chi)(f,g) = (y\otimes g(\chi), \chi\otimes f(y))$$
and
$$\alpha: T^\vee \to \Hom(A_P,A^\vee) \qquad \alpha(\chi)(f,g) = g(\chi)$$
respectively. Observe that in the special case where $Y$ is trivial and $G$ a
non--isotrivial extension of a simple abelian variety $A$ by $\IG_m$, we have
$P(M) = \cHom(\IZ,A^\vee) = A^\vee$, and $\Der_k(P(M),A)$ consists of
homomorphisms $\partial_A \in \Hom(A^\vee, A)$ with the property $\partial_A +
\partial_A^\vee=0$. So the group of derivations is isomorphic to the quotient of
$\Hom(A^\vee,A)$ modulo the N\'eron--Severi group of $A$.
\end{para}

\vspace{4mm}
\begin{lem}\label{Lem:TheseAreDerivations}
Let $y^+$ be an element of $Y$. The pair of homomorphisms
$(\partial_T,\partial_A)$ given by
$$\partial_A:A_P \xrightarrow{\:\:\subseteq\:\:} A_U
\xrightarrow{\:\:(f,g)\mapsto f(y^+)\:\:}A \qqet \partial_T:T_P
\xrightarrow{\:\:\subseteq\:\:} T_U \xrightarrow{\:\:h\mapsto h(y^+)\:\:}T$$
is a derivation of $P$ to $G$.
\end{lem}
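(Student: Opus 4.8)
The plan is to check the Leibniz rule of \ref{Par:DerivationsDef} by a direct computation on points. By construction $\partial_A$ and $\partial_T$ are composites of the inclusions $A_P\subseteq A_U$, $T_P\subseteq T_U$ with evaluation morphisms, so they are morphisms of $\overline k$--group schemes from $A_P$ to $A$ and from $T_P$ to $T$; what remains is the identity
$$(\partial_A^\vee\circ\alpha(\chi))+(\alpha(\chi)^\vee\circ\partial_A)=\lambda(\partial_T^\vee(\chi))$$
in $\Hom_{\overline k}(A_P,A_P^\vee)$ for every $\chi\in T^\vee$, where $\lambda$ and $\alpha$ are the explicit maps of \ref{Par:DerivationsSetup}. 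Both sides are homomorphisms of abelian varieties over the algebraically closed field $\overline k$, so it is enough to compare their values on an arbitrary point $(f,g)\in A_P\subseteq\cHom(Y,A)\oplus\cHom(T^\vee,A^\vee)$.

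The first step is to unwind the three dual morphisms. Using the identifications $\cHom(Y,A)^\vee\cong Y\otimes A^\vee$, $\cHom(T^\vee,A^\vee)^\vee\cong T^\vee\otimes A$ and $\cHom(Y,T)^\vee\cong Y\otimes T^\vee$, under which $A_U^\vee=(Y\otimes A^\vee)\oplus(T^\vee\otimes A)$ surjects onto $A_P^\vee$ and $T_U^\vee=Y\otimes T^\vee$ surjects onto $T_P^\vee$, the dual of an evaluation morphism $\mathrm{ev}_{y^+}$, resp. $\mathrm{ev}_{\chi}$, is the map $b\mapsto y^+\otimes b$, resp. $b\mapsto\chi\otimes b$, and the dual of a projection onto a direct summand is the inclusion of the dual summand. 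Carrying this out, $\partial_T^\vee(\chi)$ is the class in $T_P^\vee$ of $y^+\otimes\chi$, the homomorphism $\partial_A^\vee$ sends $a^\vee\in A^\vee$ to the class in $A_P^\vee$ of $(y^+\otimes a^\vee,\,0)$, and $\alpha(\chi)^\vee$ sends $a\in A$ to the class in $A_P^\vee$ of $(0,\,\chi\otimes a)$. Since $\alpha(\chi)(f,g)=g(\chi)$ and $\partial_A(f,g)=f(y^+)$, the left--hand side of the rule, evaluated at $(f,g)$, equals
$$\partial_A^\vee(g(\chi))+\alpha(\chi)^\vee(f(y^+))=\overline{(y^+\otimes g(\chi),\,0)}+\overline{(0,\,\chi\otimes f(y^+))}=\overline{(y^+\otimes g(\chi),\,\chi\otimes f(y^+))},$$
the bar denoting the class in $A_P^\vee$. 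On the other hand $\lambda(\partial_T^\vee(\chi))=\lambda(y^+\otimes\chi)$, and the formula of \ref{Par:DerivationsSetup} gives $\lambda(y^+\otimes\chi)(f,g)=\overline{(y^+\otimes g(\chi),\,\chi\otimes f(y^+))}$ as well. Hence the two sides of the Leibniz rule agree for every $\chi\in T^\vee$, and $(\partial_T,\partial_A)\in\Der_{\overline k}(P,G)$.

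I do not expect a genuine obstacle: the argument is pure bookkeeping of the dual abelian varieties, of the two projections $A_U\to\cHom(Y,A)$ and $A_U\to\cHom(T^\vee,A^\vee)$, and of the compatibility of everything with the inclusions $A_P\subseteq A_U$, $T_P\subseteq T_U$ and the dual surjections $A_U^\vee\twoheadrightarrow A_P^\vee$, $T_U^\vee\twoheadrightarrow T_P^\vee$. That compatibility is automatic, since every map that appears is the restriction, respectively the descent, of a map already defined at the level of $U(M)$; one could equally well verify the Leibniz identity first for $\partial$ regarded on $A_U$ and then restrict. The only point that really demands attention is to keep straight which tensor slot carries $y^+$ and which carries $\chi$, so that the two classes computed in $A_P^\vee$ coincide on the nose.
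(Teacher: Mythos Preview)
Your proof is correct and follows exactly the same route as the paper: compute the duals $\partial_A^\vee$, $\partial_T^\vee$, $\alpha(\chi)^\vee$ explicitly and then verify the Leibniz identity pointwise on $(f,g)\in A_P$. You are slightly more careful than the paper in writing classes in $A_P^\vee$ rather than in $A_U^\vee$, but otherwise the computations match line for line.
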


\begin{proof}
The duals of $\partial_A$ and $\partial_P$ are given by $\partial_A^\vee: a
\mapsto (y^+\otimes a,0)$ and $\partial_T^\vee: \chi \mapsto y^+ \otimes \chi$.
For $(f,g) \in A_P$, we compute:
\begin{multline*}
\partial_A^\vee(\alpha(\chi)(f, g))+\alpha(\chi)^\vee(\partial_A(f,g)) =
\partial_A^\vee(g(\chi)) + \alpha(\chi)^\vee(f(y^+)) = \\
= (y^+\otimes g(\chi),0) + (0,\chi\otimes f(y^+)) = (y^+\otimes g(\chi),  \chi
\otimes f(y^+))  = \\
= \lambda(y^+\otimes \chi)(f,g) = \lambda(\partial_T^\vee(\chi))(f,g) 
\end{multline*}
so the equality $(\partial_A^\vee\circ \alpha(\chi))+(\alpha(\chi)^\vee\circ
\partial_A) = \lambda(\partial_T^\vee(\chi))$ holds, as demanded.
\end{proof}

\vspace{4mm}
\begin{lem}\label{Lem:Derivationsdt0}
Let $(\partial_T,\partial_A)\in \Der_k(P(M),G)$ be a derivation. If
$\partial_A=0$, then $w\circ \partial_T^\vee = 0$. Reciprocally, if
$\partial_T:T_P\to T$ is any morphism such that $w\circ \partial_T^\vee = 0$,
then $(\partial_T,0)$ is a derivation.
\end{lem}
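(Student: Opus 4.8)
The plan is to unravel the Leibniz rule against the explicit formulas for $w$, $\lambda$ and $\alpha$ recorded in \ref{Par:DerivationsSetup}, the only geometric input being that the abelian variety $A_P$ is topologically generated by the point $\overline v$ attached to $M$. First I would reduce both halves to one statement. By \ref{Par:DerivationsDef}, a pair $(\partial_T,\partial_A)$ is a derivation precisely when $(\partial_A^\vee\circ\alpha(\chi))+(\alpha(\chi)^\vee\circ\partial_A)=\lambda(\partial_T^\vee(\chi))$ holds in $\Hom_{\overline k}(A_P,A_P^\vee)$ for every $\chi\in T^\vee$. Setting $\partial_A=0$ kills the left hand side, so ``$(\partial_T,0)$ is a derivation'' is \emph{equivalent} to ``$\lambda(\partial_T^\vee(\chi))=0$ for all $\chi\in T^\vee$''. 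Thus the lemma reduces to the single assertion: for a morphism of tori $\partial_T\colon T_P\to T$ one has $\lambda\circ\partial_T^\vee=0$ if and only if $w\circ\partial_T^\vee=0$.

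Next I would make the bridge between the two conditions, namely the identity $w(\psi)=\lambda(\psi)(\overline v)$ for $\psi\in T_P^\vee$, where $\overline v=(v,v^\vee)$ is the image of $\overline u$ in $U_A=\cHom(Y,A)\times\cHom(T^\vee,A^\vee)$. Comparing the formula $w(y\otimes\chi)=(y\otimes v^\vee(\chi),\chi\otimes v(y))$ with $\lambda(y\otimes\chi)(f,g)=(y\otimes g(\chi),\chi\otimes f(y))$ from \ref{Par:DerivationsSetup}, the value $w(\psi)$ is literally $\lambda(\psi)$ evaluated at $\overline v$; a quantitatively safe form is $n\cdot w(\psi)=\lambda(\psi)(n\overline v)$ in $A_P^\vee$, where $n\geq 1$ is chosen with $n\overline u\in P(M)$ as in Definition \ref{Def:P(M)}, so that $n\overline v\in A_P$. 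Granting this, the implication $(\partial_A=0)\Rightarrow(w\circ\partial_T^\vee=0)$ is immediate, since $\lambda(\partial_T^\vee(\chi))=0$ forces $w(\partial_T^\vee(\chi))=\lambda(\partial_T^\vee(\chi))(\overline v)=0$.

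For the reciprocal I would argue by density. Assume $w\circ\partial_T^\vee=0$, fix $\chi\in T^\vee$, and put $\psi:=\lambda(\partial_T^\vee(\chi))\in\Hom_{\overline k}(A_P,A_P^\vee)$. Then $\psi(n\overline v)=n\cdot w(\partial_T^\vee(\chi))=0$, so $n\overline v$ lies in the algebraic subgroup $\ker\psi$ of $A_P$. Now $A_P$ is the smallest abelian subvariety of $U_A$ containing $n\overline v$ --- this is Definition \ref{Def:P(M)} transported along the projection $U(M)\twoheadrightarrow U_A$ --- hence also the smallest algebraic subgroup of $A_P$ containing $n\overline v$, by the same reasoning as in the proof of Proposition \ref{Pro:SmallestLieSubobject}. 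Therefore $\ker\psi=A_P$, i.e.\ $\psi=0$; since $\chi$ was arbitrary we get $\lambda\circ\partial_T^\vee=0$, and $(\partial_T,0)$ is a derivation by the reduction above.

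The hard part will be the passage $n\cdot w(\partial_T^\vee(\chi))=0\Rightarrow w(\partial_T^\vee(\chi))=0$ in the forward implication: the scaled identity alone only places $w(\partial_T^\vee(\chi))$ in the $n$-torsion of $A_P^\vee$. To remove this ambiguity I would use $w(\psi)=\lambda(\psi)(\overline v)$ directly at the level of the defining formulas of \ref{Par:DerivationsSetup} --- the formula for $w(\psi)$ being that for $\lambda(\psi)$ with $(f,g)$ replaced by $\overline v$ --- so that vanishing of $\lambda(\partial_T^\vee(\chi))$ forces vanishing of $w(\partial_T^\vee(\chi))$ on the nose; alternatively one first reduces to the case $\overline u\in P(M)$. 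Beyond this, the proof consists only of the routine substitutions into the formulas of \ref{Par:DerivationsSetup}.
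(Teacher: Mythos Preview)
Your argument is correct and follows the same route as the paper: both proofs rest on the single identity $w(\psi)=\lambda(\psi)(v,v^\vee)$ read off from the formulas in \ref{Par:DerivationsSetup}, then use it in each direction, invoking that $(v,v^\vee)$ generates $A_P$ for the converse. Your preliminary reduction of both halves to the equivalence $\lambda\circ\partial_T^\vee=0\Leftrightarrow w\circ\partial_T^\vee=0$ is a tidy repackaging the paper leaves implicit, and your attention to the integer $n$ is a genuine refinement: the paper simply writes $\lambda(\partial_T^\vee(\chi))(v,v^\vee)$ without checking that $(v,v^\vee)\in A_P$, whereas you observe that a priori only $n(v,v^\vee)$ lies there. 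Of your two proposed fixes for the resulting torsion ambiguity, the second (reduce to $\overline u\in P(M)$ by replacing $u$ with $nu$, which leaves $P(M)$, $G$, $w$, $\lambda$ and the derivation condition unchanged) is the clean one; the first (``use the formulas directly'') is a bit slippery, since $\lambda(\partial_T^\vee(\chi))=0$ in $\Hom(A_P,A_P^\vee)$ does not by itself control the ambient evaluation at a point outside $A_P$.
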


\begin{proof}
By definition of $w$ and $\lambda$, the equality
$$w(\partial_T^\vee(\chi))=\lambda(\partial_T(\chi))(v,v^\vee)$$
holds for all $\chi\in T^\vee$. Hence, if $(\partial_T,0)$ is a derivation, then
we have $\lambda(\partial_T(\chi))=0$ for all $\chi\in T^\vee$ by the Leibniz
rule, therefore $w\circ \partial_T^\vee = 0$. On the other hand, if
$w(\partial_T^\vee(\chi)) = 0$ then we have
$\lambda(\partial_T(\chi))(v,v^\vee)$, and since $(v,v^\vee)$ generates $P_A$,
we get $\lambda(\partial_T^\vee(\chi))=0$.
\end{proof}

\vspace{4mm}
\begin{para}\label{Par:ConstrDerivationPhi}
We will now construct a linear map 
$$\Phi: D_M(k)\otimes\IQ\to \Der_k(P(M),G)\otimes\IQ$$
which will eventually turn out to be an isomorphism, as follows: Given a
deficient point $Q\in D_M(k)$, define a 1--motive $M^+ = [Y\oplus\IZ
\xrightarrow{\:\:u\:\:}G]$ by $u^+(y,n)=u(y)+nQ$. Let
$$r:P(M)\to P(M^+)$$
be a morphism whose composition with the morphism $P(M^+)\to P(M)$ induced by
$M\to M^+$ is multiplication by some nonzero integer $n$. Such a morphism exists
by Corollary \ref{Cor:DeficientGeometric}. We get morphisms
\begin{eqnarray*}
\partial_T & :\:\: & T_{P(M)} \xrightarrow{\:\: r_T\:\:}T_{P(M^+)}
\xrightarrow{\:\: \subseteq \:\:} \cHom(Y\oplus \IZ,T) \xrightarrow{\:\: f
\mapsto f(0,1)\:\:}T\\
\partial_A & :\:\: & A_{P(M)} \xrightarrow{\:\: r_A\:\:}A_{P(M^+)}
\xrightarrow{\:\: \subseteq \:\:} \cHom(Y\oplus \IZ,A)\oplus\cHom(T^\vee,A^\vee)
\xrightarrow{\:\: (f,g) \mapsto f(0,1)\:\:}A
\end{eqnarray*}
and set $\Phi(Q\otimes 1)= (\partial_T,\partial_A)\otimes n^{-1}$. By Lemma
\ref{Lem:TheseAreDerivations} the pair $(\partial_T,\partial_A)$ is indeed a
derivation, and $(\partial_T,\partial_A)\otimes n^{-1}$ does not depend on the
choice of the isogeny $r$, hence $\Phi$ is a well--defined $\IQ$--linear map.
\end{para}

\vspace{4mm}
\begin{thm}\label{Thm:DerivationsMain}
The map $\Phi$ constructed in \ref{Par:ConstrDerivationPhi} is an isomorphism.
\end{thm}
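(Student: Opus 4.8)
The plan is to prove that $\Phi$ is both injective and surjective. Both groups in sight are finitely generated: $\Der_k(P(M),G)$ is finitely generated and free by \ref{Par:DerivationsDef}, and $D_M(k)$ is a subgroup of $G(k)$, which is finitely generated by the Mordell--Weil theorem and Dirichlet's unit theorem. Hence $\Phi$ is a $\IQ$--linear map between finite dimensional $\IQ$--vector spaces, and injectivity together with surjectivity will give that it is an isomorphism.

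\emph{Injectivity.} Suppose $Q\in D_M(k)$ satisfies $\Phi(Q\otimes 1)=0$. Since $\Der_k(P(M),G)$ is torsion free, the pair $(\partial_T,\partial_A)$ built from $Q$ in \ref{Par:ConstrDerivationPhi} vanishes. I would unwind the construction as follows. The morphism $r\colon P(M)\to P(M^+)$ has the property that composing it with the map $P(M^+)\to P(M)$ of Corollary \ref{Cor:DeficientGeometric} gives multiplication by the nonzero integer $n$; since $Q$ is deficient, that latter map is an isogeny by Corollary \ref{Cor:DeficientGeometric}, hence so is $r$, and therefore $r_A$ and $r_T$ are isogenies onto $A_{P(M^+)}$ and $T_{P(M^+)}$ respectively. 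Vanishing of $\partial_A$ then forces every point of $A_{P(M^+)}$, viewed inside $\cHom(Y\oplus\IZ,A)\oplus\cHom(T^\vee,A^\vee)$, to be a homomorphism vanishing at $(0,1)\in Y\oplus\IZ$; evaluated on a generator of $A_{P(M^+)}$ this says that the image of $Q$ in $A(k)$ is torsion. After replacing $Q$ by a multiple we may assume $Q\in T(k)$, and then the same argument with $\partial_T=0$ shows that $Q$ itself is torsion, so $Q\otimes 1=0$ in $D_M(k)\otimes\IQ$.

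\emph{Surjectivity.} Here I would realise the heuristic of the introduction to this section, in the spirit of Breen's construction: a derivation $(\partial_T,\partial_A)\in\Der_k(P(M),G)$ should arise by ``applying the derivation to $n\overline u$''. Concretely, fix $n\geq 1$ with $n\overline u\in P(M)(k)$ and let $\overline v_A\in A_{P(M)}(k)$ be its image in the abelian quotient of $P(M)$. The point $\partial_A(\overline v_A)\in A(k)$ lifts to $G(k)\otimes\IQ$, since the cokernel of $G(k)\to A(k)$ embeds into $H^1(k,T)$, which is finite; the toric datum $\partial_T$, via the Leibniz identity of \ref{Par:DerivationsDef}, singles out the right lift, so that one obtains a well defined point $Q\in G(k)\otimes\IQ$, represented after scaling by a genuine rational point $Q_0\in G(k)$. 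Equivalently and more structurally, one assembles $(\partial_T,\partial_A)$ with the structure maps of $M$ into a $1$--motive $M^+=[Y\oplus\IZ\to G]$, deforming $M$ along $\partial$. Two things then have to be checked. First, that $Q_0$ is deficient: by Corollary \ref{Cor:DeficientGeometric} this amounts to $P(M^+)\to P(M)$ being an isogeny, which holds because the new generator of $M^+$ has been put in by a derivation intrinsic to $P(M)$, so that $\Vell P(M^+)\to\Vell P(M)$, i.e. $W_{-1}\fl^{M^+}\to W_{-1}\fl^M$ by Theorem \ref{Thm:ComparisonEll}, is an isomorphism. Second, that $\Phi(Q_0\otimes 1)$ equals $(\partial_T,\partial_A)$ up to a nonzero rational scalar; this is a direct comparison of the formulas in \ref{Par:ConstrDerivationPhi} with the description of the canonical derivations in Lemma \ref{Lem:TheseAreDerivations}, now applied to $M^+$.

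The main obstacle is the surjectivity step, and in it the passage from an abstract derivation --- a pair of homomorphisms between the weight--graded pieces subject only to the Leibniz identity --- to an actual $1$--motive $M^+$, together with the verification that the resulting point is deficient. The first requires controlling a lifting obstruction, which is why the statement is proved only after $\otimes\IQ$; the second must be extracted from Corollary \ref{Cor:DeficientGeometric} and the comparison $\Vell P(M^+)=W_{-1}\fl^{M^+}$ of Theorem \ref{Thm:ComparisonEll}. The remaining verifications reduce to bookkeeping with the explicit descriptions of $U(M)$, $P(M)$ and $\Der_k(P(M),G)$ given in \ref{Par:ConstructionU(M)}, \ref{Par:DerivationsSetup} and \ref{Par:DerivationsDef}.
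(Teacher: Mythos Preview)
Your injectivity argument is correct and essentially the paper's: once $(\partial_T,\partial_A)=0$, evaluating at the generator $(v^+,(v^+)^\vee)$ of $A_{P(M^+)}$ kills the image of $Q$ in $A$, and the torus step finishes. The paper streamlines this by first absorbing $Q$ into $M$ (so $Q=u(y^+)$ and the derivation is literally the one of Lemma~\ref{Lem:TheseAreDerivations}), but the content is identical.

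The surjectivity sketch has a real gap at ``the toric datum $\partial_T$, via the Leibniz identity, singles out the right lift''. The Leibniz rule is a compatibility among $\partial_T,\partial_A,\lambda,\alpha$; it does not by itself produce an element of $T(k)$ correcting an arbitrary lift $\widetilde q$ of $a:=\partial_A(v,v^\vee)$, nor does it tell you why a correctly chosen lift yields a deficient point. What the paper actually does is: pick \emph{any} lift $\widetilde q$ and form $M^+$. The choice $a=\partial_A(v,v^\vee)$ guarantees that $\rho_A\colon A_{P(M^+)}\to A_{P(M)}$ is an isogeny (the map $(f,g)\mapsto((f,\partial_A(f,g)),g)$ splits it), so $\ker\rho\subseteq T_{P(M^+)}$. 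Now compare two derivations of $P(M^+)$ into $G$: the canonical $\widetilde\partial$ from $y^+=(0,1)$ (Lemma~\ref{Lem:TheseAreDerivations}) and $\partial\circ\rho$. Their $A$--parts agree on the generator $(v^+,v^\vee)$ by construction, hence everywhere, so the difference is $(\delta,0)$. Here Lemma~\ref{Lem:Derivationsdt0} is the missing ingredient in your outline: it says that such a $\delta$ factors through the maximal torus quotient $T_0^+$ of $P(M^+)$, and one corrects by setting $q:=\widetilde q-\delta(\pi_0(\overline u^+))$.

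Your heuristic ``the new generator has been put in by a derivation intrinsic to $P(M)$, so $P(M^+)\to P(M)$ is an isogeny'' is not a proof, and in fact it is false for the uncorrected $\widetilde q$: the map $\rho$ need not be an isogeny before the torus correction. Only after subtracting $\delta(\pi_0(\overline u^+))$ does the canonical derivation of the new $M^{+q}$ match $\partial\circ\rho$, and this matching is what forces $P(M^{+q})\to P(M)$ to be an isogeny, hence $q$ deficient with $\Phi(q\otimes1)=\partial\otimes1$. So the route through Lemma~\ref{Lem:Derivationsdt0} is not optional bookkeeping but the mechanism that makes surjectivity work.
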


\begin{proof}
Let us write $T_0$ for the torus dual to $\ker w$, where $w:T_P^\vee \to
A_P^\vee$ is the map defined by $P$. So $T_0$ is the largest torus quotient of
$P$ via the canonical projection $\pi_0:P\to T_0$. The torus $T_0$ comes
equipped with a special point $u_0 := \pi(\overline u)$. Every subtorus of $T_0$
that contains $u_0$ is already equal to $T_0$.\\
We now check injectivity and surjectivity of the map $\Phi$, starting with
injectivity. Let $q \in D_M(k)$ be a deficient point such that $\Phi(q\otimes 1)
= 0$. Replacing $M$ by $M^+$, we can suppose without loss of generality that
$q=u(y^+)$ for some $y^+\in Y$. Because $\Der_k(P(M),G)$ is a finitely generated
free group, the relation $\Phi(q\otimes 1) = 0$ means that the maps
$$\partial_A:A_P \xrightarrow{\:\:\subseteq\:\:} A_U
\xrightarrow{\:\:(f,g)\mapsto f(y^+)\:\:}A \qqet \partial_T:T_P
\xrightarrow{\:\:\subseteq\:\:} T_U \xrightarrow{\:\:h\mapsto h(y^+)\:\:}T$$
are both zero. We have $\pi_A(q) = v(y^+) = \partial_A(v,v^\vee) = 0$, so $q$
must be an element of $T(k)$. But then we have $q = u_0(y^+) =
\partial_T(u_0)=0$, using Lemma \ref{Lem:Derivationsdt0}. This shows injectivity
of $\Phi$. \\
To show that $\Phi$ is surjective, let $\partial = (\partial_T,\partial_A)$ be a
derivation, and let us construct a deficient point $q$ with $\Phi(q\otimes 1) =
\partial \otimes 1$. Define $a := \partial_A(v,v^\vee) \in A(k)$ and let
$\widetilde q \in G$ be any point with $\pi(\widetilde q) = a$. Define $M^+ =
[Y\oplus\IZ \xrightarrow{\:\:u^+\:\:}G]$ by $u^+(y,n)=u(y)+n\widetilde q$ and
let
$$\rho:P(M^+) \to P(M)$$
be the induced morphism of semiabelian varieties. The kernel of $\rho$ is
contained in $T_{P(M^+)}$. We get two derivations $\widetilde \partial =
(\widetilde \partial_T,\widetilde \partial_A)$ and $\partial \circ \rho =
(\partial_T\circ\rho_T,\partial_A\circ\rho_A)$ on $P(M^+)$ with values in $G$.
Their difference is the derivation
$$\widetilde \partial - \partial \circ \rho = (\widetilde
\partial_T-\partial_T\circ\rho_T,0)$$
so $\delta := \widetilde \partial_T-\partial_T\circ\rho_T$ is a morphism from
the maximal torus quotient $T_0^+$ of $P(M^+)$ to $T$. The point $q :=
\widetilde q-\delta(\pi(\overline u^+))$ has the required property.
\end{proof}

\vspace{4mm}
\begin{cor}
Let $M=[Y\xrightarrow{\:\:u\:\:}G]$ be a 1--motive over $k$. The group of
deficient points $D_M(k)$ is finitely generated, and its rank is the same as the
rank of $\Der_k(P(M),G)$.
\end{cor}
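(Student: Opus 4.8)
The plan is to deduce this corollary almost immediately from Theorem \ref{Thm:DerivationsMain}, the only additional ingredient being the finite generation of $G(k)$. So first I would record that $G(k)$ is a finitely generated abelian group. Indeed, $G$ sits in an exact sequence $0\to T\to G\to A\to 0$ of commutative group schemes over $k$, so $G(k)$ is an extension of a subgroup of $A(k)$ by the group $T(k)$. Here $A(k)$ is finitely generated by the Mordell--Weil theorem, and $T(k)$ is finitely generated by Dirichlet's unit theorem (pass to a finite extension of $k$ splitting $T$ and use that a subgroup of a finitely generated abelian group is again finitely generated), so $G(k)$ is finitely generated. Since $D_M(k)$ is by definition a subgroup of $G(k)$, it is finitely generated as well, and in particular has a well-defined finite rank, with $D_M(k)\otimes\IQ$ a $\IQ$--vector space of that dimension.

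Next I would invoke Theorem \ref{Thm:DerivationsMain}, which says that the $\IQ$--linear map $\Phi\colon D_M(k)\otimes\IQ\to \Der_k(P(M),G)\otimes\IQ$ constructed in \ref{Par:ConstrDerivationPhi} is an isomorphism. The group $\Der_k(P(M),G)$ is a finitely generated free abelian group, being a subgroup of $\Hom_{\overline k}(T_P,T)\times\Hom_{\overline k}(A_P,A)$ as noted in \ref{Par:DerivationsDef}, so the target of $\Phi$ has dimension equal to the rank of $\Der_k(P(M),G)$. Comparing dimensions across the isomorphism $\Phi$ then yields
$$\operatorname{rank} D_M(k)=\dim_\IQ\bigl(D_M(k)\otimes\IQ\bigr)=\dim_\IQ\bigl(\Der_k(P(M),G)\otimes\IQ\bigr)=\operatorname{rank}\Der_k(P(M),G),$$
which is exactly the assertion of the corollary.

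There is no genuine obstacle here: all the substantive work has already been carried out in Theorem \ref{Thm:DerivationsMain}, and this statement is mere bookkeeping with ranks once finite generation of $G(k)$ is in place. If one wished, the explicit bound $\operatorname{rank} D_M(k)\le r$ with $r$ depending only on $\dim_{\IQ_\ell}\Vell M$ alluded to in \ref{Par:DeficientSetup} would follow in the same breath, since the ambient group $\Hom_{\overline k}(T_P,T)\times\Hom_{\overline k}(A_P,A)$ has rank controlled by the dimensions of the semiabelian varieties involved, all of which are bounded in terms of $\dim_{\IQ_\ell}\Vell M$.
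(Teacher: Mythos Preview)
Your proof is correct and follows essentially the same approach as the paper: both use the isomorphism $\Phi$ from Theorem \ref{Thm:DerivationsMain} for the rank statement and deduce finite generation of $D_M(k)$ from that of $G(k)$, the latter obtained by d\'evissage from the Mordell--Weil theorem and Dirichlet's unit theorem. Your version simply spells out the d\'evissage that the paper leaves implicit.
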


\begin{proof}
We have an isomorphism $\Phi: D_M(k)\otimes \IQ \to \Der_k(P(M),G) \otimes \IQ$,
so all we have to show is that the group $D_M(k)$ is finitely generated. One can
show (by d\'evissage) that the group $G(k)$ is abstractly isomorphic to a direct
sum of a finite group and a free group. So any subgroup of $G(k)$, in particular
$D_M(k)$, also is isomorphic to a direct sum of a finite group and a free group.
\end{proof}

\vspace{14mm}
\section*{\texorpdfstring{Appendix (following P.~Deligne): The Tannakian
construction of $P(M)$}{Appendix (following P.~Deligne): The Tannakian
construction of P(M)}}
\renewcommand{\thesection}{A}
\setcounter{thm}{0}

\begin{par}
I reproduce here in almost unaltered form a comment of P.~Deligne, explaining
our construction of the Lie algebra of the unipotent motivic fundamental group
$P(M)$ of a 1--motive $M$ in terms of Tannakian formalism. I alone am to blame
for mistakes.
\end{par}

\vspace{4mm}
\begin{par}
The starting point is that in definition \ref{Def:P(M)} the point $\overline u$
should be seen as an extension of $W_{-1}\cEnd(M)$ by the unit object $\IZ$.
This extension can be constructed in a general setting as follows:
\end{par}

\vspace{4mm}
\begin{para}\label{Para:Story1}
Let $\cT$ be a tannakian category in characteristic zero with unit object
$\mathds{1}$. Suppose each object of $\cT$ has a functorial exhaustive
filtration $W$ compatible with tensor products and duals, the functor
$\gr^W_\ast$ being exact. For any object $M$, the object $\cEnd(M) =
M^\vee\otimes M$ contains the subobject $W_{-1}\cEnd(M)$. The filtration of
$\cEnd(M)$ is deduced from the filtration of $M$, hence
$$W_{-1}\cEnd(M) = \im\Big(\bigoplus_p\cHom(M/W_pM,W_pM) \to \cEnd(M)\Big)$$
As $\Ext^1(M/W_pM,W_pM) = \Ext^1(\mathds{1}, \cHom(M/W_pM,W_pM))$, we can take
the class of $M$ in each of the vector spaces $\Ext^1(M/W_pM,W_pM)$, interpret
it as a class in $\Ext^1(\mathds{1}, \cHom(M/W_pM,W_pM))$, take the sum of those
and push to $W_{-1}\cEnd(M)$ by functoriality of $\Ext^1$. We get a class
$$\cl(M) \in \Ext^1(\mathds{1}, W_{-1}\cEnd(M))$$
which is natural in $M$ under taking subobjects and quotients.
\end{para}

\vspace{4mm}
\begin{para}\label{Para:Story2}
\begin{par}
Let $G$ be the fundamental group of $\cT$. It has an invariant unipotent
subgroup $W_{-1}(G)$ with Lie algebra $W_{-1}(\Lie G)$. 
Thus, for all objects $M$, the group $G$ acts on $M$ respecting the filtration
$W$, and the unipotent subgroup $W_{-1}(M)$ acts trivially on $\gr^W_\ast M$.
Let $w:\IG_m\to G/W_{-1}(G)$ be the cocharacter defined by the numbering of the
filtration of $W$ of $M$, for any $M$. This means that for $a\in\IG_m$, the
automorphism $w(a)$ of $\gr^W_\ast(M)$ acts as multiplication with $a^n$ on
$\gr^W_n(M)$. Let us consider the inverse image of this $\IG_m$ in $G$, or more
precisely $\IG_m\times_{G/W_{-1}G}G$. Its Lie algebra $\mathfrak L$ is an
extension of $\Lie(\IG_m)=\mathds{1}$ by $\Lie W_{-1}(G)$. 
\end{par}
\begin{par}
For any object $M$ of $\cT$, let $\fu_M$ be the image in $W_{-1}\cEnd(M)$ of
$\Lie W_{-1}(G)$. The extension $\mathfrak L$ gives us by functoriality a
quotient $\mathfrak L_M$ of $\mathfrak L$ acting on $M$, which is an extension
of $\mathds{1}$ by $\fu_M$.
\end{par}
\end{para}

\vspace{4mm}
\begin{par}
In the setting of section \ref{Sec:UnipoMotFG}, $U(M)$ corresponds to
$W_{-1}\cEnd(M)$, and the subobject $P(M)$ of $U(M)$ corresponds to the
subobject $\fu_M$ of $W_{-1}\cEnd(M)$. We have a 1--motive $[\IZ \to U(M)]$
given by the point $\overline u$, which has to be seen as an extension of $[0\to
U(M)]$ by $[\IZ\to 0]$, and $P(M)$ was defined to be the smallest subobject of
$U(M)$ from which this extension comes, say after passing to
$\IQ$--coefficients. Exactly in this way the two stories \ref{Para:Story1} and
\ref{Para:Story2} are related:
\end{par}

\begin{prop}\label{Pro:Delignes2Stories}
\begin{enumerate}
 \item The extension $\cl(M)$ is the push out of the extension $\mathfrak L_M$
via the inclusion $\fu_M\to W_{-1}\cEnd(M)$.
 \item If $\mathfrak v$ is a subobject of $\fu_M$ such that $\cl(M)$ is the
image of a class in $\Ext^1(\mathds{1}, \mathfrak v)$, then $\mathfrak v =
\fu_M$.
\end{enumerate}
\end{prop}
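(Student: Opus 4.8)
The plan is to track explicitly how both extension classes arise from the action of the fundamental group $G$ on $M$, and to reduce (2) to the defining minimality property of $\fu_M$. For part (1), recall that $\mathfrak L_M$ is the extension of $\mathds 1$ by $\fu_M$ obtained by pushing out the extension $\mathfrak L = \IG_m\times_{G/W_{-1}G}G$ of $\Lie(\IG_m)=\mathds 1$ by $\Lie W_{-1}(G)$ along $\Lie W_{-1}(G)\twoheadrightarrow \fu_M$. The class $\cl(M)$ in $\Ext^1(\mathds 1, W_{-1}\cEnd(M))$ was defined from the filtered structure of $M$ via $\Ext^1(M/W_pM,W_pM)=\Ext^1(\mathds 1, \cHom(M/W_pM,W_pM))$. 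The key observation is that both classes measure the same thing: the obstruction to splitting the $G$-action on $M$ compatibly with a splitting of the weight filtration. Concretely, I would choose a splitting of $W$ on $M$ as a vector space, so that $G$ acts through a cocycle-type datum; the restriction of this datum to $W_{-1}(G)$ gives exactly the extension $\mathfrak L_M$ after passing to Lie algebras, while the induced extension of $\mathds 1$ by $W_{-1}\cEnd(M)$ classifying the deviation of $M$ from being filtered-split is precisely $\cl(M)$. Since the map $\fu_M\to W_{-1}\cEnd(M)$ is exactly the one through which the $W_{-1}(G)$-action factors, the compatibility is the statement that $\cl(M)$ is the pushout of $\mathfrak L_M$ along this inclusion. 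The main obstacle here is bookkeeping: making the two a priori different-looking definitions of the extension classes literally match requires carefully unwinding the Yoneda-$\Ext$ description of $\cl(M)$ and the fibre-product description of $\mathfrak L$, and checking the identification is the natural one; I expect this is where the real work lies.

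For part (2), suppose $\mathfrak v\subseteq \fu_M$ is a subobject such that $\cl(M)$ lies in the image of $\Ext^1(\mathds 1,\mathfrak v)\to \Ext^1(\mathds 1, W_{-1}\cEnd(M))$. By part (1), $\cl(M)$ is the pushout of $\mathfrak L_M$ along $\fu_M\hookrightarrow W_{-1}\cEnd(M)$, and $\fu_M\hookrightarrow W_{-1}\cEnd(M)$ is injective (being the image of $\Lie W_{-1}(G)$ acting faithfully in the relevant Tannakian sense on $\langle M\rangle^\otimes$). Hence $\cl(M)$ factors through $\Ext^1(\mathds 1,\mathfrak v)$ if and only if $\mathfrak L_M$, viewed as an extension of $\mathds 1$ by $\fu_M$, already comes by pushout from an extension by $\mathfrak v$. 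But $\fu_M$ is by construction the image of $\Lie W_{-1}(G)$ in $W_{-1}\cEnd(M)$, equivalently the smallest subobject of $W_{-1}\cEnd(M)$ through which the adjoint action of $\mathfrak L$ on $M$ factors; said differently, $\fu_M$ is the smallest subobject with the property that $\mathfrak L_M$ descends to an extension of $\mathds 1$ by it. This is precisely the minimality that characterises $\fu_M$, coming from the fact that $\langle M\rangle^\otimes$ is generated by $M$ and that the fundamental group of $\langle M\rangle^\otimes$ is a quotient of $G$ with unipotent radical acting through $\fu_M$. Therefore $\mathfrak v=\fu_M$.

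In carrying this out, the one place demanding care is to phrase ``smallest subobject through which the extension comes'' intrinsically enough that it is visibly a Tannakian reformulation of Definition~\ref{Def:P(M)}: there $P(M)$ was the smallest semiabelian subvariety of $U(M)$ containing a multiple of $\overline u$, and the dictionary is that $\overline u\in U(M)(k)$ corresponds to an extension of $[0\to U(M)]$ by $[\IZ\to 0]$, i.e.\ to a class in $\Ext^1(\mathds 1, U(M))$, and ``containing a multiple of $\overline u$'' translates, after passing to $\IQ$-coefficients, into ``the extension class descends to this subobject''. So I would first set up this dictionary precisely (using that $\Ext^1(\mathds 1,-)$ on the category of $1$-motives up to isogeny computes rational points up to isogeny, as used implicitly throughout Section~\ref{Sec:UnipoMotFG}), then invoke part (1) to identify $\cl(M)$ with the class of $\overline u$, and finally read off (2) as the minimality defining $P(M)=\fu_M$. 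The expected difficulty is entirely in part (1)'s identification of the two extension classes; part (2) is then a short formal consequence.
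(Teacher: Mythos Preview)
Your part (1) is not yet a proof: you correctly identify that both classes should measure the failure of the weight filtration on $M$ to split, but you never carry out the identification. Saying you would ``choose a splitting of $W$ on $M$ as a vector space'' and track cocycles is problematic in a general Tannakian category (there is no underlying vector space until you pick a fibre functor), and in any case you stop before the computation. The paper's argument is concrete and stays inside $\cT$: it observes that the pushout of $\mathfrak L_M$ along $\fu_M\hookrightarrow W_{-1}\cEnd(M)$ is the pullback of the tautological extension
\[
0 \to W_{-1}\cEnd(M) \to W_0\cEnd(M) \to \gr^W_0\cEnd(M) \to 0
\]
along the weight cocharacter $\sum_n n\,e_n:\mathds 1\to \gr^W_0\cEnd(M)$, where $e_n:\mathds 1\to \cEnd(\gr^W_nM)$ is the identity. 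One then writes $\sum_n n\,e_n = \sum_p E_p + m\sum_n e_n$ with $E_p=\sum_{n>p}e_n$; the pullback by $\sum_n e_n$ is trivial since it lifts to $\id_M\in W_0\cEnd(M)$, and the pullback by each $E_p$ is exactly the class of $M$ in $\Ext^1(M/W_pM,W_pM)$ pushed to $W_{-1}\cEnd(M)$. Summing gives $\cl(M)$. This decomposition trick is the substance of (1), and your outline does not reach it.

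Your part (2) has a genuine gap. You write that $\fu_M$ is ``the image of $\Lie W_{-1}(G)$ in $W_{-1}\cEnd(M)$, \emph{equivalently} the smallest subobject through which the action factors; \emph{said differently}, the smallest subobject with the property that $\mathfrak L_M$ descends to an extension of $\mathds 1$ by it.'' The first equivalence is tautological, but the second is exactly the content of (2) and cannot be assumed. That the action $\Lie W_{-1}(G)\to W_{-1}\cEnd(M)$ has image $\fu_M$ does not by itself prevent the extension class of $\mathfrak L_M$ in $\Ext^1(\mathds 1,\fu_M)$ from lying in the image of $\Ext^1(\mathds 1,\mathfrak v)$ for some proper $\mathfrak v\subsetneq\fu_M$. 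Invoking Definition~\ref{Def:P(M)} is backwards: the purpose of this proposition is to prove that the Tannakian $\fu_M$ agrees with the geometric $P(M)$, so the geometric minimality of $P(M)$ is not available here. The missing idea is this: if $\mathfrak L_M$ came from $\mathfrak v$, there would be a subobject $\mathfrak L'\subseteq\mathfrak L_M$ surjecting onto $\mathds 1$ with $\mathfrak L'\cap\fu_M=\mathfrak v$; but any subobject of $\mathfrak L_M$ in $\cT$ is automatically stable under $G$, hence under $\mathfrak L_M$ itself. Since the bracket with the $\mathds 1$-part acts as multiplication by $p$ on $\gr^W_p$, stability forces $\gr^W_p\mathfrak L'=\gr^W_p\mathfrak L_M$ for all $p$, whence $\mathfrak L'=\mathfrak L_M$ and $\mathfrak v=\fu_M$.
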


\begin{par}
I claim in (1) an equality in $\Ext^1(\mathds{1}, W_{-1}\cEnd(M))$. As
$\Hom(\mathds{1}, W_{-1}\cEnd(M))$ is $0$ for weight reasons, it is the same as
an (unique) isomorphism of actual extensions. 
\end{par}

\begin{par}
A first computation, which was helpful for understanding but now has disappeared
from the proof, was to consider the category of graded representations of a
graded Lie algebra $\fg$ with degrees $<0$, and wondering what cocycle $c:\fg 
\to W_{-1} \cEnd(M)$ was giving me $\cl(M)$ in $H^1(\fg, W_{-1}\cEnd(M))$. The
answer is the composite map $\fg \to \fg  \to W_{-1} \cEnd(M)$, where the first
map is multiplication by $n$ in degree $-n$, and the second map is given by the
action of $\fg$ on $M$.
\end{par}

\vspace{4mm}
\begin{proof}[Proof of Proposition \ref{Pro:Delignes2Stories}]
\begin{par}
(1) For each integer $n$ we have a map
$$e_n: \mathds{1} \to \cEnd(\gr^W_n\! M) \to \gr^W_0\cEnd(M) =
\bigoplus_p\cEnd(\gr^W_p\! M)$$
The push--out by $\fu_M \to W_{-1}\cEnd(M)$ of the extension $0\to \fu_M \to
\mathfrak L_M \to \mathds{1} \to 0$ is the pull--back of the extension 
\begin{equation}\label{Eqn:DeligneExtension}
1\to W_{-1}\cEnd(M) \to W_0\cEnd(M) \to \gr^W_0\!\cEnd(M) \to 1\tag{$\ast$}
\end{equation}
by $\sum_nne_n$. Indeed, $\sum_nne_n$ gives the action of $\mathds{1} =
\Lie(\IG_m)$ on $\gr^W_\ast\!M$ corresponding to the grading. Define $E_p :=
\sum_{n>p}e_n$ and let $A\leq B$ be integers such that $W_AM=0$ and  $W_BM=M$.
We have then
$$\sum_nne_n = \sum_{\hspace{-2mm}A\leq p\leq B\hspace{-2mm}}E_p + m\sum_ne_n$$
for some integer $m$ depending on the choice of $A$ and $B$. The map $\sum_ne_n$
lifts to the identity morphism of $M$, viewed as a map $\mathds{1}\to \cEnd(M)$
which factors over $W_0\cEnd(M)$. The pull--back of (\ref{Eqn:DeligneExtension})
by $m\sum_ne_n$ is hence trivial, and the push--out of $\mathfrak L_M$ by the
inclusion $\fu_M \to W_{-1}\cEnd(M)$ is the sum of the pull--backs of
(\ref{Eqn:DeligneExtension}) by the $E_p$. This pull--back by $E_p$ comes from
an extension of $\mathds 1$ by $\cHom(M/W_pM,W_pM)$, which I would like to call
\emph{Endomorphisms of $M$ respecting $W_p(M)$, inducing a multiple of the
identity on $M/W_pM$ and $0$ on $W_pM$}. At least, that is what it becomes in
any realisation. This is the extension already considered in \ref{Para:Story1},
corresponding to the class of $M$ in $\Ext^1(M/W_pM, W_pM)$. The sum of those
extension classes, pushed to $W_{-1}\cEnd(M)$, had been defined to be $\cl(M)$.
\end{par}
\begin{par}
(2) On objects $N$ of weights $< 0$, i.e.\ such that $N = W_{-1}N$, the functor
$\Ext^1(\mathds 1, -)$ is left exact. For any class $\alpha$ in $\Ext^1(\mathds
1, N)$, there is hence a smallest sub-object $N_0$ of $N$ such that $\alpha$
comes from a class in $\Ext^1(\mathds 1, N_0)$. Indeed, if $\alpha$ comes from
$N'$ and from $N''$, the short exact sequence 
$$0\to N'\cap N'' \to N'\oplus N'' \to N$$
shows after applying $\Ext^1(\mathds 1, -)$ that $\alpha$ also comes from
$N'\cap N''$. It remains to show that the class of $\mathfrak L_M$ in
$\Ext^1(\mathds 1, \fu_M)$ does not come from any $\mathfrak v \subsetneq
\fu_M$. Indeed, any subobject of $\mathfrak L_M$ is stable by the action of
$\mathfrak L_M$ because $G$ and $\Lie G$ act on everything.
\end{par}
\end{proof}

\vspace{4mm}
\begin{prop}\label{Pro:DeligneMinimalSubalg}
Any Lie--subobject of $\mathfrak L_M$ mapping onto $\mathds 1$ is equal to
$\mathfrak L_M$. In other words, the extension $0\to \fu_M \to \mathfrak L_M \to
\mathds{1} \to 0$ is essential.
\end{prop}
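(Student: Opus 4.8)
The plan is to deduce this immediately from Proposition~\ref{Pro:Delignes2Stories}. Recall that $\fu_M$ is the kernel of the projection $\mathfrak L_M \to \mathds 1$. Let $\mathfrak v \subseteq \mathfrak L_M$ be a Lie--subobject which maps onto $\mathds 1$; I would set $\mathfrak w := \mathfrak v \cap \fu_M$ (this makes sense since $\cT$ is abelian). As $\mathfrak v \to \mathds 1$ is surjective with kernel $\mathfrak w$, there is a short exact sequence $0 \to \mathfrak w \to \mathfrak v \to \mathds 1 \to 0$, and the inclusion $\mathfrak v \hookrightarrow \mathfrak L_M$ is a morphism of extensions from it to $0 \to \fu_M \to \mathfrak L_M \to \mathds 1 \to 0$ which is the identity on the quotient $\mathds 1$.

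The first step is then the (routine) remark that such a morphism of extensions, being the identity on the right--hand term, exhibits the lower sequence as the push--out of the upper one along $\mathfrak w \hookrightarrow \fu_M$; equivalently, the class of the extension $\mathfrak L_M$ in $\Ext^1(\mathds 1, \fu_M)$ is the image, under $\mathfrak w \hookrightarrow \fu_M$, of the class of the extension $\mathfrak v$ in $\Ext^1(\mathds 1, \mathfrak w)$. Pushing this forward once more along $\fu_M \hookrightarrow W_{-1}\cEnd(M)$ and using Proposition~\ref{Pro:Delignes2Stories}(1), we obtain $\cl(M)$; hence $\cl(M)$ is the image of a class in $\Ext^1(\mathds 1, \mathfrak w)$ with $\mathfrak w$ a subobject of $\fu_M$. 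Proposition~\ref{Pro:Delignes2Stories}(2) then forces $\mathfrak w = \fu_M$, i.e. $\fu_M \subseteq \mathfrak v$; combined with the surjection $\mathfrak v \to \mathds 1 = \mathfrak L_M/\fu_M$ this gives $\mathfrak v = \mathfrak L_M$, which is the assertion. The statement about the extension being essential is just a reformulation of this.

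I do not expect a genuine obstacle here: all the real content sits in Proposition~\ref{Pro:Delignes2Stories}, and the only thing needing (mild) attention is the bookkeeping with Yoneda extension classes in the abelian category $\cT$ in the first step above, in particular that $\mathfrak w = \mathfrak v \cap \fu_M$ is legitimately formed and that the displayed morphism of short exact sequences really does compute a push--out. If one preferred to avoid Proposition~\ref{Pro:Delignes2Stories}, one could argue directly: every subobject of $\mathfrak L_M$ is stable under $\mathfrak L_M$, since $G$ and $\Lie G$ act on everything, hence is an ideal; the weight cocharacter $w$ provides an element of $\mathfrak L_M$ whose adjoint action is invertible on $\fu_M$, because $\fu_M$ has strictly negative weights, so that $[\mathfrak L_M,\mathfrak L_M] = \fu_M$; and from $\mathfrak v + \fu_M = \mathfrak L_M$ one peels off the finite weight filtration of $\fu_M$ one step at a time, using that $\mathfrak v$ is a Lie--subobject and an ideal, to get $\fu_M \subseteq \mathfrak v$. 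This second route is longer, so I would present the one through Proposition~\ref{Pro:Delignes2Stories}.
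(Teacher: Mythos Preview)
Your argument via Proposition~\ref{Pro:Delignes2Stories} is correct, but it is not the route the paper takes. The paper gives the direct argument you sketch at the end as an alternative: the bracket $\gr^W_p\mathfrak L_M \otimes (\mathfrak L_M/W_{-1}\mathfrak L_M) \to \gr^W_p\mathfrak L_M$ is multiplication by~$p$; since every subobject of $\mathfrak L_M$ in $\cT$ is automatically stable under the $\mathfrak L_M$--action (this is the remark already used in the proof of Proposition~\ref{Pro:Delignes2Stories}(2)), one gets $p\cdot\gr^W_p\mathfrak L_M \subseteq \gr^W_p\mathfrak L'$ from $\gr^W_0\mathfrak L' = \mathds 1$, hence equality on each graded piece and so $\mathfrak L' = \mathfrak L_M$. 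The whole proof is three sentences, so your estimate that this route is ``longer'' is off.

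Your main argument has the virtue of making transparent how Propositions~\ref{Pro:Delignes2Stories} and~\ref{Pro:DeligneMinimalSubalg} are related: part~(2) of the former already \emph{is} the essentiality statement, only phrased for $\cl(M)$ in $W_{-1}\cEnd(M)$ rather than for the class of $\mathfrak L_M$ in $\fu_M$, and part~(1) bridges the two. Note also that neither your proof nor the paper's uses the hypothesis that $\mathfrak v$ be a \emph{Lie} subobject; any subobject suffices, as befits the ``essential extension'' reformulation.
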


\begin{proof}
The bracket of $\mathfrak L_M$ passes to the quotients by $W$ and defines
brackets
$$W_p \mathfrak L_M /W_{p-1} \mathfrak L_M \otimes \mathfrak L_M /W_{-1}
\mathfrak L_M \to W_p \mathfrak L_M /W_{p-1}\mathfrak L_M$$
and this bracket $\gr^W_p\mathfrak L_M \otimes\mathds 1 \to \gr^W_p\mathfrak
L_M$ is the multiplication by $p$. For any subobject $\mathfrak L'$ of
$\mathfrak L_M$ mapping onto the quotient $\mathds 1$, the stability of
$\mathfrak L'$ by the action of $W_p \mathfrak L_M$ hence gives a surjectivity
$$\gr^W_p \mathfrak L' \to \gr^W_p \mathfrak L_M$$
from which the equality $\mathfrak L' = \mathfrak L_M$ follows.
\end{proof}

\vspace{14mm}
\bibliographystyle{amsalpha}

\providecommand{\bysame}{\leavevmode\hbox to3em{\hrulefill}\thinspace}

\providecommand{\href}[2]{#2}

\vspace{15mm}
$$ $$
\hspace{70mm}
\begin{minipage}[]{80mm}
Peter Jossen\\[1mm]
Fakult\"at f\"ur Mathematik\\
Universit\"at Regensburg\\
Universit\"atsstr. 31\\
93040 Regensburg, GERMANY\\[2mm]
{\tt peter.jossen@gmail.com} 
\end{minipage}

\end{document}